\documentclass[11pt, reqno]{amsart}

\usepackage{amsmath,amssymb,amsthm,epsfig}
\usepackage{hyperref}
\usepackage{amsmath}
\usepackage{amssymb}
\usepackage{color}
\usepackage{ulem}
\usepackage{epsfig}
\usepackage[mathscr]{eucal}

\usepackage[utf8]{inputenc}

\usepackage{stackrel}

\usepackage{epsfig}

\newtheorem{theorem}{Theorem}
\newtheorem{definition}{Definition}

\newtheorem{lemma}{Lemma}

\newtheorem{corollary}{Corollary}
\newtheorem{remark}{Remark}
\newtheorem{example}{Example}
\date{}
\numberwithin{equation}{section}
\numberwithin{theorem}{section}
\numberwithin{lemma}{section}
\numberwithin{corollary}{section}
\numberwithin{remark}{section}
\numberwithin{proposition}{section}
\numberwithin{definition}{section}

\def \Div {\mathrm{div}}

\def \R {\mathbb{R}}

\def \diam {\mathrm{diam}}

\def \dist {\mathrm{dist}}

\newcommand{\tr}{\mathrm{Tr}}
\newcommand{\defeq}{\mathrel{\mathop:}=}

\begin{document}

\title[Global regularity for unbalanced degenerate models]{Global regularity for a class of fully nonlinear PDEs with unbalanced variable degeneracy}

\author[J.V. da Silva]{Jo\~{a}o Vitor da Silva}
\address{Departamento de Matem\'atica \hfill\break \indent Instituto de Matem\'{a}tica, Estast\'{i}stica e Ci\^{e}ncia da Computa\c{c}\~{a}o \hfill\break \indent Universidade Estadual de Campinas - UNICAMP.
\hfill\break \indent Cidade Universit\'{a}ria Zeferino Vaz, CEP: 13083-591, Campinas - SP - Brazil.}
\email{jdasilva@unicamp.br}

\author[E.C.B. J\'{u}nior]{Elzon C.B. J\'{u}nior}
\address{Departamento de Matem\'atica \hfill\break \indent Universidade Federal do Cear\'{a}\hfill\break \indent Campus do Pici, CEP: 60455-760 Fortaleza-CE, Brazil}
\email{bezerraelzon@gmail.com}

\author[G.C. Rampasso]{Giane C. Rampasso}
\address{Departamento de Matem\'atica \hfill\break \indent Instituto de Matem\'{a}tica, Estast\'{i}stica e Ci\^{e}ncia da Computa\c{c}\~{a}o \hfill\break \indent Universidade Estadual de Campinas - UNICAMP.
\hfill\break \indent Cidade Universit\'{a}ria Zeferino Vaz, CEP: 13083-591, Campinas - SP - Brazil.}
\email{gianecr@unicamp.br}

\author[G.C. Ricarte]{Gleydson C. Ricarte}
\address{Departamento de Matem\'atica \hfill\break \indent Universidade Federal do Cear\'{a}\hfill\break \indent Campus do Pici, CEP: 60455-760 Fortaleza-CE, Brazil}
\email{ricarte@mat.ufc.br}

\begin{abstract}

We establish the existence and sharp global regularity results ($C^{0, \gamma}$, $C^{0, 1}$ and $C^{1, \alpha}$ estimates) for a class of fully nonlinear elliptic PDEs with unbalanced variable degeneracy. In a precise way, the degeneracy law of the model switches between two different kinds of degenerate elliptic operators of variable order, according to the null set of a modulating function $\mathfrak{a}(\cdot)\ge 0$. The model case in question is given by
$$
\left\{
\begin{array}{rcrcl}
    \left[|Du|^{p(x)}+\mathfrak{a}(x)|Du|^{q(x)}\right]\mathscr{M}_{\lambda, \Lambda}^{+}(D^2 u)& = & f(x) & \text{in} & \Omega \\
    u(x) & = & g(x) & \text{on} & \partial \Omega.
\end{array}
\right.
$$
for a bounded, regular and open set $\Omega \subset \R^n$, and appropriate continuous data $p(\cdot), q(\cdot)$, $f(\cdot)$ and $g(\cdot)$. Such sharp regularity estimates generalize and improve, to some extent, earlier ones via geometric treatments. Our results are consequences of geometric tangential methods and make use of compactness, localized oscillating and scaling techniques. In the end, our findings are applied in the study of a wide class of nonlinear models and free boundary problems.

\bigskip

\noindent \textbf{Keywords:} Fully nonlinear elliptic PDEs,  non-homogeneous degenerate operators, variable exponent, sharp H\"{o}lder gradient estimates.
\bigskip

\noindent \textbf{MSC (2010)}: 35B65, 35J60, 35J70, 35R35.
\tableofcontents
\end{abstract}
\maketitle
\newpage

\section{Introduction}\label{s1}

Investigations on existence/boundedness/regularity issues involving non-uniformly elliptic operators as follows
$$
\mathscr{L} w \defeq -\Div(\partial_{\xi} F(x, \nabla w)) = f(x) \quad \text{in}\quad \Omega \subset \R^n,
$$
whose ``bulk energy density'' does not fulfil a standard $p$-power growth condition, i.e., a sort of double-sided bound of the type
$$
|\xi|^p\lesssim F(x, \xi)\lesssim |\xi|^p+1, \qquad \text{for} \,\,\,p>1, \,\,\,\text{and}\,\,\,|\xi| \,\,\,\text{large}
$$
are currently very much a classical topic, and they have attracted increasing attention of many researchers in the last decades (cf. \cite{LU68} for a Ladyzhenskaya-Ural'tseva's long-established treatise). Particularly, several results were developed to functionals with nonstandard growth conditions of $(p, q)$-type
$$
|\xi|^p\lesssim F(x, \xi)\lesssim |\xi|^q+1, \qquad \text{for} \,\,\, 1<p\le q, \,\,\,\text{and}\,\,\,|\xi| \,\,\,\text{large},
$$
which are nowadays known as \textit{Autonomous Functional} with $(p, q)-$growth.

An archetypical example of such functionals is given by multi-phase variational integrals
{\scriptsize{
\begin{equation}\label{Eq-Functional}
 W_{\text{loc}}^{1, p}(\Omega)\ni w \mapsto \mathscr{F}_0(w, \Omega) \defeq \int_{\Omega} \left(|Dw|^p+ \sum_{i=1}^{n} |Dw|^{q_i}-fw\right)dx, \quad 1<p<q_1\le \cdots\le q_n,
\end{equation}}}which were firstly examined in the Uraltseva-Urdaletova's groundbreaking work \cite{UU83}, whose special case of bounded minimizers was studied. Additionally, for a regularity treatment, we must quote Marcellini's pioneering work \cite{Marc89}, which proved that minimizers of \eqref{Eq-Functional} (for $f \equiv 0$) belong to $H_{\text{loc}}^{1, \infty}(\Omega)$.
\bigskip

Recently, some authors have introduced a new class of functionals, whose simplest model case is given by
{\scriptsize{
\begin{equation}\label{DP-Eq}
W_{\text{loc}}^{1, p}(\Omega)\ni w \mapsto \mathscr{F}(w, \Omega) \defeq \int_{\Omega} \mathscr{H}_0(x, \nabla w) dx, \quad \text{for} \quad \mathscr{H}_0(x, \xi) \defeq |\xi|^p+ \mathfrak{a}(x)|\xi|^{q},
\end{equation}}}where $0 \le \mathfrak{a} \in C^{0, \alpha}(\Omega)$ and $p$ and $q$ are positive constants  such that
\begin{equation}\label{Eq_Conditions}
  1<p\le q \quad \text{and}\quad \frac{q}{p}< 1 + \frac{\alpha}{n} \quad \text{with} \quad \alpha \in (0, 1].
\end{equation}
It worth stress that \eqref{DP-Eq} changes its ellipticity rate according to the geometry of the zero level set of the modulating function $\mathcal{Z}(\mathfrak{a}, \Omega) \defeq \{\mathfrak{a}(x) = 0\}$. For this reason, \eqref{DP-Eq} is so-termed as \textit{Functional of Double Phase} type.

As a matter of fact, by considering a wide class of double phase functionals
\begin{equation}\label{Funct-Col-Ming}
  w \mapsto \mathscr{G}(w, \Omega) \defeq \int_{\Omega} \mathscr{H}(x, w, \nabla w) dx,
\end{equation}
where $\mathscr{H}: \R^n \times \R \times \R^n \to \R$ is a Carath\'{e}odory function fulfilling a growth condition (and under natural hypothesis \eqref{Eq_Conditions})
$$
L_1\mathscr{H}_0(x, \xi) \le \mathscr{H}(x, w, \xi) \le L_2\mathscr{H}_0(x, \xi) \quad \text{for some constants} \quad 0<L_1\le L_2,
$$
in a series of seminal works, Colombo-Mingione in \cite{CM15} and \cite{CM15II} and Baroni-Colombo-Mingione in \cite{BCM15III} proved many boundedness and local sharp regularity results ($D w \in C_{\text{loc}}^{0, \beta}(\Omega)$ for a universal exponent) for local minimizers of \eqref{Funct-Col-Ming}. We also recommend De Filippis-Mingione's works \cite{DeFM19II} and \cite{DeFM20} for optimal Lipschitz bounds in the context of nonautonomous integrals. Furthermore, existence/multiplicity issues of solutions to double phase problems like \eqref{DP-Eq} were completely addressed by Liu-Dai's recent work \cite{LD18}.

\bigskip
Historically, the study of double phase problems as \eqref{DP-Eq} dates back to Zhikov's fundamental works \cite{Zhi93} and \cite{Zhi95}, which describe the behavior of certain strongly anisotropic materials, whose hardening estates, connected to the exponents of the gradient's growth, change in a point-wise fashion. In such a scenario, a mixture of two heterogeneous materials, with hardening $(p, q)$-exponents, can be performed according to the intrinsic geometry of the null set of the modeling function:
$$
\mathscr{L} w(x_0) = \left\{
\begin{array}{lcl}
  -\Delta_p w(x_0) & \text{if} & x_0 \in \mathcal{Z}(\mathfrak{a}, \Omega) \\
  -\Delta_p w(x_0)-\Div (\mathfrak{a}(x_0)|\nabla w(x_0)|^{q-2}\nabla w(x_0)) & \text{if} & x_0 \notin \mathcal{Z}(\mathfrak{a}, \Omega).
\end{array}
\right.
$$
Moreover, Zhikov's works also give new instances of the occurrence of the so-called \textit{Lavrentiev's phenomenon}, i.e., for a suitable boundary datum $u_0$:
$$
\displaystyle \inf_{w \in u_0+ W^{1, p}(\Omega)} \int_{\Omega} F(x, \nabla w)dx< \inf_{w \in u_0+ W^{1, q}(\Omega)} \int_{\Omega} F(x, \nabla w)dx,
$$
which states that is not possible to attain the minimum of the functional \eqref{DP-Eq} in a more regular fashion - a natural obstruction to regularity (cf. \cite{BCM15III}, \cite{CM15}, \cite{CM15II} and \cite{DeFM19II} for enlightening works). Particularly, Zhikov shown in \cite{Zhi97} that functionals with $p(x)-$growth manifest the Lavrentiev phenomenon if and only if the critical continuity condition is infringed (cf. \cite[Section 4.1]{DHHR17} and \cite[Part 1]{RaRe15}):
$$
   \displaystyle \limsup_{s \to 0} \,\hat{\omega}(s)\ln(s^{-1})\leq \mathrm{C}_1,
$$
for a universal modulus of continuity $\hat{\omega}:[0, \infty) \to [0, \infty)$ such that
$$
   |p(x)-p(y)| \leq \mathrm{C}_2\cdot\hat{\omega}(|x-y|).
$$
On the other hand, such functionals \eqref{DP-Eq} also appear in a variety of physical models; see \cite{Zhi11} for applications in the elasticity theory, \cite{BRR19} for transonic flows, \cite{BD'AFP00} for model of static solutions for elementary particle, among others.

\bigskip
Along recent directions, different generalized functionals have been considered in the literature. For instance, Ragusa and Tachikawa in \cite{RaTach20} analysed minimizers to a class of integral functionals
of double phase type with variable exponents as follows
{\scriptsize{
\begin{equation}\label{EqFuncVarExp}
  W_{\text{loc}}^{1, 1}(\Omega)\ni w \mapsto \int_{\Omega} \left(|\nabla w|^{p(x)} + \mathfrak{a}(x)|\nabla w|^{q(x)}\right)dx, \,\,\, \text{for} \,\,\, q(x)\ge p(x)\ge p_0>1,\,\,\mathfrak{a}(x)\ge 0,
\end{equation}}}where $p(\cdot)$, $q(\cdot)$ and $\mathfrak{a}(\cdot)$ are assumed to be H\"{o}lder continuous functions. In such a context, the authors prove (under appropriate assumptions on data) that local minimizers are $C_{\text{loc}}^{1, \gamma}(\Omega)$ for some $\gamma \in (0, 1)$.

Regarding functionals \eqref{Eq-Functional}, \eqref{Funct-Col-Ming} and \eqref{EqFuncVarExp}, we suggest reading the Mingione-R\v{a}dulescu's interesting survey \cite{MingRad21} on recent developments in such class of problems with nonstandard growth and nonuniform ellipticity regime.

\bigskip
In turn, in the light of recent non-divergence researches (i.e., a non-variational counterpart of certain variational integrals of the calculus of variations with non-standard growth \eqref{Funct-Col-Ming}), De Filippis' manuscript \cite{DeF20} was the pioneering work in considering fully nonlinear problems with non-homogeneous degeneracy. Precisely, De Filippis proved  $C_{\text{loc}}^{1, \gamma}$ regularity estimates for viscosity solutions of
{\scriptsize{
\begin{equation}\label{EqDeFilip}
    \left[|Du|^p+\mathfrak{a}(x)|Du|^q\right]F(D^2 u) = f \in C^{0}(\overline{\Omega}), \quad \text{for} \quad q\ge p>0,\,\,\,\mathfrak{a}(x)\ge 0,
\end{equation}}}for some $\gamma \in (0, 1)$ depending on universal parameters. Aftermath, da Silva-Ricarte in \cite{daSR20} improved De Filippis' result and addressed a variety of applications in nonlinear elliptic models and related free boundary problems.

\bigskip
Concerning fully nonlinear models of (single) degenerate type (i.e. $\mathfrak{a} \equiv 0$)
\begin{equation}\label{SingDegEq}
  \mathcal{G}_p[u] \defeq |D u|^pF(D^2 u) = f(x) \quad \text{in} \quad \Omega
\end{equation}
the list of contributions is fairly diverse, including aspects such as existence/uniqueness issues, Harnack inequality, ABP estimates \cite{daSRRV21} and \cite{daSJR21}, Liouville type results \cite{daSLR20}, local H\"{o}lder and Lipschitz estimates, local  gradient estimates \cite{ART15}, \cite{BD2}, \cite{BD3}, \cite{BD16},  and \cite{IS}, as well as connections with a variety of free boundary problems of Bernoulli type \cite{daSRRV21}, obstacle type \cite{DSVI} and \cite{DSVII}, singular perturbation type \cite{ART17}, \cite{daSJR21} and \cite{RT11}, and dead-core type \cite{daSLR20}, just to name a few.

At this point, we must quote the series of Berindelli-Demengel's key works \cite{BD2}, \cite{BD3} and \cite{BD16}, where local gradient regularity to \eqref{SingDegEq} have been extended to up-to-the-boundary $C^{1, \gamma}$ estimates in the presence of sufficiently regular domain and boundary datum. In addition, the following estimate holds
$$
       \displaystyle \|u\|_{C^{1, \gamma}(\overline{\Omega})}\leq \mathrm{C}(n, \lambda, \Lambda, \gamma)\cdot\left(\|u\|_{L^{\infty}(\Omega)} + \|g\|_{C^{1, \beta_g}(\partial \Omega)} +\|f\|_{L^{\infty}(\Omega)}^{\frac{1}{p+1}}\right).
$$
Aftermath, in the recent work \cite{AS21}, Ara\'{u}jo-Sirakov proved optimal boundary and global gradient estimates for solutions of \eqref{SingDegEq} in the spirit of Ara\'{u}jo \textit{et al}'s work \cite{ART15} and Silvestre-Sirakov's work \cite{SS14}.

We must also place emphasis on Bronzi \text{et al}'s result \cite[Theorem 2.1]{BPRT20}, where the authors show that viscosity solutions for the class of variable-exponent fully nonlinear models
$$
|D u|^{\theta(x)}F(D^2 u)=f(x) \quad \text{in} \quad B_1
$$
are of class $C^{1, \gamma}(B_{1/2})$, where $f\in C^{0}(B_1)$, and $\theta \in C^{0}(B_1)$ enjoys minimal assumptions such that $\displaystyle \inf_{B_1} \theta(x)>-1$ (structural law of singular/degenerate type). Moreover, the following estimate holds
$$
       \displaystyle \|u\|_{C^{1, \gamma}\left( B_{\frac{1}{2}}\right)}\leq \mathrm{C}(n, \lambda, \Lambda, \gamma)\cdot\left(\|u\|_{L^{\infty}(B_1)}  +\|f\|_{L^{\infty}(B_1)}^{\frac{1}{\|\theta^{+}\|_{\infty}+\|\theta^{-}\|_{\infty}+1}}+1\right).
$$

\bigskip
Finally, completing this mathematical state-of-the-art, we must quote the recent work due to Fang-R\v{a}dulescu-Zhang, which by combining the approaches from \cite{BPRT20} and \cite{DeF20} established local $C^{1, \alpha}$ regularity estimates to degenerate fully nonlinear elliptic PDEs with variable exponent
{\scriptsize{
\begin{equation}\label{EqFRZ}
    \left[|Du|^{p(x)}+\mathfrak{a}(x)|Du|^{q(x)}\right]F(D^2 u) = f \in C^{0}(\overline{\Omega}), \quad \text{for} \quad q(x)\ge p(x)\ge p_0>0,\,\,\,\mathfrak{a}(x)\ge 0.
\end{equation}}}

Consequently, after these breakthroughs and taking into account the previous highlights, according to our scientific knowledge, there is no investigation concerning existence/uniqueness, sharp and global regularity for such class of problems \eqref{EqFRZ} in a general scenario with unbalanced degeneracy and variable order. For this reason, such lack of investigations was one of our main impetus in considering existence/regularity issues for a class of strongly degenerate models and deliver some relevant applications.

Therefore, in this work we shall derive existence issues and global geometric regularity estimates for solutions of a class of nonlinear elliptic equations having a non-homogeneous degeneracy and variable exponent, whose general mathematical model is given by
\begin{equation}\label{1.1}
\left\{
\begin{array}{rcrcl}
  \mathcal{H}(x, Du)F(x, D^2 u) & = & f(x) & \text{in} & \Omega \\
  u(x) & = & g(x) & \text{on} & \partial \Omega,
\end{array}
\right.
\end{equation}
 for a bounded and regular open set $\Omega \subset \R^n$, suitable data $f$ and $g$. Throughout this work,
 $\mathcal{H}$ enjoys an appropriated degeneracy law and $F$ is assumed to be a second order, fully nonlinear (uniformly elliptic) operator, i.e., it is nonlinear in its highest order derivatives (to be specified soon).

The primary model case to \eqref{1.1} we have in mind is
{\scriptsize{
\begin{equation}\label{ModelEq}
  \left[|Du|^{p(x)}+\mathfrak{a}(x)|Du|^{q(x)}\right] \tr(\mathbb{A}(x)D^2 u) =  f(x) \quad \text{in} \quad B_1 \,\,\, \text{with} \,\,\, \lambda|\xi|^2\le \xi^{t}\mathbb{A}(x)\cdot \xi \le \Lambda|\xi|^2,
\end{equation}}}for a symmetric matrix $\mathbb{A} \in \text{Sym}(n)$ and constants $0<\lambda \le \Lambda < \infty$. In a mathematical perspective,  \eqref{ModelEq} enjoys distinct types of degeneracy laws under a variable exponent regime, depending on the values of the modulating function $\mathfrak{a}(\cdot) \ge 0$, as well as of magnitude of vectorial component
{\scriptsize{
$$
\text{If} \quad x \notin \mathcal{Z}(\mathfrak{a}, \Omega) \quad \Rightarrow \quad |\xi|^{p(x)} \le \mathcal{H}(x, \xi) \le \left\{
\begin{array}{rcl}
  \left(1+ \|\mathfrak{a}\|_{L^{\infty}(\Omega)}\right)|\xi|^{p(x)} & \text{if} & |\xi|\le 1 \\
  \left(1+\|\mathfrak{a}\|_{L^{\infty}(\Omega)}\right)|\xi|^{q(x)} & \text{if} & |\xi|>1.
\end{array}
\right.
$$}}
$$
\text{If} \quad x \in \mathcal{Z}(\mathfrak{a}, \Omega) \quad \Rightarrow \quad \mathcal{H}(x, \xi) = |\xi|^{p(x)}.
$$
As a consequence, diffusion properties of the model exhibit a sort of non-uniformly elliptic and doubly (non-homogeneous) degenerate signature, which combines two different type operators of variable order (cf. \cite{ART15}, \cite{BD2}, \cite{BD3}, \cite{BD16}, \cite{BPRT20} and \cite{IS} for single degeneracy scenarios).

In conclusion, in this research framework, our main contributions are:

\begin{enumerate}
  \item Existence/uniqueness of solutions (Theorem \ref{existence});
  \item Global H\"{o}lder or Lipschitz estimates (Theorems \ref{Xizao} and \ref{pgrande});
  \item Higher regularity: gradient estimates (improved local and sharp up-to-the-boundary estimates) (Theorems \ref{main1*} and \ref{main1});
  \item Geometric non-degeneracy properties of solutions (Theorem \ref{main3});
  \item Finer estimates and some applications in nonlinear models and free boundary problems (Section \ref{sec.Applic}).
\end{enumerate}

\subsection{Statement of the main results}\label{ssec.def}

In the sequel, we will present some useful definitions. Firstly, let us introduce the notion of viscosity solution for our operators.

\begin{definition}
  [{\bf Viscosity solutions}] A function $u \in C^{0}(\Omega)$ is a viscosity super-solution (resp. sub-solution) to \eqref{1.1} if whenever $\varphi \in C^2(\Omega)$ and $x_0 \in \Omega$ such that $u-\varphi$ has a local minimum (resp. a local maximum) at $x_0$, then
$$
   \mathcal{H}(x_0, D\varphi(x_0))F(x_0, D^2 \varphi(x_0)) \leq f(x_0) \qquad \text{resp.} \,\,\,(\cdots \geq f(x_0)).
$$
Finally, $u$ is said to be a viscosity solution if it is simultaneously a viscosity super-solution and
a viscosity sub-solution.
\end{definition}

In order to measure the smoothness of solutions in suitable spaces, we are going to use the following norms and semi-norms (see, \cite[Section 1]{Kov99}):

\begin{definition}
  [{\bf $C^{1, \alpha}$ norm and semi-norm}] For $\alpha \in (0,1]$, $C^{1, \alpha}(\Omega)$ denotes the space of all functions $u$ whose gradient $Du(x)$ there exists in the classical sense for every $x\in \Omega$ such that the norm
$$
\|u\|_{C^{1, \alpha}(\Omega)} \defeq \|u\|_{L^{\infty}(\Omega)} + \|Du\|_{L^{\infty}(\Omega)} + [u]_{C^{1, \alpha}(\Omega)}
$$
is finite. Moreover, we have the semi-norm
\begin{equation}\label{Semi-Norm}
  [u]_{C^{1, \alpha}(\Omega)} \defeq \sup_{\substack{x_0\in \Omega \\0< r\leq \diam(\Omega)}}  \inf_{\mathfrak{l \in \mathcal{P}_1}} \frac{\|u-\mathfrak{l}\|_{L^{\infty}(B_r(x_0)\cap \Omega)}}{r^{1+\alpha}},
\end{equation}
where $\mathcal{P}_1$ denotes the spaces of polynomial functions of degree at most 1. As a result, $u \in C^{1, \alpha}(\Omega)$ implies every component of $Du$ is $C^{0, \alpha}(\Omega)$ (see, \cite[Main Theorem]{Kov99}).
\end{definition}

Now we are in a position to state our main results. The first one establishes an optimal geometric estimate. In effect, it reads that if the source term is bounded and (A0)-(A2), \eqref{1.2} and \eqref{1.3} are in force, then any bounded viscosity solution of \eqref{1.1} belongs to $C^{1, \alpha}$ at interior points, where
 \begin{equation}\label{SharpExp}
   \alpha \defeq \min\left\{\alpha^{-}_{\mathrm{F}},  \frac{1}{p_{\text{max}}+1}, \beta_g\right\},
 \end{equation}
where $\alpha_{\mathrm{F}} = \alpha_{\mathrm{F}}(n, \lambda, \Lambda) \in (0, 1]$ is the optimal exponent to (local) H\"{o}lder continuity of gradient of solutions to homogeneous problem with ``frozen coefficients'' $F(D^2 \mathfrak{h}) = 0$ (see, \cite{C89}, \cite[Ch.5 \S 3]{CC95} and \cite{Tru88}).

Under the structural assumptions in subsection \ref{SectionSA} we are in a position to present the following results:

As our first result, we prove the existence of viscosity solutions to \eqref{1.1}.

\begin{theorem}[{\bf Existence of viscosity solutions}]\label{existence} Suppose assumptions (A0$^{\prime}$)-(A5) and \eqref{Cont-H} are in force for a continuous datum $f$. Then, there exists a unique viscosity solution $u \in C^0(\overline{\Omega})$ to \eqref{1.1}.
\end{theorem}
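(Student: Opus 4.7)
My plan is to prove Theorem \ref{existence} by Perron's method in the sense of Ishii, suitably adapted to the doubly-degenerate fully nonlinear operator with variable growth in \eqref{1.1}. The argument splits into three pieces: a \textbf{comparison principle} for continuous sub- and super-solutions; the construction of \textbf{boundary barriers} at every point of $\partial \Omega$; and the standard Perron machinery producing a solution as the envelope of admissible sub-solutions. With a comparison principle available, uniqueness is immediate; with barriers available, the Perron candidate will attain the boundary datum continuously.

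The main obstacle is the comparison principle, because $\mathcal{H}(x, \xi)$ may vanish when $\xi = 0$ and the prefactor degenerates in an unbalanced way dictated by $p(\cdot)$, $q(\cdot)$ and the modulating function $\mathfrak{a}(\cdot)$, so one cannot invoke Jensen--Ishii's lemma directly. I would address this via a perturbation argument: given a sub-solution $u$ and a super-solution $v$ of \eqref{1.1} with $u \leq v$ on $\partial \Omega$, replace $v$ by $v_{\varepsilon}(x) \defeq v(x) + \varepsilon \Psi(x)$, where $\Psi$ is a smooth auxiliary function (of exponential or quadratic type) chosen so that $v_{\varepsilon}$ is a \emph{strict} super-solution whose gradient does not vanish in $\Omega$. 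The strict positivity of $\mathcal{H}(x, Dv_{\varepsilon})$ restores local uniform ellipticity, and the classical doubling-variable method applied to $u(x) - v_{\varepsilon}(y) - \tfrac{k}{2}|x-y|^2$, combined with the continuity condition \eqref{Cont-H} on $\mathcal{H}$ and the hypotheses (A0$^{\prime}$)--(A5), yields $u \leq v_{\varepsilon}$ in $\Omega$; sending $\varepsilon \downarrow 0$ gives $u \leq v$.

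For boundary barriers, I will use the regularity of $\partial \Omega$ (a uniform exterior sphere condition suffices) together with the continuity of $g$. Fixing $x_0 \in \partial \Omega$ and an exterior ball $B_{r_0}(y_0)$ tangent to $\partial \Omega$ at $x_0$, a radial ansatz
\[
w^{+}_{x_0}(x) \defeq g(x_0) + \omega_g(|x - x_0|) + A\bigl(r_0^{-\kappa} - |x - y_0|^{-\kappa}\bigr),
\]
with $\omega_g$ a modulus of continuity for $g$, $\kappa = \kappa(n, \lambda, \Lambda) > 0$, and $A$ large depending on $\|f\|_{L^{\infty}(\Omega)}$, $\|\mathfrak{a}\|_{L^{\infty}(\Omega)}$, and the exponents $p(\cdot), q(\cdot)$, provides a viscosity super-solution of \eqref{1.1} near $x_0$ with $w^{+}_{x_0}(x_0) = g(x_0)$ and $w^{+}_{x_0} \geq g$ on $\partial \Omega$. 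The computation reduces to a pointwise Pucci-type estimate for $F(\cdot, D^2 w^{+}_{x_0})$ multiplied by the strictly positive factor $\mathcal{H}(x, Dw^{+}_{x_0})$; the parameter $A$ is then tuned so the product dominates $|f(x)|$ with the correct sign. A symmetric construction yields a sub-solution barrier $w^{-}_{x_0}$ touching $g$ from below at $x_0$.

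With comparison and barriers secured, Perron's scheme closes the proof. Set
\[
\mathfrak{u}(x) \defeq \sup\bigl\{\,v(x) : v \in C^{0}(\overline{\Omega}) \text{ is a viscosity sub-solution of } \eqref{1.1}\text{ with } v \leq g \text{ on } \partial \Omega\,\bigr\}.
\]
A standard bumping argument shows that the upper semicontinuous envelope $\mathfrak{u}^{*}$ is a viscosity sub-solution and the lower semicontinuous envelope $\mathfrak{u}_{*}$ a viscosity super-solution; the barriers $w^{\pm}_{x_0}$ sandwich $\mathfrak{u}$ at each boundary point and force $\mathfrak{u}^{*}(x_0) = \mathfrak{u}_{*}(x_0) = g(x_0)$ for every $x_0 \in \partial \Omega$; the comparison principle then gives $\mathfrak{u}^{*} \leq \mathfrak{u}_{*}$ in $\Omega$, so $\mathfrak{u} \in C^{0}(\overline{\Omega})$ is the desired viscosity solution, and uniqueness follows by applying comparison to any two such solutions.
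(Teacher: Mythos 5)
Your barrier construction and the Perron envelope are essentially the paper's route (compare Lemma \ref{existence_sub_super}), but the step on which everything hinges — the comparison principle for \eqref{1.1} itself — has a genuine gap. The perturbation $v_{\varepsilon} \defeq v + \varepsilon \Psi$ does not produce a strict super-solution of \eqref{1.1}, and not for a technical reason that can be patched by a cleverer $\Psi$: if $\varphi$ touches $v$ from below at $x_0$, the viscosity inequality only controls the \emph{product} $\mathcal{H}(x_0, D\varphi)F(x_0, D^2\varphi) \le f(x_0)$. At points where $\mathcal{H}(x_0, D\varphi)$ is tiny (test gradient near the degeneracy set) and $f(x_0)>0$, the factor $F(x_0, D^2\varphi)$ may be arbitrarily large; replacing the gradient slot by $D\varphi + \varepsilon D\Psi$ can \emph{increase} $\mathcal{H}$ by an uncontrolled factor, so $\mathcal{H}(x_0, D\varphi + \varepsilon D\Psi)\bigl(F(x_0, D^2\varphi) + \mathscr{M}^{+}_{\lambda,\Lambda}(\varepsilon D^2\Psi)\bigr)$ can exceed $f(x_0)$ no matter how $\Psi$ is chosen, since you have no control over the test function's gradient. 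Likewise, "a super-solution whose gradient does not vanish" is not a meaningful requirement for a merely continuous viscosity super-solution, and non-vanishing gradients of test functions cannot be arranged. Finally, the deeper obstruction is that \eqref{1.1} carries no strict monotonicity in $u$, which is precisely the term needed to close the doubled-variable argument; assumption \eqref{Cont-H} alone does not supply it.

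The paper circumvents this by never proving comparison for \eqref{1.1} directly: it regularizes the equation to \eqref{appro_problem}, replacing $|\xi|$ by $\varepsilon + |\xi|$ in the degeneracy law (so the prefactor is bounded away from zero) \emph{and} inserting the proper term $\varepsilon u$ inside the bracket. The comparison principle (Theorem \ref{comparison principle}) is then proved only for this approximating problem, where the contradiction comes exactly from the $\varepsilon(u(x_\delta)-v(y_\delta)) \ge \tfrac{\varepsilon \mathrm{M}_0}{2}$ term, together with \eqref{Cont-H} and (A2). Perron's method (with barriers as in Lemma \ref{existence_sub_super}) yields a solution $u_\varepsilon$ of \eqref{appro_problem}; existence for \eqref{1.1} is then recovered by letting $\varepsilon \to 0$, using uniform local $C^{0,\gamma}$ bounds and the stability Lemma \ref{P1}. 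If you want to keep your outline, you must either adopt this $\varepsilon$-regularization (or some other source of strict monotonicity, e.g.\ sign conditions on $f$ as in Birindelli--Demengel) before invoking doubling of variables; as written, the claim "$u \le v_\varepsilon$ in $\Omega$" is unsupported, and with it both the Perron step ($\mathfrak{u}^{*} \le \mathfrak{u}_{*}$) and uniqueness.
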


In the sequel, we present our first (basic) regularity result.

\begin{theorem}[{\bf H\"{o}lder regularity}]\label{Xizao}
		Let $g$ be a Lipschitz continuous datum. Suppose that $u$ satisfies
\begin{equation}\label{dirichlet}
  		\left\{
		\begin{array}{rcl}
			\mathcal{H}(y, \nabla u)F(y,D^{2}u)=f(y) \ &\text{in}& \ B\cap\{y_{y}>\phi(y^{\prime})\} \\
			u(y) = g(y) 			& \text{on}& \ B\cap\{y_{n}=\phi(y^{\prime})\}
		\end{array}
		\right.
\end{equation}

Then, for every $\gamma, r \in (0, 1)$, $u \in C^{0, \gamma}(B_{r}\cap\{y_{n}>\phi(y^{\prime})\})$. Furthermore, the following estimate holds
{\scriptsize{
$$
\|u\|_{C^{0, \gamma}(B_{r}\cap\{y_{n}>\phi(y^{\prime})\})} \leq \mathrm{C}(\lambda,\Lambda, n, q_{\text{max}},p_{\text{min}},r,\|\mathfrak{a}\|_{L^{\infty}(\Omega)}, \| f\|_{L^{\infty}(\Omega)}, \mathrm{Lip}_{g}(\partial \Omega)).
$$}}
\end{theorem}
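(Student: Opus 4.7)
The plan is to establish the estimate through a barrier argument at boundary points, combined with an interior Hölder estimate. The key heuristic is that the unbalanced degeneracy actually helps: because $|\nabla u|$ enters $\mathcal{H}$ with positive exponents, $\mathcal{H}(y,\nabla v)$ blows up for large $|\nabla v|$, which is precisely what happens for the trial barriers near the flat boundary.

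First, I would locally flatten the portion $\{y_n = \phi(y')\}$ by a $C^{1,1}$ diffeomorphism. The transformed equation retains its structural type (with $F$ uniformly elliptic with the same constants up to a controlled factor, and $\mathcal{H}$ replaced by a comparable function of the new gradient), so we may reduce to $\phi\equiv 0$ and analyze a boundary point at the origin on the half-ball $B^{+}=B\cap\{y_n>0\}$. Next, for any fixed $\gamma\in(0,1)$, I would construct radial barriers of the form
$$
\Psi^{\pm}(y) \defeq g(0) \pm \mathrm{Lip}_{g}(\partial\Omega)\,|y'| \pm A\,y_{n}^{\gamma},
$$
with $A>0$ large, to be chosen. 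On the flat portion $\{y_n=0\}$, the Lipschitz bound on $g$ immediately yields $\Psi^{-}\le g\le\Psi^{+}$; on the curved portion of the sub-half-ball's boundary, enlarging $A$ and invoking the a priori $L^{\infty}$ bound on $u$ (from the comparison principle and Theorem~\ref{existence}) gives $\Psi^{-}\le u\le\Psi^{+}$ there as well.

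The crucial point is to verify that $\Psi^{+}$ is a viscosity supersolution of $\mathcal{H}(y,\nabla v)F(y,D^{2}v) = -\|f\|_{L^{\infty}(\Omega)}$ (and $\Psi^{-}$ the analogous subsolution). Since $\gamma<1$, one has $|\nabla\Psi^{\pm}|\geq A\gamma\,y_{n}^{\gamma-1}\to\infty$ as $y_n\to 0^{+}$, whence $\mathcal{H}(y,\nabla\Psi^{\pm})\geq c\,A^{p(y)}y_{n}^{(\gamma-1)p(y)}$ is unbounded near the flat boundary. Simultaneously, $D^{2}\Psi^{+}$ has a principal eigenvalue of order $A\gamma(\gamma-1)\,y_{n}^{\gamma-2}$ (strongly negative for $\gamma\in(0,1)$), and uniform ellipticity yields $F(y,D^{2}\Psi^{+})\leq -c\,A\,y_{n}^{\gamma-2}$. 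Multiplying, $\mathcal{H}\cdot F$ evaluated on $\Psi^{+}$ tends to $-\infty$ fast enough to dominate the bounded source $f$ near $y_n = 0$; away from the flat boundary, a suitable choice of $A$ (and, if needed, a small quadratic correction) closes the inequality using uniform ellipticity and the bounds $p_{\min}\le p(\cdot)\le p_{\max}$, $q_{\min}\le q(\cdot)\le q_{\max}$.

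Invoking the comparison principle for this class of degenerate equations then yields $\Psi^{-}\le u\le\Psi^{+}$ in a sub-half-ball, so that $|u(y)-g(0)|\le C|y|^{\gamma}$ near the boundary point. Combining this boundary estimate with interior Hölder regularity for viscosity solutions of the degenerate equation---valid for every $\gamma\in(0,1)$ by Krylov--Safonov and ABP arguments adapted to the present class (cf.\ \cite{DeF20}, \cite{BPRT20})---a standard triangle-inequality patching argument gives the asserted global Hölder estimate on $B_{r}\cap\{y_{n}>\phi(y')\}$ with the stated quantitative dependence of the constant. The main obstacle is the barrier verification in the presence of the unbalanced variable-exponent degeneracy: one must analyze $\mathcal{H}(y,\nabla\Psi^{\pm})$ in three regimes (near the boundary where the gradient blows up, in an intermediate zone where both the $|\nabla|^{p(y)}$ and $\mathfrak{a}(y)|\nabla|^{q(y)}$ phases contribute, and away from the boundary where $|\nabla\Psi^{\pm}|$ is moderate), and check that the super/subsolution inequality closes in each case. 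The cases $\mathfrak{a}(y)=0$ and $\mathfrak{a}(y)>0$ are handled separately, but both share the same leading near-boundary asymptotics (since the $|\nabla|^{p(y)}$ term already dominates), and the uniform exponent bounds turn each estimate into a quantitative one with the claimed universal constant.
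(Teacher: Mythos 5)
Your overall strategy (boundary barrier plus comparison, then patching with interior estimates) is a reasonable outline, and the barrier mechanism you exploit is the same one the paper uses in Lemma~\ref{comparelemma}: a large gradient makes $\mathcal{H}$ large, so the negative principal curvature of the barrier forces $\mathcal{H}\cdot F$ below $-\|f\|_{L^{\infty}}$. But there are two genuine gaps. First, and most seriously, your final step asserts that interior H\"older regularity holds \emph{for every} $\gamma\in(0,1)$ ``by Krylov--Safonov and ABP arguments.'' That is false: Krylov--Safonov-type arguments give only some small universal exponent $\alpha'$ (this is exactly the paper's Theorem~\ref{HoldEstThm}), and a triangle-inequality patching of a boundary $\gamma$-growth estimate with an interior $C^{0,\alpha'}$ estimate yields at best $C^{0,\min\{\gamma,\alpha'\}}$ --- the paper itself points this out in the remark following the theorem, where such a patching only gives a global $C^{0,\gamma_0}$ for some $\gamma_0$. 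To reach every $\gamma\in(0,1)$ up to $\{y_n=\phi(y')\}$ you would need either scaled interior Lipschitz/gradient estimates (Theorem~\ref{GradThm}, i.e.\ \cite{FRZ21}) in the patching step, or, as the paper actually does, an Ishii--Lions doubling-of-variables argument: the boundary distance estimate of Lemma~\ref{comparelemma} is used only to show that the maximum of $\Xi(x,y)=u(x)-u(y)-\mathrm{M}|x-y|^{\gamma}-\mathrm{L}(|x-x_0|^2+|y-x_0|^2)$ cannot occur on $\{y_n=\phi(y')\}$, and the $C^{0,\gamma}$ seminorm for the prescribed $\gamma$ is then extracted from the jets/matrix inequalities of \cite[Theorem 3.2]{CIL92}, not from any interior H\"older theorem.

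Second, the barrier $\Psi^{\pm}=g(0)\pm\mathrm{Lip}_g|y'|\pm A\,y_n^{\gamma}$ does not work as written. The cone $|y'|$ has its ridge $\{y'=0,\,y_n>0\}$ \emph{inside} the domain, where smooth test functions touching from below can carry arbitrarily large positive tangential Hessian, so the viscosity supersolution inequality cannot be verified there; and more tellingly, on the lateral boundary of the comparison region near $y_n=0$ one has $A\,y_n^{\gamma}\to 0$, so ``enlarging $A$'' never gives $\Psi^{+}\ge u$ there --- you need an additional term that is uniformly large on the lateral boundary (the paper's barrier achieves this because $\tfrac{2}{\delta}\tfrac{d_\Omega}{1+d_\Omega^{\gamma}}\ge 1\ge\|u\|_{L^\infty}$ on $\{d_\Omega=\delta\}$, plus the cubic correction $\tfrac{(|y|-r)^3}{(1-r)^3}$), and that extra term's positive curvature must then be re-absorbed into the supersolution inequality. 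Both issues are fixable (full cone centered at the boundary point plus a quadratic correction), but they are not optional refinements; as written the comparison cannot be launched. Finally, note the paper deliberately avoids your preliminary boundary flattening, since the change of variables introduces a first-order correction inside $F$ that is harmless for uniformly elliptic equations but requires extra care for this degenerate structure.
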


\begin{remark}
  By combining the above estimate, a covering argument and the local H\"{o}lder regularity in Theorem \ref{HoldEstThm}, we can conclude a global $\gamma_0-$H\"{o}lder regularity. Moreover, the following estimate holds
$$
\|u\|_{C^{0, \gamma_0}(\overline{\Omega})} \leq \mathrm{C}(\lambda,\Lambda,n, q_{\text{max}},p_{\text{min}}, \|u\|_{L^{\infty}(\overline{\Omega})}, \|\mathfrak{a}\|_{L^{\infty}(\Omega)}, \| f\|_{L^{\infty}(\Omega)}, \mathrm{Lip}_{g}(\partial \Omega)).
$$
\end{remark}

In the sequel, we present one of our main regularity result.

\begin{theorem}[{\bf Improved regularity at interior points}]\label{main1*} Assume that assumptions (A0)-(A5) there hold. Let $u$ be a bounded viscosity solution to \eqref{1.1}. Then, $u$ is $C^{1, \alpha}$, at interior points, for an $\alpha \in (0, \alpha_{\mathrm{F}}) \cap \left(0, \frac{1}{p_{\text{max}}+1}\right]$. More precisely, for any point $x_0 \in \Omega$ such that $B_r(x_0)\Subset \Omega$ there holds
$$
       \displaystyle [u]_{C^{1, \alpha}(B_r(x_0))}\leq \mathrm{C}\cdot\left(\|u\|_{L^{\infty}(\Omega)}+ \|f\|_{L^{\infty}(\Omega)}^{\frac{1}{p_{\text{min}}+1}}+1\right),
$$
for $r \in \left(0, \frac{1}{2}\right)$ where $\mathrm{C}>0$ is a universal constant\footnote{A constant is said to be universal if it depends only on dimension, degeneracy and ellipticity constants, $\alpha_{\mathrm{F}}$, $\beta_g$, $L_1, L_2$ and $\|F\|_{C^{\omega}(\Omega)}$.}.
\end{theorem}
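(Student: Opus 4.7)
The plan is to employ a geometric tangential (compactness plus scaling) strategy, combined with a gradient-size dichotomy that distinguishes the degenerate regime from the uniformly elliptic one. Fix an interior point $x_0 \in \Omega$ and, after a translation/normalization of $u$ using the a priori H\"{o}lder bound of Theorem \ref{Xizao} (or rather its interior counterpart), reduce the analysis to a normalized viscosity solution $v$ in a small ball $B_1 \Subset \Omega$ satisfying $\|v\|_{L^\infty(B_1)} \le 1$ and an equation of the same form with a source term $\tilde f$ whose $L^\infty$ norm can be made as small as we like.

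The first key ingredient I would establish is an \emph{approximation lemma}: for every $\delta > 0$ there exists $\varepsilon > 0$, depending only on universal parameters, such that if $\|\tilde f\|_{L^\infty(B_1)} \le \varepsilon$ and the oscillation of the coefficients $p(\cdot), q(\cdot), \mathfrak{a}(\cdot)$ and of $F(\cdot, M)$ in the $x$-variable is controlled by $\varepsilon$, then there exists $h \in C^{1,\alpha_F}(B_{3/4})$ satisfying the frozen-coefficient homogeneous equation $F(x_0, D^2 h) = 0$ with $\|v - h\|_{L^\infty(B_{3/4})} \le \delta$. This is proved by contradiction and compactness: a counterexample would yield a sequence $(v_j)$ equicontinuous by Theorem \ref{Xizao}, hence convergent along a subsequence to some $v_\infty$. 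Stability of viscosity solutions under the limit, together with the structural degeneracy $\mathcal{H}(x_0, \xi) \ge |\xi|^{p(x_0)}$, forces $v_\infty$ to solve $F(x_0, D^2 v_\infty) = 0$; the regularity theory for $F$ (Caffarelli--Trudinger) then contradicts the assumed separation from $C^{1,\alpha_F}$ functions.

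With the approximation lemma in hand, the next step is a \emph{one-step flatness decay}: choosing $\delta$ small enough, the affine function $\ell(x) = h(0) + Dh(0)\cdot x$ satisfies $\|v - \ell\|_{L^\infty(B_\rho)} \le \rho^{1+\alpha}$ for a universal radius $\rho = \rho(\alpha, \alpha_F) \in (0, 1/2)$ and any $\alpha < \alpha_F$. I then iterate this via the rescaling $v_k(x) := \rho^{-k(1+\alpha)}(u - L_k)(x_0 + \rho^k x)$, where $L_k$ is the affine function obtained after $k$ steps. The crucial computation is how the equation transforms: $v_k$ satisfies
\begin{equation*}
\mathcal{H}_k(x, Dv_k + \rho^{-k\alpha} DL_k)\, F_k(x, D^2 v_k) = \rho^{k[(1+\alpha)(p_k+1) - 2] + 2}\, f(x_0 + \rho^k x),
\end{equation*}
with appropriately rescaled ingredients $\mathcal{H}_k, F_k$. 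Here a \textbf{dichotomy} is invoked: if $|DL_k|$ stays bounded so that the gradient is of unit order, one re-applies the approximation lemma; if instead $|DL_k| \to \infty$ (the non-degenerate regime), one rescales further so that $DL_k$ becomes $O(1)$, whereupon the equation becomes uniformly elliptic and classical $C^{1,\alpha_F}$ estimates close the argument. The threshold exponent $\alpha \le 1/(p_{\max}+1)$ is exactly the requirement that the right-hand side remain bounded through the iteration, since the worst factor on $f$ is $\rho^{k[(1+\alpha)(p(x_0)+1) - 2]}$ when one normalizes $\mathcal{H}_k$ by $\rho^{-k\alpha p(x_0)}$.

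The main obstacle I anticipate is propagating the structural hypotheses (A0)--(A5) through the rescaling scheme, specifically preserving the quantitative bound $\mathcal{H}_k(x,\xi) \ge L_1 |\xi|^{p_k(x)}$ uniformly in $k$ while accommodating the fact that $p(\cdot)$, $q(\cdot)$, and $\mathfrak{a}(\cdot)$ are merely continuous (so they freeze at $x_0$ only in a small-oscillation sense). Handling the modulating term $\mathfrak{a}(x)|\xi|^{q(x)}$ requires a careful case analysis: either $\mathfrak{a}(x_0) > 0$, where uniform ellipticity dominates once the gradient is of order one; or $x_0 \in \mathcal{Z}(\mathfrak{a}, \Omega)$, in which case the $|\xi|^{p(x)}$ part governs and the sharp exponent $\frac{1}{p_{\max}+1}$ appears naturally. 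Once these iterations are put together, summing the geometric series $\sum_k \rho^{k(1+\alpha)}|DL_{k+1} - DL_k|$ yields convergence of $DL_k$ to some vector $\nabla u(x_0)$ with the desired H\"{o}lder modulus, and the estimate of the theorem follows by translating back through the initial normalization, where the factor $\|f\|_{L^\infty}^{1/(p_{\min}+1)}$ comes from the choice of scaling needed to force $\|\tilde f\|_\infty \le \varepsilon$.
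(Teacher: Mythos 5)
Your route is genuinely different from the paper's interior argument, and as written it has one real gap plus one computational slip. The paper deliberately \emph{avoids} the plane-correction iteration you propose: since the degeneracy law $\mathcal{H}$ is not invariant under subtracting affine maps, the authors only prove closeness to the tangent plane at the origin once (Lemma \ref{lem.firststep}), then iterate the cruder quantity $\sup_{B_{\rho^k}}|u-u(0)|$ (Lemmas \ref{c3.1}--\ref{l3.3}), and finally split according to whether $|Du(0)|\le r^{\alpha}$; in the nondegenerate regime they rescale by $r_0=|Du(0)|^{1/\alpha}$, invoke the a priori gradient estimate of Theorem \ref{GradThm} to get $|Du_{r_0}|$ bounded away from zero on a universal ball, and conclude by uniformly elliptic theory. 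Your scheme is instead the Imbert--Silvestre/De Filippis ``deviation by planes'' iteration. It can be made to work, but then the approximation lemma must be stated and proved for the \emph{switched} equation $\mathcal{H}(x,Dv+\zeta)F(x,D^2v)=\tilde f$, with smallness thresholds uniform in the vector $\zeta\in\R^n$: from the second step onward your rescaled function solves an equation carrying $\zeta_k=\rho^{-k\alpha}DL_k$, which need not be small, so the lemma you formulated (with no $\zeta$) cannot be re-applied. The compactness proof must then treat both bounded $\zeta_j$ (where the H\"older estimates of Theorem \ref{Xizao} give equicontinuity) and $|\zeta_j|\to\infty$ (where one divides by $|\zeta_j|^{p_j(x)}$ and uses Lipschitz-type estimates for large deviations, exactly as in Theorem \ref{pgrande}, Lemma \ref{L0} and \cite[Lemma 4.1]{DeF20}). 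This $\zeta$-uniform lemma is precisely what the paper builds for the boundary case but chooses not to use in the interior.

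Second, your exponent bookkeeping for the rescaled source is off. With $v_k(x)=\rho^{-k(1+\alpha)}(u-L_k)(x_0+\rho^k x)$ one has $Du=\rho^{k\alpha}(Dv_k+\zeta_k)$ and $D^2u=\rho^{-k(1-\alpha)}D^2v_k$, so after normalizing $\mathcal{H}$ by $\rho^{-k\alpha p_k(\cdot)}$ and $F$ by $\rho^{k(1-\alpha)}$ the right-hand side becomes $\rho^{k[1-\alpha(1+p_k(\cdot))]}f(x_0+\rho^k x)$; boundedness of this factor through the iteration is exactly the constraint $\alpha\le \frac{1}{p_{\max}+1}$, matching \eqref{SharpExp}. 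The factor $\rho^{k[(1+\alpha)(p_k+1)-2]+2}$ you display does not produce that threshold (at $\alpha=\frac1{p_k+1}$ it gives exponent $kp_k+2$ rather than $0$), so the restriction on $\alpha$ is not justified by the computation as written, even though the conclusion you draw from it is the correct one. Finally, the normalization that produces the factor $\|f\|_{L^\infty}^{1/(p_{\min}+1)}$ in the final estimate should be carried out as in Remark \ref{SmallRegime}, with $\kappa=\|u\|_{L^\infty}+1+\delta^{-1}\|f\|_{L^\infty}^{1/(p_{\min}+1)}$; your sketch states this correctly.
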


We must stress that such a Theorem is an improved version of the one addressed in \cite[Theorem 1.1]{FRZ21} (see also \cite[Theorem 1]{DeF20}).

As a result of Theorem \ref{main1*} we have the Corollary (cf. \cite[Corollary 3.2]{ART15}):

\begin{corollary}\label{Cormain1} Assume that assumptions of Theorem \ref{main1*} are in force. Suppose $F$ to be a concave/convex operator. Then, $u$ is $C_{\text{loc}}^{1, \frac{1}{p_{\text{max}}+1}}(\Omega)$. Moreover, there holds
$$
       \displaystyle  \displaystyle \|u\|_{C^{1, \frac{1}{p_{\text{max}}+1}}(\Omega^{\prime})}\leq \mathrm{C}(\verb"universal")\cdot\left(\|u\|_{L^{\infty}(\Omega)}  +\|f\|_{L^{\infty}(\Omega)}^{\frac{1}{p_{\text{min}}+1}}+1\right).
$$
\end{corollary}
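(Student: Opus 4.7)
The strategy is to observe that the only gap between Theorem \ref{main1*} and the sharp exponent $\frac{1}{p_{\max}+1}$ is the generic bound $\alpha_{\mathrm{F}}<1$ for the tangent equation, and that this bound is removed automatically by Evans--Krylov theory as soon as $F$ is concave (or convex). No new tangential analysis is needed: the corollary is essentially a one-line upgrade of Theorem \ref{main1*}.

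First, I would invoke the classical Evans--Krylov theorem (see \cite[Ch.~6]{CC95}), which under concavity/convexity of $F$ furnishes interior $C^{2,\alpha_0}$ estimates for every viscosity solution $\mathfrak{h}$ of the homogeneous frozen-coefficient problem $F(D^{2}\mathfrak{h})=0$, for some universal $\alpha_0\in(0,1)$. In particular $D\mathfrak{h}$ is locally Lipschitz, so the sharp H\"older exponent $\alpha_{\mathrm{F}}$ controlling the gradient of solutions to the tangent equation may be taken equal to $1$ in \eqref{SharpExp}.

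With $\alpha_{\mathrm{F}}=1$, the admissible range $\alpha\in(0,\alpha_{\mathrm{F}})\cap(0,\tfrac{1}{p_{\max}+1}]$ prescribed by Theorem \ref{main1*} collapses to $\alpha\in(0,\tfrac{1}{p_{\max}+1}]$, since $p_{\max}\ge p_{\min}>0$ forces $\tfrac{1}{p_{\max}+1}<1$. Selecting the endpoint $\alpha=\tfrac{1}{p_{\max}+1}$ and applying Theorem \ref{main1*} produces the $C^{1,\frac{1}{p_{\max}+1}}$ seminorm control on every ball $B_r(x_0)\Subset\Omega$ with the correct universal right-hand side. A standard covering of any compactly contained subdomain $\Omega'\Subset\Omega$ by such balls then globalises the local bound and yields the announced estimate.

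The main conceptual ingredient is thus the Evans--Krylov upgrade of $\alpha_{\mathrm{F}}$, and I do not anticipate any genuine obstacle: the delicate analysis associated with the unbalanced variable degeneracy $\mathcal{H}(x,Du)$, the freezing of coefficients, and the scaling/oscillation arguments have all been absorbed into the proof of Theorem \ref{main1*}, so the passage from a subcritical $\alpha<\min\{\alpha_{\mathrm{F}},\tfrac{1}{p_{\max}+1}\}$ to the critical value $\tfrac{1}{p_{\max}+1}$ is purely bookkeeping once the Evans--Krylov improvement is available.
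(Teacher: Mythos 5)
Your argument is correct and follows essentially the same route as the paper: the paper's proof also upgrades the frozen homogeneous problem to $C^{1,1}_{\text{loc}}$ (i.e. $\alpha_{\mathrm{F}}=1$, exactly the Evans--Krylov improvement you invoke) and then chooses $\alpha=\frac{1}{p_{\text{max}}+1}$, which is admissible since $\frac{1}{p_{\text{max}}+1}<1$, in the scheme of Theorem \ref{main1*}. The only cosmetic difference is that the paper points to the specific steps (\eqref{2.5} and \eqref{EqEstUnifElliOper}) where the constraint $\alpha<\alpha_{\mathrm{F}}$ enters, while you apply the theorem as a black box at the endpoint exponent; both are the same bookkeeping.
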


One of the main novelties of our approach consists of removing the restriction of analyzing $C_{\text{loc}}^{1, \alpha}$ regularity estimates just along the \textit{a priori} unknown set of singular points of solutions $\mathcal{S}_{0}(u, \Omega)$, where the ``ellipticity of the operator'' degenerates (see for example, \cite{daSRS19I}, \cite{daSS18} and \cite{T15}, where improved regularity estimates were addressed along certain sets of degenerate points of existing solutions).

From now on, we will label the singular zone of existing solutions as
$$
  \mathcal{S}_{r, \theta}(u, \Omega^{\prime}) \defeq \left\{x_0 \in \Omega^{\prime} \Subset \Omega: |Du(x_0)| \leq r^{\theta}, \,\,\,\mbox{for}\,\,\,0\leq r\ll1\right\}.
$$

A geometric inspection to Theorem \ref{main1*} ensures that if $u$ satisfies \eqref{1.1} and $x_0 \in \mathcal{S}_{r, \alpha}(u, \Omega^{\prime})$, then near $x_0$ we get
$$
    \displaystyle \sup_{B_r(x_0)} |u(x)|\leq |u(x_0)|+\mathrm{C}\cdot r^{1+\alpha},
$$

Nonetheless, from a geometric perspective, it plays an essential qualitative role to obtain the (counterpart) sharp lower bound estimate for such PDEs with non-homogeneous degeneracy law. This feature is designated \textit{Non-degeneracy property} of solutions. Therefore, under a natural, non-degeneracy hypothesis on $f$, we establish the precise behavior of solutions at certain (interior) ``singular zones''.

\begin{theorem}[{\bf Non-degeneracy property}]\label{main3} Suppose that assumptions of Theorem \ref{main1*} are in force. Let $u$ be a bounded viscosity solution to \eqref{1.1} with $f(x) \geq \mathfrak{m}>0$ in $\Omega$. Given $x_0 \in \mathcal{S}_{r, \alpha}(u, \Omega^{\prime})$, there exists a constant $\mathrm{c}_0 = \mathrm{c}_0(\mathfrak{m},\|\mathfrak{a}\|_{L^{\infty}(\Omega)}, \mathrm{L}_1, n, \lambda, \Lambda, p_{\text{min}}, q_{\text{min}}, \Omega)>0$, such that
$$
  \displaystyle \sup_{\partial B_r(x_0)} \frac{u(x)-u(x_0)}{r^{1+\frac{1}{p_{\text{min}}+1}}} \geq  \mathrm{c}_0  \quad \text{for all} \quad  r \in \left(0, \frac{1}{2}\right).
$$
\end{theorem}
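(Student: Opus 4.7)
The plan is to argue by contradiction, constructing an explicit radial barrier from above and invoking the comparison principle established earlier in the paper. Writing $\beta := 1/(p_{\text{min}}+1)$ for the expected growth exponent, I will suppose that $\sup_{\partial B_r(x_0)}(u - u(x_0)) < c_0 r^{1+\beta}$ holds for a constant $c_0>0$ to be fixed purely in terms of universal data, and derive a contradiction at the point $x_0$ itself.

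For auxiliary parameters $c_0,\eta>0$ to be chosen, I take as barrier
\[
\Psi_\eta(x) := u(x_0) - \eta + c_0\, |x - x_0|^{1+\beta}, \qquad x \in \overline{B_r(x_0)}.
\]
Since $\beta\in(0,1)$, the function $\Psi_\eta$ is of class $C^1$ up to $x_0$ with $D\Psi_\eta(x_0)=0$ and smooth elsewhere. A direct computation yields $|D\Psi_\eta(x)| = c_0(1+\beta)|x - x_0|^\beta$ and eigenvalues of $D^2\Psi_\eta(x)$ proportional to $c_0 |x - x_0|^{\beta-1}$, all nonnegative. Combining uniform ellipticity (so that $F(x,M)\le \mathscr{M}^{+}_{\lambda,\Lambda}(M)$ for $M\ge 0$, together with $F(x,0)=0$) with the prescribed growth of $\mathcal{H}$, the key pointwise estimate I aim to establish is
\[
\mathcal{H}(x, D\Psi_\eta)\, F(x, D^{2}\Psi_\eta) \;\le\; C\, c_{0}^{\,p(x)+1}\, |x - x_0|^{\frac{p(x) - p_{\text{min}}}{p_{\text{min}}+1}}, \qquad x \neq x_0,
\]
for a constant $C$ depending only on $n,\lambda,\Lambda,\|\mathfrak{a}\|_{L^\infty(\Omega)},L_1,p_{\text{min}},q_{\text{min}}$. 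The exponent on $|x-x_0|$ is nonnegative (because $p(x)\ge p_{\text{min}}$), so for $|x-x_0|\le r\le 1/2$ and $c_0\le 1$ the right-hand side is majorized by $C\, c_0^{p_{\text{min}}+1}$ throughout $B_r(x_0)\setminus\{x_0\}$. Fixing $c_0 := \min\bigl\{1,(\mathfrak{m}/(2C))^{1/(p_{\text{min}}+1)}\bigr\}$ produces a strict classical supersolution inequality $\mathcal{H}(x, D\Psi_\eta) F(x, D^{2}\Psi_\eta) \le \mathfrak{m}/2 < \mathfrak{m}$ off the vertex.

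To promote this to a viscosity supersolution on the full ball $B_r(x_0)$, the singular point must be inspected: any $C^{2}$ test function $\varphi$ touching $\Psi_\eta$ from below at $x_0$ forces $D\varphi(x_0)=0$, and since $p(\cdot),q(\cdot)>0$ one has $\mathcal{H}(x_0, D\varphi(x_0))=0$, so the supersolution inequality $0\le \mathfrak{m}$ is trivial. Consequently $\Psi_\eta$ is a viscosity supersolution of $\mathcal{H}(x,Dw)F(x,D^{2}w)=\mathfrak{m}$ on $B_r(x_0)$, while $u$, satisfying $f\ge \mathfrak{m}$, is a viscosity subsolution of the same equation.

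To close the argument, the strict contradiction hypothesis yields a gap $\tau := c_0 r^{1+\beta} - \sup_{\partial B_r(x_0)}(u - u(x_0)) > 0$, and choosing $\eta := \tau/2$ secures $\Psi_\eta > u$ on $\partial B_r(x_0)$. The comparison principle then forces $\Psi_\eta \ge u$ on $\overline{B_r(x_0)}$, and evaluating at $x_0$ produces $-\eta \ge 0$, the desired contradiction. The main obstacle I anticipate is the verification of the supersolution property at the degenerate vertex $x_0$, where $\Psi_\eta$ exhibits a Hessian blow-up; this is resolved cleanly by the vanishing of $\mathcal{H}(x_0,\cdot)$ at the origin of the gradient variable. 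The remaining work is bookkeeping: tracking the variable exponents carefully in the chain of inequalities above and ensuring the final $c_0$ depends only on the prescribed data $(n,\lambda,\Lambda,\mathfrak{m},\|\mathfrak{a}\|_{L^\infty(\Omega)},L_1,p_{\text{min}},q_{\text{min}},\Omega)$ listed in the theorem.
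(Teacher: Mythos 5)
Your proposal is correct and follows essentially the same route as the paper's own proof: a radial barrier of the form $\mathfrak{c}\,|x-x_0|^{1+\frac{1}{p_{\text{min}}+1}}$, turned into a strict supersolution precisely because $f\ge \mathfrak{m}>0$ (your exponent computation $\beta p(x)+\beta-1=\frac{p(x)-p_{\text{min}}}{p_{\text{min}}+1}\ge 0$ is the same mechanism as the paper's choice of $\mathfrak{c}$ via the functions $\mathfrak{g},\hat{\mathfrak{g}}$), followed by comparison and evaluation at the center; the paper merely rescales to $B_1(0)$ and perturbs the solution by $+\varepsilon$ where you lower the barrier by $\eta$, which are equivalent devices. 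Your explicit verification of the viscosity supersolution property at the vertex (any admissible test function has zero gradient there, so $\mathcal{H}$ vanishes) is a point the paper leaves implicit, and your appeal to the comparison principle outside the $\varepsilon$-regularized setting in which Theorem \ref{comparison principle} is actually proved is a caveat shared with the paper's own argument, not an additional gap.
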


Such a quantitative information plays an essential role in the development of several analytic and geometric problems, such as in the study of blow-up procedures and related weak geometric and free boundary analysis (cf. \cite{daSRRV21}, \cite{daSRS19I} and \cite{daSS18} for related topics). This is the first non-degeneracy result for nonlinear degenerate models with variable exponent in the current literature.

Finally, completing our analysis, we obtain our last regularity result.

\begin{theorem}[{\bf Sharp regularity - up to the boundary estimates}]\label{main1} Suppose that assumptions (A0)-(A5) are in force. Let $u$ be a bounded viscosity solution to \eqref{1.1}. Then, $u$ is $C^{1, \alpha}$, up to the boundary, for $\alpha$ satisfying \eqref{SharpExp}. More precisely, the following estimate there holds
$$
       \displaystyle \|u\|_{C^{1, \alpha}(\overline{\Omega})}\leq \mathrm{C}(\verb"universal")\cdot\left(\|u\|_{L^{\infty}(\Omega)} + \|g\|_{C^{1, \beta_g}(\partial \Omega)} +\|f\|_{L^{\infty}(\Omega)}^{\frac{1}{p_{\text{min}}+1}}+1\right).
$$
\end{theorem}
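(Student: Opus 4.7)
The strategy is to combine the interior estimate from Theorem \ref{main1*} with a boundary $C^{1,\alpha}$ estimate at every $x_0 \in \partial\Omega$, then glue them by a standard covering/interpolation argument in the spirit of Silvestre--Sirakov \cite{SS14} and Ara\'{u}jo--Sirakov \cite{AS21}. Fix $x_0 \in \partial\Omega$ and flatten $\partial\Omega$ near $x_0$ via a $C^{1,\beta_g}$ diffeomorphism, reducing the problem to $B_1^+ = B_1 \cap \{y_n>0\}$ with data on $B_1 \cap \{y_n=0\}$; the PDE keeps its structure (A0)--(A5) modulo universal constants. Extending $g$ to $G \in C^{1,\beta_g}(\overline{B_1^+})$ and subtracting it, we may assume that the boundary datum vanishes on the flat face, at the price of absorbing the $C^{1,\beta_g}$ norm of $g$ into the RHS and the bookkeeping of the final estimate. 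Theorem \ref{Xizao} provides the baseline boundary H\"{o}lder control needed for the renormalization ahead.

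The core ingredient is a boundary approximation lemma: for every $\eta>0$ there is $\delta>0$ such that if $u$ is a normalized viscosity solution of the renormalized problem in $B_1^+$ with $\|u\|_{L^\infty}\le 1$, $\|f\|_{L^\infty}\le \delta$, and $u = 0$ on $B_1 \cap \{y_n=0\}$, then one finds $\mathfrak{h} \in C^{1,\alpha_{\mathrm{F}}}(\overline{B_{1/2}^+})$ solving $F(x_0, D^2 \mathfrak{h}) = 0$ in $B_{1/2}^+$ with $\mathfrak{h}\equiv 0$ on $\{y_n=0\}$ and $\|u-\mathfrak{h}\|_{L^\infty(B_{1/2}^+)} \le \eta$. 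This is proven by a compactness argument: assuming the contrary, the boundary H\"{o}lder estimate of Theorem \ref{Xizao} gives equicontinuity up to the flat face, stability of viscosity solutions passes to a limit solving $F(x_0, D^2 h) = 0$, and Krylov--Caffarelli boundary regularity for uniformly elliptic fully nonlinear equations with flat boundaries provides the $C^{1,\alpha_{\mathrm{F}}}$ control of $\mathfrak{h}$. The iteration is then a dichotomy at each dyadic scale $\rho^k$, exactly as in the proof of Theorem \ref{main1*}: either the slope is $\lesssim \rho^{k\alpha}$, so approximation by $\mathfrak{h}$ plus its tangent hyperplane at the boundary point produces an affine $\ell_k$ (vanishing on the flat face) with $\|u-\ell_k\|_{L^\infty(B_{\rho^k}^+)} \le \rho^{k(1+\alpha)}$; or the slope is non-degenerate and the rescaled equation behaves uniformly elliptically, so classical boundary $C^{1,\alpha_{\mathrm{F}}}$ estimates for $F$ apply directly.

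The main obstacle is to check that the rescaling
\[
u_k(y) \defeq \rho^{-k(1+\alpha)}\bigl(u(x_0+\rho^k y)-\ell_k(x_0+\rho^k y)\bigr)
\]
preserves the structural class (A0)--(A5) with constants independent of $k$, so that the approximation lemma can be iterated indefinitely. This forces precisely the sharp threshold $\alpha \le \min\{\alpha_{\mathrm{F}}^{-},\, 1/(p_{\max}+1),\, \beta_g\}$: the $\alpha_{\mathrm{F}}^{-}$ bound is dictated by the tangent profile $\mathfrak{h}$; the $1/(p_{\max}+1)$ bound ensures the nonhomogeneous operator $\mathcal{H}(x, Du)$ absorbs into the source term under rescaling, thereby producing the factor $\|f\|_{L^\infty}^{1/(p_{\min}+1)}$ in the final estimate; and the $\beta_g$ bound is imposed by the boundary datum, since the affine approximants at $x_0$ cannot be of higher quality than the tangent plane of $g$ at that point, whose regularity is limited by $\|g\|_{C^{1,\beta_g}(\partial\Omega)}$. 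Summing the geometric decay across scales yields the pointwise $C^{1,\alpha}$ estimate at $x_0$, uniform in $x_0 \in \partial \Omega$. Finally, combining the resulting boundary estimate with the interior estimate of Theorem \ref{main1*} via a standard covering of $\Omega$ by balls either contained in $\Omega$ or centered on $\partial\Omega$, and interpolating the seminorms, delivers the asserted global bound.
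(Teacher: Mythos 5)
Your overall skeleton (compactness approximation by $F$-harmonic profiles, dyadic iteration at a boundary point, then a covering argument combined with Theorem \ref{main1*}) is the same as the paper's Section \ref{Sec4}, but two of your preliminary reductions contain genuine gaps. First, flattening the boundary with a $C^{1,\beta_g}$ diffeomorphism cannot ``keep the structure (A0)--(A5)'': the Hessian of the composed function involves second derivatives of the diffeomorphism, which do not exist for a merely $C^{1,\beta_g}$ map, and even using the available $C^{2}$ graph $\phi$ the change of variables produces first-order terms and variable coefficients that the structural class as stated does not contain. This is precisely why the paper works directly in $\mathrm{B}\cap\{y_n>\phi(y^{\prime})\}$ and explicitly avoids any flattening. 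Second, and more seriously, subtracting an extension $G\in C^{1,\beta_g}$ of $g$ does not reduce to zero boundary data: $w=u-G$ solves $\mathcal{H}(x,Dw+DG)\,F(x,D^2w+D^2G)=f$, where $D^2G$ does not exist for $G$ only $C^{1,\beta_g}$, and even for smoother $G$ the fully nonlinear operator does not allow you to ``absorb'' $D^2G$ into the right-hand side; moreover $DG(x)$ is a \emph{variable} vector field entering the degeneracy law, which is outside the constant-deviation framework. The paper instead keeps the datum $g$ and approximates $u$ by $\mathfrak{h}$ solving the constant-coefficient homogeneous problem with the \emph{same} boundary value $g$, invoking the Birindelli--Demengel global $C^{1,\alpha_{\mathrm{F}}}$ theory for that auxiliary problem (Lemmas \ref{L0} and \ref{L2}).

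There is also a gap in your iteration. After subtracting the affine approximants, the rescaled functions solve equations with constant deviation vectors $\zeta_k=\rho^{-k\alpha}\mathrm{b}_k$ which may blow up as $k\to\infty$; this is why the paper's approximation Lemma \ref{L0} is stated for an arbitrary $\zeta$ and its compactness proof treats separately the bounded and unbounded regimes of $(\zeta_j)_j$, the unbounded one relying on the boundary Lipschitz estimates for large deviations (Theorem \ref{pgrande}) after dividing the equation by $|\zeta_j|^{p_j(\cdot)}$. Your approximation lemma is stated without any deviation, and your dichotomy ``non-degenerate slope $\Rightarrow$ the rescaled equation is uniformly elliptic, apply classical boundary $C^{1,\alpha_{\mathrm{F}}}$ estimates'' is not justified at the boundary: to conclude uniform ellipticity you need $|Du_k+\zeta_k|$ bounded below near the boundary point, i.e.\ a boundary gradient bound for the rescaled degenerate problem, which the interior estimate of Theorem \ref{GradThm} does not give and which is exactly what Theorem \ref{pgrande} supplies in the paper's argument. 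Repairing your proof essentially forces you back to the paper's route: keep the curved boundary and the datum $g$, prove the approximation lemma with an arbitrary deviation $\zeta$, and run the iteration of Lemmas \ref{L2}--\ref{L3} followed by Corollary \ref{Cor1}.
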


As an immediate consequence of our global estimates, we obtain the following optimal regularity.

\begin{corollary}\label{Cor2} Assume that assumptions of Theorem \ref{main1} are in force. Suppose further that $F$ and $g$ are as in \cite{SS14}. Then, $u$ is $C^{1, \frac{1}{p_{\text{min}}+1}}(\overline{\Omega})$. Moreover, there holds
$$
       \displaystyle  \displaystyle \|u\|_{C^{1, \frac{1}{p_{\text{max}}+1}}(\overline{\Omega})}\leq \mathrm{C}(\verb"universal")\cdot\left(\|u\|_{L^{\infty}(\Omega)} + \|g\|_{C^{1,1}(\partial \Omega)} +\|f\|_{L^{\infty}(\Omega)}^{\frac{1}{p_{\text{min}}+1}}+1\right).
$$
\end{corollary}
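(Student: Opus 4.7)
The plan is to derive Corollary \ref{Cor2} as a direct specialization of the global estimate from Theorem \ref{main1}, by plugging in the optimal parameters made available by the Silvestre--Sirakov framework. Concretely, under the hypotheses on $F$ and $g$ borrowed from \cite{SS14} (concavity or convexity of $F$, a sufficiently regular $\partial\Omega$, and $g\in C^{1,1}(\partial\Omega)$), the homogeneous frozen-coefficient problem $F(D^{2}\mathfrak{h})=0$ admits global $C^{1,1}$ estimates up to the boundary. This means that in the definition \eqref{SharpExp} of the sharp gradient H\"older exponent, one may take $\alpha_{\mathrm{F}}^{-}$ arbitrarily close to $1$ and $\beta_g = 1$.

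The next step is to observe that since $p_{\max}>0$ we have $\tfrac{1}{p_{\max}+1}<1$, so, fixing any $\alpha_{\mathrm{F}}^{-}$ strictly between $\tfrac{1}{p_{\max}+1}$ and $1$, the minimum in \eqref{SharpExp} is attained by the middle term, and the sharp exponent becomes $\alpha=\tfrac{1}{p_{\max}+1}$. Substituting this choice into Theorem \ref{main1} produces exactly the quantitative estimate asserted in the corollary, with a universal constant that now absorbs the bounds implicit in the Silvestre--Sirakov framework.

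To complete the argument, the qualitative conclusion $u\in C^{1,\,1/(p_{\min}+1)}(\overline{\Omega})$ follows from the continuous embedding $C^{1,\,1/(p_{\max}+1)}(\overline{\Omega})\hookrightarrow C^{1,\,1/(p_{\min}+1)}(\overline{\Omega})$, which is valid on the bounded set $\overline{\Omega}$ because $p_{\min}\le p_{\max}$ implies $\tfrac{1}{p_{\max}+1}\le\tfrac{1}{p_{\min}+1}$.

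The only delicate point to monitor is that the universal constant produced by Theorem \ref{main1} does not degenerate as $\alpha_{\mathrm{F}}^{-}\nearrow 1$. This is handled by choosing $\alpha_{\mathrm{F}}^{-}$ strictly away from $1$, while still keeping it above $\tfrac{1}{p_{\max}+1}$, so that the minimum in \eqref{SharpExp} is driven by $\tfrac{1}{p_{\max}+1}$ and the resulting constant remains under control. In this sense the corollary is essentially a direct corollary of Theorem \ref{main1}, and no further iteration or tangential analysis is required beyond what has already been put in place.
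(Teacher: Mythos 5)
Your main derivation is exactly the paper's argument: under the Silvestre--Sirakov hypotheses the homogeneous frozen-coefficient problem is $C^{1,1}$ up to the boundary, so one may take $\alpha_{\mathrm{F}}=1$ (hence any $\alpha_{\mathrm{F}}^{-}<1$) and, since $g\in C^{1,1}(\partial\Omega)$, also $\beta_g=1$; the minimum in \eqref{SharpExp} is then realized by $\tfrac{1}{p_{\max}+1}$, and Theorem \ref{main1} applied with this exponent gives the stated quantitative bound. Up to that point the proposal is correct and coincides with the paper's proof.

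The final step, however, contains a genuine error: the claimed embedding $C^{1,\,1/(p_{\max}+1)}(\overline{\Omega})\hookrightarrow C^{1,\,1/(p_{\min}+1)}(\overline{\Omega})$ goes the wrong way. Since $p_{\min}\le p_{\max}$ gives $\tfrac{1}{p_{\max}+1}\le\tfrac{1}{p_{\min}+1}$, the space with the \emph{larger} H\"older exponent, namely $C^{1,\,1/(p_{\min}+1)}(\overline{\Omega})$, is the smaller (stronger) one, and on a bounded set one has $C^{1,\,1/(p_{\min}+1)}(\overline{\Omega})\subset C^{1,\,1/(p_{\max}+1)}(\overline{\Omega})$, not the reverse. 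So membership in $C^{1,\,1/(p_{\min}+1)}(\overline{\Omega})$ cannot be deduced from the $C^{1,\,1/(p_{\max}+1)}$ estimate by any embedding argument; it would be a strictly stronger regularity assertion, which the degeneracy of order $p_{\max}$ does not permit in general (pointwise improvements of the exponent to $\tfrac{1}{p(x_0)+1}$ require the extra continuity hypotheses on $p(\cdot)$ of Theorem \ref{ImprovedPoint}, which are not assumed here). Note that the paper's own proof only produces the exponent $\tfrac{1}{p_{\max}+1}$, consistent with the displayed estimate; the exponent $\tfrac{1}{p_{\min}+1}$ appearing in the qualitative sentence of the corollary is best read as a misprint rather than something to be recovered by an embedding. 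Your proof is therefore complete for the quantitative estimate, but the sentence attempting to upgrade the exponent should simply be dropped (or replaced by the observation that the conclusion holds with $\tfrac{1}{p_{\max}+1}$).
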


Our findings extend/generalize regarding non-variational scenario, former results (H\"{o}lder gradient estimates) from \cite[Theorem 3.1 and Corollary 3.2]{ART15}, \cite{BD2}, \cite{BD3} and \cite{IS}, and to some extent, those from \cite[Theorems 1.1 and 1.2]{AS21}, \cite[Theorem 1.1]{BD16}, \cite[Theorem 2.1]{BPRT20}, \cite[Theorem 1]{DeF20} and \cite[Theorem 1.1]{FRZ21} by making using of different approaches and techniques adapted to the general framework of the fully nonlinear non-homogeneous degeneracy models. Finally, they are even striking for the  very special toy model
$$
 \mathcal{G}_{p, q, \mathfrak{a}}[u] = \left[|Du|^{p(x)}+\mathfrak{a}(x)|Du|^{q(x)}\right] \mathscr{M}_{\lambda, \Lambda}^{\pm}(D^2 u) = 1 \quad \text{with} \quad u_{|\partial \Omega} = 1.
$$

We should stress that an extension of our results also holds to Multi-Phase equations with variable exponents, which are a generalization of \eqref{EqFRZ}
$$
   \left(|Du|^{p(x)} + \sum_{i=1}^{n} \mathfrak{a}_i(x)|Du|^{q_i(x)}\right)F(x, D^2 u) = f(x, u) \quad \text{in} \quad \Omega,
$$
where $0\le\mathfrak{a}_i \in C^0(\Omega)$, $p, q_i \in C^0(\Omega)$ for $1\le i \le n$, and $0<p(x)\leq q_1(x)\leq \cdots\leq q_n(x)< \infty$ (cf. \cite{DeFO19} and \cite{UU83} for multi-phase variational problems).

Finally, we believe our results can be applied to others variational/non-variational elliptic problems (see Section \ref{sec.Applic}):

\begin{enumerate}
  \item Improved point-wise estimates (cf. \cite{BPRT20});
  \item Sharp regularity for Strong $p(x)-$Laplacian equation (connections with $C^{1, \frac{1}{3}}$-conjecture for $\infty-$harmonic functions - see \cite{ES08});
  \item Sharp estimates for $(p(x)\&q(x))-$harmonic functions (connections with $C^{p^{\prime}}$-regularity conjecture for inhomogeneous $p-$harmonic functions - see \cite{ATU17});
  \item Applications in some geometric free boundary problems:
  \begin{itemize}
    \item[\checkmark] Dead-core type problems (cf. \cite{daSLR20}, \cite{daSRS19I}, \cite{daSS18} and \cite{Tei16});
    \item[\checkmark] Obstacle type problems (cf. \cite{ALS15}, \cite{DSVI} and \cite{DSVII});
    \item[\checkmark] Bernoulli type problems (cf. \cite{daSRRV21});
    \item[\checkmark] Singularly perturbed problems (cf. \cite{ART17}, \cite{daSJR21} and \cite{RT11}).
  \end{itemize}
\end{enumerate}

\subsection{Structural assumptions of the model}\label{SectionSA}

Throughout this work, we will suppose the following structural conditions:
\begin{itemize}
  \item[(A0$^{\prime}$)]({{\bf Regularity of the domain}}) In this work, we will assume that $\Omega \subset \R^n$ is a bounded open domain, which satisfies a uniform exterior sphere condition. This is, for each $z\in\partial\Omega$, we can choose $x_z \in \Omega$ such that
      $$
         \overline{B_r(x_z)}\cap\overline \Omega=\{z\} \quad  \text{with} \quad r=|z-x_z|.
      $$

Particularly, it is well-known that $C^2$ domains (i.e. with $C^2$ boundary) satisfy a uniform exterior sphere condition. We will use this fact in the future.

  \item[(A0)]({{\bf Continuity and normalization condition}})
  $$
  \text{Fixed}\,\, \Omega \ni x \mapsto F(x, \cdot) \in C^{0}(\text{Sym}(n)) \quad  \text{and} \quad F(\cdot, \text{O}_{n\times n}) = 0.
  $$
  \item[(A1)]({{\bf Uniform ellipticity}}) For any pair of matrices $\mathrm{X}, \mathrm{Y}\in Sym(n)$
$$
    \mathscr{M}_{\lambda, \Lambda}^-(\mathrm{X}-\mathrm{Y})\leq F(x, \mathrm{X})-F(x, \mathrm{Y})\leq \mathscr{M}_{\lambda, \Lambda}^+(\mathrm{X}-\mathrm{Y})
$$
where $\mathscr{M}_{\lambda, \Lambda}^{\pm}$ are the \textit{Pucci's extremal operators} given by
\[
   \mathscr{M}_{\lambda, \Lambda}^-(\mathrm{X})\defeq \lambda\sum_{e_i>0}e_i+\Lambda\sum_{e_i<0}e_i\quad\textrm{ and }\quad \mathscr{M}_{\lambda, \Lambda}^+(X)\defeq \Lambda\sum_{e_i>0}e_i+\lambda\sum_{e_i<0}e_i
\]
for \textit{ellipticity constants} $0<\lambda\leq \Lambda< \infty$, where $\{e_i(\mathrm{X})\}_{i=1}^{n}$ are the eigenvalues of $\mathrm{X}$.

Additionally, for our Theorem \ref{main1} (resp. Corollary \ref{Cormain1}), we must require some sort of continuity assumption on coefficients:

  \item[(A2)]({{\bf $\omega-$continuity of coefficients}}) There exist a uniform modulus of continuity $\omega: [0, \infty) \to [0, \infty)$ and a constant $\mathrm{C}_{\mathrm{F}}>0$ such that
$$
\Omega \ni x, x_0 \mapsto \Theta_{\mathrm{F}}(x, x_0) \defeq \sup_{\substack{\mathrm{X} \in  Sym(n) \\ \mathrm{X} \neq 0}}\frac{|F(x, \mathrm{X})-F(x_0, \mathrm{X})|}{\|\mathrm{X}\|}\leq \mathrm{C}_{\mathrm{F}}\omega(|x-x_0|),
$$
which measures the oscillation of coefficients of $F$ around $x_0$. For simplicity purposes, we shall often write $\Theta_{\mathrm{F}}(x, 0) = \Theta_{\mathrm{F}}(x)$.

Finally, for notation purposes we define
$$
\|F\|_{C^{\omega}(\Omega)} \defeq \inf\left\{\mathrm{C}_{\mathrm{F}}>0: \frac{\Theta_{\mathrm{F}}(x, x_0)}{\omega(|x-x_0|)} \leq \mathrm{C}_{\mathrm{F}}, \,\,\, \forall \,\,x, x_0 \in \Omega, \,\,x \neq x_0\right\}.
$$

\item[(A3)]({{\bf Assumptions on data}}) Throughout this manuscript we will assume the following assumptions on data: $f \in L^{\infty}(\overline{\Omega})$ and $g \in C^{1, \beta_g}(\partial \Omega)$ for some $\beta_g \in (0, 1]$. Moreover, when necessary, we invoke continuity assumption on $f$. (see Theorem \ref{existence}).

\item[(A4)]({{\bf Non-homogeneous degeneracy}}) In our studies, we enforce that the diffusion properties of the model \eqref{1.1} degenerate along an \textit{a priori} unknown set of singular points of existing solutions:
$$
   \mathcal{S}_0(u, \Omega) \defeq \{x \in \Omega: |Du(x)| = 0\}.
$$
Consequently, we will impose that $\mathcal{H}:\Omega \times \R^n \to [0, \infty)$  -- non-homogeneous degeneracy law -- one behaves as
\begin{equation}\label{1.2}
     \mathrm{L}_1 \cdot \mathcal{K}_{p, q, \mathfrak{a}}(x, |\xi|) \leq \mathcal{H}(x, \xi)\leq \mathrm{L}_2 \cdot \mathcal{K}_{p, q, \mathfrak{a}}(x, |\xi|)
\end{equation}
for constants $0<\mathrm{L}_1\le \mathrm{L}_2< \infty$, where
\begin{equation}\label{N-HDeg}\tag{\bf N-HDeg}
   \mathcal{K}_{p, q, \mathfrak{a}}(x, |\xi|) \defeq |\xi|^{p(x)}+\mathfrak{a}(x)|\xi|^{q(x)}, \,\,\,\text{for}\,\,\, (x, \xi) \in \Omega \times \R^n.
\end{equation}

\item[(A5)]({{\bf Assumptions on variable exponents}}) In turn, for the degeneracy law in \eqref{N-HDeg}, we suppose that the functions $p, q \in L^{\infty}(\Omega)$ and the modulating function $\mathfrak{a}(\cdot)$ fulfil
\begin{equation}\label{1.3}
   0< p_{\text{min}}\leq p(x) \le p_{\text{max}} \leq q(x)\le q_{\text{max}}<\infty \quad \text{and} \quad 0\le \mathfrak{a} \in C^0(\overline{\Omega}).
\end{equation}

\end{itemize}

In conclusion, our manuscript is organized as follows: in Section \ref{sec.ALemmas} we show some preliminary results, namely the existence of solutions (Theorem \ref{existence}) and H\"{o}lder and Lipschitz regularity estimates. Sections \ref{Sec3} and \ref{Sec4} are concerned to prove the main results of this paper (Theorems \ref{main1*}, \ref{main1}, \ref{main3} and Corollary \ref{Cormain1}) by obtaining sharp regularity estimates at interior and boundary points. Finally, Section \ref{sec.Applic} is devoted to present some relevant application of our findings with relevant nonlinear elliptic problems, as well as we establish connections with some nonlinear geometric free boundary problems and beyond.

\section{Preliminary results}\label{sec.ALemmas}

We start off by presenting some key results that will be used later on.

As treated in \cite{BD2}, in order to prove the Theorem \ref{main1}, we may suppose, without loss of generality, that $0 \in \partial \Omega$, and the interior normal is $e_n$. For this reason, by the Implicit Function Theorem, there exist a ball $\mathrm{B}=\mathrm{B}_{\mathrm{R}}(0) \subset \mathbb{R}^n$, and $\mathrm{D}^{\prime} \subset \mathrm{B}^{\prime}_{\mathrm{R}}(0)$ ball of $\mathbb{R}^{n-1}$ and $\phi \in C^2(\mathrm{D}^{\prime})$ such that
\begin{equation}\label{Condphi}
    \phi(0)=|D\phi(0)|=0 \quad  \text{for} \quad y=(y^{\prime}, y_n).
\end{equation}
Moreover,
$$
	\Omega \cap \mathrm{B} \subset \{y_n > \phi(y^{\prime}), \,\, y^{\prime}\in \mathrm{D}^{\prime}\} \,\,\, \textrm{and} \,\,\, \partial \Omega \cap \mathrm{B} = \{y_n = \phi(y^{\prime}), \,\, y^{\prime}\in \mathrm{D}^{\prime}\}.
$$

As a result, we will consider viscosity solutions of
\begin{equation} \label{Eq2.1}
\left\{
\begin{array}{rclcl}
\mathcal{H}(y, Du)F(y, D^2 u) & = & f(y) & \text{in} & \mathrm{B} \cap \{y_n >\phi(y^{\prime})\}  \\
  u& = & g & \text{on} & \mathrm{B} \cap \{y_n = \phi(y^{\prime})\},
\end{array}
\right.
\end{equation}

Therefore, it will be sufficient to prove a localized version for Theorem \ref{main1}. More precisely, for any $x \in \mathrm{B} \cap \{y_n > \phi(y^{\prime})\}$ with $B_1(x) \subset \mathrm{B}$ and for any $r \in (0, 1)$, we have that solutions of \eqref{Eq2.1} are $C^{1, \alpha}(B_r(x) \cap \{y_n \ge \phi(y^{\prime})\})$.

In the sequel, we will present an ABP estimate adapted to our context of fully nonlinear models with unbalanced variable degeneracy. Such an estimate is pivotal in order to obtain universal bounds for viscosity solutions in terms of data of the problem. The proof follows the same lines as \cite[Theorem 2.5]{daSJR21}. For this reason, we will omit it here.

\begin{theorem}[{\bf Alexandroff-Bakelman-Pucci estimate}]\label{ABPthm}
  Suppose that assumptions (A0)-(A2) there hold. Then, there exists a constant $\mathrm{C} = \mathrm{C}(n, \lambda, p_{min}, q_{max}, \diam(\Omega))>0$ such that for any $u \in C^0(\overline{\Omega})$ viscosity sub-solution (resp. super-solution) of \eqref{1.1} in $\{x \in \Omega:u(x)>0\}$ (resp. $\{x \in \Omega:u(x)<0\}$, satisfies
{\scriptsize{
$$
\displaystyle \sup_{\Omega} u(x) \leq \sup_{\partial \Omega} g^{+}(x) +\mathrm{C}\cdot\diam(\Omega)\max\left\{\left\|\frac{f^{-}}{1+\mathfrak{a}}\right\|^{\frac{1}{p_{min}+1}}_{L^n(\Gamma^{+}(u^{+}))}, \left\|\frac{f^{-}}{1+\mathfrak{a}}\right\|^{\frac{1}{q_{max}+1}}_{L^n(\Gamma^{+}(u^{+}))}\right\},
$$}}
respectively,
{\scriptsize{
$$
\left(\displaystyle \sup_{\Omega} u^{-}(x) \leq \sup_{\partial \Omega} g^{-}(x) +\mathrm{C}\cdot\diam(\Omega)\max\left\{\left\|\frac{f^{+}}{1+\mathfrak{a}}\right\|^{\frac{1}{p_{min}+1}}_{L^n(\Gamma^{+}(u^{-}))}, \left\|\frac{f^{+}}{1+\mathfrak{a}}\right\|^{\frac{1}{q_{max}+1}}_{L^n(\Gamma^{+}(u^{-}))}
\right\}\right)
$$}}
where
$$
\Gamma^{+}(u) \defeq \left\{x \in \Omega: \exists \,\, \xi \in \R^n\,\,\,\text{such that}\,\,u(y)\le u(x)+\langle \xi, y-x\rangle \,\, \forall\, y \in \Omega\right\}.
$$

Particularly, we conclude that
{\scriptsize{
$$
\displaystyle \|u\|_{L^{\infty}(\Omega)} \leq \|g\|_{L^{\infty}(\partial \Omega)} +\mathrm{C}\cdot\diam(\Omega)\max\left\{\left\|\frac{f}{1+\mathfrak{a}}\right\|^{\frac{1}{p_{min}+1}}_{L^n(\Omega)}, \left\|\frac{f}{1+\mathfrak{a}}\right\|^{\frac{1}{q_{max}+1}}_{L^n(\Omega)}\right\}
.$$}}
\end{theorem}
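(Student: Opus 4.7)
The plan is to adapt the classical Alexandrov--Bakelman--Pucci technique to the non-homogeneous, variable-exponent degeneracy, in the spirit of \cite[Theorem 2.5]{daSJR21}. I will sketch the sup-estimate for a viscosity subsolution; the inf-estimate for supersolutions is symmetric via $-u$. First, I would replace $u$ by $v \defeq u - \sup_{\partial \Omega} g^+$ so that $v \leq 0$ on $\partial \Omega$ (the equation is translation-invariant in $u$), and then it suffices to bound $M \defeq \sup_\Omega v^+$ by the claimed right-hand side.

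Working on the upper contact set $\Gamma^+(v^+)$, I would invoke the Caffarelli--Cabr\'e variant of Alexandrov's theorem \cite{CC95} to conclude that $v$ is twice differentiable a.e. on $\Gamma^+$, with $D^2 v \leq 0$, and that the viscosity subsolution inequality $\mathcal{H}(x, Dv)F(x, D^2 v) \geq f(x)$ holds pointwise there. Uniform ellipticity (A1) plus $F(x, 0) = 0$ yield $F(x, D^2 v) \leq \mathscr{M}_{\lambda, \Lambda}^+(D^2 v) = \lambda\,\tr(D^2 v) \leq 0$, so insertion into the PDE and the AM--GM inequality on the nonpositive eigenvalues of $D^2 v$ deliver the Monge--Amp\`ere-type bound
$$
   \det(-D^2 v)(x) \leq \left(\frac{f^-(x)}{n\lambda\,\mathcal{H}(x, Dv(x))}\right)^n \quad \text{on } \Gamma^+.
$$

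The next ingredient is a clean degeneracy estimate. Monotonicity of $t \mapsto |\xi|^t$ (decreasing on $[0,1]$, increasing on $[1,\infty)$), combined with $p_{\text{min}} \leq p(x) \leq p_{\text{max}} \leq q(x) \leq q_{\text{max}}$ from (A5), produces
$$
   \mathcal{H}(x, \xi) \geq \mathrm{L}_1\bigl(1+\mathfrak{a}(x)\bigr)\begin{cases}|\xi|^{q_{\text{max}}}, & |\xi|\leq 1,\\ |\xi|^{p_{\text{min}}}, & |\xi|>1,\end{cases}
$$
which suggests pairing the pointwise bound with the regime-tailored weight $\phi(y) \defeq |y|^{n q_{\text{max}}}$ for $|y| \leq 1$ and $\phi(y) \defeq |y|^{n p_{\text{min}}}$ for $|y| > 1$. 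With this choice, in either regime, pointwise on $\Gamma^+$,
$$
   \phi(\nabla v)\,\det(-D^2 v) \leq \left(\frac{f^-(x)}{n\lambda\,\mathrm{L}_1\bigl(1+\mathfrak{a}(x)\bigr)}\right)^n.
$$

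To close the estimate I would appeal to the area formula
$$
   \int_{\nabla v(\Gamma^+)} \phi(y)\,dy \;\leq\; \int_{\Gamma^+} \phi(\nabla v(x))\,\det(-D^2 v(x))\,dx,
$$
combined with the classical inclusion $B_{M/d}(0) \subseteq \nabla v(\Gamma^+)$, where $d \defeq \diam(\Omega)$. A direct radial integration of $\phi$ over $B_{M/d}$ produces the lower bound $c_n\,(M/d)^{n(q_{\text{max}}+1)}$ in the regime $M/d \leq 1$ and $c_n\,(M/d)^{n(p_{\text{min}}+1)}$ once $M/d > 1$. Inverting the resulting scalar inequality in each regime and taking the maximum of the two yields
$$
   M \leq \mathrm{C}\,d\,\max\!\left\{\left\|\tfrac{f^-}{1+\mathfrak{a}}\right\|_{L^n(\Gamma^+)}^{\frac{1}{p_{\text{min}}+1}},\ \left\|\tfrac{f^-}{1+\mathfrak{a}}\right\|_{L^n(\Gamma^+)}^{\frac{1}{q_{\text{max}}+1}}\right\},
$$
and undoing the translation recovers the statement. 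The delicate point I expect is the viscosity-level Alexandrov argument of the second step, since $D^2 v$ exists only in the a.e.\ sense and the PDE only in the viscosity sense; this is handled by the standard Caffarelli--Cabr\'e approximation scheme \cite{CC95}. The rest is bookkeeping of the two natural regimes $|\nabla v|\lessgtr 1$, which is precisely what generates the two exponents $\tfrac{1}{p_{\text{min}}+1}$ and $\tfrac{1}{q_{\text{max}}+1}$ in the final bound.
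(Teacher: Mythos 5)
Your sketch is correct and is essentially the argument the paper itself relies on: the paper omits the proof and defers to \cite[Theorem 2.5]{daSJR21}, which runs the same weighted ABP scheme you describe (contact-set bound $\det(-D^2v)\le \big(\tfrac{f^-}{n\lambda\,\mathcal{H}(x,Dv)}\big)^n$, a gradient weight $|\xi|^{nq_{\text{max}}}$ for $|\xi|\le 1$ and $|\xi|^{np_{\text{min}}}$ for $|\xi|>1$ matched to the two degeneracy regimes, the area formula together with the inclusion $B_{M/\diam(\Omega)}\subset \nabla\Gamma(\Omega)$, and inversion in each regime, which is exactly what produces the exponents $\tfrac{1}{p_{\text{min}}+1}$ and $\tfrac{1}{q_{\text{max}}+1}$). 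The one step to execute carefully is precisely the one you flag: a viscosity subsolution need not be twice differentiable a.e., so the pointwise Monge--Amp\`ere bound must be derived on the contact set of the concave envelope via the Caffarelli--Cabr\'e touching/sup-convolution machinery of \cite{CC95}, after which your bookkeeping of the regimes $|\nabla v|\lessgtr 1$ closes the estimate.
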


We also present the following interior H\"{o}lder regularity result. The proof follows the same lines as \cite[Theorem 8.5]{daSJR21}.

\begin{theorem}[{\bf Local H\"{o}lder estimates}]\label{HoldEstThm}
Let $u$ be a viscosity solution to
$$
   \mathcal{H}(x, \nabla u)F(x, D^2u) = f(x) \quad \text{in} \quad B_1.
$$
where $f$ is a continuous and bounded function. Then, $u \in C_{\text{loc}}^{0, \alpha^{\prime}}(B_1)$ for some universal $\alpha^{\prime} \in (0, 1)$. Moreover,
{\scriptsize{
$$
\|u\|_{C^{0, \alpha^{\prime}}(\Omega^{\prime})} \leq \mathrm{C}(\verb"universal", \dist(\Omega^{\prime}, \partial \Omega))\left(\|u \|_{L^{\infty}(\Omega)} + \max\left\{ \left\|\frac{f}{1+\mathfrak{a}}\right\|^{\frac{1}{{p_{\text{min}}+1}}}_{L^n(\Omega)}, \left\|\frac{f}{1+\mathfrak{a}}\right\|^{\frac{1}{{q_{\text{max}}}+1}}_{L^n(\Omega)}\right\}\right).
$$}}
\end{theorem}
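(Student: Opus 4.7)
The plan is to convert the degenerate fully nonlinear equation into uniformly elliptic Pucci-type viscosity inequalities whose right-hand sides are controlled by $|f|/(1+\mathfrak{a})$, and then invoke the classical Krylov--Safonov H\"older theory for Pucci classes. This is the non-variational counterpart of the scheme developed in \cite{daSJR21}, and it completely bypasses the need for the oscillation assumption (A2) on $F$.

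The first (and main) step is the reduction. Given a smooth test function $\varphi$ touching a viscosity sub-solution $u$ from above at $x_0 \in B_1$ with $\nabla\varphi(x_0)\neq 0$, assumptions (A0)--(A1) give
$$
\mathscr{M}_{\lambda,\Lambda}^{+}(D^2\varphi(x_0)) \;\geq\; F(x_0,D^2\varphi(x_0)) \;\geq\; -\frac{|f(x_0)|}{\mathcal{H}(x_0,\nabla\varphi(x_0))}.
$$
Invoking the lower bound \eqref{1.2} in (A4) together with the admissible exponent range (A5), a case split on whether $|\nabla\varphi(x_0)| \gtrless 1$ shows that the right-hand side is universally dominated by
$$
C\,\max\!\left\{\left|\tfrac{f(x_0)}{1+\mathfrak{a}(x_0)}\right|^{\tfrac{1}{p_{\text{min}}+1}},\;\left|\tfrac{f(x_0)}{1+\mathfrak{a}(x_0)}\right|^{\tfrac{1}{q_{\text{max}}+1}}\right\}.
$$
A symmetric inequality with $\mathscr{M}^-_{\lambda,\Lambda}$ holds for viscosity super-solutions. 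After this reduction, $u$ lies simultaneously in both Pucci classes with effective inhomogeneity $\tilde f$ given by the displayed maximum.

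The second step is Krylov--Safonov: the classical interior H\"older estimate for Pucci inequalities (cf.\ \cite[Prop.\ 4.14]{CC95}) furnishes a universal exponent $\alpha^{\prime}\in(0,1)$, depending only on $n,\lambda,\Lambda$, together with
$$
\|u\|_{C^{0,\alpha^{\prime}}(B_{1/2})} \;\leq\; \mathrm{C}\bigl(\|u\|_{L^{\infty}(B_1)} + \|\tilde f\|_{L^n(B_1)}\bigr).
$$
The two competing exponents in $\tilde f$ account exactly for the $\max$ appearing in the statement of the theorem. The third step is a standard covering/rescaling argument upgrading the estimate from $B_{1/2}$ to any $\Omega^{\prime}\Subset \Omega$, producing the announced dependence on $\dist(\Omega^{\prime},\partial\Omega)$.

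The main obstacle lies entirely in Step~1, specifically in the fully degenerate regime $\nabla\varphi(x_0)=0$, where dividing by $\mathcal{H}$ is illegitimate. This is resolved by the standard viscosity dichotomy: either $\nabla\varphi(x_0)\neq 0$ and the Pucci inequality above applies, or $\nabla\varphi(x_0)=0$ and the desired inequality is automatic from the normalization $F(x_0,\mathrm{O}_{n\times n})=0$ in (A0). The delicate point is that the very same proof must handle simultaneously the sublinear scale $|\nabla\varphi|\leq 1$, where one must pay with the exponent $q_{\text{max}}$, and the superlinear scale $|\nabla\varphi|\geq 1$, where the cheaper $p_{\text{min}}$ suffices. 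This asymmetry is precisely what forces the pair of normalizations $\frac{1}{p_{\text{min}}+1}$ and $\frac{1}{q_{\text{max}}+1}$, mirroring the construction of \cite[Theorem 8.5]{daSJR21}.
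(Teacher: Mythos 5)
Your reduction in Step~1 has a genuine gap, and it is exactly the degenerate regime that the whole theorem is about. If $\varphi$ touches $u$ from above at $x_0$ with $0<|\nabla\varphi(x_0)|\ll 1$, then (A4)--(A5) only give $\mathcal{H}(x_0,\nabla\varphi(x_0))\geq \mathrm{L}_1\left(1+\mathfrak{a}(x_0)\right)|\nabla\varphi(x_0)|^{q_{\text{max}}}$, so the quantity $|f(x_0)|/\mathcal{H}(x_0,\nabla\varphi(x_0))$ blows up as $|\nabla\varphi(x_0)|\to 0$; no case split on $|\nabla\varphi(x_0)|\gtrless 1$ can dominate it by $C\max\bigl\{|f/(1+\mathfrak{a})|^{1/(p_{\text{min}}+1)},|f/(1+\mathfrak{a})|^{1/(q_{\text{max}}+1)}\bigr\}$. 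Those exponents only appear if you restrict to test points where the slope exceeds a threshold of the same order (for a single exponent $p$: where $|\nabla\varphi|\geq \tilde f^{1/(p+1)}$ one has $f/|\nabla\varphi|^{p}\leq \tilde f^{1/(p+1)}$). Your fallback for $\nabla\varphi(x_0)=0$ is also incorrect: there $\mathcal{H}(x_0,0)=0$ and the viscosity inequality collapses to $0\leq f(x_0)$, giving no information whatsoever on $D^2\varphi(x_0)$; the normalization $F(x_0,\mathrm{O}_{n\times n})=0$ in (A0) says nothing about a nonzero Hessian. Consequently $u$ is \emph{not} shown to lie in the Pucci classes $\overline{S}(\lambda,\Lambda,\tilde f)$ and $\underline{S}(\lambda,\Lambda,-\tilde f)$ in the sense required by the classical Krylov--Safonov estimate of \cite{CC95}, so Step~2 cannot be invoked as written (note also that restricting the test class to nonvanishing gradients genuinely weakens the viscosity inequality: $u(x)=-|x|^2$ admits no such test function at the origin).

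What your argument actually yields is that the Pucci inequalities hold \emph{only where the gradient of the test function is large} (above a data-dependent threshold), and turning that weaker information into an interior H\"older estimate is precisely the nontrivial content of the degenerate theory: one needs either the Krylov--Safonov/Harnack machinery for equations that are uniformly elliptic only for large gradients, in the spirit of \cite{IS} and of the ABP-type analysis of \cite{daSJR21} (which is the route the paper follows, since Theorem \ref{HoldEstThm} is proved there by invoking \cite[Theorem 8.5]{daSJR21}), or an Ishii--Lions doubling-of-variables argument with comparison function $\mathrm{M}|x-y|^{\gamma}$ plus localization terms, which is how the paper handles the boundary versions in Theorems \ref{Xizao} and \ref{pgrande}. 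Either of these replaces your Step~1--2; your Step~3 (covering/rescaling, and the $\max$ of the two normalized norms coming from the normalization $u\mapsto u/\kappa$) is fine once the interior estimate on a fixed ball is actually established.
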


Before presenting our main Key Lemma, the following simple convergence result will be instrumental in the proof of Lemma \ref{L0}:
	
	\begin{lemma}\label{lem.stab} Let $(F_k(x,\mathrm{X}))_{k\in \mathbb{N}}$ be a sequence of operators satisfying (A0)-(A2) with the same ellipticity constants and same modulus of continuity in $B_1(0)$. Then, there exists an elliptic operator $F_0$ which still satisfies (A0)-(A2) such that
		\[
		F_k\longrightarrow F_0 \quad \mbox{uniformly on compact subsets of} \quad \text{Sym}(n)\times B_1(0).
		\]
	\end{lemma}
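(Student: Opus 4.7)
The plan is a routine Arzelà--Ascoli argument combined with a diagonal extraction. The key point is that the three structural assumptions (A0)--(A2), when imposed uniformly in $k$, translate precisely into uniform boundedness and equicontinuity of the family $\{F_k\}$ on compact subsets of $B_1(0)\times \mathrm{Sym}(n)$; the limit operator inherits (A0)--(A2) by passing to the limit.

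First I would establish the uniform pointwise bound. Since $F_k(x,\mathrm{O}_{n\times n})=0$ by (A0), the uniform ellipticity condition (A1) applied with $\mathrm{Y}=\mathrm{O}_{n\times n}$ yields
\[
\mathscr{M}^{-}_{\lambda,\Lambda}(\mathrm{X})\le F_k(x,\mathrm{X})\le \mathscr{M}^{+}_{\lambda,\Lambda}(\mathrm{X})
\qquad \text{for every }(x,\mathrm{X})\in B_1(0)\times \mathrm{Sym}(n),
\]
so $|F_k(x,\mathrm{X})|\le n\Lambda\,\|\mathrm{X}\|$ uniformly in $k$. This gives the uniform boundedness on any set of the form $\overline{B_{1-\delta}}\times\{\|\mathrm{X}\|\le R\}$.

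Next I would prove equicontinuity. For $(x_1,\mathrm{X}_1),(x_2,\mathrm{X}_2)\in B_1(0)\times\mathrm{Sym}(n)$, a standard splitting
\[
|F_k(x_1,\mathrm{X}_1)-F_k(x_2,\mathrm{X}_2)|\le |F_k(x_1,\mathrm{X}_1)-F_k(x_1,\mathrm{X}_2)| + |F_k(x_1,\mathrm{X}_2)-F_k(x_2,\mathrm{X}_2)|
\]
gives, by (A1) applied to the pair $(\mathrm{X}_1,\mathrm{X}_2)$ at the point $x_1$, the Lipschitz control $|F_k(x_1,\mathrm{X}_1)-F_k(x_1,\mathrm{X}_2)|\le n\Lambda\,\|\mathrm{X}_1-\mathrm{X}_2\|$, and by (A2) the oscillation control $|F_k(x_1,\mathrm{X}_2)-F_k(x_2,\mathrm{X}_2)|\le \mathrm{C}_{\mathrm{F}}\,\omega(|x_1-x_2|)\,\|\mathrm{X}_2\|$. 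Both bounds are independent of $k$, so on each compact $K\subset B_1(0)\times\mathrm{Sym}(n)$ the family $\{F_k\}$ is uniformly equicontinuous.

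Third, I would apply the Arzelà--Ascoli theorem on an exhaustion of $B_1(0)\times\mathrm{Sym}(n)$ by compact sets $K_m\defeq \overline{B_{1-1/m}(0)}\times\{\mathrm{X}\in\mathrm{Sym}(n):\|\mathrm{X}\|\le m\}$ and run a Cantor diagonal extraction to produce a subsequence (still denoted $F_k$) and a continuous function $F_0:B_1(0)\times\mathrm{Sym}(n)\to \R$ such that $F_k\to F_0$ uniformly on every $K_m$, hence on every compact subset.

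Finally, I would verify that $F_0$ inherits (A0)--(A2): continuity of $F_0(x,\cdot)$ and the normalization $F_0(x,\mathrm{O}_{n\times n})=0$ are immediate from uniform convergence; the Pucci sandwich defining (A1) and the oscillation inequality defining (A2) are preserved under pointwise (hence uniform) limits, with the same constants $\lambda,\Lambda,\mathrm{C}_{\mathrm{F}}$ and the same modulus $\omega$. I do not foresee any substantial obstacle, since each ingredient is a direct consequence of one of the structural hypotheses; the only mildly delicate point is keeping the covering by compact sets clean so the diagonal argument produces a single limit defined on the whole of $B_1(0)\times\mathrm{Sym}(n)$.
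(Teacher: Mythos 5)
Your argument is correct and is exactly the standard (and evidently intended) proof of this lemma, which the paper states as a ``simple convergence result'' without proof: the normalization (A0) plus the Pucci sandwich (A1) give the uniform local bound and the Lipschitz control in $\mathrm{X}$, (A2) gives equicontinuity in $x$, and Arzel\`{a}--Ascoli with a diagonal extraction over an exhaustion produces the limit, which inherits (A0)--(A2) with the same constants. The only caveat is that this compactness argument (necessarily) yields convergence along a subsequence rather than of the full sequence, which is how the lemma is in fact used in the contradiction argument of Lemma \ref{L0}, so your extraction step is precisely what is needed.
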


Another important piece of information we need in our article concerns to the notion of stability of viscosity solutions, i.e., the limit of a sequence of viscosity solutions turns out to be a viscosity solution of the corresponding limiting equation. The following Lemma will be instrumental in the proof of Lemma \ref{L0}, whose proof can be found in \cite{BD16} and \cite[Lemma 3.2]{FRZ21}.

\begin{lemma}[{\bf Stability}]\label{P1}
Let $(g_j)_j$ be a sequence of Lipschitz continuous functions such that $g_j \to g_{\infty}$. Suppose that $(u_j)$ is a sequence of uniformly bounded continuous viscosity solutions of
$$
\left\{
\begin{array}{rcrcl}
\mathcal{H}_j (y,Du_j + \zeta_j)F_j(x, D^2 u_j) & = & f_j & \text{in} & \mathrm{B} \cap \{y_n > \phi(y')\}  \\
  u_j  & = & g_j & \text{on} & \mathrm{B} \cap \{y_n = \phi(y')\},
\end{array}
\right.
$$
where $(\zeta_j)_j \subset \mathbb{R}^n$. Suppose further that $f_j \to 0$, $\zeta_j \to \zeta_{\infty}$ and $F_j (\cdot) \to F_{\infty}(\cdot)$. %and $\mathcal{H}_j(y, \cdot) \to \mathcal{H}_{\infty}(y,\cdot)$.
Then, one can extract from $(u_j)_j$ a subsequence which converges uniformly to $u_{\infty}$ on $\overline{\mathrm{B} \cap \{y_n > \phi(y^{\prime})\} }$. Moreover, such a limit $u_{\infty}$ satisfies in the viscosity sense
$$
\left\{
\begin{array}{rclcl}
F_{\infty}( D^2 u_{\infty}) & = & 0 & \text{in} & \mathrm{B} \cap \{y_n > \phi(y^{\prime})\}  \\
  u_{\infty} & = & g_{\infty} & \text{on} & \mathrm{B} \cap \{y_n = \phi(y^{\prime})\}.
\end{array}
\right.
$$
\end{lemma}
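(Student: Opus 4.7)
The plan is a standard compactness-plus-stability argument, complicated by the fact that the multiplicative weight $\mathcal{H}_j$ can degenerate in the limit. I would proceed in three stages: first establish equicontinuity and extract a uniform subsequential limit, then pass to the limit in the viscosity inequality when the shifted gradient is non-vanishing, and finally handle the degenerate case via a perturbation of the test function. The main obstacle will be precisely this last step, since when $D\varphi(x_0)+\zeta_\infty=0$ the factor $\mathcal{H}_j$ collapses to zero in the limit and the viscosity inequality becomes vacuous.

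\textbf{Stage 1 (Compactness).} To reduce to the standard (unshifted) form, set $v_j(y)\defeq u_j(y)+\zeta_j\cdot y$, so that $Dv_j=Du_j+\zeta_j$ and $D^2 v_j=D^2 u_j$; then $v_j$ satisfies
$$
\mathcal{H}_j(y,Dv_j)\,F_j(y,D^2 v_j)=f_j \quad \text{in } \mathrm{B}\cap\{y_n>\phi(y')\},
$$
with boundary trace $g_j+\zeta_j\cdot y$ on $\mathrm{B}\cap\{y_n=\phi(y')\}$. Since $\|u_j\|_{L^\infty}$, $|\zeta_j|$, $\|f_j\|_{L^\infty}$, and the Lipschitz constants of the perturbed boundary data are all uniformly bounded, Theorem \ref{Xizao} (boundary H\"{o}lder) combined with Theorem \ref{HoldEstThm} (interior H\"{o}lder, applied through a covering argument) yields a uniform $C^{0,\gamma_0}$ bound for $v_j$, and hence for $u_j$, on $\overline{\mathrm{B}\cap\{y_n>\phi(y')\}}$. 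Arzel\`a--Ascoli produces a subsequence $u_j\to u_\infty$ uniformly, and $g_j\to g_\infty$ gives $u_\infty=g_\infty$ on the flattened boundary.

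\textbf{Stage 2 (Non-degenerate limiting inequality).} I verify the subsolution property $F_\infty(D^2 u_\infty)\geq 0$; the supersolution direction is symmetric. Let $\varphi\in C^2$ and $x_0$ be an interior strict local maximum of $u_\infty-\varphi$. By uniform convergence, there exist $x_j\to x_0$ such that $u_j-\varphi$ attains a local maximum at $x_j$, and the viscosity subsolution property gives
$$
\mathcal{H}_j\bigl(x_j,\,D\varphi(x_j)+\zeta_j\bigr)\,F_j\bigl(x_j,\,D^2\varphi(x_j)\bigr)\geq f_j(x_j).
$$
If $D\varphi(x_0)+\zeta_\infty\neq 0$, then \eqref{1.2} together with continuity of $\mathcal{K}_{p,q,\mathfrak{a}}$ (see \eqref{N-HDeg}) forces
$$
\liminf_{j\to\infty}\mathcal{H}_j\bigl(x_j,\,D\varphi(x_j)+\zeta_j\bigr)\;\geq\;\mathrm{L}_1\,\mathcal{K}_{p,q,\mathfrak{a}}\bigl(x_0,\,|D\varphi(x_0)+\zeta_\infty|\bigr)\;>\;0.
$$
Dividing through, using $f_j\to 0$ and the uniform convergence $F_j\to F_\infty$ on compact subsets of $\operatorname{Sym}(n)\times\mathrm{B}$ from Lemma \ref{lem.stab}, I obtain $F_\infty(D^2\varphi(x_0))\geq 0$.

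\textbf{Stage 3 (Degenerate case --- the crux).} When $D\varphi(x_0)+\zeta_\infty=0$, Stage 2 yields no information and a perturbation argument, in the spirit of \cite{BD16} and \cite[Lemma 3.2]{FRZ21}, is required. For small $\delta>0$ and a unit vector $e$ (chosen generically so as to avoid the exceptional directions in which the shifted gradient could cancel against second-order corrections), define
$$
\tilde\varphi_\delta(x)\defeq\varphi(x)+\delta\,e\cdot(x-x_0).
$$
Since $\tilde\varphi_\delta\to\varphi$ in $C^2$ and $u_\infty-\varphi$ has a strict local maximum at $x_0$, the perturbed function $u_\infty-\tilde\varphi_\delta$ attains a local maximum at some $x_\delta$ with $x_\delta\to x_0$ as $\delta\to 0^+$. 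For such generic $e$ and all sufficiently small $\delta$, continuity of $D\varphi$ gives
$$
D\tilde\varphi_\delta(x_\delta)+\zeta_\infty=\bigl(D\varphi(x_\delta)+\zeta_\infty\bigr)+\delta e\neq 0,
$$
so Stage 2 applied with test function $\tilde\varphi_\delta$ at $x_\delta$ produces $F_\infty(D^2\varphi(x_\delta))\geq 0$. Letting $\delta\to 0$ and invoking continuity of $F_\infty$ and of $D^2\varphi$ yields $F_\infty(D^2\varphi(x_0))\geq 0$, establishing the subsolution property. The supersolution inequality follows by the analogous argument using the perturbation $-\delta e\cdot(x-x_0)$, which completes the identification of $u_\infty$ as a viscosity solution of $F_\infty(D^2 u_\infty)=0$ in $\mathrm{B}\cap\{y_n>\phi(y')\}$.
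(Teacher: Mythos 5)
Your Stages 1 and 2 are sound and follow the route the paper itself has in mind (compactness via Theorem \ref{Xizao} applied to $u_j+\zeta_j\cdot y$, cf. Remark \ref{RemarkSeqTransl}, then division by the degeneracy factor when the shifted gradient survives in the limit); the only cosmetic point there is that the lower bound on $\liminf_j\mathcal{H}_j$ should be phrased through the uniform bounds $p_{\min}\le p_j\le q_j\le q_{\max}$, $\mathfrak{a}_j\ge 0$, since the exponents of $\mathcal{H}_j$ need not converge. The genuine gap is in Stage 3, which you yourself identify as the crux. The claim that for a ``generic'' direction $e$ one has $D\varphi(x_\delta)+\zeta_\infty+\delta e\neq 0$ at the perturbed touching point is not a consequence of continuity of $D\varphi$, and it is in fact false in precisely the situation the lemma is designed for. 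Take $u_\infty(x)=-\zeta_\infty\cdot x$ (an admissible limit: $u_j=-\zeta_j\cdot x$ solves the $j$-th problem with $f_j\equiv 0$) and any $\varphi$ with $u_\infty-\varphi$ having a strict interior maximum at $x_0$; then $u_\infty-\tilde\varphi_\delta$ is $C^2$, so at its interior maximum $x_\delta$ the first-order condition gives $D\tilde\varphi_\delta(x_\delta)=Du_\infty(x_\delta)=-\zeta_\infty$, i.e.\ the shifted gradient vanishes at \emph{every} perturbed touching point, for \emph{every} $e$ and every $\delta$. More generally, whenever $u_\infty$ happens to be differentiable at $x_\delta$ with $Du_\infty(x_\delta)=-\zeta_\infty$, your perturbation reproduces the degenerate situation verbatim, so Stage 2 can never be invoked and the argument does not close.

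This degenerate case cannot be dispatched by a one-line linear perturbation of the test function; it is exactly the step for which the paper defers to \cite{BD16} and \cite[Lemma 3.2]{FRZ21} (see also \cite[Lemma 4.1]{DeF20} and the original argument in \cite{IS}). There the conclusion $F_\infty(D^2\varphi(x_0))\ge 0$ in the case $D\varphi(x_0)+\zeta_\infty=0$ is obtained by a contradiction argument run at the level of the approximating solutions $u_j$: assuming $F_\infty(D^2\varphi(x_0))<0$, one tests $u_j$ with the whole family $\varphi+q\cdot(x-x_0)$, $|q|$ small, and uses that the viscosity inequality together with $F_j(\cdot,D^2\varphi)\le -c<0$ forces $|D\varphi(x_{j,q})+\zeta_j+q|\to 0$ at all the corresponding contact points, which is then shown to be incompatible with the structure of $D\varphi$ near $x_0$ (alternatively, the failure of finding non-degenerate contact points is shown to force $D^2\varphi(x_0)\ge 0$, in which case $F_\infty(D^2\varphi(x_0))\ge F_\infty(0)=0$ trivially — note this is what saves the affine example above, but your proof contains neither branch of this dichotomy). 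As written, Stage 3 asserts the key fact rather than proving it, so the proof is incomplete at its central point.
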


\begin{remark}\label{RemarkSeqTransl} It is noteworthy to stress that since $(\zeta_j)_j \subset \mathbb{R}^n$ converges, then it is bounded. Then, taking into account the Global H\"{o}lder estimates and ABP (Theorems \ref{Xizao} and \ref{ABPthm}) imply that $(u_j+ x \cdot \zeta_j)_j$ is a convergent sequence. In other words, the sequence of solutions in Lemma \ref{P1} is pre-compact in the $C^{0, \gamma}-$topology.
\end{remark}

\subsection{Existence of solutions: Proof of Theorem \ref{existence}}

In this subsection, we prove the classical result known as Comparison Principle for unbalanced degenerate scenario with variable exponent. For that end, we will consider the following approximating problem
\begin{equation}\label{appro_problem}
\mathcal{G}_{\varepsilon}[u] \defeq	\mathcal{H}_{\varepsilon}(x, \nabla u)[\varepsilon u + F(x, D^2 u)]  =  f(x) \quad \text{ in } \quad \Omega
\end{equation}
where $\mathcal{H}_{\varepsilon}:\Omega\times\mathbb{R}^n\to [0,\infty)$ satisfies
\begin{equation}\label{1.2-epsilon}
	\mathrm{L}_1 \cdot \mathcal{K}_{p, q, \mathfrak{a}}^\varepsilon(x, |\xi|) \leq \mathcal{H}_{\varepsilon}(x, \xi)\leq \mathrm{L}_2 \cdot \mathcal{K}_{p, q, \mathfrak{a}}^{\varepsilon}(x, |\xi|)
\end{equation}
for constants $0<\mathrm{L}_1\le \mathrm{L}_2< \infty$, with
\begin{equation}\label{N-HDeg-epsilon}
	\mathcal{K}_{p, q, \mathfrak{a}}^{\varepsilon}(x, |\xi|) \defeq (\varepsilon+|\xi|)^{p(x)}+\mathfrak{a}(x)(\varepsilon+|\xi|)^{q(x)}, \,\,\,\text{for}\,\,\, (x, \xi) \in \Omega \times \R^n
\end{equation}
for $\varepsilon \in (0, 1)$ and $u\in C^0(\overline{\Omega})$. The desired comparison result will hold true by letting $\varepsilon \to 0^{+}$. The proof follows essentially the idea from \cite{CIL92} adapted to our scenario.

Additionally to assumptions (A0)-(A2) and (A4)-(A5), we will assume the following condition: $p, q, \mathfrak{a} \in C^{0, 1}(\Omega)$ and there exist a constant
$$
  \mathrm{C}_{p, q, \mathfrak{a}} = \mathrm{C}_{p, q, \mathfrak{a}}\left(\|\nabla p\|_{L^{\infty}(\Omega)}, \|\nabla q\|_{L^{\infty}(\Omega)}, \|\nabla \mathfrak{a}\|_{L^{\infty}(\Omega)}, \|\mathfrak{a}\|_{L^{\infty}(\Omega)}, L_1, L_2\right) >0
$$
and a modulus of continuity $\omega_{p, q, \mathfrak{a}}: [0, \infty) \to [0, \infty)$ such that
{\small{\begin{equation}\label{Cont-H}
   |\mathcal{H}_{\varepsilon}(x, \xi)-\mathcal{H}_{\varepsilon}(y, \xi)| \le \mathrm{C}_{p, q, \mathfrak{a}} (\varepsilon+|\xi|)^{p_{\text{min}}}\left(|\ln(\varepsilon+|\xi|)|+1\right)\omega_{p, q, \mathfrak{a}}(|x-y|).
\end{equation}}}
for all $(x, y, \xi) \in \Omega \times \Omega\times \left(B_1 \setminus \{0\}\right)$.

\begin{remark} \label{Obs3.1}
By way of illustration, when
$$\mathcal{H}_{\varepsilon}(x, \xi) = (\varepsilon+|\xi|)^{p(x)} + \mathfrak{a}(x)(\varepsilon+|\xi|)^{q(x)}$$
 then \eqref{Cont-H} becomes
$$
   |\mathcal{H}_{\varepsilon}(x, \xi)-\mathcal{H}_{\varepsilon}(y, \xi)| \le  \mathcal{A}_1^{\varepsilon}(x, y , \xi) + \mathcal{A}_2^{\varepsilon}(x, y , \xi),
$$
where
$$
\mathcal{A}_1^{\varepsilon}(x, y , \xi) \defeq \|\nabla p\|_{L^{\infty}(\Omega)}(\varepsilon+|\xi|)^{p_{\text{min}}}|\ln(\varepsilon+|\xi|)||x-y|
$$
and
{\small{$$
\mathcal{A}_2^{\varepsilon}(x, y , \xi) \defeq \left(\|\nabla q\|_{L^{\infty}(\Omega)}\|\mathfrak{a}\|_{L^{\infty}(\Omega)}|\ln(\varepsilon+|\xi|)| + \|\nabla \mathfrak{a}\|_{L^{\infty}(\Omega)}\right)(\varepsilon+|\xi|)^{q_{\text{min}}}|x-y|.
$$}}

Finally, we must stress that assumption \eqref{Cont-H} takes into account just in order to prove the Comparison Principle. In the whole paper, we make use only assumptions (A0)-(A2) and (A4)-(A5).
\end{remark}

Now, we are in a position to deliver the proof of Comparison Principle.

\begin{theorem}[{\bf Comparison Principle}]\label{comparison principle}
	Assume that assumptions (A0)-(A2), (A4)-(A5) and \eqref{Cont-H} are in force. Let $f \in C^0(\overline{\Omega})$. Suppose $u$ and $v$ are respectively a viscosity supersolution and 	subsolution of \eqref{appro_problem}. If $u \leq v$ on $\partial \Omega$, then $u \leq v$ in $\Omega$.
\end{theorem}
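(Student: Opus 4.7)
The strategy is to implement the classical doubling-of-variables method of Crandall--Ishii--Lions (cf.\ \cite{CIL92}), tailored to the unbalanced degenerate operator $\mathcal{H}_\varepsilon$ with variable exponents. The regularization term $\varepsilon u$ in \eqref{appro_problem} plays a crucial role, providing a strictly coercive zero-order contribution that ultimately breaks the degeneracy in the limit.

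\textbf{Setup.} I argue by contradiction, supposing the comparison fails and there is a strict interior gap between the subsolution and the supersolution (in the natural direction of comparison). Let $M$ denote this positive supremum; the boundary hypothesis forces $M$ to be attained at some interior point $\hat x \in \Omega$. For each large $\alpha > 0$ introduce the doubled auxiliary
\[
\Phi_\alpha(x,y) \defeq v(x) - u(y) - \frac{\alpha}{2}|x-y|^2, \qquad (x,y)\in \overline{\Omega}\times\overline{\Omega},
\]
and let $(x_\alpha, y_\alpha)$ be a maximum pair. Standard lemmas (see \cite[Lemma 3.1]{CIL92}) yield $|x_\alpha - y_\alpha| \to 0$, $\alpha|x_\alpha - y_\alpha|^2 \to 0$, $v(x_\alpha) - u(y_\alpha) \to M$, and $(x_\alpha, y_\alpha) \in \Omega \times \Omega$ once $\alpha$ is sufficiently large.

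\textbf{Theorem of sums and viscosity inequalities.} Invoke Ishii's Theorem of Sums: for any $\eta>0$ there exist $X_\eta, Y_\eta \in \text{Sym}(n)$ with $(\zeta_\alpha, X_\eta) \in \overline{J}^{\,2,+} v(x_\alpha)$ and $(\zeta_\alpha, Y_\eta) \in \overline{J}^{\,2,-} u(y_\alpha)$, where $\zeta_\alpha \defeq \alpha(x_\alpha - y_\alpha)$, satisfying the block inequality
\[
-\Bigl(\frac{1}{\eta}+\|\mathcal{A}\|\Bigr)\,I \leq \begin{pmatrix} X_\eta & 0 \\ 0 & -Y_\eta \end{pmatrix} \leq \mathcal{A} + \eta\, \mathcal{A}^{2}, \qquad \mathcal{A}= \alpha\begin{pmatrix} I & -I \\ -I & I \end{pmatrix}.
\]
The choice $\eta = 1/(3\alpha)$ gives $X_\eta \leq Y_\eta$ in the matrix order and $\|X_\eta\|, \|Y_\eta\| = O(\alpha)$. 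Plugging the two jets into the corresponding viscosity inequalities yields
\[
\mathcal{H}_\varepsilon(x_\alpha,\zeta_\alpha)\bigl[\varepsilon v(x_\alpha) + F(x_\alpha, X_\eta)\bigr] \geq f(x_\alpha), \qquad \mathcal{H}_\varepsilon(y_\alpha,\zeta_\alpha)\bigl[\varepsilon u(y_\alpha) + F(y_\alpha, Y_\eta)\bigr] \leq f(y_\alpha).
\]
Writing $\mathsf{A} \defeq \mathcal{H}_\varepsilon(x_\alpha,\zeta_\alpha)$ and $\mathsf{B} \defeq \mathcal{H}_\varepsilon(y_\alpha,\zeta_\alpha)$, both are bounded below by $\mathrm{L}_1\min\{\varepsilon^{p_{\text{min}}},\varepsilon^{p_{\text{max}}}\}>0$ \emph{uniformly in} $\alpha$ thanks to \eqref{1.2-epsilon}--\eqref{N-HDeg-epsilon}. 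Subtracting and rearranging delivers the fundamental inequality
\[
\varepsilon\mathsf{A}\bigl(v(x_\alpha)-u(y_\alpha)\bigr) + \mathsf{A}\bigl[F(x_\alpha, X_\eta)-F(y_\alpha, Y_\eta)\bigr] + (\mathsf{A}-\mathsf{B})\bigl[\varepsilon u(y_\alpha)+F(y_\alpha, Y_\eta)\bigr] \geq f(x_\alpha)-f(y_\alpha).
\]
Splitting $F(x_\alpha, X_\eta)-F(y_\alpha, Y_\eta) = [F(x_\alpha,X_\eta)-F(x_\alpha,Y_\eta)] + [F(x_\alpha,Y_\eta)-F(y_\alpha,Y_\eta)]$, assumption (A1) together with $X_\eta\leq Y_\eta$ forces the first bracket to be $\leq \mathscr{M}^{+}_{\lambda,\Lambda}(X_\eta-Y_\eta)\leq 0$, while (A2) bounds the second by $\|F\|_{C^\omega(\Omega)}\,\omega(|x_\alpha-y_\alpha|)\|Y_\eta\|$. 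The increment $|\mathsf{A}-\mathsf{B}|$ is controlled by the quantitative continuity \eqref{Cont-H}. Continuity of $f$ gives $f(x_\alpha)-f(y_\alpha)\to 0$, whereas the coercive leading term converges, along a subsequence, to $\varepsilon\mathsf{A}_\infty M>0$.

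\textbf{Principal obstacle.} The crux of the proof is verifying that the residual terms truly vanish in the limit, since $\|Y_\eta\| = O(\alpha)$ and the upper bound $\mathsf{A}\leq \mathrm{L}_2(1+\|\mathfrak{a}\|_{L^\infty(\Omega)})(\varepsilon+|\zeta_\alpha|)^{q_{\text{max}}} = O(\alpha^{q_{\text{max}}/2})$ may grow along the doubling sequence. The logarithmically corrected assumption \eqref{Cont-H} -- carefully tailored to the variable-exponent degeneracy, see Remark~\ref{Obs3.1} -- is designed precisely so that $|\mathsf{A}-\mathsf{B}|\cdot\|Y_\eta\|$ is negligible as $\alpha\to\infty$, and an analogous balance handles $\mathsf{A}\,\omega(|x_\alpha-y_\alpha|)\|Y_\eta\|$ through (A2). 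With these estimates in hand, passing to the limit produces $0\geq \varepsilon \mathsf{A}_\infty M>0$, the sought contradiction, which establishes the comparison principle.
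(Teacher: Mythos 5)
Your overall strategy (doubling of variables, theorem of sums, (A1)--(A2), \eqref{Cont-H}, and the $\varepsilon$-term as the source of strict monotonicity) is the same as the paper's, and your jet-to-inequality pairing and the use of $X_\eta\le Y_\eta$ with $\mathscr{M}^{+}_{\lambda,\Lambda}$ are done carefully. Nevertheless there are two genuine gaps. First, the endgame does not follow from what you derived. Note also that you double $v(x)-u(y)$ and contradict $\sup(v-u)>0$, i.e.\ you are proving $v\le u$, while the statement asserts $u\le v$ and the paper's proof doubles $u(x)-v(y)$ accordingly. More importantly, in your ``fundamental inequality'' the coercive term $\varepsilon\mathsf{A}\bigl(v(x_\alpha)-u(y_\alpha)\bigr)$ sits on the left of a ``$\ge$'' together with terms for which you only have \emph{upper} bounds tending to $0$; all you can conclude is $\varepsilon\mathsf{A}\bigl(v(x_\alpha)-u(y_\alpha)\bigr)\ge -\mathrm{o}(1)$, which is perfectly consistent with a positive gap $M$. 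The claimed limit ``$0\ge\varepsilon\mathsf{A}_\infty M>0$'' is a non sequitur. For the contradiction one needs the gap term bounded \emph{above} by the vanishing errors, as in the paper's chain \eqref{estl-CP}, and with the regularization written as $\varepsilon u+F$ in \eqref{appro_problem} (a zeroth-order term of the non-proper sign for this ellipticity convention) your subtraction always places $\varepsilon\mathsf{A}(v-u)$ on the unhelpful side; the argument only closes if the $\varepsilon$-term acts with the proper sign (equivalently, a regularization of the form $F(x,D^2u)-\varepsilon u$), which is what the paper's computation implicitly uses. You should either fix the sign convention of the approximating operator or rearrange so that the gap is genuinely dominated by the vanishing terms; as written, no contradiction is obtained.

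Second, the ``principal obstacle'' you flag is left as an assertion, and in your arrangement it is strictly harder than in the paper's. Because you never divide the two viscosity inequalities by $\mathcal{H}_\varepsilon$, the increment $|\mathsf{A}-\mathsf{B}|$ from \eqref{Cont-H} gets multiplied by $F(y_\alpha,Y_\eta)=O(\|Y_\eta\|)=O(\alpha)$, and the (A2)-term gets multiplied by $\mathsf{A}$, which may be of order $(\varepsilon+|\zeta_\alpha|)^{q_{\max}}$ with $|\zeta_\alpha|=\alpha|x_\alpha-y_\alpha|$ not known to stay bounded (the doubling lemma only gives $\alpha|x_\alpha-y_\alpha|^2\to0$). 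Saying that \eqref{Cont-H} ``is designed precisely so that'' these products vanish is not a proof, and it is not clear they do. The paper avoids this coupling by dividing each viscosity inequality by the corresponding $\mathcal{H}_\varepsilon$ \emph{before} subtracting: then the $x$-dependence of $\mathcal{H}_\varepsilon$ enters only through $f(y_\delta)\bigl[\mathcal{H}_\varepsilon(x_\delta,\cdot)^{-1}-\mathcal{H}_\varepsilon(y_\delta,\cdot)^{-1}\bigr]$, which is controlled in \eqref{est3-CP} via \eqref{Cont-H} and the uniform lower bound from \eqref{1.2-epsilon} (the $\varepsilon$-regularization), and the unbounded factor $\|Y\|$ appears only in the single (A2)-term. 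If you keep your undivided decomposition, you must supply quantitative estimates showing $|\mathsf{A}-\mathsf{B}|\,\|Y_\eta\|\to0$ and $\mathsf{A}\,\omega(|x_\alpha-y_\alpha|)\,\|Y_\eta\|\to0$ along the doubling sequence, which the stated hypotheses do not obviously yield.
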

\begin{proof}
	We shall prove this result by contradiction. For this end, suppose that such a statement is false. Then, we can assume that
	$$
	\mathrm{M}_0 \defeq \max_{x\in\overline{\Omega}}(u-v)(x)>0.
	$$
	
	For $\delta>0$, let us define
	$$
	\mathrm{M}_{\delta}\defeq \max_{x,y\in\overline{\Omega}}\left[u(x)-v(y)-\frac{|x-y|^2}{2\delta}\right].
	$$
	Assume that the maximum $\mathrm{M}_\delta$ is attained in a point $(x_\delta, y_\delta)\in \overline{\Omega}\times\overline{\Omega}$ and notice that $\mathrm{M}_\delta\geq \mathrm{M}_0$.
	
	From \cite[Lemma 3.1]{CIL92}, we have that
	\begin{equation}\label{limit-CP}
	\lim_{\delta\to0}\frac{|x_\delta-y_\delta|}{\delta}=0.
	\end{equation}
	This implies that $x_\delta,y_\delta\in\Omega$ for $\delta$ sufficiently small. Moreover, \cite[Theorem 3.2 and Proposition 3 ]{CIL92} assures the existence of a limiting super-jet $\left(\frac{x_\delta-y_\delta}{\delta}, \mathrm{X}\right)$ of $u$ at $x_\delta$ and a limiting sub-jet $\left(\frac{x_\delta-y_\delta}{\delta}, \mathrm{Y}\right)$ of $v$ at $y_\delta$, where the matrices $\mathrm{X}$ and $\mathrm{Y}$ satisfies the inequality
	\begin{equation}\label{matrices-ine}
		-\frac{3}{\delta}
		\begin{pmatrix}
			\mathrm{Id}_n& 0 \\
			0& \mathrm{Id}_n
		\end{pmatrix}
		\leq
		\begin{pmatrix}
			\mathrm{X}&0\\
			0&-\mathrm{Y}
		\end{pmatrix}
		\leq
		\frac{3}{\delta}
		\begin{pmatrix}
			\mathrm{Id}_n & -\mathrm{Id}_n \\
			-\mathrm{Id}_n& \mathrm{Id}_n
		\end{pmatrix},	
	\end{equation}
where $\mathrm{Id}_n$ is the identity matrix.

Observe that the assumption (A1) implies that the operator $F$ is, in particular, degenerate elliptic. It implies that,
\begin{equation}\label{MonotF}
  F(x, \mathrm{Y}) \le F(x, \mathrm{X})
\end{equation}
for every $x\in\Omega$ fixed, since $\mathrm{X}\leq \mathrm{Y}$ from \eqref{matrices-ine}.

Therefore, as a result of the two viscosity inequalities (and sentence \eqref{MonotF})
$$
	\mathcal{H}_{\varepsilon}\left(x_\delta, \frac{(x_\delta-y_\delta)}{\delta}\right)[\varepsilon u(x_\delta) + F(x_\delta, \mathrm{X})]  \leq  f(x_\delta)
$$
and
$$
\mathcal{H}_{\varepsilon}\left(y_\delta, \frac{(x_\delta-y_\delta)}{\delta}\right)[\varepsilon v(y_\delta) + F(y_\delta, \mathrm{Y})]  \geq  f(y_\delta)
$$
we get
\begin{equation}\label{estl-CP}
  \begin{array}{rcl}
    \frac{\varepsilon \mathrm{M}_0}{2} & \le & \varepsilon (u(x_\delta)-v(y_\delta)) \\
     & \le & [F(x_\delta, \mathrm{Y})-F(y_\delta, \mathrm{Y})] + \frac{f(x_\delta)}{\mathcal{H}_{\varepsilon}\left(x_\delta, \frac{(x_\delta-y_\delta)}{\delta}\right)}-\frac{f(y_\delta)}{\mathcal{H}_{\varepsilon}\left(y_\delta, \frac{(x_\delta-y_\delta)}{\delta}\right)}
  \end{array}
\end{equation}

Now, observe that, from assumption (A2), we can estimate
\begin{equation}\label{est2-CP}
|F(x_\delta, \mathrm{Y})-F(y_\delta, \mathrm{Y})|\leq \mathrm{C}_{\mathrm{F}}\omega(|x_\delta-y_\delta|)\|\mathrm{Y}\|.
\end{equation}
Moreover, if $\omega_f$ is a modulus of continuity of $f$ on $\overline{\Omega}$, from assumptions \eqref{1.2} and \eqref{Cont-H} we obtain

{\scriptsize{
\begin{equation}\label{est3-CP}
	\begin{aligned}
		&\frac{f(x_\delta)}{\mathcal{H}_{\varepsilon}\left(x_\delta, \frac{(x_\delta-y_\delta)}{\delta}\right)}-\frac{f(y_\delta)}{\mathcal{H}_{\varepsilon}\left(y_\delta, \frac{(x_\delta-y_\delta)}{\delta}\right)}\\
		\leq&\frac{f(x_\delta)-f(y_\delta)}{\mathcal{H}_{\varepsilon}\left(x_\delta, \frac{(x_\delta-y_\delta)}{\delta}\right)}+f(y_\delta)\left[\frac{1}{\mathcal{H}_{\varepsilon}\left(x_\delta, \frac{(x_\delta-y_\delta)}{\delta}\right)}-\frac{1}{\mathcal{H}_{\varepsilon}\left(y_\delta, \frac{(x_\delta-y_\delta)}{\delta}\right)}\right]\\
		\leq&\frac{\omega_f(|x_\delta-y_\delta|)}{L_1 \cdot \mathcal{K}_{p, q, \mathfrak{a}}^{\varepsilon}\left(x_\delta, \frac{|x_\delta-y_{\delta}|}{\delta}\right)}\\
		+&\|f\|_{L^{\infty}(\Omega)}\frac{\left|\mathcal{H}_{\varepsilon}\left(y_\delta, \frac{(x_\delta-y_\delta)}{\delta}\right)-\mathcal{H}_{\varepsilon}\left(x_\delta, \frac{(x_\delta-y_\delta)}{\delta}\right)\right|}{L^2_1 \cdot \mathcal{K}_{p, q, \mathfrak{a}}^{\varepsilon}\left(x_\delta, \frac{|x_\delta-y_{\delta}|}{\delta}\right)\mathcal{K}_{p, q, \mathfrak{a}}^{\varepsilon}\left(y_\delta, \frac{|x_\delta-y_{\delta}|}{\delta}\right)}\\
		\leq& \mathrm{C}_{p, q, \mathfrak{a}}\|f\|_{L^{\infty}(\Omega)}\frac{ \left(\varepsilon+\frac{|x_\delta-y_{\delta}|}{\delta}\right)^{p_{\text{min}}}\left(\left|\ln\left(\varepsilon+\frac{|x_\delta-y_{\delta}|}{\delta}\right)\right|+1\right)\omega_{p, q, \mathfrak{a}}(|x_{\delta}-y_{\delta}|)}{L^2_1 \left(\varepsilon+\frac{|x_\delta-y_{\delta}|}{\delta}\right)^{2q_{\text{max}}}}\\
 + & \frac{\omega_f(|x_\delta-y_\delta|)}{L_1 \left(\varepsilon+\frac{|x_\delta-y_{\delta}|}{\delta}\right)^{q_{\text{max}}}}
	\end{aligned}
\end{equation}}}

Therefore, combining the sentences \eqref{estl-CP}, \eqref{est2-CP} and \eqref{est3-CP} we conclude
$$
\begin{array}{rcl}
  \frac{\varepsilon \mathrm{M}_0}{2} & \le & \mathrm{C}_{\mathrm{F}}\omega(|x_\delta-y_\delta|)\|\mathrm{Y}\| + \frac{\omega_f(|x_\delta-y_\delta|)}{L_1 \left(\varepsilon+\frac{|x_\delta-y_{\delta}|}{\delta}\right)^{q_{\text{max}}}}\\
   & + & \mathrm{C}_{p, q, \mathfrak{a}}\|f\|_{L^{\infty}(\Omega)}\frac{ \left(\varepsilon+\frac{|x_\delta-y_{\delta}|}{\delta}\right)^{p_{\text{min}}}\left(\left|\ln\left(\varepsilon+\frac{|x_\delta-y_{\delta}|}{\delta}\right)\right|+1\right)\omega_{p, q, \mathfrak{a}}(|x_{\delta}-y_{\delta}|)}{L^2_1 \left(\varepsilon+\frac{|x_\delta-y_{\delta}|}{\delta}\right)^{2q_{\text{max}}}} \\
   & = & \text{o}(1) \quad \text{as}\quad \delta \to 0^{+},
\end{array}
$$
which yields a contradiction, thereby proving the desired result.
\end{proof}

In this point, we are ready to prove the existence of viscosity solution to \eqref{appro_problem}. The proof can be obtained employing the ideas of the Perron's method.

\begin{lemma}[{\bf Existence of sub/supersolutions}]\label{existence_sub_super}
	Assume that assumptions (A0)-(A5) are in force. Let $\Omega$ a bounded domain satisfying a uniform exterior sphere condition and $f \in C^0(\overline{\Omega})$. Then, for every $\varepsilon \in (0, 1)$, there exist a viscosity subsolution $u_1\in C^0(\overline{\Omega})$ and a viscosity supersolution $u_2\in C^0(\overline{\Omega})$ to \eqref{appro_problem} satisfying $u_1=u_2=g$ on $\partial \Omega$.
\end{lemma}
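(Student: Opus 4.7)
I would produce $u_1$ and $u_2$ as viscosity solutions of two auxiliary \emph{uniformly elliptic} Dirichlet problems driven by the Pucci extremal operators, and then use (A1) to upgrade them to sub- and super-solutions of \eqref{appro_problem}. The crucial quantitative input is the uniform positive lower bound
$$
\mathcal{H}_{\varepsilon}(x, \xi) \geq \mathrm{L}_1 (\varepsilon+|\xi|)^{p(x)} \geq \mathrm{L}_1 \varepsilon^{p_{\text{max}}} =: c_\varepsilon > 0, \qquad (x,\xi) \in \Omega \times \R^n,
$$
available thanks to $\varepsilon \in (0, 1)$ and assumptions (A4)-(A5). I then fix $M_{\varepsilon} \defeq \|f\|_{L^\infty(\Omega)}/c_{\varepsilon}$ once and for all.

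\textbf{Constructions.} Let $u_2 \in C^{0}(\overline{\Omega})$ be the viscosity solution of
$$
\mathscr{M}^{+}_{\lambda, \Lambda}(D^2 v) + \varepsilon v = -M_\varepsilon \ \text{in } \Omega, \qquad v = g \ \text{on } \partial\Omega,
$$
whose existence is classical since $\mathscr{M}^{+}_{\lambda,\Lambda}$ is uniformly elliptic, the zero-order coefficient $\varepsilon>0$ makes the operator proper, and (A0$^\prime$) supplies the exterior-sphere barriers needed for Perron's scheme (see \cite{CC95} and \cite{CIL92}). Invoking (A1) with $Y = 0$ together with (A0), $F(x,D^2 u_2) \leq \mathscr{M}^{+}_{\lambda,\Lambda}(D^2 u_2) = -M_\varepsilon - \varepsilon u_2$ in the viscosity sense, so $\varepsilon u_2 + F(x, D^2 u_2) \leq -M_\varepsilon < 0$. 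Multiplying this negative quantity by $\mathcal{H}_{\varepsilon}(x, \nabla u_2) \geq c_\varepsilon > 0$ flips the inequality in the right direction and yields $\mathcal{G}_\varepsilon[u_2] \leq c_\varepsilon (-M_\varepsilon) = -\|f\|_{L^\infty(\Omega)} \leq f$ in $\Omega$. Symmetrically, I would take $u_1$ to be the viscosity solution of $\mathscr{M}^{-}_{\lambda, \Lambda}(D^2 v) + \varepsilon v = M_\varepsilon$ in $\Omega$ with $v = g$ on $\partial\Omega$; then $F(x, D^2 u_1) \geq \mathscr{M}^{-}_{\lambda,\Lambda}(D^2 u_1) = M_\varepsilon - \varepsilon u_1$, so that $\varepsilon u_1 + F(x,D^2 u_1) \geq M_\varepsilon > 0$ and hence $\mathcal{G}_\varepsilon[u_1] \geq c_\varepsilon M_\varepsilon = \|f\|_{L^\infty(\Omega)} \geq f$. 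Both $u_1$ and $u_2$ coincide with $g$ on $\partial\Omega$ by construction, and the already-established Comparison Principle (Theorem \ref{comparison principle}) automatically forces $u_1 \leq u_2$, so the pair is ready to feed into the Perron procedure completing Theorem \ref{existence}.

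\textbf{Main obstacle.} The sole non-routine ingredient is the existence of the two auxiliary Pucci-type solutions in $C^0(\overline{\Omega})$ with continuous boundary datum $g$. This reduces, via Perron's method, to the construction of standard exterior-sphere barriers of the form $\eta_{z}(x) = c_\alpha\bigl(r_z^{-\alpha} - |x-x_z|^{-\alpha}\bigr)$ tied to a continuous extension of $g$ into $\overline{\Omega}$; the choice of $\alpha$ with $\lambda(\alpha+1) > (n-1)\Lambda$ renders $\mathscr{M}^{+}_{\lambda, \Lambda}(D^2\eta_z)$ strictly negative on $\Omega$, so $\eta_z$ furnishes a one-sided barrier at $z$, yielding attainment of the boundary values and closing the Perron argument in the classical way.
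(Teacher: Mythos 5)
Your reduction steps are fine: the uniform lower bound $\mathcal{H}_\varepsilon(x,\xi)\ge \mathrm{L}_1\varepsilon^{p_{\text{max}}}>0$, the two-sided bound $\mathscr{M}^{-}_{\lambda,\Lambda}(X)\le F(x,X)\le \mathscr{M}^{+}_{\lambda,\Lambda}(X)$ from (A0)--(A1), and the multiplication of the sign-definite bracket by the positive factor all transfer correctly to the viscosity setting. The genuine gap is the existence input you treat as classical. In $\mathscr{M}^{+}_{\lambda,\Lambda}(D^2v)+\varepsilon v=-M_\varepsilon$ the zero-order term enters with the \emph{anti-monotone} sign: in the Crandall--Ishii--Lions normalization this operator is strictly decreasing in $v$, i.e.\ it is \emph{not} proper (properness would require $-\varepsilon v$; note that the bracket $\varepsilon u+F(x,D^2u)$ in \eqref{appro_problem} itself carries this ``wrong'' sign, which is exactly the delicate point of the lemma). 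Hence neither the comparison principle nor the Perron-plus-barrier scheme of your last paragraph is available for your auxiliary problems, and the failure is not merely a citation issue: take $\lambda=\Lambda=1$, so $\mathscr{M}^{\pm}_{\lambda,\Lambda}(X)=\mathrm{tr}(X)$, and $\Omega=B_R$ with $R$ so large that the first Dirichlet eigenvalue satisfies $\lambda_1(B_R)<1$. Choosing $\varepsilon=\lambda_1(B_R)\in(0,1)$, $g\equiv0$ and $f\not\equiv 0$ (so $M_\varepsilon>0$), the problem $\Delta v+\varepsilon v=-M_\varepsilon$ in $B_R$, $v=0$ on $\partial B_R$, has no solution at all (testing against the positive principal eigenfunction), and for $\varepsilon>\lambda_1(B_R)$ comparison for this operator is false. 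So $u_1,u_2$ as you define them need not exist, whereas the lemma must hold for every $\varepsilon\in(0,1)$ and every bounded $\Omega$ with the exterior sphere property; what you really need is a supersolution (resp.\ subsolution) of the auxiliary inequality attaining $g$ on $\partial\Omega$, and producing it is precisely the nontrivial content that your appeal to ``classical'' solvability skips.

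This is also where your route diverges from the paper's, which never solves any boundary-value problem carrying the ill-signed $\varepsilon v$: it writes down explicit profiles for the degenerate operator itself --- a paraboloid-type global function and, at each $z\in\partial\Omega$, the exterior-sphere power barriers $v_z(x)=\mathrm{K}\left(r^{-\alpha_0}-|x-x_z|^{-\alpha_0}\right)$ with $\lambda(\alpha_0+2)-n\Lambda\ge 1$ --- and chooses $\mathrm{K}$, $\mathrm{C}_\delta$ and $\mathrm{M}_{\ast}$ so large that the Hessian contribution $F(x,D^2v_z)\ge \mathrm{K}\alpha_0|x-x_z|^{-(\alpha_0+2)}$ dominates both $\|f\|_{L^{\infty}(\Omega)}/\mathrm{L}_1$ and the zero-order term, the latter absorbed pointwise through the crude bound $\varepsilon v_{z,\delta}\ge -\|g\|_{L^{\infty}(\partial\Omega)}$, valid for all $\varepsilon,\delta\in(0,1)$; the candidate equal to $g$ on $\partial\Omega$ is then obtained by taking minima and an infimum over $z$ and $\delta$ of these explicit barriers. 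If you want to keep your strategy of delegating to an auxiliary uniformly elliptic problem, you must strip the $\varepsilon v$ term out of that problem and absorb it by hand, which leads you back to pointwise barrier constructions of exactly this explicit type; as written, the assertion that ``the zero-order coefficient $\varepsilon>0$ makes the operator proper'' is incorrect and the proof does not close.
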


\begin{proof}
	For every $\varepsilon \in (0, 1)$, let us prove the existence of a continuous viscosity subsolution $u_1$ of \eqref{appro_problem} agreeing with $g$ on $\partial\Omega$. The supersolution case follows from an analogous argument.	
	
	 Firstly, we will construct a global supersolution to \eqref{appro_problem}. For this end, set $	 \mathfrak{C}_0 \defeq \frac{\|f\|_{L^\infty(\Omega)}}{\mathrm{L}_1}$ and choose a point $x_0$ such that $\dist(x_0,\partial\Omega)\geq 1$. Define
	 \[
	 \mathrm{M}_{\ast} \defeq \max\{\mathfrak{C}_0,\lambda n\}
	 \]
	 and a function
	\[
	v_1(x) \defeq \overline{\mathrm{M}}-\frac{\mathrm{M}_{\ast}}{2\lambda n}|x-x_0|^2,
	\]
	where $\overline{\mathrm{M}}$ is chosen such that $v_1>\|g\|_{L^\infty(\partial\Omega)}$ on $\partial\Omega$.
	
	Observe that, for every $x\in\Omega$
	{\scriptsize{
	\begin{equation*}
		\begin{aligned}
			&\mathcal{H}(x, \varepsilon+\nabla v_1(x))[\varepsilon v_1(x) + F(x, D^2 v_1(x))]\\
			&\geq L_1 \cdot \mathcal{K}_{p, q, \mathfrak{a}}(x, \varepsilon+|\nabla v_1(x)|)[\varepsilon v_1(x) + F(x, D^2 v_1(x))]\\
			&=\mathrm{L}_1(\varepsilon+|\nabla v_1(x)|)^{p(x)}[\varepsilon v_1(x) + F(x, D^2 v_1(x))]+\mathfrak{a}(x)\mathrm{L}_1(\varepsilon+|\nabla v_1(x)|)^{q(x)}[\varepsilon v_1(x) + F(x, D^2 v_1(x))]\\
			&\geq \mathrm{L}_1\mathrm{M}_{\ast}(1+\mathfrak{a}(x))\\
			&\geq \mathrm{L}_1\mathrm{M}_{\ast}\\
			&\geq f(x).
		\end{aligned}
	\end{equation*}}}
	Now, let $z\in\partial\Omega$ be a fixed point. Since $\Omega$ satisfies a uniform exterior sphere condition, we can choose $x_z$ such that $\overline{B_r(x_z)}\cap\overline \Omega=\{z\}$ with $r=|z-x_z|$. Next, set $\mathrm{R} \defeq r+\diam(\Omega)$. In what follows, we define the function
	\[
	v_z(x)\defeq \mathrm{K}(r^{-\alpha_0}-|x-x_z|^{-\alpha_0})
	\]
	where $\alpha_0>2, \mathrm{K}>0$.
	Notice that $v_z(z)=0$, $v_z(x)>0$ in $\Omega$ and
	\[
	Dv_z(x)=\mathrm{K}\alpha_0 \frac{x-x_z}{|x-x_z|^{\alpha_0+2}}
	\]
	and
	\[
	D^2v_z(x)=\mathrm{K}\alpha_0 \frac{\mathrm{Id}_n}{|x-x_z|^{\alpha_0+2}}-\mathrm{K}\alpha_0 (\alpha_0+2)\frac{(x-x_z)\otimes(x-x_z)}{|x-x_z|^{\alpha_0+4}}.
	\]
	Hence,
	\[
	|Dv_z|\geq \mathrm{K}\alpha_0\frac{1}{\mathrm{R}^{1+\alpha_0}}\quad \mbox{in}\quad \Omega.
	\]
	Now, let us observe that, if $\lambda(\alpha_0+2)-n\Lambda\geq 1$, we obtain
	\begin{equation*}
		\begin{aligned}
			F(D^2v_z(x))&\geq \mathrm{K}\alpha_0 \frac{1}{|x-x_z|^{\alpha_0+2}}(\lambda(\alpha_0+2)-n\Lambda)\\
			&\geq \mathrm{K}\alpha_0 \frac{1}{|x-x_z|^{\alpha_0+2}}.
		\end{aligned}
	\end{equation*}
	In the sequel, set $\mathrm{K}>0$ satisfying
$$
\mathrm{K} \geq \frac{1}{\alpha_0}\max\left\{\mathrm{R}^{1+\alpha_0}, \,\mathrm{R}^{1+\alpha_0}\left(\mathfrak{C}_0+\|g\|_{L^\infty(\partial\Omega)}\right)\right\}.
$$	

	Now, let $\delta \in (0, 1)$ be fixed. For constants $\mathrm{C}_\delta\geq 1$, define the functions
	\[
	v_{z,\delta}(x) \defeq g(z)+\delta+\mathrm{C}_\delta v_z(x),
	\]
	such that $v_{z,\delta}\geq g$ on $\partial\Omega$. We stress that the constants $C_\delta$ dependent only on the modulus of continuity of $g$ and are independent of $z$.
	
	Therefore,
{\scriptsize{
$$
\begin{array}{rl}
  \mathcal{H}(x, \varepsilon+\nabla v_{z,\delta}(x))[\varepsilon v_{z,\delta}(x) + F(x, D^2 v_{z,\delta}(x))] & \geq \mathrm{L}_1(\varepsilon+|\nabla v_{z,\delta}(x)|)^{p(x)}[\varepsilon v_{z,\delta}(x) + F(x, D^2 v_{z,\delta}(x))] \\
   & \geq \mathrm{L}_1\left(-\|g\|_{L^\infty(\partial\Omega)}+\mathrm{C}_\delta \mathrm{K}\alpha_0\frac{1}{\mathrm{R}^{2+\alpha_0}}\right) \\
   & \geq \mathrm{L}_1\mathrm{K} \\
   & \ge f(x) \quad \text{for every} \quad \varepsilon,\delta \in (0, 1)\quad \text{and} \quad x\in\Omega.
\end{array}
$$}}

 This means that, for every $\varepsilon,\delta \in (0, 1)$ and $z\in\partial\Omega$, the functions $v_{z,\delta}$ are subsolutions of \eqref{appro_problem}. Hence,
	\[
	\hat{v}_{z,\delta}(x)\defeq \min\{v_{z,\delta}(x),v_1(x)\}
	\]
	are viscosity subsolutions of \eqref{appro_problem}. Observe that, if we define
	\[
	u_1(x)\defeq \inf\{\hat{v}_{z,\delta}(x):\,z\in\partial\Omega \quad\text{and} \quad \delta \in (0, 1)\},
	\]
	we can conclude that $u_1$ is a viscosity subsolution to \eqref{appro_problem} such that $u_1=g$ on $\partial\Omega$.
\end{proof}

Finally, by combining Theorem \ref{comparison principle}, Lemma \ref{existence_sub_super} and the Perron's method we can derive the existence of a viscosity solution to the approximating equations \eqref{appro_problem}. Finally, the desired existence of a viscosity solution to \eqref{1.1} is obtained.

\begin{proof}[{\bf Proof of Theorem \ref{existence}}]
	Let $\varepsilon \in (0, 1)$ fixed. From the above discussion, there exists a viscosity solution $u_\varepsilon$ to \eqref{appro_problem} such that $u_1\leq u_{\varepsilon}\leq u_2$ and $u_\varepsilon=g$ on $\partial\Omega$.

Since the sequence $(u_\varepsilon)_{0<\varepsilon<1}$ is uniformly bounded in $\mathcal{C}^{0, \gamma}_{loc}(\Omega)$ (see \cite{daSRRV21} and \cite{DSVI}), through a subsequence if necessary, we have that $u_\varepsilon$ converges to a function $u_{\infty}$ as $\varepsilon\to0$. From standard stability results (see Lemma \ref{P1}) we conclude that $u_\infty$ solves
	\[
	\mathcal{H}(x, Du_{\infty})F(x,D^2u_\infty)=f(x)\quad \mbox{in}\quad\Omega, \quad \text{with}\quad  u_\infty=g\,\,\, \text{on}\,\,\, \partial\Omega.
	\]
\end{proof}

%%%%%%%%%%%%%%%%%%%%%%%%%%%%%%%%%%%%%%%

	\subsection{Global equi-continuity of solutions: Proof of Theorem \ref{Xizao}}
	
	In this intermediate section we establish global H\"{o}lder and Lipschitz estimates (equi-continuity) for viscosity solutions of
		\begin{equation} \label{3.1}
			\left\{
			\begin{array}{rcrcl}
				\mathcal{H}(y, Du + \zeta)F( D^2 u) & = & f(y) & \text{in} & B_1(x) \cap \{y_n >\phi(y^{\prime})\}\\
				u(y) & = & g(y) & \text{on} & B_1(x) \cap \{y_n = \phi(y^{\prime})\},
			\end{array}
			\right.
		\end{equation}

	As a matter of fact, such equi-continuity is a direct consequence of a series of auxiliary Lemmas, which yield compactness with respect to topology of uniform convergence.

Differently from \cite{Wint09} (see also \cite{daSN21}), we perform a simplest approach, which allows us to carry out the proof without using a change of variable scheme of viscosity solutions (a diffeomorphism for flat boundaries), which is a perfect fit for uniformly elliptic operators, but more technical for degenerate ones (cf. \cite{BV21}). In fact, the strategy of proofs used in this session, follows with corresponding adjustments to our unbalanced degeneracy setting, the ones from Berindelli-Demengel's works \cite{BD2} and \cite{BD16}.

    The first step towards establishing a Global H\"{o}lder regularity is obtaining a uniform estimate in terms of distance map to the hyper-surface points $\left\{y_{n}=\phi(y^{\prime}) \right\}$.

	\begin{lemma}\label{comparelemma} Let $g \in C^{1,\beta_g}$ be a boundary datum and $\phi$ a function as above. If $d_{\Omega}: \overline{\Omega} \to [0, +\infty)$ is the distance to the hyper-surface $\{y_{n}=\phi(y^{\prime})\}$, then, for every $\gamma, r \in (0, 1)$, there exist $\delta_{0}  = \delta_0(\lvert \lvert f \rvert\rvert_{\infty}, \lambda, \Lambda, p_{\text{min}}, q_{\text{max}}, r, \Omega, \mathrm{Lip}_{g}(\partial \Omega))$, such that for every $\delta \in (0, \delta_{0})$, if $u$ is a normalized viscosity solution, i.e. $\|u\|_{L^{\infty}(\Omega)}\leq 1$, of
		
\begin{equation}\label{dirichlet}
  		\left\{
		\begin{array}{rcl}
			\mathcal{H}(y, \nabla u)F(y,D^{2}u)=f(y) \ &\text{in}&  \mathrm{B}\cap\{y_{y}>\phi(y^{\prime})\} \\
			u(y) = g(y) 			& \text{on}&  \mathrm{B}\cap\{y_{n}=\phi(y^{\prime})\}
		\end{array}
		\right.
\end{equation}
 then the following estimate holds
		\begin{equation*}
			\lvert u(y^{\prime},y_{n})-g \rvert \leq \frac{2}{\delta}\frac{d_{\Omega}(y)}{1+d^{\gamma}_{\Omega}(y)} \quad \text{in} \quad B_{r}(0)\cap\{y_{n}>\phi(y^{\prime})\}.
		\end{equation*}
	\end{lemma}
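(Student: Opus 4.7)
The plan is to establish the estimate by comparison with an explicit barrier built from the profile $h(t)\defeq t/(1+t^\gamma)$, which is dictated by the shape of the right-hand side of the desired bound. By passing to $-u$ (which satisfies an equation of the same structural form, with $F(y,\mathrm{X})$ replaced by $\widetilde F(y,\mathrm{X}) \defeq -F(y,-\mathrm{X})$ and $f$ by $-f$, both admissible under (A0)--(A2)), it suffices to prove the one-sided bound $u(y)-g(y_0)\le\tfrac{2}{\delta}h(d_\Omega(y))$, where $y_0=(y_0',\phi(y_0'))$ realizes $|y-y_0|=d_\Omega(y)$. The strategy is standard: construct $\Psi_{y_0}^+$ which (i) dominates $u$ on the boundary of $B_r(0)\cap\{y_n>\phi(y')\}$ and (ii) is a viscosity supersolution of \eqref{dirichlet} in the interior, then invoke the Comparison Principle (Theorem \ref{comparison principle}).

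\textbf{Choice of barrier.} Take
\[
\Psi_{y_0}^+(z)\defeq g(y_0)+\mathrm{Lip}_g(\partial\Omega)\,|z'-y_0'|+\tfrac{2}{\delta}\,h(d_\Omega(z)),
\]
defined in a tubular neighborhood of $y_0$; since $\phi\in C^2$, the distance $d_\Omega$ is $C^2$ there with $|\nabla d_\Omega|=1$ and bounded $D^2 d_\Omega$. On $\{z_n=\phi(z')\}\cap\overline{B_r}$ one has $d_\Omega(z)=0$, so by the Lipschitz continuity of $g$ the tangential term already dominates $g(z)-g(y_0)=u(z)-g(y_0)$. On $\partial B_r\cap\{z_n\ge\phi(z')\}$ the distance $d_\Omega$ is bounded below by a constant $c(r)>0$, and since $\|u\|_{L^\infty}\le 1$ the term $(2/\delta)h(c(r))$ dominates once $\delta<\delta_0(r,\gamma)$. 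The key analytic properties of the profile are $h'(t)\in[1-\gamma,1]$ and
\[
h''(t)=-\tfrac{\gamma\bigl(1+\gamma+(1-\gamma)t^\gamma\bigr)\,t^{\gamma-1}}{(1+t^\gamma)^3}\le -c_\gamma\,t^{\gamma-1},\qquad t\in(0,1],
\]
so that the Hessian of $h(d_\Omega)$ has a diverging negative eigenvalue in the normal direction.

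\textbf{Supersolution check and conclusion.} Where $\Psi_{y_0}^+$ is smooth, the gradient has norm comparable to $1/\delta$ for $\delta$ small, and the degeneracy law \eqref{1.2} gives $\mathcal H(z,\nabla\Psi_{y_0}^+)\ge \mathrm{L}_1\,\delta^{-p_{\min}}$. Applying $F(z,\cdot)\le\mathscr M^+_{\lambda,\Lambda}$ to the Hessian, and bounding the tangential contributions coming from $D^2 d_\Omega$ and (after mild regularization) from $\mathrm{Lip}_g\,|z'-y_0'|$ against the principal term $(2/\delta)h''(d_\Omega)\,\nabla d_\Omega\otimes\nabla d_\Omega$, one obtains
\[
F(z,D^2\Psi_{y_0}^+)\le -\tfrac{\lambda c_\gamma}{\delta}\,d_\Omega^{\gamma-1}+\tfrac{C}{\delta}.
\]
Choosing $\delta_0$ small in terms of $\lambda,\Lambda,p_{\min},q_{\max},r,\Omega,\|f\|_\infty,\mathrm{Lip}_g$ makes the product $\mathcal H\,F$ bounded above by $-\|f\|_{L^\infty(\Omega)}\le f(z)$, confirming that $\Psi_{y_0}^+$ is a classical, hence viscosity, supersolution. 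Theorem \ref{comparison principle} then delivers $u(y)\le\Psi_{y_0}^+(y)=g(y_0)+\mathrm{Lip}_g\,d_\Omega(y)+\tfrac{2}{\delta}h(d_\Omega(y))$; a further shrinking of $\delta_0$ absorbs the Lipschitz term into the $h$-term (using $\mathrm{Lip}_g\,t\le c(\delta)\,h(t)$ near $t=0$), and the symmetric lower bound follows by the $u\mapsto-u$ reduction.

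\textbf{Main obstacle.} The principal difficulty is verifying the supersolution inequality uniformly up to the boundary and uniformly in the unbalanced, variable-exponent structure. The tangential correction $\mathrm{Lip}_g|z'-y_0'|$ possesses a singular distributional Hessian of order $\mathrm{Lip}_g/|z'-y_0'|$, which near $y_0$ threatens to overwhelm the favorable quantity $(1/\delta)\,d_\Omega^{\gamma-1}$ produced by $h''$. This must be resolved either by replacing $|z'-y_0'|$ with the smooth surrogate $(|z'-y_0'|^2+\eta^2)^{1/2}$ for an $\eta=\eta(\delta)$ calibrated to $\delta$ (which caps the tangential Hessian at $\mathrm{Lip}_g/\eta$), or by treating the cone $\{d_\Omega\le c|z'-y_0'|\}$ separately through the Lipschitz bound on $g$ without invoking the PDE. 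Simultaneously, the lower bound on $\mathcal H(z,\nabla\Psi_{y_0}^+)$ must be uniform in $p(\cdot)$, $q(\cdot)$, and $\mathfrak a(\cdot)$, which is where the two-sided bound \eqref{1.2} together with $0<p_{\min}\le p\le q\le q_{\max}$ and $\mathfrak a\ge 0$ enters — explaining the explicit dependence of $\delta_0$ on $p_{\min},q_{\max},\|f\|_\infty$ and $\mathrm{Lip}_g$.
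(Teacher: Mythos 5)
Your overall strategy (comparison against a barrier built from the profile $t\mapsto \frac{2}{\delta}\frac{t}{1+t^{\gamma}}$ composed with $d_{\Omega}$) is the same as the paper's, but two steps in your construction do not work as written. First, the lateral boundary: your claim that $d_{\Omega}\ge c(r)>0$ on $\partial B_{r}\cap\{z_{n}\ge\phi(z^{\prime})\}$ is false, because $0\in\{y_{n}=\phi(y^{\prime})\}$ and the sphere $\partial B_{r}$ meets the hypersurface; near that intersection $d_{\Omega}$ is arbitrarily small, the term $\frac{2}{\delta}\frac{d_{\Omega}}{1+d_{\Omega}^{\gamma}}$ is tiny, and you have no quantitative control of $u$ there (a boundary modulus of continuity for $u$ is precisely what the lemma is proving), so the domination of $u$ on $\partial\big(B_{r}\cap\{z_{n}>\phi(z^{\prime})\}\big)$ is not established. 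The paper avoids this by comparing only in the slab $\Omega_{\delta}=\{d_{\Omega}<\delta\}$ inside the large ball $\mathrm{B}$ and adding to the barrier the cubic bump $\frac{(|y|-r)^{3}}{(1-r)^{3}}$ for $|y|\ge r$: on $\{d_{\Omega}=\delta\}$ the barrier is at least $\frac{2}{1+\delta^{\gamma}}\ge 1\ge\|u\|_{\infty}$, on the outer sphere the bump is of size one, and the estimate is then asserted only in $B_{r}$, where the bump vanishes; the region $\{d_{\Omega}\ge\delta\}$ is handled trivially by $\|u\|_{\infty}\le1$ (plus the interior H\"older estimate later). This restriction also strengthens the favorable term in the supersolution check to order $\delta^{\gamma-2}$, which is what makes the final inequality in $\delta$ easy to satisfy.

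Second, the nonzero boundary datum. The obstacle you flag is real and your sketch does not resolve it: the cone $\mathrm{Lip}_{g}|z^{\prime}-y_{0}^{\prime}|$ is convex with Hessian of size $\mathrm{Lip}_{g}/|z^{\prime}-y_{0}^{\prime}|$, entering $\mathscr{M}^{+}_{\lambda,\Lambda}$ with the wrong sign; away from the hypersurface (say $d_{\Omega}\sim r$, where the good term is only of order $1/\delta$) the smoothing parameter must satisfy $\eta\gtrsim C\delta$, and then the final bound inherits the additive constant $\mathrm{Lip}_{g}\eta\sim\mathrm{Lip}_{g}\delta$, which cannot be absorbed into $\frac{2}{\delta}\frac{d_{\Omega}(y)}{1+d_{\Omega}^{\gamma}(y)}$ once $d_{\Omega}(y)\ll\delta^{2}$ — i.e. exactly near the boundary, where the estimate matters; the alternative of "treating the cone region without the PDE" is circular, since controlling $u$ there is the content of the lemma. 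The paper's device is different and sidesteps the issue entirely: it adds to the barrier the solution $\Psi$ of $\mathscr{M}^{+}_{\lambda,\Lambda}(D^{2}\Psi)=0$ in $\mathrm{B}\cap\{y_{n}>\phi(y^{\prime})\}$ with $\Psi=g$ on the flat part, which is $C^{1,\beta}$ up to the boundary with $\|\Psi\|_{\infty}\le\|g\|_{\infty}$ and $\|D\Psi\|_{\infty}\le \mathrm{C}\,\mathrm{Lip}_{g}$; by subadditivity $\mathscr{M}^{+}_{\lambda,\Lambda}\big(D^{2}(v+\Psi)\big)\le\mathscr{M}^{+}_{\lambda,\Lambda}(D^{2}v)$ and the gradient lower bound $|D(v+\Psi)|\gtrsim 1/\delta$ persists for small $\delta$, so the supersolution verification is unchanged, and the $C^{1,\beta}$ boundary regularity of $\Psi$ converts the bound on $u-\Psi$ into the stated bound against $g$. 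If you replace your two devices by these (slab plus cubic bump for the lateral boundary; the Pucci-harmonic lifting $\Psi$ in place of the Lipschitz cone), your argument becomes essentially the paper's proof.
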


	\begin{proof}
		In the sequel, we will split the proof in two cases: when $g\equiv 0$ and the case where $g$ is not identically null.

In the first case, from ABP estimate (Theorem \ref{ABPthm}) we may suppose that $\|u \|_{L^{\infty}(\Omega)}\leq 1$, since by hypothesis the boundary data is null and we may consider that $f$ has small norm (via smallest regime).
		
As to the remainder of the proof, it is enough to analyze the set $\Omega_{\delta} \defeq \{y \in \Omega: d(y)<\delta\}$, since the complementary setting follows from the fact that $\|u \|_{L^{\infty}(\Omega)}\leq 1$ combined with local H\"{o}lder estimates (\ref{HoldEstThm})
{\scriptsize{
$$
\|u\|_{C^{0, \alpha^{\prime}}(\Omega^{\prime})} \leq \mathrm{C}(\verb"universal", \dist(\Omega^{\prime}, \partial \Omega))\left(\|u \|_{L^{\infty}(\Omega)} + \max\left\{\left\|\frac{f}{1+\mathfrak{a}}\right\|^{\frac{1}{{p_{\text{min}}+1}}}_{L^n(\Omega)}, \left\|\frac{f}{1+\mathfrak{a}}\right\|^{\frac{1}{{q_{\text{max}}}+1}}_{L^n(\Omega)}\right\}\right).
$$}}

		At first, let us consider $\delta <\eta$ such that $d_{\Omega}(y)<\eta$. It is well-known that $d_{\Omega} \in C^{2}$. Additionally,  $\| D^{2} d_{\Omega} \|_{L^{\infty}(\Omega)} \le \mathrm{K}$. In the following lines, we must build up a viscosity super-solution such that
		\begin{equation}\label{Gzao}
			\mathcal{H}(y, \nabla v)\mathscr{M}^{+}_{\lambda,\Lambda}(D^{2} v)<-\|\lvert f \|_{L^{\infty}(\Omega)} \quad \text{in} \quad  B\cap\{y_{n}>\phi(y^{\prime})\} \cap \Omega_{\delta}
		\end{equation}
with the boundary condition
\begin{equation*}
		v \geq u \quad \text{on} \quad \partial\left(\mathrm{B}\cap\{y_{n}>\phi(y^{\prime})\} \cap \Omega_{\delta}\right)
	\end{equation*}

		We aim that the following profile is the desired function
		\begin{equation}
			v(y)=\left\{
			\begin{array}{lcl}
				\frac{2}{\delta}\frac{d_{\Omega}(y)}{1+d^{\gamma}_{\Omega}(y)} & \text{for} & \lvert y \rvert <r  \\
				\frac{2}{\delta}\frac{d_{\Omega}(y)}{1+d^{\gamma}_{\Omega}(y)}+\frac{1}{(1-r)^{3}}(\lvert y\rvert -r)^{3} & \text{for} &  \lvert y \rvert\geq r,
			\end{array}
			\right.
		\end{equation}
		
		As verified in \cite[Lemma 2.2]{BD2}, the boundary condition is fulfilled.
				
		Now, in order to check that $v$ is a viscosity super-solution of \eqref{Gzao}, let us observe that
		
		$$
		Dv=\left\{
		\begin{array}{lcl}
			\frac{2}{\delta}\frac{1+(1-\gamma)d^{\gamma}_{\Omega}}{(1+d^{\gamma}_{\Omega})^{2}}Dd_{\Omega} & \text{for} & \lvert y \rvert <r \\
			\frac{2}{\delta}\frac{1+(1-\gamma)d^{\gamma}_{\Omega}}{(1+d^{\gamma}_{\Omega})^{2}}Dd_{\Omega}+\frac{y}{\lvert y\rvert}\frac{1}{(1-r)^{2}}(\lvert y\rvert -r)^{2} & \text{for} & \ \lvert y \rvert\geq r
		\end{array}
		\right.
		$$
is non-degenerate, this is
		
$$
   \vert Dv \rvert\geq \frac{1}{4\delta} \quad \text{provided}  \quad \delta\leq \frac{1-r}{12}.
$$
In effect, we have that $v \in C^{2}$. Furthermore,
		
		$$
		\begin{array}{lcl} \displaystyle
			D^{2}v \displaystyle &=& -\Big(\frac{2\gamma d^{\gamma -1}_{\Omega}}{\delta}\Big)\frac{(1+\gamma)+(1-\gamma)d^{\gamma}_{\Omega}}{(1+d^{\gamma}_{\Omega})^{3}}Dd_{\Omega}\otimes Dd_{\Omega} + \frac{2}{\delta}\frac{1+(1-\gamma)d^{\gamma}_{\Omega}}{(1+d^{\gamma}_{\Omega})^{2}}D^{2}d_{\Omega} \\
			\displaystyle &+& \frac{3(\lvert y\rvert -r)}{(1-r)^{3}}\Big\{ \Big(\mathrm{Id}_n- \frac{y\otimes y}{\lvert y \rvert^{2}}\Big)\frac{(\lvert y \rvert -r)}{\lvert y\rvert}+2\frac{ y\otimes y}{\lvert y\rvert^{2}}\Big\}.
		\end{array}
		$$
		
		Hence,
		$$
		\begin{array}{lcl}
			\mathscr{M}^{+}_{\lambda,\Lambda}(D^{2}v) &\leq& \Big\{\frac{-2\gamma \lambda d^{\gamma-1}_{\Omega}}{\delta}\frac{(1+\gamma)+(1-\gamma)d^{\gamma}_{\Omega}}{(1+d^{\gamma}_{\Omega})^{3}}+\frac{2n\Lambda \mathrm{K}}{\delta}\frac{1+(1-\gamma)d^{\gamma}_{\Omega}}{(1+d^{\gamma}_{\Omega})^{2}}+ \frac{6\Lambda n}{(1-r)^{2}}\Big\}\\
			\displaystyle &\leq & \Big\{-(2\gamma \delta^{\gamma-2}\lambda)\frac{(1+\gamma)}{(1+\delta^{\gamma})^{3}}+\frac{2}{\delta}n\mathrm{K}\Lambda+\frac{6n\Lambda}{(1-r)^{2}} \Big\}
		\end{array}
		$$
		
		Now, in order to obtain \eqref{Gzao}, we must impose
		
		\begin{equation*}
		\mathscr{M}^{+}_{\lambda,\Lambda}(D^{2}v) <-\| f\|_{L^{\infty}(\Omega)}(\mathrm{L}_{1}\mathcal{K}_{p,q,\mathfrak{a}}(y,\lvert Dv\rvert)))^{-1},
		\end{equation*}		
		which it is satisfied precisely when (for $\delta<\min\left\{\eta, \frac{1-r}{12}\right\}$):
		$$
\frac{2\gamma\lambda(1+\gamma)\delta^{\gamma}}{\delta^{2}(1+\delta^{\gamma})^{3}}>\frac{2}{\delta}n\mathrm{K}\Lambda+\frac{6n\Lambda}{(1-r)^{2}}+\frac{\| f\|_{L^{\infty}(\Omega)}}{\mathrm{L}_{1}\min\Big\{\Big(\frac{1}{4\delta}\Big)^{q_{max}},\Big(\frac{1}{4\delta}\Big)^{p_{min}}\Big\}}
		$$

		From Comparison Principle (Theorem \ref{comparison principle}) $u \leq v$ in $B\cap\{y_{n}>\phi(y^{\prime})\} \cap \Omega_{\delta}$. To finish this case, we replace
		$v$ by $-v$ in the previous computations and we restricted the analysis to the set $B_{r}\cap\{y_{n}>\phi(y^{\prime})\}$.

Now, for the case $g \ne 0$, let us consider $\Psi$ the unique solution of
		
		$$
		\left\{
		\begin{array}{rcccl}
			\mathscr{M}^{+}_{\lambda,\Lambda}(D^{2} \Psi) & = &0  &\text{in} & \mathrm{B}\cap\{y_{n}>\phi(y^{\prime})\} \\
			\Psi & = & g & \text{on} & \mathrm{B}\cap \{y_{n}=\phi(y^{\prime})\}.
		\end{array}
		\right.
		$$
		
		Remember that $\Psi \in C^{1,\beta}(\mathrm{B}\cap\{y_{n}\geq \phi(y^{\prime})\})\cap C^{2}(\mathrm{B}\cap \{y_{n}> \phi(y^{\prime})\})$. Moreover, the function $\Psi$ can be chosen so that
$$
\left\{
\begin{array}{l}
  \|\Psi \|_{L^{\infty}(\mathrm{B}\cap\{y_{n}>\phi(y^{\prime})\})} \leq \| g\|_{L^{\infty}(\mathrm{B}\cap \{y_{n}=\phi(y^{\prime})\})} \leq 1  \\
  \|D \Psi \|_{L^{\infty}(\mathrm{B}\cap\{y_{n}>\phi(y^{\prime})\})} \leq \mathrm{C}(\lambda,\Lambda,n,\Omega)\| Dg\|_{L^{\infty}(\mathrm{B}\cap \{y_{n}=\phi(y^{\prime})\})}.
\end{array}
\right.
$$

Now, let us consider the function		
		$$
		w(y)=\left\{
		\begin{array}{lcl}
			\frac{2}{\delta}\frac{d_{\Omega}(y)}{1+d^{\gamma}_{\Omega}(y)} + \Psi(y) & \text{for} & \lvert y \rvert <r \\
			\frac{2}{\delta}\frac{d_{\Omega}(y)}{1+d^{\gamma}_{\Omega}(y)}+\frac{1}{(1-r)^{3}}(\lvert y\rvert -r)^{3}+\Psi(y)\ &\text{for}& \ \lvert y \rvert\geq r
		\end{array}
		\right.
		$$
		
		As in the previous case, we are able to show that
		
$$
		v \geq u \quad \text{on} \quad \partial (\mathrm{B}\cap\{y_{n}>\phi(y^{\prime})\}\cap \Omega_{\delta}).
$$

Finally, for $\delta>0$ sufficiently small, we have that $w$ fulfills \eqref{Gzao} as well. From this point, the proof follows as in \cite[Lemma 2.2]{BD2}, thereby finishing the Lemma.
	\end{proof}

	In the sequel, by combining the estimate from previous Lemma \ref{comparelemma} and the Ishii-Lions Lemma, we obtain H\"{o}lder regularity of the Dirichlet problem \eqref{dirichlet} (up to the boundary).

	\begin{proof}[{\bf Proof of Theorem \ref{Xizao}}] Let $r_{1} \in \left(r, \frac{1}{2}\right)$ be fixed. Without loss of generality, we suppose that $\| u \|_{\infty}\leq 1$. Let $x_{0} \in B_{r} \cap \{y_{n}>\phi(y^{\prime})\}$ and $\Xi$ the function, defined as
		\begin{equation}
			\Xi(x,y)=u(x)-u(y)-\mathrm{M}\lvert x-y \rvert^{\gamma}-\mathrm{L}\Big(\lvert x-x_{0}\rvert^{2}+\lvert y-x_{0} \rvert^{2} \Big)
		\end{equation}
		
	We are going to show that for $\mathrm{L}, \mathrm{M} \gg 1$ large enough (independent of $x_{0}$), we get
		
		\begin{equation}\label{nonpositive}
			\Xi(x,y) \leq 0 \quad \text{in} \quad (B_{r_{1}}\cap\{y_{n}>\phi(y^{\prime})\})^{2}
		\end{equation}
which will imply that $u \in C^{0, \gamma}(B_{r}\cap\{y_{n}>\phi(y^{\prime})\})$, when $x=x_{0}$ and allowing this vary. For to show \eqref{nonpositive} at those points where $y_{n}=\phi(y^{\prime})$, we use the Lemma \ref{comparelemma}, which ensures that there exists $\mathrm{K}_{0}>0$ such that
		
		\begin{equation*}
			\lvert u(x) - g(x^{\prime})\rvert \leq \mathrm{K}_{0}d(x,\partial \Omega) \quad \text{for} \quad x \in B_{r_{1}}\cap\{y_{n}>\phi(y^{\prime})\}.
		\end{equation*}

		In effect, since $\lvert x^{\prime}-y^{\prime} \rvert \leq \lvert x-y \rvert$, we have
{\scriptsize{		
		$$
		\begin{array}{lcl}
			\lvert u(x^{\prime},x_{n})-u(y^{\prime},\phi(y^{\prime})) \rvert \displaystyle & \leq & \lvert u(x^{\prime},x_{n})-u(x^{\prime},\phi(x^{\prime})) \rvert + \lvert u(x^{\prime},\phi(x^{\prime}))-u(y^{\prime},\phi(y^{\prime})) \rvert
			\\
			\displaystyle &\leq& \mathrm{K}_{0}d(x,\partial\Omega)+\mathrm{Lip}_{g}(\partial \Omega)\lvert x^{\prime}-y^{\prime}\rvert
			\\
			\displaystyle & \leq & \mathrm{K}_{0}\lvert x-(y^{\prime},\phi(y^{\prime})) \rvert + \mathrm{Lip}_{g}(\partial \Omega)\lvert x-(y^{\prime},\phi(y^{\prime})) \rvert
		\end{array}
		$$}}

As a result, if we select $\mathrm{M} > \mathrm{K}_{0}+\mathrm{Lip}_{g}(\partial \Omega)$ (large enough), then

		$$
		\begin{array}{lcl}\label{Gamado}
			\displaystyle \Xi(x,y)
			\displaystyle & \leq & \lvert u(x)-u(y)\rvert-\mathrm{M}\lvert x-y \rvert^{\gamma}-\mathrm{L}\Big(\lvert x-x_{0}\rvert^{2}+\lvert y-x_{0}\rvert^{2} \Big) \\
			\displaystyle & \leq & (\mathrm{K}_{0}+\mathrm{Lip}_{g}(\partial \Omega))\lvert x-y \rvert-\mathrm{M}\lvert x-y \rvert^{\gamma}-\mathrm{L}\Big(\lvert x-x_{0}\rvert^{2}+\lvert y-x_{0}\rvert^{2} \Big)
			\\
			\displaystyle & \leq & \mathrm{M}(\lvert x-y \rvert - \lvert x-y \rvert^{\gamma})-\mathrm{L}\Big(\lvert x-x_{0}\rvert^{2}+\lvert y-x_{0}\rvert^{2} \Big) \leq 0.
		\end{array}
		$$
		where do we get the thesis %{\color{red}NAO ENTENDI A CONCLUSAO!}
		$$
		\Xi(x,y)\leq 0 \quad \text{in} \quad (B_{r_{1}}\cap\{y_{n}=\phi(y^{\prime})\})^{2}.
        $$
		
		In order to verify the desired estimate on the rest of the boundary, it is enough to select $\mathrm{L}>\frac{4}{(r_{1}-r)^{2}}$ and remember that $\|u\|\leq 1$. Finally, if $\mathrm{L}$ is large enough and $\mathrm{M}$ is such that

		$$
		\mathrm{M} > \mathrm{C}\lvert\lvert f \rvert\rvert_{\infty}\lvert \hat{a}-\hat{b} \rvert^{2-\gamma}\Gamma^{-1},
		$$
where $\Gamma>0$ will be defined soon in \eqref{GAMMA}.

		In this point, let us suppose for sake of contradiction that $\Xi(x,y)>0$ for some $(x,y) \in (B_{r_{1}}\cap\{y_{n}>\phi(y^{\prime}) \})^{2}$, then there is $(\hat{a},\hat{b})$ such that
		
		\begin{equation}
			\Xi(\hat{a},\hat{b})= \sup\limits_{\overline{B_{r_{1}}}} \,\Xi(x,y)>0.
		\end{equation}
		
		It is worth to stress that $\hat{a} \ne \hat{b}$, otherwise the result holds true. Moreover, the choice of $\mathrm{L}$ guarantees that $\hat{a},\hat{b} \in B_{\frac{r_{1}+r}{2}} \cap\{y_{n}>\phi(y^{\prime})\}$. Thus, from the Ishii-Lions Lemma (see \cite[Theorem 3.2]{CIL92}) for every $\varepsilon >0$ depending on the norm of $\mathcal{Q}\defeq D^{2}(\mathrm{M}\lvert x-y\rvert^{\gamma})$, there exist $\mathrm{X} ,\mathrm{Y} \in \text{Sym}(n)$ such that
		
		\begin{equation}\label{superasati}
			(\gamma \mathrm{M}(\bar{a}-\bar{b})\lvert \hat{a}-\hat{b}\rvert^{\gamma -2}+2L(\hat{a}-x_{0}), \mathrm{X}) \in \overline{\mathcal{J}^{2,+}}u( \hat{a})
		\end{equation}
		
		\begin{equation}\label{inferaexistencia}
			(\gamma \mathrm{M}(\hat{a}-\hat{b})\lvert \hat{a}-\hat{b}\rvert^{\gamma -2}-2\mathrm{L}(\hat{b}-x_{0}),-\mathrm{Y}) \in \overline{\mathcal{J}^{2,-}}u( \hat{b})
		\end{equation}
		
		with
		
		\begin{equation}\label{matricialidentidade}
			\left(
			\begin{array}{cc}
				\mathrm{X} & 0 \\
				0 & \mathrm{Y} \\
			\end{array}
			\right)
			\leq  \left(
			\begin{array}{cc}
				\mathcal{Q} & -\mathcal{Q} \\
				-\mathcal{Q} & \mathcal{Q} \\
			\end{array}
			\right)+
			(2\mathrm{L}+\varepsilon)
			\left(
			\begin{array}{cc}
				\mathrm{Id}_{n} & 0 \\
				0 & \mathrm{Id}_{n} \\
			\end{array}
			\right),
		\end{equation}
		
		Next, let
$$
a_{x}=\gamma \mathrm{M}(\hat{a}-\hat{b})\lvert \hat{a}-\hat{b}\rvert^{\gamma -2}+2\mathrm{L}(\hat{a}-x_{0}) \quad \text{and} \quad b_{y}=\gamma \mathrm{M}(\hat{a}-\hat{b})\lvert \hat{a}-\hat{b}\lvert^{\gamma -2}-2\mathrm{L}(\hat{b}-x_{0}).
$$
Since $\gamma \in (0, 1)$ as soon as $2\mathrm{L}(r+r_{1})\leq \frac{\gamma}{2}\mathrm{M}r_{1}^{\gamma -1}$, we have that $2\mathrm{L}\lvert \hat{a} -x_{0}\rvert\leq \frac{\gamma \mathrm{M}}{2}\lvert \hat{a}-\hat{b}\rvert^{\gamma -1}$, i.e

				\begin{equation}\label{estimateab}
	\frac{1}{2}\gamma \lvert \hat{a}-\hat{b} \rvert^{\gamma-1} \le \lvert a_{x}\rvert, \lvert b_{y}\rvert \le 2\gamma \mathrm{M}\lvert \hat{a}-\hat{b} \rvert^{\gamma -1}.
\end{equation}
		
		Now, by using \eqref{inferaexistencia} and \eqref{superasati}, the uniform ellipticity, the continuity of the coefficients of $F$, \eqref{estimateab},  the upper estimates for the norms $\lvert\lvert X+Y\rvert\rvert$, $\lvert\lvert X\rvert\rvert$ and $\lvert\lvert Y\rvert\rvert$ which  can be found in \cite[Proposition 2.3]{BD2} and finally \eqref{GAMMA}:

	\begin{equation}\label{GAMMA}
		\lvert \mathcal{H}(\hat{a}, a_{x})-\mathcal{H}(\hat{b},b_{y}) \rvert \displaystyle \leq \Gamma(\gamma,\lvert \hat{a}-\hat{b}\rvert, \mathrm{L}_{2}, \mathrm{L}_{1},\lvert\lvert \mathfrak{a}\rvert\rvert_{\infty},p_{\text{min}},q_{\text{max}}),
		\end{equation}
where
	$$
	\begin{array}{lcl}
		\Gamma \displaystyle &\defeq & c_{p,q, \mathfrak{a}}(2\gamma \mathrm{M}\lvert \hat{a}-\hat{b}\rvert^{\gamma-1})^{p_{\text{min}}+1}\omega_{p,q, \mathfrak{a}}(\lvert \hat{a}-\hat{b}\rvert )\\
		\displaystyle &+&c(\lvert\lvert \mathfrak{a}\rvert\rvert_{\infty},\mathrm{L}_{2},\mathrm{L}_{1})\max\Big\{ (2\gamma \mathrm{M}\lvert \hat{a}-\hat{b}\rvert^{\gamma-1})^{p_{\text{min}}},(2\gamma \mathrm{M}\lvert \hat{a}-\hat{b}\rvert^{\gamma-1})^{q_{\text{max}}} \Big\}
	\end{array}
	$$

		Finally, the upper estimates for the norms $\lvert\lvert \mathrm{X}+\mathrm{Y}\rvert\rvert$, $\lvert\lvert \mathrm{X}\rvert\rvert$ and $\lvert\lvert \mathrm{Y}\rvert\rvert$  can be found in \cite[Proposition 2.3]{BD2}.
		
		\small{$$
			\begin{array}{lcl}
				f(\hat{b}) & \leq & \displaystyle \mathcal{H}(\hat{b},b_{y})F(\bar{b},-\mathrm{Y}) \\
				& \leq & \displaystyle (\mathcal{H}(\hat{a},a_{x})+\Gamma)F(\hat{b},-\mathrm{Y}) \\
				& \leq & \mathcal{H}(\hat{a},a_{x})F(\hat{b},-\mathrm{Y})-\Gamma\mathscr{M}^{-}_{\lambda,\Lambda}(\mathrm{Y}) \\
				& \leq & \displaystyle \Big(F(\hat{a}, \mathrm{X})+\lvert\lvert \mathrm{X}\rvert\rvert \mathrm{C}_{\mathrm{F}}\omega(\lvert \hat{a}-\hat{b}\rvert)-\mathscr{M}^{-}_{\lambda,\Lambda}(\mathrm{X}+\mathrm{Y})\Big)\mathcal{H}(\hat{a},a_{x})-\Gamma\mathscr{M}^{-}_{\lambda,\Lambda}(\mathrm{Y}) \\
				& \leq & \displaystyle f(\hat{a})+\mathcal{H}(\hat{a},a_{x})\Big(\mathrm{C}_{\mathrm{F}}\lvert\lvert \mathrm{X}\rvert\rvert\omega(\lvert \hat{a}-\hat{b} \rvert) -\mathscr{M}^{-}_{\lambda,\Lambda}(\mathrm{X}+\mathrm{Y})\Big)-\Gamma\mathscr{M}^{-}_{\lambda,\Lambda}(\mathrm{Y})\\
				&\leq& \displaystyle f(\hat{a})+\mathcal{H}(\hat{a}, a_{x})\Big(\mathrm{C}_{\mathrm{F}}\lvert\lvert \mathrm{X}\rvert\rvert\omega(\lvert \hat{a}-\hat{b} \rvert)-\lambda \mathrm{C}\mathrm{M}\lvert \hat{a}-\hat{b}\rvert^{\gamma-2}\Big)-\mathrm{C}\mathrm{M}\Gamma\lvert \hat{a}-\hat{b}\rvert^{\gamma-2}\\
				&\leq& \displaystyle f(\hat{a})-\mathrm{C}\mathrm{M}\Gamma\lvert \hat{a}-\hat{b}\rvert^{\gamma-2}
			\end{array}
			$$
		}
		which yields a contradiction, provided $\mathrm{L}$ and $\mathrm{M}$ are large enough.

	\end{proof}
	
	When we invoke, in the Approximation Lemma (boundary case) \ref{L0}, we shall invoke the following Lipschitz estimate's near the boundary for a switched equation (in the vector input).
	
	\begin{theorem}({\bf Lipschitz estimates for large deviations})\label{pgrande} Let $g$ be a Lipschitz continuous datum. Assume that $u$ satisfies
		$$
%\label{dirichlet}
		\left\{
		\begin{array}{rcl}
			\mathcal{H}(x,pe_{n}+\nabla u)F(x,D^{2}u)=f \ &\text{in}& \ B_{1}(x)\cap\{y_{n}>0\} \\
			u = g \
			& \text{on}& \ B_{1}(x)\cap\{y_{n}=0\}
		\end{array}
		\right.
		$$
	with $\|u \|_{L^{\infty}(B_{1}(x)\cap\{y_{n}>0\})}\leq 1$ and $\lvert \lvert f\rvert\rvert_{L^{\infty}(B_{1}(x)\cap\{y_{n}>0\}\})}\leq \varepsilon<1$. Then, for all $r \in (0, 1)$, there exists $s_{0}=s_{0}(\lambda,\Lambda,n,q_{\text{max}},p_{\text{min}},r,\varepsilon_{0},\mathrm{Lip}_{g}(\partial \Omega))$, such that if $\lvert p \rvert > \frac{1}{s_{0}}$, then $u \in C^{0, 1}(B_{r}(x)\cap\{y_{n}>0\})$. Moreover,
$$
\|u\|_{C^{0, 1}(B_{r}(x)\cap\{y_{n}>0\})} \le   \mathrm{C}(\lambda,\Lambda,n,q_{\text{max}},p_{\text{min}},r,\varepsilon_{0},\mathrm{Lip}_{g}(\partial \Omega)).
$$
	\end{theorem}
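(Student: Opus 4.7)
I would adapt the Ishii--Lions doubling strategy used in the proof of Theorem \ref{Xizao}, replacing the H\"older penalty $M|y-z|^{\gamma}$ by a \emph{Lipschitz, strictly concave} penalty $M\omega(|y-z|)$, and exploit the lower bound $|p|>1/s_{0}$ to restore a non-degenerate ellipticity regime. After normalizing $\|u\|_{\infty}\le 1$ and fixing $r<r_{1}<1$ together with a base point $x_{0}\in B_{r}(x)\cap\{y_{n}>0\}$, I consider
\[
\Xi(y,z)\defeq u(y)-u(z)-M\omega(|y-z|)-L\bigl(|y-x_{0}|^{2}+|z-x_{0}|^{2}\bigr),
\]
where $\omega(t)\defeq t-\omega_{0}t^{1+\tau}$ for small universal $\omega_{0},\tau>0$, so that $\omega$ is Lipschitz and increasing with $\omega''(r)=-\omega_{0}\tau(1+\tau)r^{\tau-1}\to -\infty$ as $r\to 0^{+}$. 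The goal is to prove $\Xi\le 0$ on the square, which after specializing $y=x_{0}$ yields $\mathrm{Lip}(u)\le M$.

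The boundary portions are handled exactly as in Theorem \ref{Xizao}: on $\{y_{n}=0\}$, Lemma \ref{comparelemma} (applied to the $p e_{n}$-shifted equation, whose super-solution construction is unaffected) combined with $g\in C^{0,1}$ gives $\Xi\le 0$ once $M\gtrsim \mathrm{K}_{0}+\mathrm{Lip}_{g}(\partial\Omega)$; on $\partial B_{r_{1}}$ the quadratic penalty dominates $2\|u\|_{\infty}$ provided $L\gtrsim(r_{1}-r)^{-2}$. Suppose for contradiction the positive supremum is attained at an interior pair $(\hat a,\hat b)$ with $\hat a\ne \hat b$; Ishii--Lions then produces matrices $X,Y\in\mathrm{Sym}(n)$ and jet memberships $(a_{x},X)\in\overline{\mathcal{J}^{2,+}}u(\hat a)$, $(b_{y},-Y)\in \overline{\mathcal{J}^{2,-}}u(\hat b)$, where
\[
a_{x}=M\omega'(|\hat a-\hat b|)\hat e+2L(\hat a-x_{0}),\qquad b_{y}=M\omega'(|\hat a-\hat b|)\hat e-2L(\hat b-x_{0}),
\]
with $\hat e=(\hat a-\hat b)/|\hat a-\hat b|$, together with the standard matrix inequality.

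The role of the hypothesis $|p|>1/s_{0}$ is that $|a_{x}|,|b_{y}|\le M+2Lr_{1}$ are universally bounded, hence $|pe_{n}+a_{x}|,|pe_{n}+b_{y}|\ge |p|/2\gg 1$, forcing $\mathcal{H}(\cdot,pe_{n}+\cdot)\ge \mathrm{L}_{1}(|p|/2)^{p_{\min}}$. Dividing the two viscosity inequalities by the respective Hamiltonians produces
\[
F(\hat a,X)\ge \tilde f_{a},\qquad F(\hat b,-Y)\le \tilde f_{b},\qquad |\tilde f_{a}|,|\tilde f_{b}|\le \varepsilon\,\mathrm{L}_{1}^{-1}(|p|/2)^{-p_{\min}},
\]
which is arbitrarily small for $|p|$ large. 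Testing the matrix inequality against $(\hat e,-\hat e)$ extracts the radial contribution
\[
[(X+Y)\hat e]\cdot \hat e\le 4M\omega''(|\hat a-\hat b|)+2(2L+\epsilon),
\]
strongly negative by the choice of $\omega$. Combining this with (A1)--(A2) and the Hamiltonian-oscillation bound \eqref{Cont-H} in the style of the proof of Theorem \ref{Xizao} yields
\[
\mathcal{M}^{+}(X+Y)\le C(n,\Lambda,L)+4\lambda M\omega''(|\hat a-\hat b|),
\]
while the viscosity inequalities furnish the lower bound
\[
\mathcal{M}^{+}(X+Y)\ge \tilde f_{a}-\tilde f_{b}-\mathrm{C}_{F}\omega_{F}(|\hat a-\hat b|)\|Y\|,
\]
producing a contradiction once $M$ is chosen sufficiently large.

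\textbf{Main obstacle.} The delicate point is calibrating $\tau$ so that the strongly negative radial curvature $|\omega''(r)|\sim r^{\tau-1}$ dominates the $F$-oscillation term $\mathrm{C}_{F}\omega_{F}(r)\|Y\|$, where $\|Y\|\lesssim M\omega'(r)/r\sim M/r$ blows up as $r\to 0^{+}$, and simultaneously absorbs the Hamiltonian-oscillation term from \eqref{Cont-H}, which contains factors like $(|p|)^{p_{\min}}$ and $|\ln(|p|)|$. The threshold $s_{0}$ ultimately depends on the universal choice of $(M,L,\tau,\omega_{0})$, and therefore on $\lambda,\Lambda,n,p_{\min},q_{\max},r,\varepsilon_{0}$ and $\mathrm{Lip}_{g}(\partial\Omega)$ through this balance.
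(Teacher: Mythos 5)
Your proposal is correct and follows essentially the same route as the paper: an Ishii--Lions doubling argument with the concave Lipschitz penalty $\omega(s)=s-\omega_{0}s^{1+\tau}$ (the paper takes $\tau=\tfrac12$), a boundary barrier for the shifted equation to exclude contact points on $\{y_n=0\}$ (stated in the paper as the auxiliary lemma preceding Theorem \ref{pgrande}, under the compatibility $\mathfrak{b}=1/p<\delta/4$), and the largeness of $|p|$ to bound the Hamiltonian below so that the effective right-hand side is small. The only cosmetic difference is that the paper first renormalizes the equation to $\hat{\mathcal{H}}(x,e_{n}+\mathfrak{b}\nabla u)F(x,D^{2}u)=|p|^{-p(x)}f$ with $\mathfrak{b}=1/p$, whereas you keep the form $\mathcal{H}(x,pe_{n}+\nabla u)$ and divide the two viscosity inequalities by the (large) Hamiltonian values at the contact points --- an equivalent maneuver.
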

	
	Before presenting the proof, we will need the following auxiliary Lemma.
	
	\begin{lemma}
		
		Suppose $g$ is Lipschitz continuous datum. For every $\gamma, r \in (0, 1)$, there exists $\delta=\delta(\lvert\lvert f\rvert\rvert, \lambda, \Lambda, q_{\text{max}},r , \mathrm{Lip}_{g}(\partial \Omega))$, such that for $\mathfrak{b}<\frac{\delta}{4}$, any normalized solution of
		
		$$
%\label{dirichlet}
		\left\{
		\begin{array}{rcl}
			\mathcal{H}(x,e_{n}+\mathfrak{b}\nabla u)F(x,D^{2}u)=f \ &\text{in}& \ B_{1}(x)\cap\{y_{n}>0\} \\
			u = g \
			& \text{on}& \ B_{1}(x)\cap\{y_{n}=0\}
		\end{array}
		\right.
		$$
	satisfies
$$
 \lvert u(y^{\prime},y_{n})-g(y^{\prime}) \rvert \leq \frac{2}{\delta}\frac{y_{n}}{1+y_{n}^{\gamma}} \quad  \text{in} \quad  B_{r}(x)\cap\{y_{n}>0\}.
$$		
\end{lemma}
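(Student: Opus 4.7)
The plan is to mirror the proof of Lemma \ref{comparelemma} for this flat-boundary, translated-gradient equation. Since the boundary is simply $\{y_n=0\}$, the distance function $d_\Omega(y)$ collapses to $y_n$, and the natural barrier candidate is
$$
v(y) \defeq \frac{2}{\delta}\,\frac{y_n}{1+y_n^{\gamma}} + \Psi(y) + \text{(cut-off corrector for } |y|\ge r\text{)},
$$
where $\Psi$ is the $\mathscr{M}^{+}_{\lambda,\Lambda}$-harmonic extension of $g$ (needed only when $g\not\equiv 0$; in the homogeneous case one drops it). The corrector $\frac{1}{(1-r)^{3}}(|y|-r)^{3}$ for $|y|\ge r$ is added, just as in Lemma \ref{comparelemma}, to guarantee $v\ge u$ on the spherical part of the boundary.

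First I would compute $Dv$ and $D^2 v$ on the interior piece $|y|<r$. The key observation is that $Dv = \frac{2}{\delta}\,\frac{1+(1-\gamma)y_n^\gamma}{(1+y_n^\gamma)^2}\, e_n$, so $|Dv|\le \frac{2}{\delta}$, and therefore $|\mathfrak{b}\,Dv|\le \tfrac{\delta}{4}\cdot\tfrac{2}{\delta} = \tfrac12$, which gives
$$
\tfrac12 \le |e_n + \mathfrak{b}\,Dv| \le \tfrac32.
$$
Consequently, by assumption (A4),
$$
\mathrm{L}_1\min\{2^{-q_{\max}},2^{-p_{\min}}\}\bigl(1+\mathfrak{a}(x)\bigr) \;\le\; \mathcal{H}\bigl(x,e_n+\mathfrak{b}\,Dv\bigr) \;\le\; \mathrm{L}_2\tfrac32^{q_{\max}}\bigl(1+\|\mathfrak{a}\|_{L^\infty}\bigr),
$$
so the degeneracy weight contributes only universal bounded factors. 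On the other hand, a straightforward computation (identical to that in Lemma \ref{comparelemma}) shows that the $e_n\otimes e_n$ entry of $D^2 v$ produces a term of order $-\frac{2\gamma(1+\gamma)}{\delta}\,y_n^{\gamma-2}(1+y_n^\gamma)^{-3}$, which dominates the remaining $O(1/\delta)$ and $O(1)$ contributions whenever $y_n<\delta$. Hence for $\delta$ sufficiently small,
$$
\mathscr{M}^{+}_{\lambda,\Lambda}(D^2 v) \;\le\; -\frac{\|f\|_{L^\infty}}{\mathcal{H}(x,e_n+\mathfrak{b}\,Dv)} \quad \text{in } B\cap\{0<y_n<\delta\},
$$
which, combined with the uniform ellipticity assumption (A1), yields
$$
\mathcal{H}\bigl(x,e_n+\mathfrak{b}\,Dv\bigr)\,F(x,D^2 v) \;\le\; -\|f\|_{L^\infty} \;\le\; f(x)
$$
in the viscosity sense, so $v$ is a strict supersolution of the PDE.

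Next I verify the boundary ordering $v\ge u$ on $\partial\bigl(B\cap\{0<y_n<\delta\}\bigr)$: on $\{y_n=0\}$ we have $v=\Psi=g=u$; on the sphere $|y|=r$, the cut-off term plus the normalization $\|u\|_{L^\infty}\le 1$ takes care of it (choose the outer radius so that the cubic corrector already exceeds $2$); and on the inner flat cap $\{y_n=\delta\}$, we fall back on the interior H\"{o}lder estimate of Theorem \ref{HoldEstThm} applied away from the boundary, just as in Lemma \ref{comparelemma}. With $v\ge u$ on the parabolic boundary and $v$ a strict supersolution, the Comparison Principle (Theorem \ref{comparison principle}) gives $u\le v$ in the strip; replacing $v$ by the symmetric $-v$-type barrier produces the lower bound $u\ge -v$, and restricting back to $B_r\cap\{y_n>0\}$ yields the claim.

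The main technical obstacle is not the barrier construction itself, which is a direct adaptation, but rather making sure the threshold on $\delta$ (and hence on $\mathfrak{b}$) can be chosen \emph{universally}, i.e.\ depending only on $\lambda,\Lambda,\gamma,q_{\max},p_{\min},r,\|f\|_{L^\infty}$ and $\mathrm{Lip}_g(\partial\Omega)$, and not on the particular solution or on the translation parameter. This is where the uniform two-sided bound on $\mathcal{H}(x,e_n+\mathfrak{b}Dv)$ given by $|\mathfrak{b}Dv|\le 1/2$ becomes crucial: it trivializes the dependence of the right-hand side threshold on the exponents $p(x),q(x)$, reducing the argument to a single smallness condition of the form $\gamma\lambda\,\delta^{\gamma-2}\gg \delta^{-1}+1+\|f\|_{L^\infty}$, which is satisfied for all small $\delta$ exactly as in the scalar case already handled.
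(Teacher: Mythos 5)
Your proposal follows essentially the same route as the paper's proof: the flat barrier $\frac{2}{\delta}\frac{y_n}{1+y_n^{\gamma}}$ (augmented by the $\mathscr{M}^{+}_{\lambda,\Lambda}$-harmonic extension $\Psi$ when $g\not\equiv 0$ and by the cubic corrector), the key observation that $|\mathfrak{b}\,Dv|\le \tfrac12$ makes $\mathcal{H}(x,e_n+\mathfrak{b}Dv)$ uniformly bounded above and below (producing the $2^{q_{\max}}\|f\|_{L^{\infty}}/\mathrm{L}_1$ term in the smallness condition on $\delta$), and the Comparison Principle applied in the strip $\{0<y_n<\delta\}$ together with the symmetric barrier $-v$ for the lower bound. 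The only slip is cosmetic: the dominant entry of $D^2v$ is $-\frac{2\gamma}{\delta}\,y_n^{\gamma-1}\,\frac{(1+\gamma)+(1-\gamma)y_n^{\gamma}}{(1+y_n^{\gamma})^{3}}$ (exponent $\gamma-1$, not $\gamma-2$), which for $y_n<\delta$ still gives exactly the threshold $\gamma\lambda\,\delta^{\gamma-2}\gtrsim \delta^{-1}+1+\|f\|_{L^{\infty}}$ that you state and that the paper imposes.
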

	
\begin{proof}
Firstly, we are going to suppose by simplicity $g\equiv 0$. If $\mathfrak{b}=0$ the result holds true. Hence, we will assume $\mathfrak{b} \ne 0$. Now replacing the distance of $y$ to the boundary by $y_{n}$; so we consider
		
		\begin{equation}
			v(y)=\left\{
			\begin{array}{lcl}
				\frac{2}{\delta}\frac{y_{n}}{1+y_{n}^{\gamma}(y)} & \text{for} & y_{n}<\delta ,\,\,\lvert y^{\prime} \rvert <r  \\
				\frac{2}{\delta}\frac{y_{n}}{1+y_{n}^{\gamma}(y)}+\frac{1}{(1-r)^{3}}(\lvert y^{\prime}\rvert -r)^{3} & \text{for} &  y_{n}<\delta,\,\,\lvert y^{\prime} \rvert\geq r,
			\end{array}
			\right.
		\end{equation}
		
		As in the Lemma \ref{comparelemma}, it is sufficient consider the set $\{y_{n}<\delta\}$, since the assumption $osc u\leq 1$ implies $\lvert\lvert u \rvert\rvert_{\infty}\leq 1$, the result holds elsewhere.

One more time, we intend to use the Comparison Principle (Theorem \ref{comparison principle}) and prove that $u\leq v$. The desired lower bound follows, considering $-v$ in place of $v$.

Now, notice that for $v$ to fulfil
		\begin{equation}
			\mathcal{H}(x,e_{n}+\mathfrak{b}\nabla u) \mathscr{M}^{+}_{\lambda,\Lambda}(D^{2}u)<-\lvert\lvert f\rvert\rvert_{\infty} \quad \text{in} \quad \mathrm{B}. 	
		\end{equation}
		it is sufficient to select $\delta$ such that
		$$
		(2\gamma \delta^{\gamma-2}\lambda)\frac{(1+\gamma)}{(1+\delta^{\gamma})^{3}}>\frac{2}{\delta}n\mathrm{C}_{1}\Lambda+\frac{6n\Lambda}{(1-r)^{2}}+\frac{2^{q_{max}}\lvert\lvert f\rvert\rvert_{\infty}}{\mathrm{L}_{1}}
		$$
for $\mathfrak{b}<\frac{\delta}{4}$, and recall that $\lvert \nabla v\rvert \leq \frac{2}{\delta}$. In addition, $v\geq u$ on $\partial(\mathrm{B}\cap\{0<y_{N}<\delta\})$. Consequently, by Comparison Principle the desired estimate is derived in $\{\lvert y^{\prime}\rvert<r,\,\,y_{n}>0\}$. In the case $g\ne 0$, we take the function $v$ as in the proof of Lemma \ref{comparelemma}, with $d_{\Omega}$ replaced by $y_{n}$.
	\end{proof}

We are in a position to prove the Theorem \ref{pgrande}.
	
	\begin{proof}[{\bf Proof of Theorem \ref{pgrande}}]
		Recall that $u$ is a viscosity solution of
		
		\begin{equation}\label{Hbarra}
			\mathcal{\hat{H}}(x,e_{n}+\mathfrak{b}\nabla u)F(x,D^{2}u)=\tilde{f}(x)
			\end{equation}
where $\mathfrak{\mathfrak{b}}=\frac{1}{p}$, $\tilde{f}(x)=\lvert p\rvert^{-p(x)} f(x)$, and $\mathcal{\hat{H}}(x,\xi)=\lvert p\rvert^{-p(x)}\mathcal{H}(x,p\xi)$ satisfies
		
		\begin{equation*}
			\mathrm{L}_{1}\mathcal{K}_{p,q, \hat{\mathfrak{a}}}(x,\lvert \xi \rvert) \leq \mathcal{\hat{H}}(x,\xi) \leq \mathrm{L}_{2}\mathcal{K}_{p,q, \hat{\mathfrak{a}}}(x,\lvert \xi \rvert)
		\end{equation*}
where $\hat{\mathfrak{a}}(x) \defeq \lvert p \rvert^{q(x)-p(x)}\mathfrak{a}(x)$. Indeed, if $\phi \in C^{2}(B_{1}(x)\cap\{y_{n}>0\})$, and $x_{0} \in B_{1}(x)\cap\{y_{n}>0\}$ such that $u-\phi$ has a local minimum at $x_{0}$, then by hypothesis
		
	$$
		\begin{array}{lcl}
			\mathcal{\hat{H}}(x_{0},e_{n}+\mathfrak{b}\nabla \phi)F(x_{0},D^{2}\phi) \displaystyle &=& 	\lvert p \rvert^{-p(x_{0})}\mathcal{H}(x_{0},p(e_{n}+\mathfrak{b}\nabla \phi)) F(x_0,D^{2}\phi) \\
			\displaystyle &=& \lvert p \rvert^{-p(x_{0})}\mathcal{H}(x_{0},pe_{n}+\nabla \phi)F(x_0,D^{2}\phi) \\
			\displaystyle &\leq& \lvert p\rvert^{-p(x_{0})}f(x_{0}),
		\end{array}
		$$
	i.e, $u$ is a viscosity super-solution of \eqref{Hbarra}. Analogously, $u$ is a viscosity sub-solution of \eqref{Hbarra}.

		 Now, let $r_{1} \in (r, 1)$ and let $x_{0} \in B_{r_{1}}(x)\cap\{y_{n}>0\}$, and $\mathrm{L}=\frac{4}{(r_{1}-r)^{2}}$, thus
		
		$$
\Xi(x,y)=u(x)-u(y)- \mathrm{M}\omega(\lvert x-y \rvert)-\mathrm{L}(\lvert x-x_{0} \rvert^{2}+\lvert y-x_{0} \rvert^{2}),
$$
where $\omega(s)=s-\omega_{0}s^{\frac{3}{2}}$ if $s\leq s_{0}=\Big(\frac{2}{3\omega_{0}}\Big)^{2}$ and $\omega(s)=\omega(s_{0})$ if $s\geq s_{0}$.	

Next, if we prove $\Xi(x,y)\leq 0$ in $B_{r_{1}}(x)$, since $\mathrm{M}$ is independent of $x_{0}$ , by choosing $x=x_{0}$, we get
		
$$
u(x_{0})-u(y)\leq \mathrm{M}\lvert x_{0}-y\rvert+\mathrm{L}\rvert x_{0}-y\rvert^{2}
$$

Now, if $y=x_{0}$ for all $x \in B_{r_{1}}$, we get
		
$$
  u(x)-u(x_{0})\leq \mathrm{M}\lvert x_{0}-x\rvert+ \mathrm{L}\rvert x_{0}-x\rvert^{2}
$$

		Thus, for $(x,y) \in B_{r}$, we have the desired result. Now, we begin to observe that if the supremum of $u$ is achieved in $(\hat{a},\hat{b}) \in \overline{B_{r}(x)}$, with our choice of $\mathrm{M}$, we ensure that neither $\hat{a}$ nor $\hat{b}$ can belong to the part $\{y_{n}=0\}$. The rest of the proof holds as in the proof of Theorem \ref{Xizao} as long as we choose $\delta$ small enough  such that
		$$
		(2\gamma \delta^{\gamma-2}\lambda)\frac{(1+\gamma)}{(1+\delta^{\gamma})^{3}}>\frac{2}{\delta}n\mathrm{K}\Lambda+\frac{6n\Lambda}{(1-r)^{2}}+\frac{2^{q_{max}}\lvert\lvert f\rvert\rvert_{\infty}}{L_{1}\max\{\lvert p\rvert^{p_{min}},\lvert p \rvert^{q_{max}}\}}
		$$
and $\mathfrak{b}\leq \frac{\delta}{4}$, thereby finishing the result.	
	\end{proof}

\section{ Proof of Theorems \ref{main1*} and \ref{main3}}\label{Sec3}

In this intermediate section, we will establish a sharp $C_{\text{loc}}^{1,\alpha}$ regularity estimates for viscosity solutions of
\begin{equation}\label{EqRegloc}
    \mathcal{H}(x, Du)F(x, D^2 u)  =  f(x) \quad  \text{in}  \quad \Omega
\end{equation}

Summarizing (for the sharp interior estimates) our approach is a byproduct of an oscillation-type mechanism (such a strategy relies on original ideas from \cite[Theorem 1]{ALS15} and \cite[Theorem 10]{LL17}, and afterwards considered in \cite{ATU17} and \cite{APR}, see also \cite{AdaSRT19}, \cite{daSR20} and \cite{PRS20}) combined with a localized argument, whose proof is conducted by analyzing two cases:

\begin{enumerate}
  \item[(A)] If $|D u| \ll 1$ with a controlled magnitude, then a perturbation of the $\mathfrak{F}-$harmonic profile leads to the inhomogeneous problem at the limit via a stability argument in a $C^1-$fashion.
  \item[(B)] On the other hand, if $|Du|\geq L_0>0$ (with a uniform lower bound), then classical estimates (see \cite{C89}, \cite{CC95} and \cite{Tru88}) can be enforced, since the problem becomes uniformly elliptic, i.e.
  $$
  \mathscr{M}_{\lambda, \Lambda}^-(D^2 u)\leq \mathrm{C}_0(L^{-1}_0, \|f\|_{L^{\infty}(\Omega)}) \,\, \text{and} \,\, \mathscr{M}_{\lambda, \Lambda}^+(D^2 u)\geq -\mathrm{C}_0(L^{-1}_0, \|f\|_{L^{\infty}(\Omega)}).
  $$

\end{enumerate}

Finally, in contrast with \cite[Theorem 3.1]{ART15}, \cite[Theorem 1.1]{APR}, \cite[Theorem 1]{DeF20} and \cite[Theorem 1]{IS}, our strategy does not make use of a suitable switched problem (a sort of deviation by planes), neither appeals to a blow-up argument as the one addressed in \cite[Lemma 9]{LL17}.

An essential tool we will use is the gradient \textit{a priori} estimate from \cite[Theorem 1.1]{FRZ21}, which we will state below for completeness.

\begin{theorem}[{\bf Gradient estimates}]\label{GradThm} Let $F$ be an operator satisfying (A0)-(A2) and let $u$ be a bounded viscosity solution to
$$
 \left[|Du|^{p(x)} +\mathfrak{a}(x)|Du|^{q(x)}\right] F(D^2u) = f \in L^{\infty}(B_1).
$$
Then,
\begin{equation*}
\|u\|_{C^{1,\gamma}\left(B_{\frac{1}{2}}\right)}\leq \mathrm{C}\cdot \left(\|u\|_{L^{\infty}(B_1)} +1+ \|f\|_{L^{\infty}(B_1)}^{\frac{1}{p_{\text{min}}+1}}\right)
\end{equation*}
for universal constants $\gamma \in (0, 1)$ and $\mathrm{C}>0$.
\end{theorem}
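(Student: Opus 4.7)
\medskip

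\noindent\textbf{Proof proposal for Theorem \ref{GradThm}.} The plan is to combine a compactness-based approximation lemma with a geometric iteration on dyadic balls, using a dichotomy in the size of the gradient. Throughout, I would normalize the problem so that $\|u\|_{L^\infty(B_1)} \le 1$ and $\|f\|_{L^\infty(B_1)} \le \varepsilon_0$, for a universal $\varepsilon_0>0$ to be chosen; this is possible by replacing $u$ with $u/K$ where $K \defeq \|u\|_{L^\infty(B_1)} + \|f\|_{L^\infty(B_1)}^{1/(p_{\min}+1)}+1$ and noting that the structure $\bigl[|Du|^{p(x)}+\mathfrak{a}(x)|Du|^{q(x)}\bigr]F(D^2 u)$ is preserved (up to constants) under scalings $u \mapsto u/K$ thanks to \eqref{1.3}.

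The first building block is an \emph{approximation lemma}: for every $\delta>0$ there exists $\varepsilon_0 = \varepsilon_0(\delta,\verb"universal")>0$ such that if $u$ is a normalized viscosity solution in $B_1$ with $\|f\|_{L^\infty} \le \varepsilon_0$, then one can find an $F$-harmonic function $\mathfrak{h}$ in $B_{3/4}$ (i.e.\ $F(D^2\mathfrak{h})=0$) with $\|u-\mathfrak{h}\|_{L^\infty(B_{3/4})} \le \delta$. I would prove this by contradiction/compactness: if it fails, there are sequences $u_k, f_k, p_k(\cdot), q_k(\cdot), \mathfrak{a}_k(\cdot)$ with $\|f_k\|_\infty \to 0$, uniformly bounded solutions $u_k$, and equibounded exponents and moduli. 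The uniform H\"older estimate (Theorem \ref{HoldEstThm}) combined with Lemma \ref{lem.stab} and the stability Lemma \ref{P1} (used with $\zeta_j\equiv 0$) extracts a subsequence converging to $u_\infty$ solving $F_\infty(D^2 u_\infty)=0$, contradicting the failure assumption. By Caffarelli's theory (\cite{C89}, \cite{CC95}, \cite{Tru88}), $\mathfrak{h}$ is $C^{1,\alpha_F}$ with universal bounds, so in $B_{1/2}$ there is an affine function $\ell(x)=\mathfrak{h}(0)+D\mathfrak{h}(0)\cdot x$ with $|D\mathfrak{h}(0)|\le C_\ast$ and $\|\mathfrak{h}-\ell\|_{L^\infty(B_r)} \le C_\ast r^{1+\alpha_F}$ for all $r\le 1/2$.

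Next, I would run a discrete iteration producing a sequence of affine functions $\ell_k(x)=a_k+b_k\cdot x$ with
\[
 \|u-\ell_k\|_{L^\infty(B_{\rho^k})} \le \rho^{k(1+\alpha)},\qquad |a_{k+1}-a_k|+\rho^k|b_{k+1}-b_k|\le C\rho^{k(1+\alpha)},
\]
for a universal $\rho\in(0,1/2)$ and any fixed exponent $\alpha\in(0,\alpha_F)\cap(0,1/(p_{\max}+1)]$. The dichotomy at the $k$-th step is as follows: either $|b_k|\le \rho^{k\alpha}$, in which case the rescaled function
\[
 v_k(x)\defeq \rho^{-k(1+\alpha)}\bigl(u(\rho^k x)-\ell_k(\rho^k x)\bigr)
\]
satisfies an equation of the same form with a new source term whose $L^\infty$-norm is bounded by $\rho^{k((1+\alpha)(p_{\min}+1)-(1+\alpha p_{\min}))}\|f\|_\infty$; the choice $\alpha\le 1/(p_{\max}+1)$ keeps this bounded and, taking $k$-dependent truncations of the exponents $p(\cdot),q(\cdot)$, allows a reapplication of the approximation lemma to find a new affine correction $\ell_{k+1}$. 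Alternatively, if $|b_k|>\rho^{k\alpha}$ at some step, then on $B_{\rho^k}$ the gradient of $u-\ell_k+\ell_k$ is uniformly away from zero after rescaling, the degeneracy $\mathcal{K}_{p,q,\mathfrak{a}}$ is bounded below and above, and the equation becomes uniformly elliptic with bounded right-hand side; then the standard $C^{1,\alpha_F}$ estimates for $F(D^2\cdot)=g$ close the iteration directly. Summing the geometric series gives $(b_k)$ Cauchy, its limit $b_\infty=Du(0)$, and the pointwise $C^{1,\alpha}$ estimate at the origin; translating the argument to any $x_0\in B_{1/2}$ yields the theorem with the stated dependence.

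The delicate part is ensuring that the rescaled PDE in the degenerate alternative preserves the structural assumptions uniformly in $k$: the modulating function $\mathfrak{a}(\cdot)$, the exponents $p(\cdot), q(\cdot)$, and the constants $\mathrm{L}_1,\mathrm{L}_2$ must survive the zoom-in so that the same approximation lemma applies at every scale. This forces the constraint $\alpha\le 1/(p_{\max}+1)$ (so that the rescaled source $\rho^{-k(1+\alpha)(p(\cdot)+1)+k\cdot 1}f(\rho^k x)$ stays uniformly bounded) and requires that $\mathfrak{a}(\rho^k x)\rho^{k\alpha(q(\rho^k x)-p(\rho^k x))}$ remain uniformly bounded above, which is guaranteed by \eqref{1.3}. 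Beyond this bookkeeping, everything reduces to the two inputs already available: the constant-coefficient $C^{1,\alpha_F}$ theory for $F(D^2\mathfrak{h})=0$ and the uniform-ellipticity theory once $|Du|$ is bounded below.
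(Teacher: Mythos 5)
First, a point of context: the paper does not prove Theorem \ref{GradThm} at all --- it imports it verbatim from \cite[Theorem 1.1]{FRZ21} and uses it as an a priori tool (notably in the nondegenerate branch of the proof of Theorem \ref{main1*}). So your proposal should be measured against the De Filippis/Fang--R\u{a}dulescu--Zhang scheme that the citation stands for, and in outline you do reproduce it: normalization, compactness approximation by $F$-harmonic profiles, dyadic improvement of flatness with the constraint $\alpha\le \frac{1}{p_{\max}+1}$, and a switch to the uniformly elliptic theory once the slope dominates the scale.

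The genuine gap is in how you handle the affine corrections. After subtracting $\ell_k$ and rescaling, $v_k$ does not solve ``an equation of the same form'': it solves the switched equation $\mathcal{H}_k(x, Dv_k+\zeta_k)F_k(D^2 v_k)=f_k$ with deviation $\zeta_k=\rho^{-k\alpha}b_k$, whereas your approximation lemma is stated and proved (via Lemma \ref{P1} with $\zeta_j\equiv 0$) only for zero deviation; to iterate you need it uniformly in $\zeta$, bounded or unbounded, and this is exactly where \cite{FRZ21}, \cite{DeF20} and the paper's boundary Lemma \ref{L0} must split into bounded and unbounded $(\zeta_j)_j$ and invoke equicontinuity of solutions to the switched equations \emph{uniformly in the deviation} --- an Ishii--Lions/cutting-lemma input (the interior analogue of Theorems \ref{Xizao} and \ref{pgrande}) that your proposal never establishes or cites. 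The same missing input breaks your large-slope branch: from $\|u-\ell_k\|_{L^\infty(B_{\rho^k})}\le\rho^{k(1+\alpha)}$ and $|b_k|>\rho^{k\alpha}$ alone you cannot conclude that $|Du|$ (equivalently $|Dv_k+\zeta_k|$) is bounded below on the ball; that requires a Lipschitz-type bound on $v_k$ for the switched equation with $|\zeta_k|\ge 1$, which is precisely the sort of gradient estimate being proven, so as written the step is circular --- and you cannot borrow the paper's Section \ref{Sec3} dichotomy either, since there the nondegenerate case is resolved by invoking Theorem \ref{GradThm} itself. Once a uniform-in-$\zeta$ Lipschitz/H\"{o}lder lemma for $\mathcal{H}(x,Dv+\zeta)F(D^2v)=f$ is added, your scheme closes; two smaller slips worth fixing are the exponent of the rescaled source, which should be $\rho^{k(1-\alpha(1+p(\cdot)))}$ rather than the expression you wrote, and the fact that the normalization $u\mapsto u/K$ replaces $\mathfrak{a}(x)$ by $K^{q(x)-p(x)}\mathfrak{a}(x)$, so one must check (as is indeed the case) that no constant in the argument depends on an upper bound for $\|\mathfrak{a}\|_{L^\infty}$.
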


\begin{remark} Let us mention that under continuity assumption on the coefficients,
any viscosity solutions to
$$
 \left[|Du|^{p(x)} +\mathfrak{a}(x)|Du|^{q(x)}\right] F(x, D^2u) = f \in L^{\infty}(B_1).
$$
are locally of class $C^{1, \gamma}$, for some $\gamma \in (0, 1)$, depending on universal parameters.
\end{remark}

Next, as in \cite{daSR20} we define an appropriate class of solutions to our problem:

\begin{definition}
  \label{FineClass} Let $F$ be a fully nonlinear operator satisfying (A0)-(A2). For $\mathcal{H}$ and $f$ satisfying (A3)-(A5), we say that $u \in \mathcal{J}(F, \mathcal{H}, f)(B_1(0))$ provided
\begin{enumerate}
  \item $\mathcal{H}(x, Du)F(x, D^2 u) = f(x)$ in $B_1(0)$ in the viscosity sense.
  \item  $\|u\|_{L^{\infty}(B_1(0))}\leq 1$ in $B_1(0)$.
\end{enumerate}
\end{definition}

The first key step towards the proof of Theorem \ref{main1*} is to show that the solutions to \eqref{1.1}, in inner domains, can be approximated in a suitable manner by $\mathfrak{F}-$harmonic profiles, and that during such a process certain regularity properties of solutions are preserved.

At this point, we are in a position to enunciate the following Key Lemma (cf. \cite[Lemma 5.1]{ART15} and \cite[Lemma 4.1]{DeF20}):

\begin{lemma}[{\bf Approximation Lemma - Local version}]\label{lem.flat}
Let $\mathfrak{h} \in C^{0}\left(\overline{B_{\frac{1}{2}}(0)}\right)$ be the unique viscosity solution to
$$
\left\{
\begin{array}{rcrcl}
  F(D^2 \mathfrak{h}) & = & 0 & \text{in} & B_{\frac{1}{2}}(0) \\
  \mathfrak{h} & = & u & \text{on} & \partial B_{\frac{1}{2}}(0),
\end{array}
\right.
$$
Then, given $\iota \in (0, 1)$, there exists a $\delta= \delta(\iota, n, \lambda, \Lambda, p_{\text{min}}, q_{\text{max}} )>0$ such that if $u \in \mathcal{J}(F, \mathcal{H}, f)(B_{1}(0))$ with
$$
    \max\left\{\Theta_{\mathrm{F}}(x), \left\|f\right\|_{L^{\infty}(B_{1}(0))}\right\} \leq \delta
$$
then
\begin{equation}\label{eq.flat}
     \max\left\{\|u_k-\mathfrak{h}\|_{L^{\infty}\left(B_{\frac{1}{2}}(0)\right)}, \|Du_k-D\mathfrak{h}\|_{L^{\infty}\left(B_{\frac{1}{2}}(0)\right)}\right\} \leq \iota.
\end{equation}
\end{lemma}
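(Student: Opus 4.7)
I would argue by contradiction, following the compactness/perturbation strategy pioneered by Caffarelli and adapted to the degenerate setting in \cite{ART15,DeF20}. Suppose the claim fails: then there exist $\iota_0>0$, sequences $(F_j)$ satisfying (A0)-(A2) with $\Theta_{F_j}(x)\to 0$ uniformly, $(\mathcal{H}_j)$ satisfying (A4)-(A5), $f_j\in L^\infty(B_1)$ with $\|f_j\|_{L^\infty(B_1)}\to 0$, together with solutions $u_j\in\mathcal{J}(F_j,\mathcal{H}_j,f_j)(B_1(0))$ whose $F_j$-harmonic replacements $\mathfrak{h}_j$ (defined by $F_j(D^2\mathfrak{h}_j)=0$ in $B_{\frac{1}{2}}(0)$ with $\mathfrak{h}_j=u_j$ on $\partial B_{\frac{1}{2}}(0)$) satisfy
$$
\max\left\{\|u_j-\mathfrak{h}_j\|_{L^\infty(B_{\frac{1}{2}}(0))},\ \|Du_j-D\mathfrak{h}_j\|_{L^\infty(B_{\frac{1}{2}}(0))}\right\}>\iota_0
$$
for every $j\in\mathbb{N}$.

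\textbf{Compactness.} Theorem \ref{GradThm} provides a uniform $C^{1,\gamma}(\overline{B_{\frac{1}{2}}(0)})$ bound for $(u_j)$ with a universal $\gamma\in(0,1)$. For the auxiliary sequence $(\mathfrak{h}_j)$, the classical Evans--Krylov--Caffarelli--Trudinger regularity theory (see \cite{C89,CC95,Tru88}) yields a uniform $C^{1,\alpha_{\mathrm{F}}}(\overline{B_{\frac{1}{2}}(0)})$ bound, the boundary data $u_j|_{\partial B_{\frac{1}{2}}(0)}$ being themselves uniformly $C^{1,\gamma}$. Arzelà--Ascoli thus provides (along a subsequence) $u_j\to u_\infty$ and $\mathfrak{h}_j\to\mathfrak{h}_\infty$ in $C^1(\overline{B_{\frac{1}{2}}(0)})$. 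Lemma \ref{lem.stab} gives a subsequential limit $F_\infty$ still satisfying (A0)-(A2); since $\Theta_{F_j}(x)\to 0$ uniformly, the limiting operator $F_\infty$ depends only on its matrix argument and is uniformly elliptic with constants $\lambda,\Lambda$.

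\textbf{Stability of both limiting equations.} Passing to the limit in $F_j(D^2\mathfrak{h}_j)=0$ is the standard viscosity stability for uniformly elliptic equations, and yields $F_\infty(D^2\mathfrak{h}_\infty)=0$ in $B_{\frac{1}{2}}(0)$ with $\mathfrak{h}_\infty=u_\infty$ on $\partial B_{\frac{1}{2}}(0)$. For the degenerate equation, analogously to Lemma \ref{P1}, I would verify that $u_\infty$ solves $F_\infty(D^2 u_\infty)=0$ in $B_{\frac{1}{2}}(0)$ in the viscosity sense. Indeed, if a smooth $\varphi$ touches $u_\infty$ strictly from above at $x_0$, uniform convergence produces nearby contact points $x_j\to x_0$ with $u_j-\varphi$ attaining a local maximum there; when $D\varphi(x_0)\neq 0$ the lower bound $\mathcal{H}_j(x_j,D\varphi(x_j))\geq \mathrm{L}_1|D\varphi(x_j)|^{p(x_j)}\geq c_0>0$ allows division by $\mathcal{H}_j$, giving $F_j(x_j,D^2\varphi(x_j))\leq f_j(x_j)/c_0\to 0$ and hence $F_\infty(D^2\varphi(x_0))\leq 0$ (and symmetrically for sub-solutions). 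Test functions with $D\varphi(x_0)=0$ are handled via the Birindelli--Demengel viscosity convention for degenerate fully nonlinear equations (cf. \cite{BD2,BPRT20,IS}), under which no constraint is imposed at such points. By the Comparison Principle for the uniformly elliptic constant-coefficient operator $F_\infty$, the Dirichlet problem has a unique solution, so $u_\infty=\mathfrak{h}_\infty$ on $\overline{B_{\frac{1}{2}}(0)}$.

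\textbf{Contradiction and main obstacle.} The $C^1$-convergences now yield $\|u_j-\mathfrak{h}_j\|_{C^1(\overline{B_{\frac{1}{2}}(0)})}\to\|u_\infty-\mathfrak{h}_\infty\|_{C^1(\overline{B_{\frac{1}{2}}(0)})}=0$, contradicting the standing assumption and furnishing the desired $\delta=\delta(\iota,n,\lambda,\Lambda,p_{\text{min}},q_{\text{max}})$ by the usual abstract-nonsense extraction. The most delicate step is the stability at critical points of $u_\infty$, where $\mathcal{H}_j$ may degenerate, preventing a naive division in the viscosity inequality; this is resolved by either (i) invoking the degenerate-viscosity convention cited above, which is natural for equations of the form $\mathcal{H}(x,Du)F(x,D^2 u)=f$, or equivalently (ii) exploiting the uniform $C^{1,\gamma}$ bound on $(u_j)$ to rule out pathological behaviour of the gradients near the singular set $\{Du_\infty=0\}$ and recover the limit inequality from the non-degenerate case by continuity of $F_\infty$ in its matrix argument.
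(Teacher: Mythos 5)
Your overall strategy — contradiction, compactness via the uniform $C^{1,\gamma}$ estimate of Theorem \ref{GradThm}, operator convergence via Lemma \ref{lem.stab}, stability of both equations, and uniqueness for the limiting Dirichlet problem — is exactly the route the paper takes (its proof is delegated to \cite[Lemma 2.3]{daSR20}, which runs precisely this compactness argument), so in spirit your proposal matches the intended proof. Two steps, however, are not justified as written. The substantive one is the stability of the degenerate equation at contact points where the test function has vanishing gradient. Your option (i) is incorrect as stated: the limiting equation $F_\infty(D^2 u_\infty)=0$ is uniformly elliptic, and its viscosity definition does impose a constraint at such points; the Birindelli--Demengel relaxed convention concerns testing the original singular/degenerate equation, not the limit, so you cannot simply declare such test functions irrelevant. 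One must actually verify the inequality there, which is done in the literature by a perturbation-of-test-function argument (add a small linear term, or argue on the sign of the eigenvalues of $D^2\varphi(x_0)$, as in \cite{IS}, \cite[Lemma 4.1]{DeF20} and \cite[Lemma 3.2]{FRZ21}), or, within this paper, by invoking the stability statement of Lemma \ref{P1} whose proof is referred to those sources. Your option (ii) is too vague to substitute for this: a uniform $C^{1,\gamma}$ bound on $(u_j)$ does not by itself recover the viscosity inequality at critical points of the test function.

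The second, more minor, point is the compactness of the harmonic replacements $(\mathfrak{h}_j)$ in $C^1$ up to $\partial B_{1/2}$. The references \cite{C89}, \cite{CC95}, \cite{Tru88} give interior $C^{1,\alpha_{\mathrm{F}}}$ estimates, which degenerate near the boundary; since the conclusion \eqref{eq.flat} asks for gradient closeness on all of $B_{1/2}$, you need up-to-the-boundary $C^{1,\beta}$ estimates for the Dirichlet problem on the ball with the uniformly $C^{1,\gamma}$ boundary data $u_j|_{\partial B_{1/2}}$ — i.e. results of the type \cite{SS14} or \cite{Wint09} (and the resulting exponent is limited by $\gamma$, not $\alpha_{\mathrm{F}}$ alone). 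With that citation corrected and the critical-point case of the stability step argued (or cited) properly, your proof closes along the same lines as the paper's.
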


\begin{proof}
  The proof follows the same lines as the one in \cite[Lemma 2.3]{daSR20}. For this reason, we omit it here in order to avoid a unnecessary repetition.
\end{proof}

\begin{remark}[{\bf Smallness regime}]\label{SmallRegime} Let us argue on the scaling character of our problem, which enables us to put the proof of Theorem \ref{main1} under the assumptions of Approximation Lemma \ref{lem.flat}. Let $u$ be a viscosity solution of \eqref{1.1}. Fix a point $x_0 \in \Omega^{\prime} \Subset \Omega$, we define $v: B_1(0) \to \R$ as follows
$$
    v(x) = \frac{u(\tau x+ x_0)}{\kappa}
$$
for parameters $\kappa, \tau>0$ to be determined later. It is easy to verify that $v$ fulfills (in the viscosity sense)
$$
\mathcal{H}_{\kappa, \tau}(x, D v) F_{\kappa, \tau}(x, D^2v) =  f_{\kappa, \tau}(x)  \textrm{ in } B_1(0),
$$
where
\[
\left\{
\begin{array}{rcl}
  F_{\kappa, \tau}(x, X) & \defeq & \frac{\tau^{2}}{\kappa}F\left(x_0+\tau x, \frac{\kappa}{\tau^{2}} X\right) \\
  f_{\kappa, \tau}(x) & \defeq & \frac{\tau^{p(x_0+\tau x)+2}}{\kappa^{p(x_0+\tau x)+1}}f(x_0+\tau x)\\
  \mathfrak{a}_{\kappa, \tau}(x) & \defeq & \left(\frac{\tau}{\kappa}\right)^{p(x_0+\tau x)-q(x_0+\tau x)}\mathfrak{a}(x_0+\tau x)\\
  \mathcal{H}_{\kappa, \tau}(x, \xi) &\defeq & \left(\frac{\tau}{\kappa}\right)^{p(x_0+\tau x)}\mathcal{H}\left(x_0+\tau x, \frac{\kappa}{\tau}\xi\right)\\
  \mathcal{K}_{p, q, \mathfrak{a}}^{\kappa, \tau}(x, |\xi|)&\defeq& |\xi|^{p(x_0+\tau x)}+\mathfrak{a}_{\kappa, \tau}(x)|\xi|^{q(x_0+\tau x)}.
\end{array}
\right.
\]
Hence, $F_{\kappa, \tau}$ fulfils the structural assumptions (A0) and (A1). Moreover,
$$
L_1 \cdot \mathcal{K}_{p, q, \mathfrak{a}}^{\kappa, \tau}(x, |\xi|)\leq  \mathcal{H}_{\kappa, \tau}(x, \xi) \leq L_2 \cdot \mathcal{K}_{p, q, \mathfrak{a}}^{\kappa, \tau}(x, |\xi|) \quad \text{for} \quad (x, \xi) \in \Omega \times \R^n.
$$
Now, for given $\iota \in (0, 1)$, which will be sufficiently small but fixed, let $\delta_{\iota}>0$ be the universal constant in the statement of Approximation Lemma \ref{lem.flat}. Then, we choose
$$
 \kappa \defeq \|u\|_{L^{\infty}(\Omega)} + 1 +\delta_{\iota}^{-1}\|f\|^{\frac{1}{p_{\text{min}}+1}}_{L^{\infty}(\Omega)}\\
$$
and
$$
 \tau = \min \left\{\frac{1}{2},\, \frac{1}{4}\dist(\Omega^{\prime},\, \partial \Omega), \left(\frac{\delta_{\iota}}{\|f\|_{L^{\infty}(\Omega)}+1}\right)^{\frac{1}{p_{\text{min}}+2}},\, \omega^{-1}\left(\frac{\delta_{\iota}}{\mathrm{C}_{\mathrm{F}}+1}\right)\right\}.
$$
Therefore, with such choices, $v$,  $F_{\kappa, \tau}$ and $f_{\kappa, \tau}$ fall into the framework of Approximation Lemma \ref{lem.flat}.
\end{remark}

The next Lemma provides the first step of the geometric iteration, under a proper control on growth of the gradient:

\begin{lemma}\label{lem.firststep}
Under the assumptions of Lemma \ref{lem.flat} there exists $\rho\in \left(0,\frac{1}{2}\right)$ such that
\begin{equation}\label{Aproxcond}
\displaystyle \sup_{B_{\rho}(0)} \frac{\left|u(x)-l_{0} u(x)\right|}{\rho^{1+\alpha}}\leq 1.
\end{equation}\label{2.4}
where $\mathfrak{l}_{0} u(x) = u(0)+D u(0)\cdot x$.
\end{lemma}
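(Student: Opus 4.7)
The plan is to combine the Approximation Lemma \ref{lem.flat} with the classical interior $C^{1,\alpha_{\mathrm{F}}}$ estimates for $F$-harmonic profiles (see \cite{C89}, \cite{CC95}, \cite{Tru88}), and then calibrate parameters in the usual geometric iteration fashion. I first observe that $Du(0)$ exists in the classical sense by Theorem \ref{GradThm}, so the affine polynomial $\mathfrak{l}_0 u(x) \defeq u(0) + Du(0)\cdot x$ is well defined.

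I would apply Lemma \ref{lem.flat} with a tolerance $\iota \in (0,1)$ to be chosen later, producing the unique viscosity solution $\mathfrak{h}$ of $F(D^2\mathfrak{h}) = 0$ on $B_{1/2}(0)$ with $\mathfrak{h} = u$ on $\partial B_{1/2}(0)$ and
$$
\|u-\mathfrak{h}\|_{L^\infty(B_{1/2}(0))} + \|Du - D\mathfrak{h}\|_{L^\infty(B_{1/2}(0))} \leq 2\iota.
$$
Since $u \in \mathcal{J}(F,\mathcal{H},f)(B_1(0))$ and $\iota\leq 1$, one has $\|\mathfrak{h}\|_{L^\infty(B_{1/2}(0))}\leq 2$; then the classical $C^{1,\alpha_{\mathrm{F}}}$ interior regularity for the frozen equation delivers a universal constant $\mathrm{C}^{*}>0$ such that, for every $\rho\in(0,1/4)$,
$$
\sup_{B_\rho(0)}\bigl|\mathfrak{h}(x) - \mathfrak{h}(0) - D\mathfrak{h}(0)\cdot x\bigr| \leq \mathrm{C}^{*}\rho^{1+\alpha_{\mathrm{F}}}.
$$

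By the triangle inequality and the $C^1$-closeness of $u$ and $\mathfrak{h}$, for $x \in B_\rho(0)$ one obtains
$$
\begin{aligned}
\bigl|u(x)-\mathfrak{l}_0 u(x)\bigr| &\leq |u(x)-\mathfrak{h}(x)| + \bigl|\mathfrak{h}(x)-\mathfrak{h}(0)-D\mathfrak{h}(0)\cdot x\bigr| \\
&\quad + |\mathfrak{h}(0)-u(0)| + |D\mathfrak{h}(0)-Du(0)|\,|x| \\
&\leq 3\iota + \mathrm{C}^{*}\rho^{1+\alpha_{\mathrm{F}}}.
\end{aligned}
$$
Since $\alpha<\alpha_{\mathrm{F}}$ is part of the hypothesis of Theorem \ref{main1*}, I would next choose $\rho \in (0,1/4)$ so small that $\mathrm{C}^{*}\rho^{\alpha_{\mathrm{F}}-\alpha} \leq \frac{1}{2}$, whence $\mathrm{C}^{*}\rho^{1+\alpha_{\mathrm{F}}} \leq \frac{1}{2}\rho^{1+\alpha}$. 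With this $\rho$ now fixed, set $\iota \defeq \rho^{1+\alpha}/6$; this in turn fixes the smallness threshold $\delta=\delta(\iota)$ supplied by Lemma \ref{lem.flat}, and the estimate \eqref{Aproxcond} follows.

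The main subtlety is the order of quantifiers: $\rho$ must be fixed first, using only $\alpha<\alpha_{\mathrm{F}}$ and the universal constant $\mathrm{C}^{*}$ coming from the frozen-coefficient theory; only then can $\iota$ (and hence $\delta$) be chosen as a function of $\rho$. The reliance on assumption (A2) is what allows the Approximation Lemma to produce an $\mathfrak{h}$ solving a well-posed frozen equation at $x_0=0$, to which the classical $C^{1,\alpha_{\mathrm{F}}}$ theory applies; and the role of the smallness regime (Remark \ref{SmallRegime}) is exactly to put the solution into $\mathcal{J}(F,\mathcal{H},f)(B_1(0))$ with oscillation of coefficients and source term small enough to trigger $\delta$.
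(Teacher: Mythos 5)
Your proof is correct and follows essentially the same route as the paper: approximate $u$ by the $F$-harmonic profile $\mathfrak{h}$ via Lemma \ref{lem.flat}, invoke the classical interior $C^{1,\alpha_{\mathrm{F}}}$ estimate for $\mathfrak{h}$, run the same four-term triangle inequality, and then fix $\rho$ first (using only $\alpha<\alpha_{\mathrm{F}}$ and the universal constant) and $\iota$, hence $\delta$, afterwards. Your explicit remark on the order of quantifiers matches the paper's choices in \eqref{2.5}, only with slightly different numerical constants.
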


\begin{proof}
Let $\iota\ll 1$ to be chosen \textit{a posteriori}. From Approximation Lemma \ref{lem.flat} we know that there exists $\delta_\iota>0$, such that whenever
\begin{equation}\label{EqSmallCond}
 \max\left\{\Theta_{\mathrm{F}}(x), \left\|f\right\|_{L^{\infty}(B_{1}(0))}\right\}\leq\delta_{\iota},
\end{equation}
then \eqref{eq.flat} holds. For $\rho\in \left(0,\frac{1}{2}\right)$ to be fixed soon and $x\in B_{\rho}(0)$ we compute
\[
   \left|u(x)-\mathfrak{l}_{0} u(x)\right| \leq |u(x)-\mathfrak{h}(x)| + |\mathfrak{h}(x)-\mathfrak{l}_{0} \mathfrak{h}(x)|+ |\mathfrak{h}(0)-u(0)| + |(D\mathfrak{h}-D u)(0)\cdot x|
\]
so that
\[
\sup_{B_\rho(0)}\left|u(x)-\mathfrak{l}_{0} u(x)\right| \leq \sup_{B_\rho(0)}|\mathfrak{h}(x)-\mathfrak{l}_{0} \mathfrak{h}(x)|+ 3\iota
\]
provided that \eqref{EqSmallCond} there holds.

Now, according to the available regularity theory to homogeneous problem with ``frozen coefficients'' (see, \cite{C89}, \cite{CC95} and \cite{Tru88}) we have
$$
   \displaystyle |\mathfrak{h}(x)-\mathfrak{l}_0 \mathfrak{h}(x)|   \leq  \mathrm{C}(n, \lambda, \Lambda)\cdot|x|^{1+\alpha_{\mathrm{F}}} \quad \forall\,\, x \in B_{\frac{1}{2}}(0),
$$
where $\mathrm{C}>0$ and $\alpha_{\mathrm{F}} =  \alpha_{\mathrm{F}}(n, \lambda, \Lambda)\in (0, 1]$. Finally,
$$
\begin{array}{rcl}
\displaystyle \sup_{B_\rho(0)}\left|u(x)-\mathfrak{l}_{0} u(x)\right|&\le &  \mathrm{C}(n, \lambda, \Lambda)\cdot\rho^{1+\alpha_{\mathrm{F}}}+3\iota\\
&\leq & \rho^{1+\alpha},
\end{array}
$$
as long as we make the following universal choices:
\begin{equation}\label{2.5}
  \rho \in \left(0, \min\left\{\frac{1}{2}, \,\left(\frac{4}{5\mathrm{C}(n, \lambda, \Lambda)}\right)^{\frac{1}{\alpha_{\mathrm{F}}-\alpha}}\right\}\right) \quad \text{and} \quad   \iota \in  \left(0, \frac{1}{15}\rho^{1+\alpha}\right)
\end{equation}
Therefore, we obtain \eqref{Aproxcond}, thereby finishing the proof.
\end{proof}

Different from scenario of second order operators, using the previous lemma we can no longer proceed with an iterative scheme, since the operator's degeneracy law is not translating invariance by affine maps. Thus, the following simple consequence will provide the correct quantitative information.

\begin{lemma}[{\bf $1^{st}$ step of induction}]\label{c3.1}
Suppose that the assumptions of Lemma \ref{lem.firststep} are in force. Then, for $\rho>0$ fulfilling \eqref{2.5} we have
$$
\displaystyle \sup_{B_{\rho}(0)}\left|u(x)-u(0)\right|\leq\rho^{1+\alpha}+\rho|D u(0)|.
$$
\end{lemma}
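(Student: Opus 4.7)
The statement is essentially an immediate corollary of Lemma \ref{lem.firststep}, so the plan is very short. My strategy is to decompose $u(x)-u(0)$ via the first-order Taylor-like polynomial $\mathfrak{l}_0 u(x)=u(0)+Du(0)\cdot x$ and apply the triangle inequality together with the estimate obtained in Lemma \ref{lem.firststep}.

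Concretely, for any $x \in B_\rho(0)$ I write
\[
|u(x)-u(0)| \le |u(x)-\mathfrak{l}_0 u(x)| + |\mathfrak{l}_0 u(x)-u(0)| = |u(x)-\mathfrak{l}_0 u(x)| + |Du(0)\cdot x|.
\]
The first term is bounded by $\rho^{1+\alpha}$ uniformly on $B_\rho(0)$ thanks to \eqref{Aproxcond} in Lemma \ref{lem.firststep}, and the second term is controlled by Cauchy--Schwarz as $|Du(0)\cdot x|\le |Du(0)|\,|x|\le \rho|Du(0)|$. Taking the supremum over $B_\rho(0)$ yields the claimed estimate.

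There is no real obstacle here; the only subtlety worth flagging is that the implicit ``smallness regime'' guaranteeing the applicability of Lemma \ref{lem.firststep} (i.e.\ the normalization coming from Remark \ref{SmallRegime} and the thresholds on $\Theta_{\mathrm F}$ and $\|f\|_{L^\infty}$) must already be assumed to be in force, so that \eqref{Aproxcond} is legitimately available. Once that is noted, the conclusion is purely algebraic via the triangle inequality, and the choice of $\rho$ in \eqref{2.5} is inherited unchanged from Lemma \ref{lem.firststep}.
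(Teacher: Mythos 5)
Your proof is correct and matches the paper's (implicit) argument: the paper states this lemma as a "simple consequence" of Lemma \ref{lem.firststep} without proof, and the intended reasoning is precisely your triangle-inequality decomposition through $\mathfrak{l}_0 u$ combined with \eqref{Aproxcond} and $|Du(0)\cdot x|\le \rho|Du(0)|$.
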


Next, we will obtain the precise control on the influence of the gradient of $u$, we iterate (in an induction procedure) solutions in suitable dyadic balls. The proof is slightly similar to the one in \cite[Theorem 3.1]{AdaSRT19} and \cite[Lemma 2.5]{daSR20}. For this reason, we will omit it here.

\begin{lemma}[{\bf $k^{th}$ step of induction}]\label{lem.dy} Under the assumptions of Lemma \ref{lem.firststep} one obtain
\begin{equation}\label{3.3}
\displaystyle\sup_{B_{\rho^k}(0)}\left|u(x)-u(0)\right|\leq\rho^{k(1+\alpha)}+|D u (0)|\sum_{j=0}^{k-1}\rho^{k+j\alpha}.
\end{equation}
\end{lemma}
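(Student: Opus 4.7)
The proof proceeds by induction on $k$, with the base case $k=1$ supplied by Lemma~\ref{c3.1}. For the inductive step, assuming the estimate holds at level $k$, I would rescale $u$ to the unit ball by setting
\[
v(y) := \tau_k^{-1}\bigl[u(\rho^k y)-u(0)\bigr], \qquad \tau_k := \rho^{k(1+\alpha)}+|Du(0)|\,S_k,
\]
where $S_k := \sum_{j=0}^{k-1}\rho^{k+j\alpha}$, so that $v(0)=0$ and $\|v\|_{L^\infty(B_1)}\le 1$ by the inductive hypothesis.

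A direct scaling computation (matching the one in Remark~\ref{SmallRegime}) shows that $v$ satisfies a PDE of the same structural form $\mathcal{H}_v(y,Dv)F_v(y,D^2 v)=f_v(y)$ in $B_1$, with
\[
F_v(y,X) := (\rho^{2k}/\tau_k)\,F\!\left(\rho^k y, (\tau_k/\rho^{2k})X\right)
\]
inheriting the ellipticity constants of $F$ via the $1$-homogeneity of the Pucci extremal operators, together with the corresponding rescalings of $\mathcal{H}$ and of the source. Granting that $v$ falls within the hypothesis class of Lemma~\ref{c3.1} (see the obstacle discussion below), applying Lemma~\ref{c3.1} to $v$ produces
\[
\sup_{B_\rho}|v(y)-v(0)| \le \rho^{1+\alpha}+\rho\,|Dv(0)|.
\]

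Unwinding the normalization through $Dv(0)=(\rho^k/\tau_k)Du(0)$ and multiplying by $\tau_k$ gives
\[
\sup_{B_{\rho^{k+1}}}|u(x)-u(0)| \le \tau_k\rho^{1+\alpha}+\rho^{k+1}|Du(0)| = \rho^{(k+1)(1+\alpha)} + |Du(0)|\bigl[\rho^{1+\alpha}S_k+\rho^{k+1}\bigr],
\]
and the short reindexing identity $\rho^{1+\alpha}S_k+\rho^{k+1}=S_{k+1}$ closes the induction.

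The chief obstacle is verifying that the rescaled triple $(F_v,\mathcal{H}_v,f_v)$ genuinely falls within the scope of Lemma~\ref{c3.1} uniformly in $k$. For $F_v$ this is automatic. However, the non-homogeneous degeneracy $\mathcal{H}_v$ generates $y$-dependent prefactors of the form $(\tau_k/\rho^k)^{p(\rho^k y)+1}/\rho^{k(p(\rho^k y)+2)}$ which must be absorbed into the structural constants $L_1,L_2$ of~(A4); simultaneously, the rescaled source $f_v$ and the frozen-coefficients oscillation $\Theta_{F_v}$ must still satisfy the smallness required by Lemma~\ref{lem.flat}. This is ultimately handled by exploiting $\tau_k \ge \rho^{k(1+\alpha)}$, the uniform bounds $p_{\min}\le p(x)\le p_{\max}\le q_{\max}$, and the universal choice of $\rho$ from~\eqref{2.5}.
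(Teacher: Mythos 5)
Your induction-plus-rescaling argument is correct and is essentially the proof the paper leaves implicit (it omits the argument, pointing to \cite[Lemma 2.5]{daSR20} and \cite[Theorem 3.1]{AdaSRT19}, which proceed by exactly this dyadic scaling followed by an application of the first-step lemma): the reindexing identity $\rho^{1+\alpha}S_k+\rho^{k+1}=S_{k+1}$ and the verification that the rescaled data remain in the smallness regime, using $\tau_k\ge\rho^{k(1+\alpha)}$ together with $\alpha\le\frac{1}{p_{\text{max}}+1}$ so that $\|f_v\|_{L^{\infty}}\le\rho^{k\left(1-\alpha(p_{\text{max}}+1)\right)}\|f\|_{L^{\infty}}\le\delta$, are precisely the right ingredients. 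One small precision: the $y$-dependent prefactor $(\tau_k/\rho^k)^{p(\rho^k y)}$ is not absorbed into the constants $\mathrm{L}_1,\mathrm{L}_2$ but rather factored out of $\mathcal{H}_v$ and pushed onto the rescaled source $f_v$ (and into the rescaled modulating function $\mathfrak{a}_v$), exactly as in Remark \ref{SmallRegime}.
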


Next result provides a regularity estimate inside the singular zone.

\begin{lemma}\label{l3.3}
Suppose that the assumptions of Lemma \ref{lem.dy} are in force. Then, there exists a constant $\mathrm{M}_0(\verb"universal")>1$ such that
$$
\displaystyle \sup_{B_{r}(0)} \frac{|u(x)-u(0)|}{r^{1+\alpha}}\leq \mathrm{M}_0 \cdot \left(1+|D u(0)|r^{-\alpha}\right),\,\,\forall r\in(0,\rho).
$$
\end{lemma}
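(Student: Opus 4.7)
My plan is to upgrade the dyadic estimate in Lemma \ref{lem.dy} to a continuous-scale estimate by a standard interpolation between consecutive dyadic radii. Since the relevant bound is already available along the geometric sequence $\{\rho^k\}_{k \in \mathbb{N}}$, the argument is purely algebraic: any $r \in (0, \rho)$ is sandwiched between two successive powers of $\rho$, and one compares $r^{1+\alpha}$ with $\rho^{k(1+\alpha)}$ at a uniformly controlled cost.

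More precisely, given $r \in (0, \rho)$, I would select the unique $k \in \mathbb{N}$, $k \geq 1$, satisfying $\rho^{k+1} < r \leq \rho^k$, so that $B_r(0) \subset B_{\rho^k}(0)$. Applying Lemma \ref{lem.dy} at the scale $\rho^k$ gives
$$
\sup_{B_r(0)} |u(x)-u(0)| \;\le\; \sup_{B_{\rho^k}(0)} |u(x)-u(0)| \;\le\; \rho^{k(1+\alpha)} + |Du(0)|\sum_{j=0}^{k-1}\rho^{k+j\alpha}.
$$
I would then divide by $r^{1+\alpha}$ and estimate the two terms separately. For the first term, the choice of $k$ yields
$$
\frac{\rho^{k(1+\alpha)}}{r^{1+\alpha}} \;<\; \frac{\rho^{k(1+\alpha)}}{\rho^{(k+1)(1+\alpha)}} \;=\; \rho^{-(1+\alpha)}.
$$
For the second term, I would factor $\rho^k$ out and sum the geometric series, obtaining
$$
\sum_{j=0}^{k-1}\rho^{k+j\alpha} \;=\; \rho^k\,\frac{1-\rho^{k\alpha}}{1-\rho^\alpha} \;\le\; \frac{\rho^k}{1-\rho^\alpha},
$$
and then write $\rho^k/r^{1+\alpha} = (\rho^k/r) \cdot r^{-\alpha} < \rho^{-1} r^{-\alpha}$, again by the choice of $k$.

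Combining these two estimates produces
$$
\sup_{B_r(0)} \frac{|u(x)-u(0)|}{r^{1+\alpha}} \;\le\; \rho^{-(1+\alpha)} + \frac{|Du(0)|}{\rho(1-\rho^\alpha)}\,r^{-\alpha},
$$
so defining
$$
\mathrm{M}_0 \;\defeq\; \max\!\left\{\rho^{-(1+\alpha)},\,\frac{1}{\rho(1-\rho^\alpha)}\right\} + 1
$$
yields the claimed bound. Since $\rho$ and $\alpha$ are universal quantities fixed in Lemma \ref{lem.firststep}, so is $\mathrm{M}_0$. There is no real obstacle here; the only mild subtlety is to make sure that the exponent $k\alpha$ in the geometric sum is summed uniformly in $k$, which is guaranteed by $\rho^\alpha < 1$, and that the comparison $\rho^k/r \le \rho^{-1}$ is used in the correct direction, which follows from $r > \rho^{k+1}$. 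The step which must be recorded carefully is the identification of $\mathrm{M}_0$ as depending only on $n,\lambda,\Lambda,\alpha_{\mathrm F},p_{\text{max}},\beta_g$ through $\rho$ and $\alpha$, so that it can indeed be called universal in the subsequent oscillation argument.
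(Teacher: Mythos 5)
Your proposal is correct and follows essentially the same route as the paper: fix the dyadic index $k$ with $\rho^{k+1}<r\leq\rho^{k}$, apply Lemma \ref{lem.dy} at scale $\rho^{k}$, and sum the geometric series, with the resulting constant $\mathrm{M}_0$ universal since it depends only on $\rho$ and $\alpha$. The only cosmetic difference is how you extract $r^{-\alpha}$ (via $\rho^{k}/r<\rho^{-1}$ rather than $\rho^{-k\alpha}\leq r^{-\alpha}$), which merely changes the universal constant.
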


\begin{proof}
Firstly, fix any $r\in(0,\rho)$ and choose $k\in\mathbb{N}$ the smallest integer such that $\rho^{k+1}<r\leq\rho^{k}$. By using Lemma \ref{lem.dy}, we estimate
\begin{align*}
\sup_{B_r(0)}\frac{|u(x)-u(0)|}{r^{1+\alpha}} & \leq \frac{1}{\rho^{1+\alpha}} \sup_{B_{\rho^k}(0)}\frac{|u(x)-u(0)|}{\rho^{k(1+\alpha)}} \\
										   & \displaystyle \leq \frac{1}{\rho^{1+\alpha}} \left(1+|Du(0)|\rho^{-k(1+\alpha)}\sum_{j=0}^{k-1}\rho^{k+j\alpha}\right)\\
										   & \leq \frac{1}{\rho^{1+\alpha}} \left(1+|Du(0)|\rho^{-k\alpha}\sum_{j=0}^{k-1}\rho^{j\alpha}\right)\\
										   & \leq \frac{1}{\rho^{1+\alpha}} \left(1+|Du(0)|\rho^{-k\alpha}\frac{1}{1-\rho^\alpha}\right)\\
                                           & \leq  \frac{1}{\rho^{1+\alpha}(1-\rho^\alpha)}\cdot (1+|Du(0)|r^{-\alpha})\\
   										   & = \mathrm{M}_0\cdot (1+|Du(0)|r^{-\alpha}),\\
\end{align*}
thereby concluding the proof.
\end{proof}

Now, we can give the proof of first main result of this manuscript:

\begin{proof}[{\bf Proof of Theorem \ref{main1}}]
Without loss of generality, we may assume that $x_0=0$. Notice that the degenerate ellipticity of the operator naturally leads us to separate the study into two different regimes, depending on whether $|Du(0)|$ is ``sufficiently small'' or not.

\vspace{0.3cm}

\begin{enumerate}
  \item If $0 \in \mathcal{S}_{r, \alpha}(u, \Omega)$
\vspace{0.1cm}

By using Lemma \ref{lem.dy} we estimate
\begin{align*}
\sup_{B_r(0)}\left|u(x)-\mathfrak{l}_{0} u(x)\right| & \leq \sup_{B_r(0)}|u(x)-u(0)|+|D u(0)|r \\
												 &  \leq \mathrm{M}_0 \cdot r^{1+\alpha}\left(1+|D u(0)|r^{-\alpha}\right)+r^{1+ \alpha}\\
												 & \leq  3\mathrm{M}_0 \cdot r^{1+\alpha}\\
\end{align*}
as desired in this case.

\vspace{0.3cm}
  \item If $0 \notin \mathcal{S}_{r, \alpha}(u, \Omega)$ i.e. $r^\alpha< |D u(0)|\leq L$
\vspace{0.1cm}

In this case, let us define $r_0 \defeq |D u(0)|^{\frac{1}{\alpha}}$ and
$$
u_{r_0}(x) \defeq \frac{u(r_0x)-u(0)}{r_0^{1+\alpha}}.
$$
Hence, we are allowed to apply Lemma \ref{l3.3} and conclude that
\begin{equation}\label{EstSmallGrad}
\displaystyle \sup_{B_{r_0}(0)} |u(r_0x)-u(0)| \leq 2\mathrm{M}_0\cdot r_0^{1+\alpha}.
\end{equation}
Now, notice that $u_{r_0}$ fulfills in the viscosity sense
$$
\mathcal{H}_{r_0}(x, D u_{r_0})F_{r_0}(x, D^2 u_{r_0}) = f_{r_0}(x) \quad \text{in} \quad B_1(0),
$$
where
$$
\left\{
\begin{array}{rcl}
F_{r_0}(x, \mathrm{X}) & \defeq & r_0^{1-\alpha}F\left(r_0x, r_0^{-(1-\alpha)}\mathrm{X}\right) \\
f_{r_0}(x)  & \defeq & r_0^{1-\alpha(p(r_0x)+1)}f(r_0x)\\
\mathcal{H}_{r_0}(x, \xi ) & \defeq & r_0^{-p(r_0x)\alpha}\mathcal{H}\left(r_0x, r_0^{\alpha}\xi\right)\\
\mathfrak{a}_{r_0}(x) & \defeq & r_0^{(q(r_0x)-p(r_0x))\alpha}\mathfrak{a}(r_0x)
\end{array}
\right.
$$
and
\begin{equation}\label{Eq3.2}
\displaystyle u_{r_0}(0) = 0, \,\,\,|D u_{r_0}(0)| = 1 \quad \text{and} \quad \|f_{r_0}\|_{L^{\infty}(B_1(0))} \le 1.
\end{equation}
Moreover, \eqref{EstSmallGrad} assures us that $u_{r_0}$ is uniformly bounded in the $L^{\infty}-$topology. From Theorem \ref{GradThm} it follows (using \eqref{Eq3.2}) that
$$
\|u_{r_0}\|_{C^{1,\gamma}(B_{1/2}(0))}\leq \mathrm{C}(\verb"universal").
$$
Such an estimate and one more time \eqref{Eq3.2}, allow us to choose a radius $0<\rho_0(\verb"universal")\ll 1$ such that
$$
\mathrm{c}_0 \leq |D u_{r_0}(x)|\leq \mathrm{c}_0^{-1} \,\,\,\forall \,\,x \in B_{\rho_0}(0) \,\,\,\text{and}\,\,\, \mathrm{c}_0 \in (0, 1) \,\,\,\text{fixed}.
$$

Particularly, we obtain (in the viscosity sense)
\[
 F_{r_0}(x, D^2u_{r_0}) =\tilde{f}_{r_0}(x)\defeq \frac{f_{r_0}(x)}{\mathcal{H}_{r_0}(x, Du_{r_0})} \quad \text{in} \quad B_{\rho_0}(0),
\]

The previous statement says $\tilde{f}_{r_0}$ is (universally) bounded in $B_{\rho_0}(0)$ and we get the result from classical estimates (see, \cite{C89}, \cite[Section 8.2]{CC95} and \cite{Tru88} - see also \cite{daSN21} for sharp estimates) since the equation becomes uniformly elliptic:
$$
  \mathscr{M}_{\lambda, \Lambda}^-(D^2 u_{r_0})\leq \mathrm{C}_0\left(p_{\text{min}}, q_{\text{max}}, \mathfrak{c}_0, \mathrm{L}^{-1}_1, \|f\|_{L^{\infty}(\Omega)}, \|\mathfrak{a}\|_{L^{\infty}(\Omega)}\right)
$$
and
$$
\mathscr{M}_{\lambda, \Lambda}^+(D^2 u_{r_0})\geq -\mathrm{C}_0\left(p_{\text{min}}, q_{\text{max}}, \mathfrak{c}_0, L^{-1}_1, \|f\|_{L^{\infty}(\Omega)}, \|\mathfrak{a}\|_{L^{\infty}(\Omega)}\right).
$$

 Therefore, $u_{r_0}\in C_{\text{loc}}^{1, \alpha_0}(B_{\rho_0}(0))$ for any $\alpha_0 \in (0, \alpha_{\mathrm{F}})$. As a result, we obtain
 \begin{equation}\label{EqEstUnifElliOper}
  \displaystyle \sup_{B_r(0)}\left|u_{r_0}(x)-\mathfrak{l}_{0} u_{r_0}(x)\right|\leq \mathrm{C} \cdot r^{1+\alpha_0}, \,\,\,\forall\,\,r \in \left(0, \frac{\rho_0}{2}\right),
 \end{equation}
 which, one re-writes in terms of $u$ as follows
 $$
 \displaystyle \sup_{B_r(0)}\left|\frac{u(r_0x)-u(0)}{r_0^{1+\alpha_0}}-r_0^{-\alpha_0}Du(0)\cdot x\right|\leq \mathrm{C} \cdot r^{1+\alpha_0}.
 $$
 Next, by doing $\alpha=\alpha_0$ in the above estimate (see, \eqref{SharpExp}), we conclude
  $$
 \displaystyle \sup_{B_r(0)}\left|u(x)-\mathfrak{l}_{0} u(x)\right|\leq \mathrm{C} \cdot r^{1+\alpha}, \,\,\,\forall\,\,r \in \left(0, \frac{\rho_0r_0}{2}\right),
 $$

 In conclusion, for $ r \in \left[\frac{\rho_0r_0}{2}, r_0\right)$, we have
 $$
 \begin{array}{rcl}
 \displaystyle \sup_{B_r(0)}\left|u(x)-\mathfrak{l}_{0} u(x)\right| & \leq &\displaystyle \sup_{B_{r_0}(0)}\left|u(x)-\mathfrak{l}_{0} u(x)\right|\\
 &\leq &\displaystyle \sup_{B_{r_0}(0)}\left|u(x)-u(0)\right| +|Du(0)|r_0\\
 &\leq & (2\mathrm{M}_0+1)\cdot r_0^{1+\alpha}\\
 &\leq & 3\mathrm{M}_0 \left(\frac{2}{\rho_0}\right)^{1+\alpha}\cdot r^{1+\alpha}.
 \end{array}
   $$

Finally, from characterization of Dini-Campanato spaces (see \textit{e.g.} \cite{Kov99}) we conclude that $u$ is $C^{1, \alpha}$ at $x_0 = 0$. Moreover, a standard covering argument yields the corresponding estimate in any compact subset $\Omega^{\prime} \Subset \Omega$, which finishes the proof.

\end{enumerate}
\end{proof}

\begin{proof}[{\bf Proof of Corollary \ref{Cormain1}}]
It follows from Theorem \ref{main1}, since solutions to the homogeneous problem (with ``frozen coefficients'') for such classes of operators are $C_{\text{loc}}^{1,1}(\Omega)$, in other words, $\alpha_{\mathrm{F}}=1$ (see, \cite[Section 6]{daSR20} for more examples). Therefore, we are able to choose $\alpha = \frac{1}{p_{\text{max}}+1} \in (0, 1)$ in the sentences \eqref{2.5} and \eqref{EqEstUnifElliOper}.
\end{proof}

%%%%%%%%%%%%%%%%%%%%%%%%%%%%%%%%%%%%%%%%%%%%%%%%%%%%%%%%%%%%%%%%%%%%%%%%%%%%

Next, we will deliver the proof of the geometric non-degeneracy.

\begin{proof}[{\bf Proof of Theorem \ref{main3}}]
Firstly, for $x_0 \in \Omega^{\prime} \Subset \Omega$ let us define the scaled function:
 $$
    u_{r, x_0}(x) \defeq \frac{u(x_0+rx)-u(x_0)+ \varepsilon}{r^{\frac{p_{\textrm{min}}+2}{p_{\textrm{min}}+1}}} \quad \text{for} \quad x \in B_1(0).
 $$
Now, observe that $u_{r, x_0}$ fulfills in the viscosity sense
$$
  \mathcal{H}_{r, x_0}(x, D u_{r, x_0})F_{r, x_0}(x, D^2 u_{r, x_0}) = f_{r, x_0}(x) \quad \text{in} \quad B_1(0),
$$
where
$$
\left\{
\begin{array}{rcl}
  F_{r, x_0}(x, \mathrm{X}) & \defeq & r^{\frac{p_{\textrm{min}}}{p_{\textrm{min}}+1}}F\left(x_0+rx, r^{-\frac{p_{\textrm{min}}}{p_{\textrm{min}}+1}}\mathrm{X}\right) \\
  f_{r, x_0}(x)  & \defeq & f(x_0 + r x)\\
  \mathcal{H}_{r, x_0}(x, \xi ) & \defeq & r^{-\frac{p_{\textrm{min}}}{p_{\textrm{min}}+1}}\mathcal{H}\left(x_0+rx, r^{\frac{1}{p_{\textrm{min}}+1}}\xi\right)\\
  \mathfrak{a}_{r, x_0}(x) & \defeq & r^{\frac{q_{\textrm{max}}-p_{\textrm{min}}}{p_{\textrm{max}}+1}}\mathfrak{a}(x_0+rx).
\end{array}
\right.
$$

Firstly, let us introduce the comparison function:
$$
   \Theta(x) \defeq \mathfrak{c}\cdot |x|^{\frac{p_{\textrm{min}}+2}{p_{\textrm{min}}+1}},
$$
where the constant $\mathfrak{c}>0$ will be chosen in such a way that
$$
\mathcal{H}_{x_0,r}(x, D\Theta)F_{x_0,r}(x, D^2 \Theta) < f_{x_0,r}(x)\quad \text{in} \quad B_R(0) \Subset \Omega.
$$
Note that
\begin{eqnarray*}
	D_i \Theta(x) &=&  \mathfrak{c} \left( \frac{p_{\textrm{min}}+2}{p_{\textrm{min}}+1} \right) x_i \cdot |x|^{-\frac{p_{\textrm{min}}}{p_{\textrm{min}}+1}}\\
	D_{ij} \Theta(x)& =&  \mathfrak{c} \left( \frac{p_{\textrm{min}}+2}{p_{\textrm{min}}+1} \right) \left[ \delta_{ij} - \frac{p_{\textrm{min}}}{p_{\textrm{min}}+1} \frac{x_i x_j}{|x|^2}\right] \cdot |x|^{-\frac{p_{\textrm{min}}}{p_{\textrm{min}}+1}}.
\end{eqnarray*}
Let now $\mathfrak{A}=(\mathfrak{A}_{ij})$ be a symmetric matrix whose eigenvalues belong to $[\lambda, \Lambda]$, i.e.,
$$
	\lambda |\xi |^2 \le \sum_{i,j=1}^{N} \mathfrak{A}_{ij} \xi_i \xi_j \le \Lambda |\xi|^2, \quad \forall \, \xi \in \mathbb{R}^n.
$$
We will write in this case that $\mathfrak{A} \in \mathcal{A}_{\lambda, \Lambda}$. Hence, we have
{\scriptsize{
\begin{eqnarray*}
\sum_{i,j=1}^{n} \mathfrak{A}_{ij} D_{ij} \Theta(x) &=& \sum_{i,j=1}^{n} \mathfrak{A}_{ij} \left \{ \mathfrak{c} \left( \frac{p_{\textrm{min}}+2}{p_{\textrm{min}}+1}\right) \left[\delta_{ij} - \frac{p_{\textrm{min}}}{p_{\textrm{min}}+1} \frac{x_ix_j}{|x|^2} \right] \cdot |x|^{-\frac{p_{\textrm{min}}}{p_{\textrm{min}}+1}}\right\}\\
&=&   \mathfrak{c} \left( \frac{p_{\textrm{min}}+2}{p_{\textrm{min}}+1} \right) \left[ \sum_{i,j=1}^{n} \mathfrak{A}_{ij} \delta_{ij} - \frac{p_{\textrm{min}}}{p_{\textrm{min}}+1} \sum_{i,j=1}^{n} \mathfrak{A}_{ij} \frac{x_i x_j}{|x|^2} \right] \cdot |x|^{-\frac{p_{\textrm{min}}}{p_{\textrm{min}}+1}}\\
&=&  \mathfrak{c} \left( \frac{p_{\textrm{min}}+2}{p_{\textrm{min}}+1} \right) \left[ \sum_{i=1}^{n} \mathfrak{A}_{ii}  - \frac{p_{\textrm{min}}}{p_{\textrm{min}}+1} \frac{1}{|x|^2}\sum_{i,j=1}^{n} \mathfrak{A}_{ij} x_i x_j \right] \cdot |x|^{-\frac{p_{\textrm{min}}}{p_{\textrm{min}}+1}}\\
&\le&  \mathfrak{c} \left( \frac{p_{\textrm{min}}+2}{p_{\textrm{min}}+1} \right) \left[ n \Lambda - \frac{p_{\textrm{min}}}{p_{\textrm{min}}+1} \lambda\right] |x|^{-\frac{p_{\textrm{min}}}{p_{\textrm{min}}+1}}.
\end{eqnarray*}
}}
Thus,
\begin{eqnarray*}
	\mathcal{M}^{+}_{\lambda, \Lambda}(D^2 \Theta(x)) &=& \sup_{\mathfrak{A} \in \mathcal{A}_{\lambda, \Lambda}} \left\{ \sum_{i,j=1}^{n} \mathfrak{A}_{ij} D_{ij} \Theta(x)\right\} \\
	&\le&  \mathfrak{c} \left( \frac{p_{\textrm{min}}+2}{p_{\textrm{min}}+1} \right) \left[ n \Lambda - \frac{p_{\textrm{min}}}{p_{\textrm{min}}+1} \lambda\right]  |x|^{-\frac{p_{\textrm{min}}}{p_{\textrm{min}}+1}}.
\end{eqnarray*}

Now, let us note that by  assumptions \eqref{1.2} and \eqref{N-HDeg} we have
{\scriptsize{
$$
\begin{array}{rcl}
  \mathcal{H}_{r,x_0}(x, D\Theta) &=& r^{-\frac{p_{\textrm{min}}}{p_{\textrm{min}}+1}}\mathcal{H}\left(x_0+rx, r^{\frac{1}{p_{\textrm{min}}+1}} D \Theta(x_0+rx)\right)\\
   &\le&  \mathrm{L}_2 \cdot \left( |r^{\frac{1}{p_{\textrm{min}}+1}}D\Theta|^{p(x_0+rx)} + \mathfrak{a}_{r,x_0}(x) |r^{\frac{1}{p_{\textrm{min}}+1}}D\Theta|^{q(x_0+rx)} \right) \cdot r^{-\frac{p_{\textrm{min}}}{p_{\textrm{min}}+1}} \\
  &=& \mathrm{L}_2 . \Big( r^{\frac{p(x_0+rx)}{1+p_{\textrm{min}} }} \mathfrak{c}^{p(x_0+rx)} \left( \frac{p_{\textrm{max}} +2}{p_{\textrm{min}}+1}\right)^{p(x_0+rx)} \cdot |x|^{\frac{p(x_0+rx)}{1+ p_{\textrm{min}}}} +\\
  &&\mathfrak{a}_{r,x_0}(x)  r^{\frac{q(x_0+rx)}{1+p_{\textrm{min}} }} \mathfrak{c}^{q(x_0+rx)} \left( \frac{p_{\textrm{min}} +2}{p_{\textrm{min}}+1}\right)^{q(x_0+rx)} \cdot |x|^{\frac{q(x_0+rx)}{1+ p_{\textrm{min}}}} \Big)  \cdot r^{-\frac{p_{\textrm{min}}}{p_{\textrm{min}}+1}}\\
  &\le&  \mathrm{L}_2 . \Big( r^{\frac{p(x_0+rx)}{1+p_{\textrm{min}} }} \mathfrak{c}^{p(x_0+rx)} \left( \frac{p_{\textrm{min}} +2}{p_{\textrm{min}}+1}\right)^{p_{\textrm{max}}} \cdot |x|^{\frac{p(x_0+rx)}{1+ p_{\textrm{min}}}} +\\
  &&\mathfrak{a}_{r,x_0}(x)  r^{\frac{q(x_0+rx)}{1+p_{\textrm{min}} }} \mathfrak{c}^{q(x_0+rx)} \left( \frac{p_{\textrm{min}} +2}{p_{\textrm{min}}+1}\right)^{q_{\textrm{max}}} \cdot |x|^{\frac{q(x_0+rx)}{1+ p_{\textrm{min}}}} \Big)  \cdot r^{-\frac{p_{\textrm{min}}}{p_{\textrm{min}}+1}}\\
&\le&\mathrm{L}_2 \cdot \Big( \mathfrak{c}^{p(x_0+rx)} \left( \frac{p_{\textrm{min}}+2}{p_{\textrm{min}}+1}\right)^{p_{\textrm{max}}} + r^{\frac{q(x_0+rx) -p(x_0+rx)}{1+p_{\textrm{min}}}} \mathfrak{a}_{r,x_0}(x).\\
&\cdot& \mathfrak{c}^{q(x_0+rx)}  \left( \frac{p_{\textrm{min}}+2}{p_{\textrm{min}}+1}\right)^{q_{\textrm{max}}} |x|^{\frac{q(x_0+rx) - p(x_0+rx)}{1+p_{\textrm{max}}}} \Big) r^{\frac{p(x_0+rx) - p_{\textrm{min}}}{1+p_{\textrm{min}}}} . |x|^{\frac{p(x_0+rx)}{1+p_{\textrm{min}}}}\\
&\le&\mathrm{L}_2 \cdot \Big( \mathfrak{c}^{p(x_0+rx)} \left( \frac{p_{\textrm{min}}+2}{p_{\textrm{min}}+1}\right)^{p_{\textrm{max}}} + r^{\frac{q(x_0+rx) - p_{\textrm{min}} + q_{\textrm{max}}-p(x_0+rx) }{1+p_{\textrm{max}}}} \mathfrak{a}(x_0+rx).\\
&\cdot& \mathfrak{c}^{q(x_0+rx)}  \left( \frac{p_{\textrm{min}}+2}{p_{\textrm{min}}+1}\right)^{q_{\textrm{max}}} |x|^{\frac{q_{\textrm{max}} - p_{\textrm{min}}}{1+p_{\textrm{max}}}} \Big)|x|^{\frac{p_{\textrm{min}}}{1+p_{\textrm{min}}}} \\
&\le&\mathrm{L}_2 \cdot \Big( \mathfrak{c}^{p(x_0+rx)} \left( \frac{p_{\textrm{min}}+2}{p_{\textrm{min}}+1}\right)^{p_{\textrm{max}}} + \| \mathfrak{a}\|_{L^{\infty}}. \mathfrak{c}^{q(x_0+rx)}  \left( \frac{p_{\textrm{min}}+2}{p_{\textrm{min}}+1}\right)^{q_{\textrm{max}}} \Big)\cdot |x|^{\frac{p_{\textrm{min}}}{1+p_{\textrm{min}}}}  .
\end{array}
$$
}}
In the sequel, we will split the analysis in two cases:
\begin{itemize}
\item If $0 < \mathfrak{c} < 1$,
$$
	 \mathcal{H}_{r,x_0}(x, D\Theta) \le \mathrm{L}_2 \left( \mathfrak{c}^{p_{\textrm{min}}}  \left( \frac{p_{\textrm{min}}+2}{p_{\textrm{min}}+1}\right)^{p_{\textrm{max}}} + \|\mathfrak{a}\|_{L^{\infty}} \mathfrak{c}^{p_{\textrm{max}}} \left( \frac{p_{\textrm{min}}+2}{p_{\textrm{min}}+1}\right)^{q_{\textrm{max}}} \right)\cdot |x|^{\frac{p_{\textrm{min}}}{1+p_{\textrm{min}}}} .
$$
Thus,
$$
	 \mathcal{H}_{r,x_0}(x, D\Theta) F_{r,x_0}(x,D^2 \Theta) \le \Xi_1 \cdot \Xi_2
$$
where
$$
\Xi_1 \defeq \left[\left(\frac{p_{\textrm{min}}+2}{p_{\textrm{min}}+1}\right)^{p_{\textrm{max}}+1} \mathfrak{c}^{p_{\textrm{min}}+1} + \|\mathfrak{a}\|_{L^{\infty}(\Omega)} \left(\frac{p_{\textrm{min}}+2}{p_{\textrm{min}}+1}\right)^{q_{\textrm{max}}+1}\mathfrak{c}^{p_{\textrm{max}}+1}\right],
$$
and
$$
\Xi_2 \defeq \mathrm{L}_2\cdot \left(n \Lambda - \frac{p_{\textrm{min}}}{p_{\textrm{min}}+1}  \lambda\right).
$$
At this point, consider the  function $\mathfrak{g}: [0, \infty) \to \R$ given by
$$
   \mathfrak{g}(t) \defeq \Xi_2\cdot t^{p_{\textrm{min}}+1}\left[\left(\frac{p_{\textrm{min}}+2}{p_{\textrm{min}}+1}\right)^{p_{\textrm{max}}+1}+\Xi_3\cdot t^{p_{\textrm{max}}-p_{\textrm{min}}}\right]-\mathfrak{m},
$$
where
$$
    \displaystyle \Xi_3 \defeq \|\mathfrak{a}\|_{L^{\infty}(\Omega)} \left(\frac{p_{\textrm{min}}+2}{p_{\textrm{min}}+1}\right)^{q_{\textrm{max}}+1} \quad \text{and} \quad \mathfrak{m} \defeq \inf_{\Omega} f(x).
$$
Now, let us label $\mathrm{T}_0>0$ its smallest root, which there exists thanks to the assumption $\displaystyle \mathfrak{m} \defeq \inf_{\Omega} f(x)>0$. Therefore, we are able to choose a $\mathfrak{c}=\mathfrak{c}(\mathfrak{m},\|\mathfrak{a}\|_{L^{\infty}(\Omega)}, \mathrm{L}_2, n, \lambda, \Lambda, p_{\textrm{min}},q_{\textrm{max}}, p_{\textrm{max}} , \Omega) \in (0, \mathrm{T}_0)$ such that
$$
 \mathcal{H}_{r,x_0}(x, D\Theta)F_{r,x_0}(x, D^2 \Theta) < f_{r,x_0}(x) \qquad \text{point-wisely}.
$$

\item If $ \mathfrak{c} > 1$,
$$
	 \mathcal{H}_{r,x_0}(x, D\Theta) \le \mathrm{L}_2 \left( \mathfrak{c}^{p_{\textrm{max}}}  \left( \frac{p_{\textrm{min}}+2}{p_{\textrm{min}}+1}\right)^{p_{\textrm{max}}} + \|\mathfrak{a}\|_{L^{\infty}} \mathfrak{c}^{q_{\textrm{max}}} \left( \frac{p_{\textrm{min}}+2}{p_{\textrm{min}}+1}\right)^{q_{\textrm{max}}} \right)\cdot |x|^{\frac{p_{\textrm{min}}}{1+p_{\textrm{min}}}}
$$
Thus,
 $$
	 \mathcal{H}_{r,x_0}(x, D\Theta) F_{r,x_0}(x,D^2 \Theta) \le \Xi_4 \cdot \Xi_2
$$
where
$$
\Xi_4 \defeq \left[\left(\frac{p_{\textrm{min}}+2}{p_{\textrm{min}}+1}\right)^{p_{\textrm{max}}+1} \mathfrak{c}^{p_{\textrm{max}}+1} + \|\mathfrak{a}\|_{L^{\infty}(\Omega)} \left(\frac{p_{\textrm{min}}+2}{p_{\textrm{min}}+1}\right)^{q_{\textrm{max}}+1}\mathfrak{c}^{q_{\textrm{max}}+1}\right].
$$

Now, we consider the function $\hat{\mathfrak{g}}: [0, \infty) \to \R$ given by
$$
   \hat{\mathfrak{g}}(t) \defeq \Xi_2\cdot t^{p_{\textrm{max}}+1}\left[\left(\frac{p_{\textrm{min}}+2}{p_{\textrm{min}}+1}\right)^{p_{\textrm{max}}+1}+\Xi_3\cdot t^{q_{\textrm{max}}-p_{\textrm{max}}}\right]-\mathfrak{m},
$$
where
$$
    \displaystyle \Xi_3 \defeq \|\mathfrak{a}\|_{L^{\infty}(\Omega)} \left(\frac{p_{\textrm{min}}+2}{p_{\textrm{min}}+1}\right)^{q_{\textrm{max}}+1} \quad \text{and} \quad \mathfrak{m} \defeq \inf_{\Omega} f(x).
$$
As in the previous case, we are able to choose $\hat{\mathrm{T}}_0>0$ and $\mathfrak{c}=\mathfrak{c}(\mathfrak{m},\|\mathfrak{a}\|_{L^{\infty}(\Omega)}, \mathrm{L}_2, n, \lambda, \Lambda, p_{\textrm{min}},q_{\textrm{max}}, p_{\textrm{max}} , \Omega) \in (0, \hat{\mathrm{T}}_0)$ such that
$$
 \mathcal{H}_{r,x_0}(x, D\Theta)F_{r,x_0}(x, D^2 \Theta) < f_{r,x_0}(x) \qquad \text{point-wisely}.
$$
\end{itemize}

Finally, if $u_{r, x_0} \leq \Theta$ on the whole boundary of $B_1(0)$, then the Comparison Principle (Lemma \ref{comparison principle}), would imply that
$$
   u_{r, x_0}(x) \leq \Theta(x) \quad \mbox{in} \quad B_1(0),
$$
which contradicts the assumption that $u_{r, x_0}(0)> 0$. Therefore, there exists a point $z \in \partial B_1(0)$ such that
$$
      u_{r, x_0}(z) > \Theta(z) = \mathfrak{c}(\mathfrak{m},\|\mathfrak{a}\|_{L^{\infty}(\Omega)}, \mathrm{L}_1, n, \lambda, \Lambda, p_{\textrm{min}}, p_{\textrm{max}}, q_{\textrm{max}}, \Omega).
$$
By scaling back and letting $\varepsilon \to 0$ we finish the proof of the Theorem.
\end{proof}

\section{Proof of Theorem \ref{main1}}\label{Sec4}

In this section, we will deliver the proof of our sharp gradient estimates, i.e., Theorem \ref{main1}. Before proceeding to such a proof, we need to collect a number of auxiliary Lemmas, which play a decisive role in our strategy.

The first key step towards the proof of Theorem \ref{main1} is to show that for any $\zeta \in \mathbb{R}^n$, solutions to
\begin{equation} \label{T3.1}
\left\{
\begin{array}{rclcl}
\mathcal{H}(y, Du + \zeta)F( D^2 u) & = & f(y) & \text{in} & B_1(x) \cap \{y_n >\phi(y')\}  \\
  u& = & g & \text{on} & B_1(x) \cap \{y_n = \phi(y)\},
\end{array}
\right.
\end{equation}
can be approximated in a suitable manner by $F-$harmonic profiles, and that during such a process certain regularity properties of solutions are preserved.

%%%%%%%%%%%%%%%%
%%%%%%%%%%%%%%%
%%%%%%%%%%%%%%%%

\begin{lemma}[{\bf Approximation Lemma - Boundary case}]\label{L0}
 For all $x \in \mathrm{B}$ such that $B_1(x) \subset \mathrm{B}$,  let $\zeta \in \mathbb{R}^n$ be an arbitrary vector and $u \in C(B_1(x) \cap \{y_n > \phi(y^{\prime})\})$ a viscosity solution to
 $$
\left\{
\begin{array}{rclcl}
 \mathcal{H}(y,Du+\zeta) F(y, D^2 u) & = & f(y) & \text{in} & B_1(x) \cap \left\{y_n >\phi(y^{\prime})\right\} \\
  u & = & g & \text{on} & B_1(x) \cap \{y_n = \phi(y^{\prime})\},
\end{array}
\right.
$$
satisfying $\|u\|_{L^{\infty}(B_1(x) \cap \{y_n > \phi(y^{\prime})\})}, \|g\|_{_{C^{1, \beta_g}(B_1(x) \cap \{y_n =\phi(y^{\prime})\})}} \le 1$. Then, for all $\delta >0$ there exists an $\epsilon = \epsilon (n, \lambda, \Lambda, \delta, p_{\text{max}}) \in (0,1)$ such that if
$$
	\max\left\{\Theta_F(x), \,\,\,\|f\|_{L^{\infty}(B_1(x) \cap \{y_n > \phi(y^{\prime})\})} \right\}\le \epsilon
$$
then we can find a function $\mathfrak{h}$, solution to a constant coefficient, homogeneous $(\lambda,\Lambda)$-uniform elliptic equation
\begin{equation} \label{BD}
\left\{
\begin{array}{rcrcl}
\mathfrak{F}( D^2 \mathfrak{h}) & = & 0 & \text{in} & B_1(x) \cap \{y_n > \phi(y^{\prime})\}  \\
  \mathfrak{h} & = & g & \text{on} & B_1(x) \cap \{y_n = \phi(y^{\prime})\},
\end{array}
\right.
\end{equation}
such that
$$
	\|u-\mathfrak{h}\|_{L^{\infty}(B_1(x) \cap \{y_n > \phi(y^{\prime})\})} \le \delta.
$$
\end{lemma}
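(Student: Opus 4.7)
The plan is a standard compactness-contradiction argument tailored to the unbalanced degenerate setting. Suppose the conclusion fails: there exist $\delta_0 > 0$ and sequences $(u_j), (F_j), (f_j), (\zeta_j), (\phi_j)$ (taking $x = 0$ for notational simplicity) such that $u_j$ solves
$$\mathcal{H}(y, Du_j + \zeta_j)\, F_j(y, D^2 u_j) = f_j(y) \quad \text{in} \quad B_1 \cap \{y_n > \phi_j(y')\},$$
with $u_j = g$ on the flat portion, $\|u_j\|_\infty \leq 1$, $\|g\|_{C^{1,\beta_g}} \leq 1$, and $\max\{\Theta_{F_j}(0), \|f_j\|_\infty\} \leq 1/j$, yet $\|u_j - \mathfrak{h}\|_\infty > \delta_0$ for every solution $\mathfrak{h}$ of a constant-coefficient uniformly elliptic homogeneous problem with datum $g$. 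The analysis then bifurcates according to the boundedness of $|\zeta_j|$.

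\textbf{Case 1: $|\zeta_j|$ remains bounded.} Up to a subsequence $\zeta_j \to \zeta_\infty \in \mathbb{R}^n$. Combining the global H\"older estimate of Theorem \ref{Xizao} with Remark \ref{RemarkSeqTransl} yields equicontinuity of $(u_j + x\cdot \zeta_j)_j$, and hence Arzel\`a-Ascoli produces a uniform limit $u_j \to u_\infty$. Simultaneously, Lemma \ref{lem.stab} extracts $F_j \to F_\infty$ uniformly on compacta of $\mathrm{Sym}(n)\times B_1$; since $\Theta_{F_j}(0) \to 0$, the limit $F_\infty$ is independent of $y$, hence a constant-coefficient $(\lambda,\Lambda)$-uniformly elliptic operator. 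Together with $f_j \to 0$, the stability Lemma \ref{P1} ensures that $u_\infty$ solves $F_\infty(D^2 u_\infty) = 0$ with boundary data $g$, so setting $\mathfrak{F} = F_\infty$ and $\mathfrak{h} = u_\infty$ contradicts the standing assumption for large $j$.

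\textbf{Case 2: $|\zeta_j| \to \infty$.} Here Theorem \ref{pgrande} is the decisive tool. After a rotation aligning $\zeta_j/|\zeta_j|$ with $e_n$ (which preserves the graph representation of the boundary via the implicit function theorem, with uniform $C^2$ control), Theorem \ref{pgrande} furnishes a uniform Lipschitz bound for $u_j$ on $B_{3/4}\cap\{y_n > \phi_j(y')\}$ as soon as $|\zeta_j| > 1/s_0$. Consequently $|Du_j + \zeta_j| \geq |\zeta_j|/2$ for $j$ large, whence $\mathcal{H}(y, Du_j + \zeta_j) \geq \mathrm{L}_1 (|\zeta_j|/2)^{p_{\text{min}}}$ diverges, and dividing the equation yields
$$F_j(y, D^2 u_j) = \frac{f_j(y)}{\mathcal{H}(y, Du_j + \zeta_j)} \longrightarrow 0 \quad \text{uniformly}.$$
Passing to a uniformly convergent subsequence $u_j \to u_\infty$ and invoking the usual stability of viscosity solutions to uniformly elliptic equations—available now that the degeneracy has been absorbed into the vanishing right-hand side—shows that $u_\infty$ solves the same constant-coefficient homogeneous problem with datum $g$, once more contradicting the hypothesis.

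\textbf{Main obstacle.} The delicate point is Case~2: one must verify that the rotation aligning $\zeta_j$ with $e_n$ preserves the graph structure and the quantitative $C^2$ bounds of $\phi_j$ uniformly in $j$, so that Theorem \ref{pgrande} applies with constants independent of $j$; and one must carefully propagate the viscosity sense through the divergent shift $\zeta_j$ when extracting the limiting equation. The latter is legitimate precisely because $\mathcal{H}(y, Du_j + \zeta_j)$ is bounded away from zero on test jets in this regime, permitting the passage from the degenerate to the uniformly elliptic form without loss of information.
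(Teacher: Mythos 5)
Your proposal is correct and follows essentially the same route as the paper's proof: the same contradiction/compactness scheme with the dichotomy on $|\zeta_j|$, where the bounded case is handled exactly as in the paper via Theorem \ref{Xizao}, Remark \ref{RemarkSeqTransl}, Lemma \ref{lem.stab} and the stability Lemma \ref{P1}. In the unbounded case the paper likewise relies on the large-deviation Lipschitz estimate of Theorem \ref{pgrande} for equicontinuity and then passes to the constant-coefficient homogeneous limit --- there by normalizing the equation by $|\zeta_j|^{p_j(\cdot)}$ and invoking the arguments of \cite{DeF20} and \cite{FRZ21}, whereas you divide by $\mathcal{H}(y,Du_j+\zeta_j)\ge \mathrm{L}_1\left(|\zeta_j|/2\right)^{p_{\text{min}}}$ at touching points of test functions, which is the same mechanism.
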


\begin{proof}
The proof is based on a contradiction argument. Then, let us assume that the claim is not satisfied. Thus, there are $\delta_0 >0$ and sequences  $(\mathcal{H}_j)_j, (f_j)_j, (u_j)_j , (F_j)_j, (g_j)_j$ and sequence of vectors $(\zeta_j)_j$ for which there exist viscosity solutions $u_j$ of
$$
\left\{
\begin{array}{rclcl}
\mathcal{H}_j(y, Du_j + \zeta_j)F_j(y, D^2 u_j) & = & f_j(y) & \text{in} & B_1(x) \cap \{y_n >\phi(y^{\prime})\}  \\
  u_j & = & g_j & \text{on} & B_1(x) \cap \{y_n = \phi(y^{\prime})\},
\end{array}
\right.
$$
satisfying:
\begin{enumerate}
\item[(i)] $(F_j(y, \cdot))$ are uniformly $(\lambda,\Lambda)$-elliptic operators;

\item[(ii)] $\max\left\{ \Theta_{F_j}(x), \,\,\|f_j\|_{L^{\infty}(B_1(x) \cap \{y_n > \phi(y^{\prime})\})}\right\} \le \frac{1}{j}  $ and
$$
    f_j \in C^0(B_1(x) \cap \{y_n > \phi(y^{\prime})\})
    $$

\item[(iii)] $\mathcal{H}_j(y,\cdot)$ satisfying \eqref{1.2} and \eqref{N-HDeg} with
$$
	0 < p_{\text{min}} \le p_j(y) \le p_{\text{max}} \le q_j(y) \le q_{\text{max}} < \infty,
$$
and
{\small$$
p_j(\cdot), q_j(\cdot) \in C^0(B_1(x) \cap \{y_n > \phi(y^{\prime})\}) \quad \text{and} \quad 0 \le \mathfrak{a}_j(\cdot) \in C^0(B_1(x) \cap \{y_n > \phi(y^{\prime})\});
$$}

\item[(iv)] $u_j \in C^0(B_1(x) \cap \{y_n > \phi (y^{\prime})\})$ and $\|u_j\|_{L^{\infty}(B_1(x) \cap \{y_n > \phi(y^{\prime})\})} \le 1$
    and $g_j \in C^{1, \beta_g}(B_1(x) \cap \{y_n = \phi (y^{\prime})\})$ and $\|g_j\|_{_{C^{1, \beta_g}(B_1(x) \cap \{y_n =\phi(y^{\prime})\})}} \le 1$
\end{enumerate}
however,
\begin{equation} \label{Eq0}
	\|u_j-\mathfrak{h}\|_{L^{\infty}(B_1(x) \cap \{y_n > \phi(y^{\prime})\})} > \delta_0 \qquad \,\,\ \text{for every} \,\,\, j \in \mathbb{N}
\end{equation}
for any $\mathfrak{h}$ satisfying \eqref{BD}.

Now, since the operators $(F_j(y, \cdot))$ are $(\lambda,\Lambda)$-elliptic, from Lemma \ref{lem.stab} and condition (ii) they converge uniformly to some operator $\mathfrak{F}(\cdot)$ (with frozen coefficients).

In the sequel, we will show that $u_{\infty}$ is a viscosity solution of
\begin{equation}\label{Eq1L*}
\left\{
\begin{array}{rclcl}
\mathfrak{F} ( D^2 w) & = & 0 & \text{in} & B_1(x) \cap \{y_n > \phi(y^{\prime})\}  \\
  w & = & g_{\infty} & \text{on} & B_1(x) \cap \{y_n = \phi(y^{\prime})\},
\end{array}
\right.
\end{equation}
where $\quad g_j \to g_{\infty}$ (uniformly).

Indeed, let us suppose first that $(\zeta_j)_j$ is a bounded sequence. Then, up to subsequence, it converges to $\zeta_{\infty}$. In the sequel, by condition (iv) and the H\"{o}lder continuity of $u_j$ (namely, Theorem \ref{Xizao} and Remark \ref{RemarkSeqTransl}), we get
$$
\displaystyle	\|u_j\|_{C^{0,\gamma}( \overline{B_1(x) \cap \{y_n > \phi(y^{\prime})\}})}  \le \mathrm{C} \left(n,\lambda, \Lambda, p_{\text{max}}, \sup_{j} \zeta_j \right) \quad \text{for some} \quad \gamma \in (0,1).
$$

Therefore, by applying Arzel\`{a}-Ascoli's compactness criterium we obtain the existence of functions
$$
   u_{\infty} \in C^0(\overline{B_1(x) \cap \{y_n > \phi(y^{\prime})\}}) \quad \text{and} \quad g_{\infty} \in C^{1, \beta_g}(B_1(x) \cap \{y_n = \phi(y^{\prime})\})
$$
and subsequence such that
$$
   u_j \to u_{\infty} \quad  \text{uniformly on} \quad  B_1(x) \cap \{y_n > \phi(y^{\prime})\} \quad
$$
with $u_{\infty} = g_{\infty}$ on $B_1(x) \cap \{y_n = \phi(y^{\prime})\}$.

Hence, by Lemma \ref{P1} we conclude that $u_{\infty}$ is a viscosity solution to \eqref{Eq1L*}.

On the other hand, if $(\zeta_j)_j$ is a unbounded sequence. Then, we may assume $|\zeta_j| \to \infty$ (up to a subsequence). In this case, by invoking \eqref{1.2} and \eqref{N-HDeg}, $u_j$ is a viscosity solution of
$$
\mathcal{H}_j\left(y, \frac{Du_j}{|\zeta_j|} + \frac{\zeta_j}{|\zeta_j|}\right)F_j(x, D^2 u_j)  =  \frac{f_j(y)}{|\zeta_j|^{p_j(x)}} \quad  \text{in} \quad B_1(x) \cap \{y_n >\phi(y^{\prime})\}.
$$

Finally, since $(u_j)_j$ is equi-continuous (see Theorem \ref{pgrande}), by arguing as in \cite[Lemma 4.1 - Case 1]{DeF20} and \cite[Lemma 3.2]{FRZ21} we conclude (after pass a subsequence) that $u_{\infty}$ is a viscosity solution to \eqref{Eq1L*}. This completes the proof.

\end{proof}

\begin{remark}[{\bf Smallness regime - Boundary case}]\label{SmallRegime2}
As before, we may perform a scaling and normalization $v(x) = \frac{u(\tau x+ x_0)}{\kappa}$
in such a way
$$
	\|v\|_{L^{\infty}(\Omega)}, \|\hat{g}\|_{C^{1, \beta_g}(\partial \Omega)} \le 1 \quad \textrm{and} \quad \max\left\{\Theta_{\hat{F}}(x), \,\|\hat{f}\|_{L^{\infty}(\Omega)} \right\}\le \epsilon,
$$
where $v$ solves a PDE possessing the same structural conditions as \eqref{1.1} and then smallness and normalization regime in Lemma \ref{L0} are verified.

\end{remark}

In the sequel, we establish a first step in a geometric regularity approach via a recursive iterative process.

\begin{lemma}[{\bf First step}] \label{L2}
Let $\alpha$ be fixed as in \eqref{SharpExp}. For any $\phi \in C^2(\overline{\Omega})$ with \eqref{Condphi} in force, there exist $\epsilon_0 \in (0,1)$ and $\rho \in \left(0, \frac{1}{2}\right) $ depending on $\alpha, n, \lambda, \Lambda, \|D^2 \phi\|_{L^\infty(\Omega)}, \|g\|_{C^{1,\beta_g}(\partial\Omega)}$ and  $p_{\text{max}}$ such that for any $\zeta \in \mathbb{R}^n$ and $u$ a viscosity solution of
$$
\left\{
\begin{array}{rcrcl}
\mathcal{H}(y, \zeta + Du)F(y, D^2 u) & = & f & \text{in} & \mathrm{B} \cap \{y_n > \phi(y^{\prime})\}  \\
  u  & = & g & \text{on} & \mathrm{B} \cap \{y_n = \phi(y^{\prime})\},
\end{array}
\right.
$$
the following holds: for all $x \in \mathrm{B}$ such that $B_{1}(x) \subset \mathrm{B}$, if
{\small $$
	\|u\|_{L^{\infty}( B_{1}(x) \cap \{y_n >\phi(y^{\prime})\}) )} \le 1 \quad \textrm{and} \quad    \max\left\{\Theta_F(x), \,\,\|f\|_{L^{\infty}(B_1(x) \cap \{y_n > \phi(y^{\prime})\})}\right\} \le \epsilon_0,
$$}
then there exist an affine function $\ell(y) = \mathrm{a} + \mathrm{b} \cdot (y-x)$ ($\mathrm{a} \in \mathbb{R}$ and $\mathrm{b} \in \mathbb{R}^n$) such that
$$
	\|u - \ell\|_{L^{\infty}( B_{\rho}(x) \cap \{y_n >\phi(y^{\prime})\})} \le \rho^{1+\alpha}
$$
and
$$
	|\mathrm{a}| + |\mathrm{b}| \le \mathrm{C}(n,\lambda,\Lambda).
$$
\end{lemma}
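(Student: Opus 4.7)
The plan is to run a standard boundary compactness-plus-Taylor-expansion argument, driven by the Approximation Lemma \ref{L0}. I first fix $\alpha \in (0,1)$ as in \eqref{SharpExp} and choose a free parameter $\bar{\alpha} \in (\alpha, \min\{\alpha_F, \beta_g\})$. Given any $\rho \in (0,\tfrac{1}{2})$ (to be pinned down below), I apply Lemma \ref{L0} with a prescribed tolerance $\delta = \tfrac{1}{2}\rho^{1+\alpha}$; this produces a universal $\epsilon_0 = \epsilon_0(n,\lambda,\Lambda,\delta,p_{\text{max}}) \in (0,1)$ such that whenever the smallness hypotheses on $\Theta_F(x)$ and $\|f\|_{L^\infty}$ hold, one finds a function $\mathfrak{h}$ satisfying
\[
\mathfrak{F}(D^2\mathfrak{h}) = 0 \text{ in } B_1(x)\cap\{y_n>\phi(y')\}, \qquad \mathfrak{h} = g \text{ on } B_1(x)\cap\{y_n=\phi(y')\},
\]
together with the $L^\infty$-closeness $\|u-\mathfrak{h}\|_{L^\infty(B_1(x)\cap\{y_n>\phi(y')\})} \le \delta$.

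Next, I would invoke the boundary regularity theory for constant-coefficient, uniformly $(\lambda,\Lambda)$-elliptic equations on a $C^2$ domain (recall $\phi \in C^2(\mathrm{D}')$ with \eqref{Condphi}) with $C^{1,\beta_g}$ boundary datum. By Silvestre--Sirakov type estimates \cite{SS14} (or classical Krylov boundary estimates, depending on whether $\mathfrak{F}$ is convex/concave), $\mathfrak{h} \in C^{1,\bar{\alpha}}$ up to the flattened boundary, with the quantitative bound
\[
\|\mathfrak{h}\|_{C^{1,\bar{\alpha}}(\overline{B_{3/4}(x)\cap\{y_n>\phi(y')\}})} \le \mathrm{C}(n,\lambda,\Lambda,\|D^2\phi\|_\infty,\|g\|_{C^{1,\beta_g}}).
\]
Setting $\ell(y) \defeq \mathfrak{h}(x) + D\mathfrak{h}(x)\cdot(y-x)$, a first-order Taylor expansion yields
\[
\sup_{B_\rho(x)\cap\{y_n>\phi(y')\}} |\mathfrak{h}(y) - \ell(y)| \le \mathrm{C}_0\,\rho^{1+\bar{\alpha}}
\]
for a universal constant $\mathrm{C}_0$.

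Combining the two estimates via the triangle inequality,
\[
\sup_{B_\rho(x)\cap\{y_n>\phi(y')\}} |u - \ell| \le \delta + \mathrm{C}_0\,\rho^{1+\bar{\alpha}} = \tfrac{1}{2}\rho^{1+\alpha} + \mathrm{C}_0\,\rho^{1+\bar{\alpha}}.
\]
I now fix $\rho \in (0,\tfrac{1}{2})$ small enough that $\mathrm{C}_0\,\rho^{\bar{\alpha}-\alpha} \le \tfrac{1}{2}$, which is possible precisely because $\bar{\alpha} > \alpha$; this pins down $\rho = \rho(\alpha,n,\lambda,\Lambda,\|D^2\phi\|_\infty,\|g\|_{C^{1,\beta_g}})$ and, through the choice of $\delta$, also $\epsilon_0$. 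The bound on the coefficients is immediate: $|\mathrm{a}| = |\mathfrak{h}(x)| \le \|\mathfrak{h}\|_{L^\infty}$, controlled by the maximum principle in terms of $\|g\|_{L^\infty(\partial\Omega)} \le 1$, while $|\mathrm{b}| = |D\mathfrak{h}(x)|$ is controlled by the $C^{1,\bar{\alpha}}$ estimate above; both yield $|\mathrm{a}|+|\mathrm{b}| \le \mathrm{C}(n,\lambda,\Lambda)$.

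The main obstacle I expect is a bookkeeping one: in Lemma \ref{L0} the coefficients $\mathfrak{F}$ of the limiting problem depend only on universal data, but the quantitative $C^{1,\bar{\alpha}}$ boundary estimate for $\mathfrak{h}$ must be genuinely up-to-the-boundary on the curved domain $\{y_n > \phi(y')\}$, so the dependence on $\|D^2\phi\|_{L^\infty}$ (through the flattening diffeomorphism) has to be tracked carefully. A minor delicate point is that $x$ may lie arbitrarily close to the portion $\{y_n = \phi(y')\}$; this is harmless because the $C^{1,\bar{\alpha}}$ estimate for $\mathfrak{h}$ is uniform on $\overline{B_{3/4}(x)\cap\{y_n>\phi(y')\}}$, but it forces the use of a genuine boundary (rather than interior) regularity result, which is exactly what the hypothesis $g \in C^{1,\beta_g}$ is designed to supply.
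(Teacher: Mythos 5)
Your proposal follows essentially the same route as the paper's proof: approximate $u$ by the $\mathfrak{F}$-harmonic profile $\mathfrak{h}$ via Lemma \ref{L0} with tolerance $\delta=\tfrac12\rho^{1+\alpha}$, invoke up-to-the-boundary $C^{1,\cdot}$ estimates for $\mathfrak{h}$ (the paper quotes \cite{BD2} and \cite{BD16} where you quote \cite{SS14}/Krylov), set $\ell(y)=\mathfrak{h}(x)+D\mathfrak{h}(x)\cdot(y-x)$, and absorb the error by choosing $\rho$ so that $\mathrm{C}_0\rho^{\text{gap}}\le\tfrac12$; the paper only adds the preliminary reduction to the case $B_1(x)\cap\{y_n=\phi(y^{\prime})\}\neq\emptyset$ (otherwise interior estimates from \cite{FRZ21} apply). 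The one wrinkle is your intermediate exponent $\bar{\alpha}\in(\alpha,\min\{\alpha_F,\beta_g\})$, which is an empty interval when $\alpha=\beta_g$ (a case permitted by \eqref{SharpExp}); the paper avoids this by stating the boundary estimate \eqref{EqEstHomProb} for $\mathfrak{h}$ with exponent $\alpha_F$ and constant depending on $\|g\|_{C^{1,\beta_g}(\partial\Omega)}$, so that only $\alpha<\alpha_F$ is needed in the absorption step.
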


\begin{proof}
 Firstly, note that if $B_1(x) \cap \{y_n=\phi(y^{\prime})\} = \emptyset$, then it is sufficient to use the interior estimates due to \cite[Theorem 1.1]{FRZ21}. For this reason, from now on, we are going to assume that $B_1(x) \cap \{y_n =\phi(y^{\prime})\} \not= \emptyset$.

 Now, for a $\delta >0$ to be chosen \textit{a posteriori}, let $\mathfrak{h}$ be the viscosity solution to
$$
	\left\{
\begin{array}{rclcl}
 F( D^2 \mathfrak{h}) & = & 0 & \text{in} & B_{1}(x) \cap \{y_n > \phi(y^{\prime})\}  \\
  \mathfrak{h} & = & g & \text{on} & B_{1}(x) \cap \{y_n = \phi(y^{\prime})\},
\end{array}
\right.
$$
which is $\delta$-close to $u$ in the $L^{\infty}(B_1(x) \cap \{y_n > \phi(y^{\prime})\})$ topology.

Remember that Lemma \ref{L0} ensures the existence of such an $\mathfrak{h}$, provided that $\epsilon_0 >0$ is small enough. Moreover, by virtue of the global $C^{1,\alpha_F}$-regularity of $\mathfrak{h}$ (cf. \cite[Theorem 1.1]{BD2} and \cite[Theorem 1.1]{BD16}), there exists a constant $\mathrm{C}_0>1$ depending only on $n,\lambda,\Lambda, \|g\|_{C^{1, \beta_g}(\partial \Omega)}$ and $\|\phi\|_{C^2}$ such that
{\scriptsize{
\begin{equation}\label{EqEstHomProb}
   	\sup_{y \in B_{r}(x) \cap \{y_n > \phi(y^{\prime})\}} \frac{\left| \mathfrak{h}(y) - (\mathfrak{h}(x) + D\mathfrak{h}(x) \cdot (y-x)) \right|}{r^{1+\alpha_F}} \le \mathrm{C}_0 \quad \text{for all}\quad r \in \left(0, \frac{1}{2}\right)
\end{equation}}}
and
 $$
 	|\mathfrak{h}(x)| + |D\mathfrak{h}(x)| \le \mathrm{C}_0.
 $$
 In this point, let us set
\begin{equation}\label{EqL-funct}
   	\ell(y) \defeq  \mathfrak{h}(x) + D \mathfrak{h}(x) \cdot (y-x).
\end{equation}

 Thus, \eqref{EqEstHomProb},  \eqref{EqL-funct} and Approximation \ref{L0} Lemma  yield that
 \begin{equation}\label{EqPFirstIter}
    \begin{array}{rcl}
     \displaystyle \sup_{B_{\rho}(x) \cap \{y_n > \phi(y^{\prime})\}} |u(y) - \ell(y)| & \le & \displaystyle \sup_{B_{\rho}(x) \cap \{y_n > \phi(y^{\prime})\}} |u(y) - \mathfrak{h}(y)| \\
       & + & \displaystyle \sup_{B_{\rho}(x) \cap \{y_n > \phi(y^{\prime})\}} |\mathfrak{h}(y) - \ell(y) | \\
       & < & \delta + \mathrm{C}_0 \rho^{1+\alpha_F}.
    \end{array}
 \end{equation}
 Since $0 < \alpha < \alpha_F$, we can choose $0 < \rho \ll 1$ so small that
 \begin{equation}\label{Eq5.2}
    	\mathrm{C}_0 \rho^{\alpha_F - \alpha} \le \frac{1}{2} \quad \Leftrightarrow \quad \rho \le \left(\frac{1}{2\mathrm{C}_0}\right)^{\frac{1}{\alpha_F-\alpha}}.
 \end{equation}
 Also, for the previous selected radius, we fix
\begin{equation}\label{Eq5.3}
 	\delta = \frac{1}{2} \rho^{1+\alpha} \le \frac{1}{2} \left(\frac{1}{2\mathrm{C}_0}\right)^{ \frac{1+\alpha}{\alpha_F-\alpha}}.
\end{equation}
 Finally, by combining \eqref{Eq5.2} and \eqref{Eq5.3} and plugging them in \eqref{EqPFirstIter} we obtain
$$
	\|u-\ell\|_{L^{\infty}(B_{\rho}(x) \cap \{y_n >\phi(y^{\prime})\}) }  \le \rho^{1+\alpha}.
$$
\end{proof}

Next, the H\"{o}lder gradient regularity can be derived from following iteration process.

\begin{lemma}[{\bf Iterative procedure - $k^{\text{th}}-$step}] \label{L3}
Suppose that $\rho, \epsilon_0>0$ and $\mathrm{B}$ are as in Lemma \ref{L2}. Suppose that $u$ is a viscosity solution of
\begin{equation} \label{3}
\left\{
\begin{array}{rcrcl}
  \mathcal{H}(y,Du)F( y, D^2 u) & = & f & \text{in} & \mathrm{B} \cap \{y_n > \phi(y^{\prime})\}  \\
  u & = & g & \text{on} & \mathrm{B} \cap \{y_n = \phi(y^{\prime})\},
\end{array}
\right.
\end{equation}
with
{\small$$
	\|u\|_{L^{\infty}( B_{1}(x) \cap \{y_n >\phi(y^{\prime})\}) )} \le 1 \quad \textrm{and} \quad    \max\left\{\Theta_F(x), \,\,\|f\|_{L^{\infty}(B_1(x) \cap \{y_n > \phi(y^{\prime})\})}\right\} \le \epsilon_0,
$$}
then, for any $\alpha$ as in \eqref{SharpExp}, $j \in \{0,1,2,\ldots\}$ and $x \in B$ such that $B_1(x) \subset B$,  there is a sequence $(\ell_j)_j$, where $\ell_j(y) \defeq \mathrm{a}_j + \mathrm{b}_j \cdot (y-x)$ ($\mathrm{a}_j \in \mathbb{R}$, $\mathrm{b}_j \in \mathbb{R}^n$ and $|\mathrm{a}_j|+|\mathrm{b}_j|\leq \mathrm{C}_0$) satisfying
\begin{equation} \label{4}
	\|u - \ell_j\|_{L^{\infty} ( B_{\rho^j}(x) \cap \{ y_n > \phi(y^{\prime})\} )}  \le \rho^{j(1+\alpha)}
\end{equation}
and
$$
	|\mathrm{a}_j - \mathrm{a}_{j-1}| \le \mathrm{C}_0 \rho^{j(1+\alpha)} \quad \text{and} \quad |\mathrm{b}_j - \mathrm{b}_{j-1}| \le\mathrm{C}_0\rho^{(j-1) \alpha}
$$
for the constant $\mathrm{C}_0 = \mathrm{C}_0(n,\lambda,\Lambda, \|g\|_{C^{1, \beta_g}(\partial \Omega)}, \|\phi\|_{C^2})$.
\end{lemma}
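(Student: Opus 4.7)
The plan is to prove Lemma \ref{L3} by induction on $j$, running a rescaling/normalization argument at each step so that Lemma \ref{L2} can be reapplied. Without loss of generality, after translation we assume $x=0$. The base cases $j=0$ ($\ell_0 \equiv 0$) and $j=1$ (provided by Lemma \ref{L2}) are immediate.

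Assume the conclusion for $0,\dots,k$. The key is to introduce the dyadic rescaling
\[
v_k(\tilde y) \defeq \frac{u(\rho^k \tilde y) - \ell_k(\rho^k \tilde y)}{\rho^{k(1+\alpha)}},
\qquad \tilde y \in B_1 \cap \{\tilde y_n > \phi_k(\tilde y')\},
\]
where $\phi_k(\tilde y') \defeq \rho^{-k}\phi(\rho^k \tilde y')$. One checks immediately that $\phi_k(0)=|D\phi_k(0)|=0$ and $\|D^2\phi_k\|_\infty \le \rho^k\|D^2\phi\|_\infty$, so the straightened boundary is preserved across scales. By the inductive hypothesis $\|v_k\|_{L^\infty}\le 1$. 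Setting $\zeta_k \defeq \mathrm{b}_k/\rho^{k\alpha}$, a direct chain-rule computation shows that $v_k$ solves (in the viscosity sense)
\[
\widetilde{\mathcal H}_k(\tilde y,\zeta_k + Dv_k)\; F_k(\tilde y, D^2 v_k) \;=\; \tilde f_k(\tilde y),
\]
with $F_k(\tilde y,X)\defeq \rho^{k(1-\alpha)}F(\rho^k\tilde y,\rho^{-k(1-\alpha)}X)$, a renormalized degeneracy law $\widetilde{\mathcal H}_k$ coming out of the factorization $\mathcal H(\rho^k\tilde y,\rho^{k\alpha}\xi)=\rho^{k\alpha p_k(\tilde y)}\widetilde{\mathcal H}_k(\tilde y,\xi)$, and
\[
\tilde f_k(\tilde y) = \rho^{k(1-\alpha(1+p_k(\tilde y)))}\,f(\rho^k\tilde y).
\]

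Next I verify all normalization hypotheses of Lemma \ref{L2} for $v_k$. Uniform ellipticity of $F_k$ with the same $(\lambda,\Lambda)$ is preserved by its very definition, and $\Theta_{F_k}(0)\le \Theta_F(0)\le \epsilon_0$ (using monotonicity of $\omega$ and $\rho<1$). The rescaled degeneracy satisfies
\[
L_1\bigl(|\xi|^{p_k(\tilde y)}+\tilde{\mathfrak a}_k(\tilde y)|\xi|^{q_k(\tilde y)}\bigr) \le \widetilde{\mathcal H}_k(\tilde y,\xi) \le L_2\bigl(|\xi|^{p_k(\tilde y)}+\tilde{\mathfrak a}_k(\tilde y)|\xi|^{q_k(\tilde y)}\bigr),
\]
with $\tilde{\mathfrak a}_k(\tilde y)=\mathfrak a(\rho^k\tilde y)\rho^{k\alpha(q_k(\tilde y)-p_k(\tilde y))}\le \|\mathfrak a\|_\infty$, so Lemma \ref{L2} applies uniformly. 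The crucial smallness of $\tilde f_k$ uses the choice \eqref{SharpExp}: since $\alpha\le 1/(p_{\max}+1)$, we have $1-\alpha(1+p_k(\tilde y))\ge 0$, hence $\|\tilde f_k\|_\infty\le \|f\|_\infty\le \epsilon_0$. Likewise, the rescaled boundary datum $g_k(\tilde y')\defeq \rho^{-k(1+\alpha)}\bigl(g(\rho^k \tilde y') - \ell_k(\rho^k \tilde y')\bigr)$ is bounded in $C^{1,\beta_g}$ using $\alpha\le\beta_g$ together with the $C^{1,\beta_g}$-Taylor expansion of $g$ around $x=0$ and the induction-controlled closeness of $\ell_k$ to the boundary trace; this is the main obstacle of the argument, and it will be handled by showing that the sequence $(\mathrm{b}_k)$ stays bounded (via the geometric bound $|\mathrm{b}_j|\le \mathrm{C}_0$ accumulated from $\sum \mathrm{C}_0\rho^{(j-1)\alpha}$) and that $g_k$ inherits a $C^{1,\beta_g}$-norm controlled by $\|g\|_{C^{1,\beta_g}(\partial\Omega)}$ independently of $k$.

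Having verified the hypotheses, I apply Lemma \ref{L2} to $v_k$: there exist $\tilde{\mathrm a}\in\mathbb R$, $\tilde{\mathrm b}\in\mathbb R^n$ with $|\tilde{\mathrm a}|+|\tilde{\mathrm b}|\le\mathrm{C}_0$ satisfying
\[
\sup_{B_\rho\cap\{\tilde y_n>\phi_k(\tilde y')\}} \bigl|v_k(\tilde y)-\tilde{\mathrm a}-\tilde{\mathrm b}\cdot\tilde y\bigr|\le \rho^{1+\alpha}.
\]
Scaling back defines $\ell_{k+1}(y)\defeq \ell_k(y)+\rho^{k(1+\alpha)}\tilde{\mathrm a}+\rho^{k\alpha}\tilde{\mathrm b}\cdot y$, so that
\[
\mathrm{a}_{k+1}-\mathrm{a}_k = \rho^{k(1+\alpha)}\tilde{\mathrm a},\qquad \mathrm{b}_{k+1}-\mathrm{b}_k = \rho^{k\alpha}\tilde{\mathrm b},
\]
yielding the claimed increments $|\mathrm{a}_{k+1}-\mathrm{a}_k|\le \mathrm{C}_0\rho^{(k+1)(1+\alpha)}$ (after absorbing a factor $\rho^{1+\alpha}$) and $|\mathrm{b}_{k+1}-\mathrm{b}_k|\le \mathrm{C}_0\rho^{k\alpha}$, as well as the approximation \eqref{4} at scale $\rho^{k+1}$. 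The bound $|\mathrm{a}_{k+1}|+|\mathrm{b}_{k+1}|\le \mathrm{C}_0$ follows by telescoping and summing the geometric series $\sum\rho^{j\alpha}<\infty$, closing the induction.
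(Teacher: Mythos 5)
Your proposal is correct and follows essentially the same route as the paper's proof: dyadic rescaling of $u-\ell_k$ at scale $\rho^k$, recasting the equation with the translated gradient vector $\zeta_k=\rho^{-k\alpha}\mathrm{b}_k$, checking the smallness of the rescaled source via $\alpha\le \frac{1}{p_{\text{max}}+1}$ and the uniform $C^{1,\beta_g}$ control of the rescaled boundary datum via $\alpha\le\beta_g$ and the boundedness of $(\mathrm{b}_k)$ and $D^2\phi$, and then reapplying Lemma \ref{L2} to update the affine approximation. The only place you leave sketched (the uniform bound on $g_k$) is exactly the step the paper also treats tersely, using the same ingredients (the normalization of $g$ at the boundary point and the inductive estimate \eqref{4} restricted to $\{y_n=\phi(y')\}$), so there is no substantive difference in the argument.
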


\begin{proof}
We make use of a recursive argument inspired in \cite[Theorem 3.1]{ART15} (see also \cite[Corollary 2.12]{BD2}).  Let us argue by induction. Firstly, such a claim holds for $k=1$ by Lemma \ref{L2}. Suppose that this statement holds true for $j=1,2,\ldots,k$. Now, we are going to check it for $j=k+1$.

Initially, as in \cite{BD16}, one can assume that $g(x^{\prime}) = D_i g(x^{\prime})=0$ for $i=1,\ldots, n-1$.  This will be used when we will later evaluate the $C^{1,\beta_g}$ norm of $g_k$.

  Now, let us to verify the $j=k+1$ step of induction. We define
  $$
  	u_k: B_1(x) \cap \{y_n > \phi_k(y^{\prime})\} \rightarrow \mathbb{R}
$$	
	 given by
 $$
 	u_k(y) = \frac{u(\rho^{k}(y-x) + x) - \ell_k  (\rho^k (y-x) +x)}{\rho^{k(1+\alpha)}}.
 $$
 Then, $u_k$ satisfies in the viscosity sense
$$
\left\{
\begin{array}{rcrcl}
 \mathcal{H}_k(y, \zeta_k + Du_k)F_k(y, D^2 u_k) & = & f_k(y) & \text{in} & B \cap \{y_n > \phi_k(y^{\prime})\}  \\
  u_k(y) & = & g_k(y) & \text{on} & B \cap \{y_n = \phi_k(y^{\prime})\}
\end{array}
\right.
$$
where
{\scriptsize{
\begin{eqnarray*}
	F_k(y, \mathrm{X}) &\defeq & \rho^{k(1-\alpha)} F(\rho^k (y-x) + x, \rho^{k(\alpha-1)} \mathrm{X}) \\
	f_k(y) &\defeq & \rho^{ k(1-\alpha (1+p_k(y)))   } f(\rho^k (y-x) + x)\\
	g_k(y^{\prime}) & \defeq & \rho^{-k(1+\alpha)}  \left \{g(\rho^k (y-x)^{\prime} + x^{\prime}) - b^{\prime}_k \cdot (\rho^k (y-x) +x)^{\prime} -b^n_k \phi(\rho^k (y^{\prime}-x^{\prime}) + x)\right\}\\
\zeta_k & \defeq & \rho^{-k \alpha} \mathrm{b}_k\\
\phi_k(y^{\prime}) & \defeq & x_n \left( 1-\frac{1}{\rho^k}\right) + \frac{1}{\rho^k} \phi \bigg(\rho^k(y^{\prime}-x^{\prime}) + x^{\prime} \bigg)\\
	 \mathcal{H}_k(x, \xi) &\defeq & \rho^{- k \alpha p_k(y)} \mathcal{H} \left( \rho^k(y-x) +x,\rho^{k \alpha} \xi\right) \quad \textrm{satisfies \eqref{1.2} with }
\end{eqnarray*}}}
$$
	p_k(y) \defeq p(\rho^k (y-x) +x), \quad q_k(y) \defeq q(\rho^k (y-x) +x)
$$
and
$$
	\mathfrak{a}_k(y) \defeq \rho^{q_k(y) - p_k(y)} \mathfrak{a} (\rho^k (y-x) +x).
$$

It is clear that $F_k(y, \cdot)$ is also a uniformly $(\lambda,\Lambda)$-elliptic operator. By induction assumption, we can see that
$$
	\|u_k\|_{L^{\infty} ( B_1(x) \cap \{y_n > \phi_k(y^{\prime})\})}   \le 1.
$$
Moreover,
\begin{eqnarray*}
	\|p_k\|_{L^{\infty}(B_1(x) \cap \{y_n > \phi_k(y^{\prime})\})} &\le& \|p\|_{L^{\infty} (  {B_{\rho^k}(x) \cap \{y_n > \phi(y^{\prime})\}}) } \le p_{max}\\
	\|q_k\|_{L^{\infty}(B_1(x) \cap \{y_n > \phi_k(y^{\prime})\})} &\le& \|q\|_{L^{\infty} (  {B_{\rho^k}(x) \cap \{y_n > \phi(y^{\prime})\}}) } \le q_{max}\\
	\|a_k\|_{L^{\infty}(B_1(x) \cap \{y_n > \phi_k(y^{\prime})\})} &\le& \|a\|_{L^{\infty} (  {B_{\rho^k}(x) \cap \{y_n > \phi(y^{\prime})\}}) }
	\end{eqnarray*}
	and
	$$
	0 < p_{\text{min}} \le p_k(y) \le p_{\text{max}} \le q_k(y) \le q_{\text{max}} < +\infty.
	$$

	In view of the choice of $\alpha$ in \eqref{SharpExp} we easily estimate
	\begin{eqnarray*}
		\|f_k\|_{L^{\infty}(B_1(x) \cap \{y_n > \phi_k(y^{\prime})\})} &=& \|\rho^{k (1-\alpha (1+p_k(y)))} f(\rho^k (y-x) +x)\|_{L^{\infty}(B_1(x) \cap \{y_n > \phi_k(y^{\prime})\})}\\
		&\le& \epsilon_0 \rho^{k(1-\alpha (1+p_{\text{max}}))}  \\
		&\le& \epsilon_0.
	\end{eqnarray*}
	Furthermore,
$$
   |D \phi_k (y^{\prime})| = |D \phi (\rho^k (y^{\prime}-x^{\prime}) +x^{\prime})| \quad \text{and} \quad
	 	D^2 \phi_k(y^{\prime}) = \rho^k D^2 \phi(\rho^k (y^{\prime}-x^{\prime}) + x^{\prime}),
$$
Hence,
$$
\begin{array}{rcl}
  \|D^2 \phi_k\|_{L^\infty(B_1(x) \cap \{y_n > \phi(y^{\prime})\})} & = & \displaystyle \rho^k \|D^2 \phi\|_{L^{\infty}(B_1(x) \cap \{y_n > \phi_k(y^{\prime})\})} \\
   & \le & \displaystyle \|D^2 \phi\|_{L^\infty(B_1(x) \cap \{y_n > \phi(y^{\prime})\})}.
\end{array}
$$

	Finally, as remarked at the beginning of the proof, we need to check that $g_k$ is uniformly bounded in $C^{1, \beta_g}$, only when $x$ is on the boundary. Let us recall that, for all $y^{\prime}$ and $z$,
	\begin{equation} \label{E1}
		|D g(y^{\prime}) - Dg (z^{\prime})| \le |y^{\prime}-z^{\prime}|^{\beta_g}.
	\end{equation}
	Next, remember that, since
	$$
		\|u - \ell_k\|_{L^{\infty} \big(B_{\rho^k}(x) \cap \{y_n > \phi(y^{\prime})\}\big)} \le \rho^{k(1+\alpha)}
	$$
	then
	\begin{equation} \label{E2}
		\|g(y^{\prime}) - b^{\prime}_k \cdot (y^{\prime}-x^{\prime}) - b^n_k \phi(y^{\prime})\|_{L^{\infty} \big( B_{\rho^k}(x) \cap \{y_n =\phi(y^{\prime})\} \big)}  \le \rho^{k(1+\alpha)}.
	\end{equation}

	Now, using $g(x^{\prime}) = u(x) =0$, since $x$ is on the boundary, using \eqref{E1} together with the fact that $D^2 \phi$ is bounded, one obtains the required uniform bound on the $C^{1,\beta_g}$ norm of $g_k$.
	
		Therefore, the assumptions in Lemma \ref{L2} are satisfied. Thus, there exists an affine function
		$$
			\tilde{\ell}(y) = \tilde{\mathrm{a}} + \tilde{\mathrm{b}} \cdot (y-x)
		$$
		with
		$$
			|\tilde{\mathrm{a}}| + |\tilde{\mathrm{b}}| \le C_0(n,\lambda,\Lambda, \|g\|_{C^{1, \beta_g}(\partial \Omega)}, \|\phi\|_{C^2(\Omega)})
		$$
		such that
		\begin{equation} \label{EST}
			\|u_k(y) - \tilde{l}(y)\|_{L^{\infty} \big( B_{\rho}(x) \cap \{y_n > \phi(y^{\prime})\} \big)} \le \rho^{1+\alpha}.
		\end{equation}
		
		Next, we label
	$$
		\ell_{k+1}(y) \defeq \mathrm{a}_{k+1} + \mathrm{b}_{k+1} \cdot (y-x),
	$$
	where
	$$
		\mathrm{a}_{k+1} = \mathrm{a}_k + \rho^{k(1+\alpha)} \tilde{\mathrm{a}} \quad \textrm{and} \quad \mathrm{b}_{k+1} = \mathrm{b}_k + \rho^{k \alpha} \tilde{\mathrm{b}}.
	$$
	Thus, scaling \eqref{E2} back to unit domain, we reach that
	$$
	\|u - \ell_{k+1}\|_{ L^{\infty} \big ( B_{ \rho^{k+1}} (x) \cap \{y_n > \phi(y^{\prime})\} \big)  } \le \rho^{(k+1)(1+\alpha)}
	$$
	with
	$$
		|\mathrm{a}_{k+1} -\mathrm{a}_k| \le \mathrm{C}_0 \rho^{k(1+\alpha)} \quad \textrm{and} \quad |\mathrm{b}_{k+1} - \mathrm{b}_k| \le \mathrm{C}_0 \rho^{k \alpha},
	$$
which are the desired conclusion.
\end{proof}

\begin{corollary} \label{Cor1}
Suppose that hypotheses of Lemma \ref{L3} are in force. Then, there exists an affine function $\tilde{\ell}(y) = \tilde{\mathrm{a}} + \tilde{\mathrm{b}} \cdot (y-x)$ with
$$
	|\tilde{\mathrm{a}}| + |\tilde{\mathrm{b}}| \le \mathrm{C}_0
$$
such that for each $0 < r \le \rho$ with $\rho$ being the one in Lemma \ref{L3}
\begin{equation} \label{Cor2}
	\|u - \tilde{\ell}\|_{L^{\infty} \big(B_r(x) \cap \{y_n > \phi(y^{\prime})\}\big)} \le \mathrm{C}(\mathrm{C}_0, \alpha).r^{1+\alpha}.
\end{equation}
\end{corollary}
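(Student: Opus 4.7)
The plan is to extract a limiting affine function from the sequence $(\ell_j)_j$ produced by Lemma \ref{L3} and then transfer the discrete-scale estimate \eqref{4} to every continuous scale $r \in (0, \rho]$. First, I would verify that $(\mathrm{a}_j)_j$ and $(\mathrm{b}_j)_j$ are Cauchy in $\R$ and $\R^n$ respectively. From $|\mathrm{a}_j - \mathrm{a}_{j-1}| \le \mathrm{C}_0 \rho^{j(1+\alpha)}$ and $|\mathrm{b}_j - \mathrm{b}_{j-1}| \le \mathrm{C}_0 \rho^{(j-1)\alpha}$, summing the geometric series gives
$$
|\mathrm{a}_j - \mathrm{a}_k| \le \mathrm{C}_0 \sum_{i=k+1}^{j} \rho^{i(1+\alpha)} \le \frac{\mathrm{C}_0}{1-\rho^{1+\alpha}}\rho^{(k+1)(1+\alpha)}, \qquad |\mathrm{b}_j - \mathrm{b}_k| \le \frac{\mathrm{C}_0}{1-\rho^{\alpha}}\rho^{k\alpha},
$$
since $\rho \in (0, 1/2)$. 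Hence both sequences converge; call their limits $\tilde{\mathrm{a}}$ and $\tilde{\mathrm{b}}$, and set $\tilde{\ell}(y) = \tilde{\mathrm{a}} + \tilde{\mathrm{b}} \cdot (y-x)$. The uniform bound $|\tilde{\mathrm{a}}| + |\tilde{\mathrm{b}}| \le \mathrm{C}_0$ is inherited from Lemma \ref{L3} by passing to the limit.

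Next, I would translate the Cauchy estimates into rates of convergence: letting $j \to \infty$ in the two displayed inequalities yields
$$
|\mathrm{a}_k - \tilde{\mathrm{a}}| \le \frac{\mathrm{C}_0}{1-\rho^{1+\alpha}}\rho^{(k+1)(1+\alpha)}, \qquad |\mathrm{b}_k - \tilde{\mathrm{b}}| \le \frac{\mathrm{C}_0}{1-\rho^{\alpha}}\rho^{k\alpha}.
$$
Now fix $r \in (0, \rho]$ and pick the unique integer $k \ge 1$ with $\rho^{k+1} < r \le \rho^k$. On the set $B_r(x) \cap \{y_n > \phi(y')\} \subset B_{\rho^k}(x) \cap \{y_n > \phi(y')\}$ I apply the triangle inequality,
$$
|u(y)-\tilde{\ell}(y)| \le |u(y)-\ell_k(y)| + |\mathrm{a}_k - \tilde{\mathrm{a}}| + |\mathrm{b}_k - \tilde{\mathrm{b}}|\,|y-x|,
$$
and use \eqref{4} together with $|y-x| \le r \le \rho^k$ to bound this by
$$
\rho^{k(1+\alpha)} + \frac{\mathrm{C}_0}{1-\rho^{1+\alpha}}\rho^{(k+1)(1+\alpha)} + \frac{\mathrm{C}_0}{1-\rho^{\alpha}}\rho^{k(1+\alpha)} \le \mathrm{C}(\mathrm{C}_0,\alpha) \rho^{k(1+\alpha)}.
$$
Finally, since $\rho^{k(1+\alpha)} = \rho^{-(1+\alpha)}\rho^{(k+1)(1+\alpha)} \le \rho^{-(1+\alpha)} r^{1+\alpha}$, absorbing $\rho^{-(1+\alpha)}$ into the constant yields \eqref{Cor2}.

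No step here is genuinely hard: the whole argument is a routine telescoping plus a dyadic-to-continuous passage. The only mild care needed is in keeping the constant $\mathrm{C}(\mathrm{C}_0, \alpha)$ independent of $r$ (and of $x$), which is automatic because $\rho$ was fixed universally in Lemma \ref{L2} and the geometric sums depend only on $\rho$ and $\alpha$. If one wanted to extend \eqref{Cor2} to all $r \in (0, 1)$, the interval $r \in (\rho, 1)$ can be absorbed using $\|u\|_{L^\infty} \le 1$ and the bound on $|\tilde{\mathrm{a}}|+|\tilde{\mathrm{b}}|$, but the statement as given only requires $0 < r \le \rho$.
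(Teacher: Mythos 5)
Your proposal is correct and follows essentially the same route as the paper's proof: telescoping the increments from Lemma \ref{L3} to get Cauchy sequences and convergence rates for $(\mathrm{a}_j)$ and $(\mathrm{b}_j)$, then transferring the dyadic estimate \eqref{4} to an arbitrary scale $r\in(0,\rho]$ via the choice $\rho^{k+1}<r\le\rho^k$ and absorbing $\rho^{-(1+\alpha)}$ into the constant. The only differences are cosmetic bookkeeping of the geometric-series constants.
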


\begin{proof}
From Lemma \ref{L3} we know that $(\mathrm{a}_j)_j \subset \mathbb{R}$ and $(b_j)_j \subset \mathbb{R}^n$ are Cauchy sequences, hence they converge. Now, we denote
$$
\displaystyle	\tilde{\mathrm{a}} \defeq \lim_{j \to \infty} \mathrm{a}_j \quad \text{and} \quad \tilde{\mathrm{b}} = \lim_{j \to \infty} \mathrm{b}_j.
$$
In the sequel, for any $m \ge j$, we have
\begin{eqnarray*}
	|\mathrm{a}_j - \mathrm{a}_m| &\le& |\mathrm{a}_j - \mathrm{a}_{j+1}| + |\mathrm{a}_{j+1} -\mathrm{a}_{j+2}| + \ldots + |\mathrm{a}_{m-1} -\mathrm{a}_m|\\
	&\le& \mathrm{C}_0 \rho^{j(1+\alpha)} + \mathrm{C}_0 \rho^{(j+1)(1+\alpha)} + \ldots + \mathrm{C}_0 \rho^{(m-1)(1+\alpha)}\\
	&=& \mathrm{C}_0 \rho^{j(1+\alpha)} \frac{1-\rho^{(m-j)(1+\alpha)   }}{1-\rho^{1+\alpha}}.
\end{eqnarray*}
Then, letting $m \to \infty$, we get
\begin{equation}\label{EqSeqa_j}
  	|\mathrm{a}_j -\tilde{\mathrm{a}}| \le \mathrm{C}_0 \frac{\rho^{j(1+\alpha)}}{1-\rho^{1+\alpha}}.
\end{equation}

In a similar way, we may show that
\begin{equation}\label{EqSeqb_j}
	|\mathrm{b}_j - \tilde{\mathrm{b}}| \le \mathrm{C}_0 \frac{\rho^{j \alpha}}{1-\rho^{\alpha}}.
\end{equation}

Now, by fixing $0 < r \le \rho$, we can take $j \in \mathbb{N}$ such that
$$
	\rho^{j+1} < r \le \rho^j.
$$

In conclusion, from sentences \eqref{4}, \eqref{EqSeqa_j} and \eqref{EqSeqb_j} we obtain
\begin{eqnarray*}
\|u-\tilde{\ell}\|_{L^{\infty}\big(B_r(x) \cap \{y_n > \phi(y^{\prime})\}\big)} &\le& \|u-\tilde{\ell}\|_{L^{\infty}\big(B_{\rho^{j}}(x) \cap \{y_n > \phi(y^{\prime})\}\big)}\\
&\le&  \|u-\ell_j\|_{L^{\infty}\big(B_{\rho^{j}}(x) \cap \{y_n > \phi(y^{\prime})\}\big)} \\
&+&  \|\ell_j-\tilde{\ell}\|_{L^{\infty}\big(B_{\rho^{j}}(x) \cap \{y_n > \phi(y^{\prime})\}\big)}\\
&\le& \rho^{j(1+\alpha)} + |\mathrm{a}_j - \tilde{\mathrm{a}}| + \rho^j |\mathrm{b}_j - \tilde{\mathrm{b}}| \\
&\le& \rho^{j(1+\alpha)} + \frac{\mathrm{C}_0}{1-\rho^{\alpha}} \rho^{j(1+\alpha)}\\
&\le& \frac{1}{\rho^{1+\alpha}} \left( 1 + \frac{\mathrm{C}_0}{1-\rho^{\alpha}}\right). r^{1+\alpha}.
\end{eqnarray*}
\end{proof}

Finally, we are in a position of completing the proof of Theorem \ref{main1}.

\begin{proof}[{\bf Proof of Theorem \ref{main1}}]
Suppose now that $u$ is a bounded viscosity solution of \eqref{1.1}, for a general $f$ bounded in $L^{\infty}$. By smallest regime (Remark \ref{SmallRegime2}), we may assume that $u$ and $f$ and $g$ are under the condition of Corollary \ref{Cor1}, in any ball that intersects $\partial \Omega$. So for any $x \in \overline{\Omega}$, \eqref{Cor2} is satisfied. Finally, a standard procedure, illustrated in \cite[Lemma 2.6]{BD3}, implies that $u$ is in $C^{1,\alpha} \big( B_{\rho}(x) \cap \overline{\Omega} \big)$, thereby ending the proof of Theorem \ref{main1}.
\end{proof}

\begin{proof}[{\bf Proof of Corollary \ref{Cor2}}]
Since solutions to the homogeneous problem (with ``frozen coefficients'') for such classes of operators are $C_{\text{loc}}^{1,1}$ close the boundary, see \cite{SS14}. Hence, we may select $\alpha_{\mathrm{F}}=1$. Therefore, from Theorem \ref{main1}, we are able to choose $\alpha = \frac{1}{p_{\text{max}}+1} \in (0, 1)$.
\end{proof}

\section{Finer estimates and further applications}\label{sec.Applic}

\subsection{Examples and improvements}

Let us, by way of illustration, explore the assumptions and implications behind the main Theorem \ref{main1*}.

\begin{example} Firstly, we consider the highly oscillating function at origin $p: B_1 \to \R_{+}$
$$
p(x) \defeq \left\{
\begin{array}{lcr}
  e^{-|x|}\cos\left(\frac{1}{|x|}\right) + \frac{3}{2} & \text{if} & |x| \neq 0 \\
  \frac{1}{2} & \text{if} & x=0.
\end{array}
\right.
$$
Thus, $\displaystyle \inf_{B_1} p(x) = \frac{1}{2}$ and $\displaystyle \sup_{B_1} p(x) = \frac{5}{2}$.

Therefore, according to Corollary \ref{Cormain1} any viscosity solution of
$$
\left[|Du|^{p(x)}+\mathfrak{a}(x)|Du|^{q(x)}\right] \mathscr{M}_{\lambda, \Lambda}^{-}(D^2 u) =  f(x)  \quad \text{in}  \quad B_1
$$
belongs to $C_{\text{loc}}^{1, \frac{2}{7}}(B_1)$, which agrees with constant case $p(x) \equiv \frac{5}{2}$.
\end{example}

\begin{example} For a fixed $\kappa>\frac{1}{7}$, let us consider the dramatically discontinuous function $p: B_1 \to \R_{+}$
$$
p(x) \defeq \left\{
\begin{array}{lcr}
  \kappa & \text{if} & |x| \in \mathbb{Q} \\
  \frac{1}{7} & \text{if} & |x| \notin \mathbb{Q}.
\end{array}
\right.
$$
Hence, $\displaystyle \inf_{B_1} p(x) = \frac{1}{7}$ and $\displaystyle \sup_{B_1} p(x) = \kappa$.

Consequently, from Corollary \ref{Cormain1} any viscosity solution of
$$
\left[|Du|^{p(x)}+\mathfrak{a}(x)|Du|^{q(x)}\right] \mathscr{M}_{\lambda, \Lambda}^{+}(D^2 u) =  f(x)  \quad \text{in}  \quad B_1
$$
belongs to $C_{\text{loc}}^{1, \frac{1}{\kappa+1}}(B_1)$. As before, such a regularity estimate agrees with constant case $p(x) \equiv \kappa$
\end{example}

The above examples states that sharp and optimal local regularity, for variable exponents (bounded away from zero at infinity) and a convex/concave operator, depends only on the upper bound of function $p(\cdot)$, even in the case of very irregular exponents.

In such a way, the following pivotal question arises: what we must expect (in a regularity point of view) when $p(\cdot)$ (resp. $q(\cdot)$) enjoys a universal modulus of continuity? Can we obtain finer regularity estimates in such scenario?

For that end, we need some additional assumptions to obtain such finer estimates. Suppose that $p(\cdot)$ and $q(\cdot)$ have a universal modulus of continuity, i.e., there is a non-decreasing function $\hat{\omega}: [0, +\infty) \to [0, +\infty)$ such that
$$
|p(x)-p(y)|+|q(x)-q(y)| \leq \mathfrak{L}_0\hat{\omega}(|x-y|) \quad \text{with} \quad \hat{\omega}(0)=0,
$$
and which satisfies the balancing condition
\begin{equation}\label{EqBalancCondit}
  \displaystyle \limsup_{s \to 0^{+}} \,\,\hat{\omega}(s)\ln(s^{-1}) \leq \mathfrak{L}< \infty.
\end{equation}

Finally, we can present our finer regularity result (cf. \cite[Theorem 7.2]{BPRT20}).

\begin{theorem}[{\bf Improved point-wise estimates}]\label{ImprovedPoint} Assume that assumptions of Theorem \ref{main1*} are in force. Suppose $F$ to be a concave/convex operator and assumption \eqref{EqBalancCondit} holds. Then, for any $x_0 \in B_{\frac{1}{2}}$ we have that $u$ is $C_{\text{loc}}^{1, \frac{1}{p(x_0)+1}}$ at $x_0$. Moreover, there holds
{\scriptsize{
$$
       \displaystyle  \displaystyle \sup_{B_r(x_0)} \,\,\frac{|u(x)-u(x_0) - Du(x_0)\cdot (x-x_0)|}{r^{1+ \frac{1}{1+p(x_0)}}} \leq \mathrm{C}(n, \lambda, \Lambda, \mathfrak{L}_0, \mathfrak{L})\cdot\left(\|u\|_{L^{\infty}(\Omega)}  +\|f\|_{L^{\infty}(\Omega)}^{\frac{1}{p_{\text{min}}+1}}+1\right).
$$}}
\end{theorem}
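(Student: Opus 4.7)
The plan is to revisit the geometric iteration underlying Theorems \ref{main1*} and Corollary \ref{Cormain1}, but now perform every rescaling centered exactly at the fixed point $x_0$ and freeze the variable exponent at the value $p(x_0)$ rather than at the global bound $p_{\text{max}}$. After translating we may assume $x_0 = 0$, and we set $\alpha_0 \defeq 1/(p(x_0)+1)$. The regular-point regime $0 \notin \mathcal{S}_{r,\alpha}(u,\Omega)$ is handled as in the proof of Theorem \ref{main1*}: the equation becomes uniformly elliptic on a small ball around $0$, and because $F$ is convex/concave the classical Evans--Krylov estimates give $C^{1,1}$-control at $0$, which is strictly stronger than what is claimed. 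Hence the substantive case is $0 \in \mathcal{S}_{r,\alpha}(u,\Omega)$.

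In that singular regime I will run, at scales $\rho^k$, the same inductive scheme from Lemmas \ref{lem.firststep}--\ref{lem.dy}, producing affine maps $\ell_k(x) = \mathrm{a}_k + \mathrm{b}_k\cdot x$ and rescalings
$$
v_k(x) \defeq \frac{u(\rho^k x) - \ell_k(\rho^k x)}{\rho^{k(1+\alpha_0)}}, \qquad x \in B_1.
$$
A direct computation shows that $v_k$ solves, in the viscosity sense,
$$
\bigl[\,|Dv_k+\zeta_k|^{p_k(x)} + \mathfrak{a}_k(x)|Dv_k+\zeta_k|^{q_k(x)}\bigr]\,F_k(x, D^2 v_k) = f_k(x),
$$
where $p_k(x) = p(\rho^k x)$, $q_k(x) = q(\rho^k x)$, $\mathfrak{a}_k(x) = \rho^{k\alpha_0(q_k(x)-p_k(x))}\mathfrak{a}(\rho^k x)$, and the source is controlled by the factor $\rho^{k(1-\alpha_0(1+p_k(x)))} = \rho^{k\alpha_0(p(x_0)-p_k(x))}$.

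The decisive observation is that the log-Hölder type assumption \eqref{EqBalancCondit} yields
$$
\bigl|k\ln\rho \cdot (p_k(x)-p(x_0))\bigr| \le |\ln \rho^k|\,\mathfrak{L}_0\,\hat\omega(\rho^k) \le \mathfrak{L}_0\mathfrak{L} + o(1),
$$
uniformly in $k$ and in $x \in B_1$. Consequently $\rho^{k\alpha_0(p(x_0)-p_k(x))}$ stays in a universal compact interval of $(0,\infty)$ depending only on $\mathfrak{L}_0$ and $\mathfrak{L}$; the same bound controls $\mathfrak{a}_k$ and the structural constants of $\mathcal{H}_k$. Hence at every dyadic scale the rescaled problem falls inside the framework of the Approximation Lemma \ref{lem.flat}, and since $F$ is convex/concave the $F$-harmonic profile is of class $C^{1,1}$ (that is, $\alpha_{\mathrm{F}}=1$). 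This permits closing the induction with exponent $\alpha_0$ in place of $1/(p_{\text{max}}+1)$, exactly as in Lemma \ref{l3.3}. Summing the geometric series over $k$ provides a unique limiting affine function $\ell_\infty(x) = u(0) + Du(0)\cdot x$ and the estimate
$$
\sup_{B_r(0)} |u(x)-u(0)-Du(0)\cdot x| \le \mathrm{C}\, r^{1+\alpha_0}
$$
for all $r \in (0,\rho)$, with $\mathrm{C}$ depending on $n,\lambda,\Lambda,\mathfrak{L}_0,\mathfrak{L}$.

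The main obstacle is the quantitative invariance of the rescaled problem: one has to verify simultaneously that (i) the new RHS $f_k$ stays bounded, (ii) the modulating function $\mathfrak{a}_k$ and the exponents $p_k,q_k$ remain in a compact range, and (iii) the small perturbation of the degeneracy exponent relative to $p(x_0)$ can be absorbed into the stability step of the Approximation Lemma. Each of these reductions hinges on the uniform bound on $\hat\omega(s)\ln s^{-1}$ supplied by \eqref{EqBalancCondit}; without it, the iteration would accumulate a logarithmic loss that destroys the pointwise sharp exponent $\alpha_0$. Once the iteration is closed at $x_0$, a covering of $\overline{B_{1/2}}$ together with the smallness-regime argument of Remark \ref{SmallRegime} yields the global form of the estimate asserted in the theorem.
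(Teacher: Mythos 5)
Your central estimate is exactly the mechanism of the paper's proof: after centering at $x_0=0$ the rescaled source carries the factor $\rho^{k(1-\alpha_0(1+p_k(x)))}=\rho^{k\alpha_0(p(x_0)-p_k(x))}$ with $\alpha_0=\frac{1}{1+p(x_0)}$, and the balancing condition \eqref{EqBalancCondit} bounds this factor by a constant $\Phi(\mathfrak{L}_0,\mathfrak{L})$ uniformly in $k$ and $x\in B_1$; the paper then simply shrinks the smallness threshold to $\delta_{\iota}/(\Phi(\mathfrak{L}_0,\mathfrak{L})+1)$ and reruns the interior iteration with exponent $\alpha_0$, concluding via the Lemma \ref{l3.3}-type oscillation bound and the singular/regular dichotomy with $\alpha_{\mathrm{F}}=1$. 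Up to that point your proposal and the paper coincide.

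The gap is in how you run the iteration in the singular regime. You subtract affine maps $\ell_k$ at every scale and call this ``the same inductive scheme from Lemmas \ref{lem.firststep}--\ref{lem.dy}''; it is not. Those lemmas deliberately avoid affine corrections --- the paper remarks right after Lemma \ref{lem.firststep} that the degeneracy law is not invariant under subtraction of affine maps --- and instead iterate $\sup_{B_{\rho^k}}|u-u(0)|$, with the gradient entering only through the explicit term $|Du(0)|\sum_{j}\rho^{k+j\alpha}$ (Lemmas \ref{c3.1} and \ref{lem.dy}), the conversion to a pointwise $C^{1,\alpha}$ estimate being done by Lemma \ref{l3.3} and the dichotomy at the critical scale $r_0=|Du(0)|^{1/\alpha}$. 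Your affine-corrected rescalings $v_k$ solve equations with gradient shifts $\zeta_k=\rho^{-k\alpha_0}\mathrm{b}_k$, and to close the induction you need an approximation lemma and equicontinuity estimates that are uniform in $\zeta_k$, including the regime $|\zeta_k|\to\infty$: this regime does occur for large $k$ whenever $Du(0)\neq 0$, and membership in $\mathcal{S}_{r,\alpha}$ only gives $|Du(0)|\le r^{\alpha}$, not $Du(0)=0$. The paper develops such shift-uniform tools only in the boundary setting (Lemma \ref{L0}, Theorem \ref{pgrande}, Lemma \ref{P1}); the interior Lemma \ref{lem.flat} has no shift, so the lemmas you invoke do not justify your inductive step. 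Either import/prove an interior approximation lemma with translations in the spirit of \cite{BPRT20} and \cite{FRZ21}, or adopt the paper's actual interior scheme (no affine subtraction, then the dichotomy, where the regular case is not a one-line Evans--Krylov appeal at $0$ but a rescaling by $r_0$ combined with the gradient estimate of Theorem \ref{GradThm} to reach the uniformly elliptic regime and a matching of scales $r\in[\rho_0 r_0, r_0)$). With that repair the remaining ingredients of your argument --- the control of $\mathfrak{a}_k$, $p_k$, $q_k$ and the use of convexity/concavity to take $\alpha_{\mathrm{F}}=1$ --- go through and reproduce the paper's proof.
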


\begin{remark} In contrast with Theorem \ref{main1*}, in the above result, we obtain a finer point-wise estimate, because
$$
   1+ \frac{1}{1+p(x_0)} \ge 1+ \frac{1}{1+p_{\text{max}}} \quad \forall\,\, x_0 \in B_{1/2}.
$$
Furthermore, different from \cite[Theorem 7.2]{BPRT20}, in our approach we withdraw the restriction of analyzing $C^{1, \alpha}$ estimates along an \textit{a priori} unknown set of singular points of solutions, i.e. $\mathcal{S}_0(u, \Omega^{\prime})$, where the uniform ellipticity character of the operator is lost.
\end{remark}

\begin{example} Let us consider variable exponents fulfilling the Log-condition:
$$
|p(x)-p(y)|+|q(x)-q(y)| \leq \frac{\omega^{\ast}(|x-y|)}{|\ln(|x-y|^{-1})|} \quad \forall\,\,\, x, y \in \Omega, ,\,\, x \neq y,
$$
for a universal modulus of continuity $\omega^{\ast}: [0, +\infty) \to [0, +\infty)$. Notice that the function $s \mapsto \frac{\omega^{\ast}(s)}{|\ln(s^{-1})|}$ is non-decreasing on $(0, s^{\ast})$ with $\displaystyle \lim_{s \to 0}  \frac{\omega^{\ast}(s)}{|\ln(s^{-1})|}=0$.

Hence, assumption \eqref{EqBalancCondit} does hold. Particularly, such a class embraces the so-called \textit{Log-H\"{o}lder condition}, i.e.
$$
|p(x)-p(y)|+|q(x)-q(y)| \leq \frac{\mathfrak{c}_{p, q}}{-\ln(|x-y|)} \quad \forall\, x, y \in \Omega, \,\, \text{with} \,\,|x - y|\le \frac{1}{2},
$$
which plays a decisive role in proving regularity estimates in many context of elliptic PDEs with non-standard growth (see \cite[Section 4.1]{DHHR17}, \cite[Section 4]{MingRad21} and \cite[Part 1]{RaRe15}).
\end{example}

\begin{proof}[{\bf Proof of Theorem \ref{ImprovedPoint}}] We will revisit the proof of Theorem \ref{main1*}, making use of assumption \eqref{EqBalancCondit} appropriately. First, WLOG we may suppose that $x_0 = 0$. Now, by following \cite[Theorem 7.2]{BPRT20} we can estimate
$$
  \rho^{1-\frac{1+p(\rho x)}{1+p(0)}} \le \left[\left(\rho^{-1}\right)^{\mathfrak{L}_0 \omega(\rho)}\right]^{\frac{1}{1+p(0)}} \quad \text{for all} \quad x \in B_1.
$$
Moreover, assumption \eqref{EqBalancCondit} yields
$$
\left(\rho^{-1}\right)^{\omega(\rho)} \le \left(\rho^{-1}\right)^{\sqrt{2}\mathfrak{L} \ln(\rho^{-1})} \le e^{\sqrt{2}\mathfrak{L}}.
$$
As a result, for some $\rho_0 \ll 1$ we get
\begin{equation}\label{EqRHO}
\rho^{1-\frac{1+p(\rho x)}{1+p(0)}} \le \Phi(\mathfrak{L}_0, \mathfrak{L}) \quad \text{for all} \quad \rho\ll \rho_0.
\end{equation}
At this point, arguing as in the proof of Lemmas \ref{c3.1} and \ref{lem.dy}, we can establish
\begin{equation}\label{ImpEstEquationIter}
  \displaystyle\sup_{B_{\rho^k}(0)}\left|u(x)-u(0)\right|\leq\rho^{k\left(1+\frac{1}{1+p(0)}\right)}+|D u (0)|\sum_{j=0}^{k-1}\rho^{k+\frac{j}{1+p(0)}} \quad \forall \, k \in \mathbb{N},
\end{equation}
provided the smallness regime (see Remark \ref{SmallRegime}) and \eqref{EqRHO} are in force
$$
    \max\left\{\Theta_{\mathrm{F}}(x), \left\|f\right\|_{L^{\infty}(B_{1})}\right\} \leq \frac{\delta_{\iota}}{\Phi(\mathfrak{L}_0, \mathfrak{L})+1}.
$$

The previous estimates in \eqref{ImpEstEquationIter} imply the statement of Lemma \ref{l3.3}:
$$
\displaystyle \sup_{B_{r}(0)} \frac{|u(x)-u(0)|}{r^{1+\frac{1}{1+p(0)}}}\leq \mathrm{M}_0 \cdot \left(1+|D u(0)|r^{-\frac{1}{1+p(0)}}\right),\,\,\forall r\in(0,\rho),
$$
which help us to complete the desired proof.
\end{proof}

\bigskip

In the sequel, we will present scenarios where our results also take place.

\subsection{Sharp estimates to Strong $p(x)-$Laplace under constraints}

Historically, the strong $p(x)-$Laplace operator dates back to Adamowicz-H\"{a}st\"{o}'s fundamental works
in \cite{AH10} and \cite{AH11}, which were directly inspired on the strong form of the $p-$Laplace operator, i.e.,
$$
\hat{\Delta}_p u \defeq |\nabla u|^{p-4}(|\nabla u|^2 \Delta u + (p-2)\Delta_{\infty} u),
$$
where
$$
\displaystyle \Delta_{\infty} u \defeq \sum_{i,j=1}^{n} u_{x_i}u_{x_j}u_{x_i x_j} = \nabla u \cdot D^2 u  \nabla u^{t}
$$
is the nowadays well-known \textit{$\infty-$Laplacian operator} (see \cite{ACJ04} for a complete historical description on this theme).

Now, if we replace $p$ by $p(x)$ in the above definition, it yields a kind
of generalization of the $p-$Laplace operator, namely the strong $p(x)-$Laplace operator
$$
  -\Delta^S_{p(x)} u = - \Div(|\nabla u|^{p(x)-2}\nabla u)+ |\nabla u|^{p(x)-2} \log(|\nabla u|) \nabla u \cdot \nabla p.
$$

Recently, Siltakoski in \cite{Silt18} has considered viscosity to the so-called normalized $p(x)-$Laplace equation given by
\begin{equation}\label{Normp(x)-Lapla}
   -\Delta^N_{p(x)} u \defeq -\Delta u - \frac{p(x)-2}{|Du|^2} \Delta_{\infty} u = 0,
\end{equation}
whose interest in such a class of equation stems from its deep connect between PDEs and Probability Theory. Precisely, stochastic Tug-of-war games with spatially varying probabilities (see \cite{AHP17} for related topic).

Formally, one has
$$
-|\nabla u|^{p(x)-2}\Delta^N_{p(x)} u =  - \Div(|\nabla u|^{p(x)-2}\nabla u)+ |\nabla u|^{p(x)-2} \log(|\nabla u|) \nabla u \cdot \nabla p.
$$

As a consequence, a notion of weak and viscosity solutions can be based on the strong $p(x)-$Laplacian equation
\begin{equation}\label{Stronp(x)-Lapla}
 -\Delta^S_{p(x)} u = 0.
\end{equation}
In effect, Siltakoski proved (see \cite[Theorem 4.1]{Silt18}) that viscosity solutions of \eqref{Normp(x)-Lapla} are equivalent to viscosity solutions of \eqref{Stronp(x)-Lapla}, provided $p \in C^{0, 1}(\Omega)$ and $\displaystyle \inf_{\Omega} p(x)>1$. Particularly, viscosity solutions to \eqref{Normp(x)-Lapla} are $C_{\text{loc}}^{1, \alpha^{\prime}}$ for some $\alpha^{\prime} \in (0, 1)$. Additionally, Siltakoski addressed the following result;

\begin{lemma}[{\cite[Lemma 6.2]{Silt18}}]\label{LemmaEquiv} A function $u$ is a viscosity solution to \ref{Normp(x)-Lapla} if and only if it is a viscosity solution to
{\scriptsize{
\begin{equation}\label{EqAplic}
 -|\nabla u|^2\Delta u-(p(x)-2)\Delta_{\infty} u =   -\tr\left((|\nabla u|^2 \mathrm{Id}_n + (p(x)-2) \nabla u \otimes \nabla u)D^2u\right) = 0,
\end{equation}}}
\end{lemma}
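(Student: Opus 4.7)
The strategy is to dichotomize the analysis according to whether the gradient of the touching test function $\varphi \in C^2$ at the contact point $x_0$ vanishes or not. Write $\mathscr{L}_1[u] \defeq -|\nabla u|^2 \Delta u - (p(x)-2)\Delta_{\infty} u$ and $\mathscr{L}_2[u] \defeq -\Delta u - (p(x)-2)|\nabla u|^{-2}\Delta_{\infty} u$. Whenever $\nabla\varphi(x_0) \neq 0$, a plain multiplication (resp.\ division) by the positive scalar $|\nabla\varphi(x_0)|^2$ transforms one viscosity inequality into the other, so both notions are interchangeable at such test points and the whole content of the lemma concentrates at test points with $\nabla\varphi(x_0) = 0$.

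For the direction \eqref{EqAplic}$\Rightarrow$\eqref{Normp(x)-Lapla}, the only non-trivial situation is when $\nabla\varphi(x_0) = 0$. Because $\mathscr{L}_2$ is singular there, the viscosity notion must be read through the semicontinuous envelopes $(\mathscr{L}_2)^{*}$ and $(\mathscr{L}_2)_{*}$; an explicit computation identifies them with
\[
(\mathscr{L}_2)^{*}(x_0,0,X) = -\operatorname{tr} X - (p(x_0)-2)\,\lambda_{-}(X), \qquad (\mathscr{L}_2)_{*}(x_0,0,X) = -\operatorname{tr} X - (p(x_0)-2)\,\lambda_{+}(X)
\]
(for $p(x_0) \ge 2$, and with the roles of $\lambda_{\pm}$ swapped when $1 < p(x_0) < 2$), where $\lambda_{\pm}$ denote the extremal eigenvalues. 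To produce these envelope inequalities from the strong-form condition, I would rely on a linear perturbation: fix a unit vector $v \in \mathbb{R}^n$ and set $\varphi_{\varepsilon}(x) \defeq \varphi(x) + \varepsilon\langle v, x-x_0\rangle$; after strictifying the contact by adding $\mu|x-x_0|^4$, locate a nearby contact point $x_{\varepsilon} \to x_0$ at which $\varphi_{\varepsilon} + c_{\varepsilon}$ still touches $u$ in the appropriate sense. For $\varepsilon$ in a dense subset of $(0,\varepsilon_0)$ one has $\nabla\varphi_{\varepsilon}(x_{\varepsilon}) = \nabla\varphi(x_{\varepsilon}) + \varepsilon v \neq 0$; applying the $\mathscr{L}_1$-inequality at $x_{\varepsilon}$ and dividing by the strictly positive $|\nabla\varphi_{\varepsilon}(x_{\varepsilon})|^2$ yields, after passing $\varepsilon \to 0^{+}$ and noting $\eta_{\varepsilon} \defeq \nabla\varphi_{\varepsilon}(x_{\varepsilon})/|\nabla\varphi_{\varepsilon}(x_{\varepsilon})| \to v$,
\[
-\Delta\varphi(x_0) - (p(x_0)-2)\langle D^2\varphi(x_0) v, v\rangle \,\gtreqless\, 0
\]
for every unit vector $v$. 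Optimizing $v$ as the extremal eigenvector of $D^2\varphi(x_0)$ then selects the correct $\lambda_{\pm}$ and delivers the envelope inequality.

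The reverse direction \eqref{Normp(x)-Lapla}$\Rightarrow$\eqref{EqAplic} is essentially automatic. At non-critical test points, it is the same multiplication/division step. At critical test points $\nabla\varphi(x_0) = 0$, the matrix $|\nabla\varphi|^2 \mathrm{Id}_n + (p(x_0)-2)\nabla\varphi\otimes\nabla\varphi$ vanishes identically, whence $\mathscr{L}_1(x_0, 0, D^2\varphi(x_0)) = 0$ and both sub- and supersolution inequalities hold trivially for the strong form.

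The principal obstacle I anticipate lies in the perturbation argument: one must justify the existence of the shifted contact point $x_{\varepsilon}$, its convergence $x_{\varepsilon} \to x_0$, and the genericity statement guaranteeing $\nabla\varphi_{\varepsilon}(x_{\varepsilon}) \neq 0$ along a sequence $\varepsilon \to 0^{+}$, and then to verify that the accessible limits of the quadratic form $\langle D^2\varphi(x_0)\eta_{\varepsilon}, \eta_{\varepsilon}\rangle$ exhaust the full range $[\lambda_{-}(D^2\varphi(x_0)), \lambda_{+}(D^2\varphi(x_0))]$, in order to recover both envelopes. These steps are standard in the viscosity theory of singular/degenerate equations (strictification of the contact, Baire-type choice of generic $\varepsilon$, compactness extraction for $\eta_{\varepsilon}$), and can be executed along the lines of \cite{BD2}, \cite{BD16} and Siltakoski's original argument.
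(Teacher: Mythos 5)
Note first that the paper offers no proof of this statement: it is imported verbatim from \cite[Lemma 6.2]{Silt18}, so your proposal can only be compared with Siltakoski's original argument, not with anything in this manuscript. Your outline agrees with the standard one: when $\nabla\varphi(x_0)\neq 0$ the two viscosity inequalities are exchanged by multiplying or dividing by the positive scalar $|\nabla\varphi(x_0)|^2$ (this uses crucially that the right-hand side is $0$); the implication \eqref{Normp(x)-Lapla} $\Rightarrow$ \eqref{EqAplic} is indeed trivial at critical contact points because the strong-form operator is continuous and vanishes when the gradient entry is zero; and your identification of the semicontinuous envelopes of the normalized operator at a critical contact point (with $\lambda_{\pm}$ swapped according to the sign of $p(x_0)-2$) is correct.

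The issues are concentrated in the perturbation step, i.e. in \eqref{EqAplic} $\Rightarrow$ \eqref{Normp(x)-Lapla} at a critical contact point. First, the claim $\eta_{\varepsilon}\to v$ (hence the conclusion ``for every unit vector $v$'') is unjustified: with the quartic strictification one only gets $|x_{\varepsilon}-x_0|\le (\varepsilon/\mu)^{1/3}$, so $\nabla\varphi(x_{\varepsilon})=O(|x_{\varepsilon}-x_0|)$ may dominate the term $\varepsilon v$ and the limiting direction of $\eta_{\varepsilon}$ need not be $v$. But this over-claim is also unnecessary: since $p(x_0)-2$ has a fixed sign and $\lambda_{\min}(X)\le\langle X\eta,\eta\rangle\le\lambda_{\max}(X)$ for every unit vector $\eta$, the inequality obtained along \emph{any} subsequential limit $\eta$ of $\eta_{\varepsilon}$ already implies the envelope inequality — e.g., for a supersolution with $p(x_0)\ge 2$, $-\Delta\varphi(x_0)-(p(x_0)-2)\langle D^2\varphi(x_0)\eta,\eta\rangle\ge 0$ forces $-\Delta\varphi(x_0)-(p(x_0)-2)\lambda_{\min}(D^2\varphi(x_0))\ge 0$ — so your worry about ``exhausting the full range $[\lambda_{-},\lambda_{+}]$'' should simply be replaced by a compactness extraction on the unit sphere. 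Second, and this is the genuine gap, the assertion that $\nabla\varphi_{\varepsilon}(x_{\varepsilon})\neq 0$ for a dense set of $\varepsilon$ is not proved and is not obvious: for your fixed direction $v$ nothing excludes that the gradient of the perturbed test function vanishes at every available contact point for every small $\varepsilon$, and contact points need not be unique, so a Baire-type argument in $\varepsilon$ alone does not evidently close. The standard repairs are either a selection argument that varies the direction $v$ as well, or — in effect Siltakoski's route — to invoke the known reduction lemma for operators that are continuous off the set $\{\xi=0\}$ and bounded in the gradient variable, which states that in the definition of viscosity solutions to \eqref{Normp(x)-Lapla} one may restrict to test functions whose gradient is nonzero at the contact point unless the Hessian vanishes there as well; with that lemma both implications become immediate. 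With the first point corrected and the second justified along one of these lines, your proof is complete.
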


Particularly, as a consequence of Lemma \ref{LemmaEquiv}, \cite[Theorem 4.1]{Silt18} and our findings (i.e. Theorem \ref{main1*}) we are able to prove the sharp result estimates:

\begin{theorem}\label{Thm5.1} Let $u$ be a bounded weak solution to \eqref{Stronp(x)-Lapla}. Assume further the assumptions on \cite[Theorem 4.1]{Silt18} and Lemma \ref{LemmaEquiv} are in force, and $\mathcal{G}[u]  = (p(x)-2)\Delta_{\infty} u \in L^{\infty}(B_1)$. Then, $u \in C_{\text{loc}}^{1, \frac{1}{3}}(B_1)$. Moreover, there holds
	$$
	\displaystyle [u]_{C^{1, \frac{1}{3}}\left(B_{\frac{1}{2}}\right)}\leq  \mathrm{C}(\verb"universal")\cdot \left[\|u\|_{L^{\infty}(B_1)} + \sqrt[3]{\|\mathcal{G}[u]\|_{L^{\infty}(B_1)}}\right].
	$$
\end{theorem}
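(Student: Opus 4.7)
The plan is to reduce Theorem \ref{Thm5.1} to a direct application of Corollary \ref{Cormain1} by exploiting the chain of equivalences established by Siltakoski in \cite{Silt18}. First, I would invoke \cite[Theorem 4.1]{Silt18}, which under the standing assumptions $p\in C^{0,1}(\Omega)$ and $\inf_\Omega p(x)>1$ ensures that the bounded weak solution $u$ of the strong $p(x)$-Laplace equation \eqref{Stronp(x)-Lapla} coincides with a continuous viscosity solution of the normalized $p(x)$-Laplacian \eqref{Normp(x)-Lapla}. Subsequently, Lemma \ref{LemmaEquiv} allows me to read $u$ as a viscosity solution of the fully nonlinear, non-divergence form equation \eqref{EqAplic}.

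Next, under the hypothesis $\mathcal{G}[u]=(p(x)-2)\Delta_{\infty} u \in L^{\infty}(B_1)$, I would transfer the lower-order infinity-Laplacian piece to the right-hand side and rewrite \eqref{EqAplic} in the equivalent form
$$
|Du|^{2}\,\mathrm{tr}(D^2 u) \;=\; -\mathcal{G}[u](x) \;=:\; \tilde{f}(x)\in L^{\infty}(B_1).
$$
This equation fits exactly the framework of \eqref{1.1} with the uniformly elliptic, linear (hence simultaneously concave and convex) operator $F(M)=\mathrm{tr}(M)$, the constant exponent profile $p(x)\equiv 2$, and trivial modulating function $\mathfrak{a}\equiv 0$. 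In particular, $p_{\text{min}}=p_{\text{max}}=2$, and the sharp exponent prescribed by \eqref{SharpExp} becomes $\frac{1}{p_{\text{max}}+1}=\frac{1}{3}$.

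Finally, I would invoke Corollary \ref{Cormain1} directly, which delivers the desired $C^{1,1/3}_{\text{loc}}$ regularity together with the quantitative estimate
$$
[u]_{C^{1,\frac{1}{3}}(B_{1/2})} \leq \mathrm{C}(\verb"universal")\cdot\left(\|u\|_{L^\infty(B_1)} + \|\mathcal{G}[u]\|_{L^\infty(B_1)}^{\frac{1}{3}}+1\right),
$$
from which the statement of Theorem \ref{Thm5.1} follows (the extra unit constant being absorbed in the universal constant upon standard normalization). The only subtlety, rather than a genuine obstacle, lies in ensuring that the ``frozen'' right-hand side $\tilde{f}(x)=-\mathcal{G}[u](x)$ is an admissible bounded source in the viscosity framework of Corollary \ref{Cormain1}, in particular at points where $|\nabla u|$ vanishes; but this is precisely where the viscosity reformulation \eqref{EqAplic} obtained from Lemma \ref{LemmaEquiv} absorbs the degeneracy of the strong $p(x)$-Laplace operator, so that the reduction goes through seamlessly.
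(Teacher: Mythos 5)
Your reduction is exactly the route the paper takes (and only sketches): use \cite[Theorem 4.1]{Silt18} and Lemma \ref{LemmaEquiv} to view $u$ as a viscosity solution of \eqref{EqAplic}, move $\mathcal{G}[u]$ to the right-hand side to get $|Du|^{2}\,\mathrm{tr}(D^2u)=-\mathcal{G}[u]\in L^{\infty}(B_1)$, and then apply Corollary \ref{Cormain1} with $p\equiv 2$, $\mathfrak{a}\equiv 0$ and the linear (hence concave/convex) operator $F=\mathrm{tr}$, so that $\tfrac{1}{p_{\max}+1}=\tfrac13$. The proposal is correct, including the absorption of the additive constant via the cubic homogeneity of the equation under the normalization $u/\kappa$ with $\kappa=\|u\|_{L^\infty}+\|\mathcal{G}[u]\|_{L^\infty}^{1/3}$.
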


It is worth to stress that $\mathcal{G}$ represents the ``bad part'' of the operator, since it does not satisfy the structural assumptions (A0)-(A2). However, if it is possible to obtain a sort of uniform bound for such a term, we can obtain a sharp regularity estimate (cf. \cite[Proposition 4.6]{ART15}).

Theorem \ref{Thm5.1} might be interpreted as a sort of weaker form of the longstanding $C^{1, \frac{1}{3}}$ conjecture for inhomogeneous $\infty-$Laplacian equation, which states that if $v$ is a bounded viscosity solution to
$$
\Delta_{\infty} v = f \in L^{\infty}(B_1) \qquad \Rightarrow \qquad v \in C_{\text{loc}}^{1, \frac{1}{3}}(B_1).
$$

In this direction, \cite{ES08}, \cite{LMZ19} and \cite{LW08} are the best known regularity estimates to infinity-Laplacian equation. The full regularity's description in any dimension is still a challenging, open problem.

\subsection{Regularity for problems with $(p(x), q(x))-$growth}

Our methods also allow us to improve (to some extent) the known regularity results for solutions to non-linear elliptic problems with $(p(x), q(x))-$growth as follows (cf. \cite{RaTach20}):
$$
-\mathcal{L}w(x) \defeq \Div\left(|\nabla w|^{p(x)-2}\nabla w + \mathfrak{a}(x)|\nabla w|^{q(x)-2} \nabla w\right)=0.
$$

Let us remember that such solutions might be obtained as minimizers for functionals of double phase with variable exponents given by
{\small{
$$
\displaystyle  w \mapsto \min_{u_0+W_0^{1,1}(\Omega)} \int_{\Omega} \left( \frac{1}{p(x)}|\nabla w|^{p(x)} + \frac{\mathfrak{a}(x)}{q(x)}|\nabla w|^{q(x)}\right)dx.
$$}}

Recently, in \cite[Theorem 1.2]{RaTach20} a $C^{1, \gamma}_{\text{loc}}(\Omega)$-regularity result was addressed for such minimizers provided that $p(\cdot), q(\cdot)$ and $a(\cdot)$ are real-valued functions on $\Omega$ fulfilling:
\begin{enumerate}
  \item $q(x) \ge p(x) \ge p_0>1$  and $\mathfrak{a}(x)\ge 0$;
  \item $p(\cdot), q(\cdot) \in C^{0, \sigma}(\Omega)$, $a(\cdot) \in C^{0, \alpha^{\prime}}(\Omega)$  ($\alpha^{\prime}, \sigma \in (0,1]$);
  \item If $\beta^{\prime} \defeq \min\left\{\alpha^{\prime}, \sigma\right\}$, then
  $$
  \displaystyle \sup_{x \in \Omega} \frac{q(x)}{p(x)}<1+\frac{\beta^{\prime}}{n}.
  $$
\end{enumerate}

Now, due to equivalence of notion solutions in \cite{FZ20} and \cite{Silt18}, and by formal computation such an operator can be written (in non-divergence form) as:
\begin{equation}\label{Eqp&qLaplac3}
  -\mathcal{L}w(x) \defeq \mathcal{J}_{1}(\nabla u, D^2 w) + \mathcal{J}_{2}(\nabla u, D^2 w)= 0,
\end{equation}
where
{\scriptsize{
$$
\left\{
\begin{array}{rcl}
  \mathcal{J}_1(\nabla w, D^2 w) & \defeq & \left[|\nabla w|^{p(x)-2}+\mathfrak{a}(x)|\nabla w|^{q(x)-2}\right]\Delta w(x) \\
   \mathcal{J}_2(\nabla w, D^2 w) & \defeq &  \left[\left(p(x)-2\right)|\nabla w|^{p(x)-2}+\left(q(x)-2\right)\mathfrak{a}(x)|\nabla w|^{q(x)-2}\right]\Delta^N_{\infty} w(x)\\
    &+& \left(|\nabla u|^{p(x)-2}  + |\nabla u|^{q(x)-2} \right)\log(|\nabla u|) \nabla u \cdot \left(\nabla p + \nabla q\right)\\
    &+& |\nabla u|^{q(x)-2} \nabla u \cdot \nabla \mathfrak{a},
\end{array}
\right.
$$}}
and
$$
\displaystyle  \Delta^N_{\infty} w(x) \defeq \frac{1}{|\nabla u|^2} \sum_{i, j} u_{x_i} u_{x_j} u_{x_i x_j}
$$
is the \textit{Normalized $\infty-$Laplacian operator} (see, \cite{LW08II}).

Therefore, under some constraints and invoking our sharp estimates in Theorem \ref{main1*} we are able to prove the next optimal regularity result in contrast with \cite[Theorem 1.2]{RaTach20} (cf. \cite{ALS15},  \cite{ATU17} and \cite{DSVI}).

\begin{theorem}\label{Thm5.2} Let $w \in C^{0, 1}(B_1)$ be a bounded viscosity solution to \eqref{Eqp&qLaplac3}. Assume further $\Delta_{\infty}^N w, \nabla \mathfrak{a}, \nabla p, \nabla q  \in L^{\infty}(B_1)$. Then, $w \in C_{\text{loc}}^{1, \frac{1}{p_{\text{max}}-1}}(B_1)$. Additionally, the following estimate holds
	$$
	\displaystyle [w]_{C^{1, \frac{1}{p_{\text{max}}-1}}\left(B_{\frac{1}{2}}\right)}\leq  \mathrm{C}(\verb"universal")\cdot \left[\|w\|_{L^{\infty}(B_1)} + \|\mathcal{J}_2\|_{L^{\infty}(B_1)}^{\frac{1}{p_{\text{min}}-1}}\right].
	$$
\end{theorem}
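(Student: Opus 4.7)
\smallskip

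\noindent\textbf{Proof plan for Theorem \ref{Thm5.2}.}

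The plan is to recast \eqref{Eqp&qLaplac3} into the canonical form \eqref{1.1} and then apply Corollary \ref{Cormain1}. Using the decomposition given just before the statement, I rewrite the equation $-\mathcal{L}w = 0$ by isolating the uniformly elliptic part $\mathcal{J}_1$ driven by $\Delta w$:
$$
\left[|\nabla w|^{p(x)-2} + \mathfrak{a}(x)|\nabla w|^{q(x)-2}\right]\Delta w(x) \;=\; -\mathcal{J}_2(\nabla w, D^2 w).
$$
Setting $\tilde p(x) \defeq p(x)-2$, $\tilde q(x) \defeq q(x)-2$ (which are positive under the natural hypothesis $p_{\min} > 2$, compatible with the stated estimate), and
$$
\tilde{\mathcal{H}}(x, \xi) \defeq |\xi|^{\tilde p(x)} + \mathfrak{a}(x)|\xi|^{\tilde q(x)}, \qquad \tilde F(M) \defeq \tr(M),
$$
the equation reads $\tilde{\mathcal{H}}(x, \nabla w)\tilde F(D^2 w) = -\mathcal{J}_2$. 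Since $\tilde F = \tr$ is linear, convex, concave and uniformly $(1,1)$-elliptic with constant coefficients, assumptions (A0)--(A2) are trivially satisfied, and (A4)--(A5) follow for $\tilde{\mathcal{H}}$ from the hypotheses on $p, q, \mathfrak{a}$.

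Next I would verify that $f \defeq -\mathcal{J}_2$ belongs to $L^\infty(B_1)$. Since $w \in C^{0,1}(B_1)$, the quantity $|\nabla w|$ is bounded, whence $|\nabla w|^{p(x)-2}$, $|\nabla w|^{q(x)-2}$ and the bounded continuous extension of $|\nabla w|\log|\nabla w|$ (which tends to $0$ as $|\nabla w|\to 0^+$) are all uniformly bounded in $B_1$. Combined with the standing hypotheses $\Delta^N_\infty w, \nabla p, \nabla q, \nabla \mathfrak{a} \in L^\infty(B_1)$, each of the three blocks composing $\mathcal{J}_2$ is bounded, and
$$
\|f\|_{L^\infty(B_1)} \;\le\; \mathrm{C}\bigl(\|\nabla w\|_{L^\infty}, \|\Delta^N_\infty w\|_{L^\infty}, \|\nabla p\|_{L^\infty}, \|\nabla q\|_{L^\infty}, \|\nabla \mathfrak{a}\|_{L^\infty}, \|\mathfrak{a}\|_{L^\infty}\bigr).
$$
With $\tilde F$ concave (and convex), Corollary \ref{Cormain1} applied to $\tilde{\mathcal{H}}, \tilde F, f$ yields, for $\tilde p_{\max} = p_{\max}-2$,
$$
w \in C^{1,\frac{1}{\tilde p_{\max}+1}}_{\mathrm{loc}}(B_1) \;=\; C^{1,\frac{1}{p_{\max}-1}}_{\mathrm{loc}}(B_1),
$$
together with the quantitative bound claimed, once we observe $\tfrac{1}{\tilde p_{\min}+1} = \tfrac{1}{p_{\min}-1}$.

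The main obstacle will be the first step, namely justifying rigorously that a bounded viscosity solution of the original divergence-form equation \eqref{Eqp&qLaplac3} becomes a viscosity solution of the reformulated equation $\tilde{\mathcal{H}}(x, \nabla w)\tilde F(D^2 w) = -\mathcal{J}_2$ in the sense of our Definition~1.1. The difficulty is twofold: first, $\mathcal{J}_2$ is singular on the set $\mathcal{S}_0(w, B_1)=\{\nabla w = 0\}$ (both through $\log|\nabla w|$ and through the normalized $\infty$-Laplacian), so one must pass through a Siltakoski-type equivalence result (cf.\ Lemma \ref{LemmaEquiv} and the discussion around it, as well as the equivalence results in \cite{FZ20} and \cite{Silt18}) to test on smooth functions whose gradient is non-vanishing at the touching point; second, on $\mathcal{S}_0(w, B_1)$ the right-hand side $\mathcal{J}_2$ admits a continuous extension by zero, which is consistent with the degenerate left-hand side $\tilde{\mathcal{H}}(x, 0)=0$, so the viscosity inequality is automatic at singular points. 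Once this reformulation step is in place, the remainder of the proof is a direct, mechanical application of Corollary \ref{Cormain1}.
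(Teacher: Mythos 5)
Your proposal is correct and follows exactly the route the paper intends: the paper gives no detailed argument for this theorem, simply asserting it follows from the interior sharp estimates, and your reformulation $\left[|\nabla w|^{p(x)-2}+\mathfrak{a}(x)|\nabla w|^{q(x)-2}\right]\Delta w=-\mathcal{J}_2$ with shifted exponents $p(x)-2$, $q(x)-2$, the observation that $\mathcal{J}_2\in L^{\infty}(B_1)$ under the standing hypotheses, and the application of Corollary \ref{Cormain1} (with $\tr$ convex/concave, so $\alpha_{\mathrm{F}}=1$) reproduces precisely the exponents $\frac{1}{p_{\max}-1}$ and $\frac{1}{p_{\min}-1}$ in the statement. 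Your remarks on the implicit constraint $p_{\min}>2$ and on interpreting the reformulated equation in the viscosity sense near $\{\nabla w=0\}$ correspond to the paper's own caveat ``under some constraints,'' so nothing essential is missing.
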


As before, Theorem \ref{Thm5.2} is a sort of non-variational weak formulation of the $C^{p^{\prime}}$ conjecture for inhomogeneous $p-$Laplacian equation (cf. \cite{ATU17} for $p>2$), which reads the following:
$$
\text{a bounded solution to} \quad -\Delta_{p} v = f \in L^{\infty}(B_1) \qquad \Rightarrow \qquad v \in C_{\text{loc}}^{1, \frac{1}{p-1}}(B_1).
$$
Finally, the full description for the $C^{p^{\prime}}$ regularity conjecture in any dimension is still an open issue.

\subsection{Sharp regularity in geometric free boundary problems}

We stress that our geometric approach is particularly refined and quite far-reaching in order to be employed in other classes of problems. Particularly, we may to study dead-core problems for fully nonlinear models with unbalanced variable degeneracy as follows
\begin{equation}\label{Maineq}
	\left[|\nabla u|^{p(x)}+\mathfrak{a}(x)|\nabla u|^{q(x)}\right] \Delta u = f_0(x)\cdot u^{\varsigma}\chi_{\{u>0\}} \quad \textrm{in} \quad \Omega,
\end{equation}
where $\varsigma \in [0,  p_{\text{max}}+1)$ is the order of reaction, $f_0$ is bounded away from zero and infinity (the Thiele modulus) and assumptions (A0)-(A5) are in force. We suggest the readers to refer to first's author works \cite{daSRS19I} and \cite{daSS18} (and therein references) for an enlightening mathematical description of such dead-core problems ruled by quasi-linear operators.

In contrast with Theorem \ref{main1} we are able to establish an improved regularity estimate for non-negative solutions of \eqref{Maineq} along their touching ground boundary $\partial\{u>0\}$ (cf. \cite{daSLR20}, \cite{daSRS19I}, \cite{daSS18} and \cite{Tei16} for similar results). The proof for such improved estimate follows the same lines as \cite[Theorem 1.2]{daSLR20} and \cite[Theorem 1.2]{daSS18}. For this reason, we will omit it here.

\begin{theorem}[{\bf Improved regularity along free boundary}]\label{IRThm}

Let $u$ be a nonnegative and bounded viscosity solution to \eqref{Maineq}. Then, given $r_0> 0$ there exists
a constant $\displaystyle \mathrm{C}_0 = \mathrm{C}_0(n, p_{\text{min}}, q_{\text{max}}, r_0, \inf_{\Omega} f(x))>0$ such that for any $x_0 \in \Omega$ such that $B_{r_0}(x_0) \subset \Omega$ and any $r \le \frac{r_0}{2}$, the following estimate holds
$$
\displaystyle  \sup_{B_r (x_0)} u(x) \leq \mathrm{C}_0\cdot \min\left\{  \inf_{B_r (x_0)} u(x), \,\, r^{\frac{p_{\text{max}}+2}{p_{\text{max}}+1-\varsigma}}\right\}.
$$

In particular, if $x_0 \in \partial \{u>0\} \cap \Omega$ (i.e. a free boundary point), then
$$
\displaystyle  \sup_{B_r (x_0)} u(x) \leq \mathrm{C}_0 \cdot r^{\frac{p_{\text{max}}+2}{p_{\text{max}}+1-\varsigma}}.
$$
\end{theorem}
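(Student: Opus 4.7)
The strategy will mirror the two-step approach of \cite[Theorem 1.2]{daSLR20} and \cite[Theorem 1.2]{daSS18}, adapted to the unbalanced variable degeneracy setting. The exponent $\beta := (p_{\text{max}}+2)/(p_{\text{max}}+1-\varsigma)$ is the scaling-invariant rate of \eqref{Maineq}: the rescaling $v(x) := u(x_0 + rx)/r^{\beta}$ produces an equation of the same structural form, with frozen exponents $p(x_0), q(x_0)$ and reaction $\tilde f_0\, v^{\varsigma}$ of comparable weight, up to lower-order perturbations that vanish as $r\to 0$.

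First I would establish the universal growth bound $\sup_{B_r(x_0)} u \leq \mathrm{C}_0\, r^\beta$ by a contradiction/compactness argument. Assume it fails along sequences $(u_k, x_k, r_k)$ with $r_k \downarrow 0$ and $\mathrm{T}_k := \sup_{B_{r_k}(x_k)} u_k > k\, r_k^\beta$. The normalization $v_k(x) := u_k(x_k + r_k x)/\mathrm{T}_k$ satisfies $\|v_k\|_{L^\infty(B_1)} = 1$ and, in the viscosity sense, a rescaled PDE of the form
$$ \mathcal{H}_k(x, \nabla v_k)\, F_k(x, D^2 v_k) \;=\; \frac{r_k^{p_k(x)+2}}{\mathrm{T}_k^{p_k(x)+1-\varsigma}}\, f_0(x_k + r_k x)\, v_k^{\varsigma}\chi_{\{v_k>0\}}, $$
with $p_k(x) := p(x_k + r_k x)$ (similarly for $q_k, \mathfrak{a}_k$), whose source coefficient decays at least like $k^{-(p_{\text{min}}+1-\varsigma)}\to 0$. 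Invoking the interior $C^{1,\alpha}$ estimates (Theorem \ref{main1*}) together with the stability Lemma \ref{P1}, a subsequence converges locally uniformly to a non-trivial, non-negative viscosity solution $v_\infty$ of a homogeneous equation with frozen coefficients. Since $v_\infty(0) = 0$ whenever $x_\infty := \lim x_k$ is a free-boundary point, the strong maximum principle forces $v_\infty \equiv 0$, contradicting $\sup_{B_1} v_\infty = 1$; in the complementary case, the non-degeneracy principle of Theorem \ref{main3} yields the analogous contradiction.

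Next I would prove the Harnack component $\sup_{B_r(x_0)} u \leq \mathrm{C}_0\, \inf_{B_r(x_0)} u$: the $C^{1,\alpha}$ regularity of Theorem \ref{main1*} provides a uniform bound on $|\nabla u|$, so that dividing by $\mathcal{H}(x,\nabla u)$ in regions of non-degenerate gradient reduces the PDE to a uniformly elliptic equation with bounded right-hand side, to which the classical Krylov-Safonov Harnack inequality applies. Combining both bounds gives the claimed $\min$-estimate; the free-boundary consequence is then immediate since $u(x_0) = 0$ at any $x_0 \in \partial\{u > 0\}\cap\Omega$.

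The main obstacle is the stability/passage-to-the-limit in the contradiction step: the rescaled modulating function in $\mathcal{H}_k$ carries a factor $r_k^{q_k(x)-p_k(x)}$ that, combined with the $x$-dependence of the exponents, must be shown to produce a well-defined uniformly elliptic limit operator rather than degenerate or blow up. This will be addressed by exploiting the continuity of $p, q, \mathfrak{a}$ at $x_\infty$ (allowing one to freeze them at the limit point) together with the uniform bounds $0 < p_{\text{min}} \leq p_k \leq p_{\text{max}} \leq q_k \leq q_{\text{max}}$ and the fact that $\mathrm{T}_k/r_k^\beta \to \infty$, which forces the subordinate term in $\mathcal{H}_k$ to remain controllable along the limit.
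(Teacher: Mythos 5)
The paper itself gives no written proof of Theorem \ref{IRThm}: it defers to \cite[Theorem 1.2]{daSLR20} and \cite[Theorem 1.2]{daSS18}, whose scheme is a dyadic flatness-improvement lemma proved by compactness, followed by a geometric iteration. Your one-shot contradiction argument does not close. Writing $\beta=\frac{p_{\text{max}}+2}{p_{\text{max}}+1-\varsigma}$ and normalizing $v_k(x)=u_k(x_k+r_kx)/\mathrm{T}_k$ with $\mathrm{T}_k=\sup_{B_{r_k}(x_k)}u_k>k\,r_k^{\beta}$, the interior estimates give convergence only on compact subsets of $B_1$; the limit $v_\infty$ is nonnegative, solves the homogeneous frozen equation and (for free-boundary centers) vanishes at the origin, so the strong maximum principle yields $v_\infty\equiv 0$ --- but this contradicts nothing, because $\sup_{B_1}v_k=1$ may be attained arbitrarily close to $\partial B_1$ and is not inherited by a locally uniform limit. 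The missing idea is exactly the doubling structure of the cited proofs: one establishes, by contradiction, the dyadic alternative $\sup_{B_{\rho^{k+1}}(x_0)}u\le\max\{\mathrm{C}\rho^{(k+1)\beta},\ \rho^{\beta}\sup_{B_{\rho^{k}}(x_0)}u\}$, so that the blow-up sequence is bounded by $\rho^{-\beta}$ on the unit ball while having supremum $1$ on an interior ball; only then is the limit nontrivial on a compact set and the strong maximum principle decisive. Iterating this alternative and passing from dyadic to arbitrary radii gives the $r^{\beta}$ growth at free-boundary points.

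Two further steps would fail as written. The bound $\sup_{B_r(x_0)}u\le \mathrm{C}_0\,r^{\beta}$ cannot be universal in $x_0$ (away from $\partial\{u>0\}$ the solution need not decay), and your fallback for non-free-boundary centers via Theorem \ref{main3} is unavailable: that theorem requires $f\ge\mathfrak{m}>0$, which the dead-core right-hand side $f_0\,u^{\varsigma}\chi_{\{u>0\}}$ violates near the free boundary, and in any case a non-degeneracy (lower) bound cannot contradict the failure of an upper bound. The estimate has to be read as $\sup_{B_r(x_0)}u\le\mathrm{C}_0\max\bigl\{\inf_{B_r(x_0)}u,\ r^{\beta}\bigr\}$ (only this is consistent with the stated ``in particular''); accordingly one proves the growth at free-boundary points and recovers general points through the distance to $\partial\{u>0\}$ combined with a Harnack inequality in the positivity set. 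On that last point, your Harnack step is also not justified: Krylov--Safonov cannot be applied after ``dividing by $\mathcal{H}(x,\nabla u)$ where the gradient is non-degenerate'', since the equation degenerates precisely on the critical set of $u$, which in general meets $B_r(x_0)$; one needs a Harnack inequality valid for the degenerate operator itself, as in the works the theorem refers to. Finally, the claimed decay $k^{-(p_{\text{min}}+1-\varsigma)}$ of the rescaled source is not a decay when $\varsigma\ge p_{\text{min}}+1$ (which is allowed, since only $\varsigma<p_{\text{max}}+1$ is assumed); the coefficient $r_k^{p_k(x)+2}\,\mathrm{T}_k^{\varsigma-p_k(x)-1}$ still tends to zero, but through the boundedness of $u_k$, so this bookkeeping must be redone.
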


Similarly to Theorem \ref{main3} we are able to prove the following Non-degeneracy property of solutions. The proof is quite similar to the one in \cite[Theorem 4.3]{daSR20}. Thereby, we leave the details of the proof to the reader.

\begin{theorem}[{\bf Non-degeneracy}]\label{LGR} Let $u$ be a nonnegative, bounded viscosity solution to \eqref{Maineq} in $B_1(0)$ with $f(x) \geq \mathfrak{m}>0$ and let $x_0 \in \overline{\{u >0\}} \cap B_{\frac{1}{2}}(0)$ be a fixed point. Then, for any $0<r<\frac{1}{2}$, there holds
$$
   \displaystyle \sup_{\partial B_r(x_0)} \,\frac{u(x)}{r^{\frac{p_{\text{min}}+2}{p_{\text{min}}+1-\varsigma}}} \geq \mathrm{C}(\mathfrak{m},\|\mathfrak{a}\|_{L^{\infty}(\Omega)}, \mathrm{L}_1, n, p_{\text{min}}, q_{\text{max}}, \varsigma, \Omega).
$$
\end{theorem}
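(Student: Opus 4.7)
\textbf{Proof proposal for Theorem \ref{LGR}.} The plan is to carry out a comparison with a carefully scaled radial barrier, following the dead-core playbook from \cite{daSRS19I,daSS18,daSR20}, but adjusting the bookkeeping to the unbalanced, variable-exponent setting. By continuity of $u$ and the closure $\overline{\{u>0\}}$, it suffices to establish the estimate for $x_0 \in \{u>0\} \cap B_{\frac{1}{2}}(0)$ and then let $x_0$ tend to a free boundary point; from there a standard limiting argument produces the universal constant in the statement. Fixing such $x_0$, set
$$
   \beta \defeq \frac{p_{\text{min}}+2}{p_{\text{min}}+1-\varsigma} > 1,
$$
(which is well-defined since $\varsigma < p_{\text{min}}+1$, as forced by the normalization), and consider the barrier $\Psi(x) \defeq c_0 \,|x-x_0|^{\beta}$ on $B_r(x_0)$, with $c_0>0$ universal, to be chosen later.

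A direct computation yields $|\nabla \Psi| = c_0 \beta|x-x_0|^{\beta-1}$ and $\Delta \Psi = c_0 \beta (n+\beta-2)|x-x_0|^{\beta-2}$; combining these with assumption \eqref{1.2} and the bound $c_0\beta \leq 1$ gives, for any $p(x) \geq p_{\text{min}}$ and $q(x) \geq p_{\text{min}}$,
$$
   \mathcal{H}(x,\nabla \Psi)\,\Delta \Psi \leq \mathrm{L}_2 (c_0\beta)^{p_{\text{min}}+1}(n+\beta-2)(1+\|\mathfrak{a}\|_{L^{\infty}(\Omega)})\,|x-x_0|^{\varsigma \beta},
$$
because the arithmetic identity $(\beta-1)p_{\text{min}}+\beta-2 = \varsigma\beta$ combined with $(\beta-1)(p(x)-p_{\text{min}})\geq 0$ shows that, for $|x-x_0|\leq r\leq\frac{1}{2}$, the $|x-x_0|$-exponents arising in both the $p(x)$- and $q(x)$-branches of $\mathcal{H}(x,\nabla\Psi)\Delta\Psi$ are at least $\varsigma\beta$. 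Exploiting $f_0(x) \geq \mathfrak{m}>0$, we select
$$
   c_0^{p_{\text{min}}+1-\varsigma} = \frac{\mathfrak{m}}{2\,\mathrm{L}_2\,\beta^{p_{\text{min}}+1}(n+\beta-2)(1+\|\mathfrak{a}\|_{L^{\infty}(\Omega)})},
$$
which certifies that $\Psi$ is a classical (hence viscosity) supersolution of $\mathcal{H}(x,\nabla \cdot)\Delta(\cdot) - f_0(\cdot)^{\varsigma}\chi_{\{\cdot >0\}} = 0$ in $B_r(x_0)$.

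With the barrier in hand, the contradiction is immediate. Assume, towards contradiction, that $\sup_{\partial B_r(x_0)} u < c_0 \, r^{\beta}$; then $u < \Psi$ on $\partial B_r(x_0)$, while $u \geq 0 = \Psi(x_0)$ inside. Since $u$ is a viscosity subsolution and $\Psi$ a (strict) viscosity supersolution of the same degenerate PDE, the Comparison Principle (in the spirit of Theorem \ref{comparison principle}, adapted to the right-hand side $f_0 u^{\varsigma}$ whose $u$-dependence is non-decreasing on the positivity set so comparison still applies in the standard dead-core fashion) yields $u \leq \Psi$ throughout $B_r(x_0)$. Evaluating at $x_0$ gives $u(x_0) \leq 0$, which contradicts $x_0 \in \{u>0\}$. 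Hence $\sup_{\partial B_r(x_0)} u \geq c_0 r^{\beta}$, i.e., the claimed estimate with $\mathrm{c}(\mathfrak{m},\|\mathfrak{a}\|_{L^{\infty}},\mathrm{L}_1,n,p_{\text{min}},q_{\text{max}},\varsigma,\Omega)=c_0$.

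\textbf{Main obstacle.} The delicate point is the uniform validity of the barrier inequality: the variable exponent $p(x)$ and the unbalanced term $\mathfrak{a}(x)|\nabla \Psi|^{q(x)}$ both produce $|x-x_0|$-powers different from the canonical $\varsigma\beta$, and it is the monotonicity $(\beta-1)(p(x)-p_{\text{min}}) \geq 0$ together with the smallness $|x-x_0| \leq \frac{1}{2}$ that keeps the degeneracy from destroying the comparison. A secondary technical point is extending the Comparison Principle of Theorem \ref{comparison principle} to accommodate the dead-core reaction $f_0 u^{\varsigma}\chi_{\{u>0\}}$; this is standard since the reaction is non-decreasing in $u$ on $\{u>0\}$, but it must be verified carefully to ensure strict ordering propagates from the boundary.
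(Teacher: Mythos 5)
Your proposal is correct and is essentially the proof the paper intends (the paper defers to \cite[Theorem 4.3]{daSR20} and the argument mirrors the one given for Theorem \ref{main3}): the radial barrier $c_0|x-x_0|^{\beta}$ with $\beta=\frac{p_{\text{min}}+2}{p_{\text{min}}+1-\varsigma}$, the exponent identity $(\beta-1)p_{\text{min}}+\beta-2=\varsigma\beta$ together with $|x-x_0|\le 1$ and $c_0\beta\le 1$ to absorb both the $p(x)$- and $q(x)$-branches, a small universal choice of $c_0$, comparison, and the contradiction at the center, followed by the limiting argument for points of $\overline{\{u>0\}}$. Only two cosmetic patches are needed, both of which you essentially anticipate: fix $c_0$ as the minimum of your root and $1/\beta$ so that $c_0\beta\le1$ actually holds, and when $\beta<2$ note that $\Psi$ is a viscosity (rather than classical) supersolution at the vertex $x_0$, since any $C^2$ test function touching $\Psi$ from below there must have vanishing gradient, so the degenerate factor $\mathcal{H}(x_0,0)=0$ makes the supersolution inequality trivial at that single point.
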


\hspace{2cm}

Now, let us move towards on sharp regularity for solutions of obstacle type problems governed by our class of operators. Precisely, we deal with viscosity solutions of
\begin{equation} \label{Eq1}
\left\{
\begin{array}{rclcl}
 \left[|\nabla u|^{p(x)}+\mathfrak{a}(x)|\nabla u|^{q(x)}\right]\Delta u & = & f(x)\cdot \chi_{\{u>\phi\}} & \text{in} & B_{1}(0) \\
  u(x) & \ge & \phi(x) & \text{on} &  B_{1}(0)\\
  u(x) &=& g(x) & \text{on} & \partial B_1(0),
\end{array}
\right.
\end{equation}
with $\phi \in C^{1,1}(B_1(0))$, $f \in L^{\infty}(B_1) \cap C^0(B_1(0))$ and $g$ is a continuous boundary datum.  We are able to prove $C^{1, \frac{1}{p_{\text{max}}+1}}(B_{1/2}(0))$ estimates.

Recently, nonlinear elliptic obstacle problems have been addressed in first's author works\cite{DSVI} and \cite{DSVII}, where were proved $C^{1, \alpha}$ estimates for a class of fully nonlinear operators of degenerate type $\mathcal{G}_0[u] \defeq |\nabla u|^{p}F(D^2 u)$.

Finally, making use of same ideas as in \cite[Theorem 1.3]{DSVII}, we are in a position to state the following result:

\begin{theorem}[{\bf Regularity along free boundary points}]\label{T1}
Suppose that the assumption (A0)-(A5)  are in force for a convex or concave operator $F$. Let $u$ be a bounded viscosity solution to \eqref{Eq1} with obstacle $\phi \in C^{1,1}(B_1(0))$. Then, $u \in C^{1,\frac{1}{p_{\text{max}}+1} }(B_{1/2})$, along free boundary points. More precisely, for any point $x_0 \in \partial \{u > \phi\} \cap B_{1/2}$ and for $r \in \left(0, \frac{1}{2}\right)$ there holds
{\scriptsize{
$$
	[u]_{C^{1, \frac{1}{p_{\text{max}}+1}}(B_r)} \le \mathrm{C}(\verb"universal")\cdot \left[\|u\|_{L^{\infty}(B_1)} + \left(\|\phi\|_{C^{1, 1}(B_1)}^{p_{\text{max}}+1}+\|f\|_{L^{\infty}(B_1)}\right)^{\frac{1}{p_{\text{max}}+1}}\right]	,
$$}}
\end{theorem}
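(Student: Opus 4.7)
The plan is to establish pointwise $C^{1,\beta}$-decay with $\beta \defeq \tfrac{1}{p_{\text{max}}+1}$ at each free boundary point $x_0 \in \partial\{u>\phi\}\cap B_{1/2}$, and then upgrade it via the Campanato-type characterization \eqref{Semi-Norm}. After translating $x_0$ to the origin, subtracting the affine polynomial $\ell_{\phi}(x) \defeq \phi(0) + D\phi(0)\cdot x$ (which turns the degeneracy $\mathcal{H}(x,Du)$ into a shifted one $\widetilde{\mathcal{H}}(x,\xi) \defeq \mathcal{H}(x,\xi+D\phi(0))$ still obeying \eqref{1.2}), and performing a smallness normalization analogous to Remark \ref{SmallRegime}, I may assume $u(0)=\phi(0)=0$, $D\phi(0)=0$, and that all relevant norms are at most one. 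The target estimate then reduces to $|u(x)|\le \mathrm{C}\,|x|^{1+\beta}$ on $B_{1/2}$.

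The \emph{lower bound} is immediate from the obstacle constraint: $u(x)\ge \phi(x)\ge -\tfrac12\|\phi\|_{C^{1,1}}|x|^2 \ge -\mathrm{C}|x|^{1+\beta}$, since $\beta<1$ and $|x|\le 1$. For the \emph{upper bound} I would argue by compactness/contradiction in the spirit of \cite{DSVII}. Assume the bound fails: one extracts sequences $(u_k,\phi_k,f_k,F_k,\mathcal{H}_k)$ satisfying (A0)--(A5) uniformly, with $0$ a free boundary point for each $u_k$, and scales $r_k\downarrow 0$ such that $S_k\defeq\sup_{B_{r_k}}u_k > k\,r_k^{1+\beta}$. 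Introduce the rescalings $v_k(y) \defeq u_k(r_k y)/S_k$ on $B_1$. Then $v_k(0)=0$, $\sup_{B_1}v_k=1$, and by the lower bound applied to $u_k$, $v_k(y)\ge -\mathrm{C} r_k^{2}/S_k \ge -\mathrm{C}\, r_k^{1-\beta}/k \to 0$. A direct computation shows that $v_k$ solves, in the viscosity sense on the rescaled non-contact set, an equation whose effective source and obstacle scale like $r_k^{2}/S_k$ against suitable powers of $\sigma_k \defeq S_k/r_k$; the choice $\beta = 1/(p_{\max}+1)$ is tailored precisely so that both the source $\widehat f_k$ and the obstacle $\widehat \phi_k(y)\defeq \phi_k(r_ky)/S_k$ tend to zero uniformly on compact subsets of $B_1$.

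Applying the stability Lemma~\ref{P1} (interior version), combined with the global equicontinuity granted by Theorem~\ref{HoldEstThm}, one extracts a subsequence $v_k \to v_\infty$ uniformly on $\overline{B_1}$, where $v_\infty$ is a bounded viscosity solution of an unobstructed limiting problem of the form $\mathcal{H}_\infty(y,Dv_\infty)F_\infty(D^2v_\infty)=0$ in $B_1$, with $v_\infty(0)=0$ and $\sup_{B_1}v_\infty = 1$. The gradient shift $\sigma_k Dv_k + D\phi_k(0)$ appearing inside $\widetilde{\mathcal{H}}_k$ is treated exactly as in the proof of Lemma~\ref{L0}, via the dichotomy ``$(\sigma_k)$ bounded vs.~unbounded'': in the former case we pass to a finite shift, in the latter we divide through by $\sigma_k^{p_k(r_k\cdot)}$ and arrive at a uniformly elliptic limit. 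Since $F_\infty$ inherits the concave/convex character of $F$, Corollary~\ref{Cormain1} applies to $v_\infty$ and yields $\sup_{B_\rho}|v_\infty|\le \mathrm{C}\rho^{1+\beta}$ for every $\rho\in(0,\tfrac12]$; pulling this decay back through the scaling contradicts $\sup_{B_1}v_k=1$ for $k$ large, closing the argument. Combining the two one-sided bounds, tracking the dependence of constants through the normalization, and translating back via $\ell_\phi$ gives the claimed seminorm estimate.

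The principal obstacle is the stability/limit-passing step: one must confirm that the rescaled pair $(\widehat F_k,\widehat{\mathcal{H}}_k)$ remains within the class (A0)--(A5) with parameters independent of $k$, that the rescaled obstacle is genuinely negligible in the limit (not just pointwise, but in a viscosity sense sufficient to kill the obstacle term), and that the gradient shift from $D\phi_k(0)$ does not destroy compactness. This is the core technical content extending the arguments of \cite{DSVI,DSVII} to the present unbalanced variable-exponent setting; every other step is a routine adaptation once this limiting equation has been identified.
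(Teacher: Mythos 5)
You should first note that the paper itself contains no proof of Theorem \ref{T1}: it merely asserts that the result follows ``making use of the same ideas as in \cite[Theorem 1.3]{DSVII}'', i.e.\ by the geometric scheme already used for Theorem \ref{main1*} (approximation by $\mathfrak{F}$-harmonic profiles plus dyadic iteration), where the $C^{1,1}$ obstacle only contributes an error of order $r^2$, absorbed because $1+\frac{1}{p_{\text{max}}+1}<2$, and the exponent $\frac{1}{p_{\text{max}}+1}$ is dictated by the scaling of the source. Your reduction (translate the free boundary point to the origin, subtract $\ell_\phi$, get the lower bound for free from $u\ge\phi$, choose $\beta=\frac{1}{p_{\text{max}}+1}$ so the rescaled source vanishes) is consistent with that route, and your scaling computations are correct.

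The genuine gap is that your compactness argument never actually produces a contradiction. With $S_k=\sup_{B_{r_k}}u_k>k\,r_k^{1+\beta}$ and $v_k=u_k(r_k\cdot)/S_k$, the divergence of $k$ is spent entirely on making the rescaled data vanish: the source is bounded by $\|f\|_\infty\,k^{-(p_k+1)}r_k^{1-\beta(p_k+1)}\le \|f\|_\infty k^{-(p_k+1)}$ and the rescaled obstacle by $\mathrm{C}r_k^{1-\beta}/k$. What survives in the limit is only $v_\infty(0)=0$, $v_\infty\ge 0$, $\|v_\infty\|_{L^\infty(B_1)}\le 1$, hence $Dv_\infty(0)=0$ by interior minimality; Corollary \ref{Cormain1} then gives $|v_\infty(y)|\le \mathrm{C}|y|^{1+\beta}$ with a \emph{universal} constant $\mathrm{C}$ that may well exceed $1$, which is perfectly compatible with $\sup_{B_1}v_k=1$ (note also that you only get locally uniform convergence in $B_1$, not on $\overline{B_1}$, so the normalization can even escape to $\partial B_1$ and the limit could be trivial). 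To close the argument you need extra structure: either run a first-step improvement plus dyadic iteration exactly as in Lemmas \ref{lem.flat}--\ref{lem.dy}, using that the approximating profile $\mathfrak{h}$ satisfies $\mathfrak{h}(0)\approx 0$ and $\mathfrak{h}\ge -o(1)$, so that $|D\mathfrak{h}(0)|$ is small and no affine correction accumulates at the free boundary point (this is the argument of \cite{DSVII} the paper points to); or select a critical scale (e.g.\ the largest $r_k$ with $\sup_{B_{r_k}}u_k= k\,r_k^{1+\beta}$) so that $v_k$ inherits the growth bound $\sup_{B_R}v_k\le R^{1+\beta}$ on all admissible larger balls and the blow-up limit violates a Liouville/regularity statement. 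A second, smaller issue, which you flag but do not resolve, is that the contact set need not disappear in the limit: a priori $v_\infty$ solves the equation only in $\{v_\infty>0\}$ and is a one-sided solution across $\{v_\infty=0\}$, so an additional step (comparison with the zero-obstacle problem or a minimum-principle argument at interior zeroes) is required before Corollary \ref{Cormain1} can legitimately be applied in all of $B_1$.
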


In contrast with Theorems \ref{main3} and \ref{LGR}, we also may address the following non-degeneracy result. The proof makes use of the same ideas as in \cite[Theorem 1.7]{DSVI} and  \cite[Theorem 1.7]{DSVII}. For this reason, we leave the details of the proof to the reader.

\begin{theorem}[{\bf Non-degeneracy property}]\label{ThmNon-Deg} Suppose that the assumptions of Theorem \ref{T1} are in force. Let $u$ be a bounded a viscosity solution to the obstacle problem \eqref{Eq1} with source term satisfying $\displaystyle \inf_{B_1(0)}f(x) \defeq \mathfrak{m}>0$. Given $x_0 \in \{u>\phi\} \cap \Omega^{\prime}$, then for all $0<r \ll 1$ there holds
$$
  \displaystyle \sup_{\partial B_r(x_0)} \frac{u(x)-\phi(x_0)}{r^{\frac{p_{\text{min}}+2}{p_{\text{min}}+1}}}\geq \mathrm{c}(\mathfrak{m},\|\mathfrak{a}\|_{L^{\infty}(\Omega)}, \mathrm{L}_1, n, p_{\text{min}}, q_{\text{max}}, \Omega^{\prime}).
$$
\end{theorem}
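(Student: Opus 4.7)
The plan is to establish this non-degeneracy result by adapting the barrier-and-comparison scheme of Theorem \ref{main3} to the obstacle setting, taking care of the fact that the governing PDE only holds inside the non-coincidence region $\{u>\phi\}$. Since $x_0\in\{u>\phi\}\cap\Omega'$, there exists an open neighborhood of $x_0$ on which $u$ satisfies the equation $\mathcal{H}(x,Du)F(x,D^2u)=f(x)\ge \mathfrak{m}>0$ in the viscosity sense. The goal is to exploit this along with a power-type sub-solution of growth $r^{(p_{\min}+2)/(p_{\min}+1)}$.

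First, I would perform the standard scaling: for fixed $r\in(0,r_0)$ with $r_0\ll 1$, and a small parameter $\varepsilon>0$, introduce
$$
v_{r,x_0}(x) \defeq \frac{u(x_0+rx)-\phi(x_0)+\varepsilon}{r^{(p_{\min}+2)/(p_{\min}+1)}},\qquad x\in B_1(0).
$$
A direct computation as in the proof of Theorem \ref{main3} shows that $v_{r,x_0}$ satisfies, in the viscosity sense on the rescaled non-contact set, a PDE of the form $\mathcal{H}_{r,x_0}(x,Dv_{r,x_0})F_{r,x_0}(x,D^2v_{r,x_0})=f_{r,x_0}(x)\ge \mathfrak{m}$, where the rescaled operator $F_{r,x_0}$ is uniformly $(\lambda,\Lambda)$-elliptic and $\mathcal{H}_{r,x_0}$ satisfies the degeneracy bounds \eqref{1.2}–\eqref{N-HDeg} with the same constants.

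Second, I would build the comparison function $\Theta(x) \defeq \mathfrak{c}\,|x|^{(p_{\min}+2)/(p_{\min}+1)}$ and choose $\mathfrak{c}=\mathfrak{c}(\mathfrak{m},\|\mathfrak{a}\|_{L^\infty},\mathrm{L}_1,n,p_{\min},q_{\max},\Omega')>0$ sufficiently small so that
$$
\mathcal{H}_{r,x_0}(x,D\Theta)\,F_{r,x_0}(x,D^2\Theta) < f_{r,x_0}(x) \qquad \text{pointwise in } B_1(0)\setminus\{0\}.
$$
The algebra is identical to the one carried out at length in the proof of Theorem \ref{main3}: one splits into the cases $\mathfrak{c}<1$ and $\mathfrak{c}>1$, uses the explicit form of $\mathscr{M}^+_{\lambda,\Lambda}(D^2\Theta)$, and invokes the continuity of the auxiliary one-variable function whose smallest positive root determines the admissible range of $\mathfrak{c}$; the positivity $\mathfrak{m}>0$ is precisely what makes this root strictly positive.

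Third, I would apply a comparison argument on a suitable sub-domain. Let $\mathcal{U}\subset B_1(0)$ denote the connected component of the rescaled non-contact set $\{v_{r,x_0}>(\phi(x_0+r\cdot)-\phi(x_0)+\varepsilon)r^{-(p_{\min}+2)/(p_{\min}+1)}\}$ containing $0$. On $\partial\mathcal{U}\cap B_1(0)$ one has $u(x_0+rx)=\phi(x_0+rx)$, and by the $C^{1,1}$-regularity of $\phi$, $\phi(x_0+rx)-\phi(x_0)=O(r|x|)+O(r^2|x|^2)$, which for $r\ll 1$ is strictly smaller than $\mathfrak{c}\,r^{(p_{\min}+2)/(p_{\min}+1)}|x|^{(p_{\min}+2)/(p_{\min}+1)}$ away from the origin. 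If one assumes for contradiction that $v_{r,x_0}\le \Theta$ on $\partial(\mathcal{U}\cap B_1(0))$ in its entirety, the Comparison Principle (Theorem \ref{comparison principle}) applied in $\mathcal{U}\cap B_1(0)$ would force $v_{r,x_0}(0)\le\Theta(0)=0$, contradicting $v_{r,x_0}(0)=\varepsilon\cdot r^{-(p_{\min}+2)/(p_{\min}+1)}>0$. Hence there exists $z\in\partial B_1(0)$ with $v_{r,x_0}(z)\ge\mathfrak{c}$, which upon rescaling and sending $\varepsilon\to 0^+$ yields the desired lower bound.

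The main obstacle is the third step: the barrier $\Theta$ must be compared with $v_{r,x_0}$ on the full boundary of a set where the PDE holds, but the non-contact set is only implicitly defined. The delicate point is to verify, using the $C^{1,1}$-regularity of the obstacle, that the boundary contribution coming from $\{u=\phi\}$ does not destroy the inequality $\Theta\ge v_{r,x_0}$: concretely, one must check that $\phi(x_0+rx)-\phi(x_0)\le \mathfrak{c}\,r^{(p_{\min}+2)/(p_{\min}+1)}|x|^{(p_{\min}+2)/(p_{\min}+1)}$ for $x$ with $|x|$ comparable to $1$, which holds because $(p_{\min}+2)/(p_{\min}+1)<2$ and $r$ is small. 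All other ingredients (scaling invariance, barrier construction, comparison) are direct parallels of the proof of Theorem \ref{main3}, as indicated by the reference to \cite[Theorem 1.7]{DSVI} and \cite[Theorem 1.7]{DSVII}.
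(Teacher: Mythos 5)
Your overall strategy (rescale around $x_0$, build the power barrier $\Theta(x)=\mathfrak{c}|x|^{\gamma}$ with $\gamma\defeq\frac{p_{\text{min}}+2}{p_{\text{min}}+1}$, and run a comparison argument to force a boundary point where $u$ exceeds the barrier) is exactly the intended adaptation of Theorem \ref{main3}; the paper itself omits the details and defers to \cite{DSVI} and \cite{DSVII}. However, your third step contains a genuine gap at the contact portion of the boundary. There you need $v_{r,x_0}\le\Theta$ on $\partial\mathcal{U}\cap B_1(0)$, i.e. $\phi(x_0+rx)-\phi(x_0)+\varepsilon\le\mathfrak{c}\,r^{\gamma}|x|^{\gamma}$, and you justify this by writing $\phi(x_0+rx)-\phi(x_0)=O(r|x|)+O(r^2|x|^2)$ and invoking $\gamma<2$ with $r\ll1$. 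The condition $\gamma<2$ only controls the quadratic remainder $O(r^2|x|^2)$. The first-order term $r\,\nabla\phi(x_0)\cdot x$ is of size $r$, and since $\gamma>1$ one has $r^{\gamma}\ll r$ as $r\to0$; hence whenever $\nabla\phi(x_0)\neq 0$ and $x$ points roughly along $\nabla\phi(x_0)$, this term dominates $\mathfrak{c}\,r^{\gamma}|x|^{\gamma}$ and the required inequality fails. Consequently, in your contradiction argument the violation of $v_{r,x_0}\le\Theta$ on $\partial(\mathcal{U}\cap B_1(0))$ may occur on the contact part $\partial\mathcal{U}\cap B_1(0)$ rather than on $\partial B_1(0)$, and no point $z\in\partial B_1(0)$ with $v_{r,x_0}(z)\ge\mathfrak{c}$ is produced.

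Note also that the obvious repairs are not immediate. If $|\nabla\phi(x_0)|\gtrsim r^{\gamma-1}$ the conclusion is in fact trivial from $u\ge\phi$, since then $\sup_{\partial B_r(x_0)}(\phi-\phi(x_0))\ge r|\nabla\phi(x_0)|-C r^2\gtrsim r^{\gamma}$; but in the complementary regime $|\nabla\phi(x_0)|\lesssim r^{\gamma-1}$ the contact-boundary bound you need becomes $\mathfrak{c}|x|+Cr^{2-\gamma}|x|^2\le\mathfrak{c}|x|^{\gamma}$, which still fails for contact points with $|x|<1$ (again because $\gamma>1$). Incorporating the tangent plane of $\phi$ at $x_0$ into the barrier fixes the obstacle side (the error becomes purely quadratic) but destroys the delicate cancellation in the barrier computation: the gradient of the barrier no longer vanishes at the vertex at the rate $|x|^{\gamma-1}$, so $\mathcal{H}_{r,x_0}(x,D\Theta)F_{r,x_0}(x,D^2\Theta)$ blows up like $|x|^{\gamma-2}$ near the origin and the strict supersolution inequality is lost there. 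A correct argument must address this interplay — e.g. by proving the estimate for $u-\phi$ (or for $u$ minus the first-order Taylor polynomial of $\phi$, from which the stated inequality follows after averaging with the bound $u\ge\phi$), with a barrier adapted to the modified gradient — which is precisely the content of the arguments in \cite{DSVI} and \cite{DSVII} that the paper appeals to. As written, your proof does not close this gap.
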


\hspace{2cm}

Finally, we might consider to address regularity estimates to nonlinear free boundary problems (for short FBP) for fully non-linear elliptic problem with unbalanced variable degeneracy as follows
\begin{eqnarray}\label{P 5.1. introduc}
	\left \{
		\begin{array}{rclcl}
			\left[|\nabla u|^{p(x)}+\mathfrak{a}(x)|\nabla u|^{q(x)}\right] \Delta u & = & f(x) & \text{ in } & \Omega_{+}\left( u \right),\\
			|\nabla u| & = & \mathrm{Q}(x) & \text{ on } & \mathfrak{F}(u),
		\end{array}
	\right.
\end{eqnarray}
where the assumptions (A0)-(A5) are satisfied, $\mathrm{Q} \geq 0$ is a continuous function and
$$
u\geq 0\text{ in }\Omega,\quad	\Omega^{+}(u)\defeq \{x \in \Omega : u(x) >0\} \quad \textrm{and} \quad \mathfrak{F}(u) \defeq \partial \Omega^{+}(u) \cap \Omega.
$$

In such a way, by following the approach developed in the authors' work \cite[Theorems 1.3 and 1.4]{daSRRV21} we might address fine properties of the free boundary:
$$
\text{ {\bf Flat/Lipschitz} free boundaries are} \,\,\,C_{\text{loc}}^{1, \gamma}\,\,\, \text{for some} \,\, \gamma(\verb"universal") \in (0, 1).
$$

In turn, the FBP considered in \eqref{P 5.1. introduc} appears as the limit of certain inhomogeneous singularly perturbed problems in the non-variational framework of high energy activation model in combustion and flame propagation theories ( cf. \cite{CK-AM04} and \cite{Weiss03} for the stationary divergence setting and \cite{ART17}, \cite{RS15} and \cite{RT11} for related non-divergence topics), whose simplest model case is given by we seek a non-negative profile $u^{\varepsilon}$ (for each $\varepsilon>0$ fixed) satisfying
\begin{equation}\label{SPP}
  \left\{
\begin{array}{rclcl}
	\left[|\nabla u^{\varepsilon}|^{p_{\varepsilon}(x)} + \mathfrak{a}_{\varepsilon}(x)|\nabla u^{\varepsilon}|^{q_{\varepsilon}(x)} \right] \Delta u^{\varepsilon} & = & \zeta_{\varepsilon}(u^{\varepsilon}) + f_{\varepsilon}(x) & \mbox{in} & \Omega,\\
	u^{\varepsilon}(x) & = & g(x) & \mbox{on} & \partial \Omega,
\end{array}
\right.
\end{equation}
in the viscosity sense, for suitable data $p_{\varepsilon}(\cdot), q_{\varepsilon}(\cdot)$, $\mathfrak{a}_{\varepsilon}(\cdot)$, $g$, where $\zeta_{\varepsilon}(s) \defeq \frac{1}{\varepsilon}\zeta\left(\frac{s}{\varepsilon}\right)$ one behaves singularly of order $\mbox{o} \left(\varepsilon^{-1} \right)$ near $\varepsilon$-layer surfaces. In such a scenario, existing solutions are globally (uniformly) Lipschitz continuous (see \cite[Theorem 1.4]{daSJR21} and \cite[Theorem 1]{RS15} for specific results) such that
{\scriptsize{
$$
\|\nabla u^{\varepsilon}\|_{L^{\infty}(\overline{\Omega})} \leq \mathrm{C}(n, (p_{\varepsilon})_{\text{min}}, (q_{\varepsilon})_{\text{max}}, \|\zeta\|_{L^{\infty}(\Omega)}, \|\mathfrak{a}_{\varepsilon}\|_{L^{\infty}(\Omega)}, \|g\|_{C^{1, \kappa}(\partial \Omega)}, \|f_{\varepsilon}\|_{L^{\infty}(\Omega)}, \Omega).
$$}}

 As a result, up to a subsequence, there exists a function $u_0$, obtained as the uniform limit of $u^{\varepsilon_{k_j}}$, as $\varepsilon_{k_j} \to 0$. Additionally, such a limiting profile fulfills in the viscosity sense
\[
\left[|\nabla u_0|^{p_{0}(x)} + \mathfrak{a}_0(x)|\nabla u_0|^{q_{0}(x)} \right] \Delta u_0 = f_0(x)
\]
for an appropriate RHS $f_0\geq 0$ (see \cite[Theorem 1.7]{daSJR21}).

Therefore, we are able to apply our findings to a family of functions of such inhomogeneous singular perturbation problems \eqref{SPP} as we have developed in \cite{ART17}, \cite{daSJR21} and \cite{daSRRV21} respectively.

\bigskip

\noindent{\bf Acknowledgements.} J.V. da Silva and G.C. Ricarte have been partially supported by Conselho Nacional de Desenvolvimento Cient\'{i}fico e Tecnol\'{o}gico (CNPq-Brazil) under Grants No. 310303/2019-2 and No. 303078/2018-9. E.C. J\'{u}nior and G.C. Rampasso are partially supported by CAPES-Brazil. This research was financed in part by the Coordena\c{c}\~{a}o de Aperfei\c{c}oamento de Pessoal de N\'{i}vel Superior - (CAPES - Brazil) - Finance Code 001.

\end{document}